   \newtheorem{theorem}[subsubsection]{Theorem}
      \newtheorem*{theorem*}{Theorem}
   \newtheorem{proposition}[subsubsection]{Proposition}
   \newtheorem{lemma}[subsubsection]{Lemma}
   \newtheorem{corollary}[subsubsection]{Corollary}
   \newtheorem*{conjecture*}{Conjecture}
\theoremstyle{definition}
          \newtheorem*{exercise*}{Exercise}
   \newtheorem{example}[subsubsection]{Example}
   \newtheorem*{example*}{Example}
   \newtheorem*{definition*}{Definition}
   \newtheorem{rem}[subsubsection]{Remark}
   \newtheorem{remark}[subsubsection]{Remark}
\newcommand{\RR}{{\mathbb{R}}}
\newcommand{\CC}{{\mathbb{C}}}
\newcommand{\QQ}{{\mathbb{Q}}}
\newcommand{\NN}{{\mathbb{N}}}
\newcommand{\PP}{{\mathbb{P}}}
\newcommand{\ZZ}{{\mathbb{Z}}}
\newcommand{\GG}{{\mathbb{G}}}
\newcommand{\LL}{{\mathbb{L}}}
\newcommand{\bbA}{{\mathbb{A}}}
\renewcommand{\AA}{{\mathbb{A}}}
\newcommand\Lam{{\Lambda}}
\newcommand{\cO}{{\mathcal O}}
\newcommand{\oB}{{\overline{B}}}
\newcommand{\oO}{{\overline{O}}}
\newcommand{\of}{{\overline{f}}}
\newcommand{\og}{{\overline{g}}}
\newcommand{\oh}{{\overline{h}}}
\newcommand{\ophi}{{\overline{\phi}}}
\def\<{\langle}
\def\>{\rangle}
\newcommand{\Spec}{\operatorname{Spec}}
\newcommand{\Proj}{\operatorname{Proj}}
\newcommand{\Ker}{{\operatorname{Ker}}}
\newcommand{\Tor}{{\operatorname{Tor}}}
\newcommand{\chara}{\operatorname{char}}
\newcommand{\oD}{{\overline{D}}}
\def\:{{\colon}}
\def\.{{,\dots,}}
\def\veps{\varepsilon}
\newcommand{\double}{\genfrac..{0pt}1
{\raise -1pt\hbox{$\scriptstyle\longrightarrow$}}{\raise 3pt\hbox
{$\scriptstyle\longrightarrow$}}}
\renewcommand{\setminus}{\smallsetminus}
\def\tototi{\mathbin{\mathop{\otimes}\limits^{\raise-1pt\hbox
{$\scriptscriptstyle {\rm L}$}}}}
\def\indlim{\mathop{\vrule width0pt height7pt depth
4pt\smash{\lim\limits_{\raise 1pt\hbox to 14.5pt
{\rightarrowfill}}}}}
\def\projlim{\mathop{\vrule width0pt height7pt depth
4pt\smash{\lim\limits_{\raise 1pt\hbox to 14.5pt
{\leftarrowfill}}}}}
\newcommand\displaceamount{3pt}
\newcommand{\doubledown}{\ar@<\displaceamount>[d]\ar@<-\displaceamount>[d]}
\newcommand{\doubleup}{\ar@<\displaceamount>[u]\ar@<-\displaceamount>[u]}
\newcommand{\doubleright}{\ar@<\displaceamount>[r]\ar@<-\displaceamount>[r]}
\def\into{\hookrightarrow}
\def\onto{\twoheadrightarrow}
\def\GGm{{\mathbb G}_m}
\def\toisom{\xrightarrow{{_\sim}}}
\def\hatA{{\widehat A}}
\def\hatB{{\widehat B}}
\def\hatK{{\widehat K}}
\def\hatR{{\widehat R}}
\def\hatf{{\widehat f}}
\def\bfD{{\mathbf{D}}}
\def\wtimes{{\widehat\otimes}}
\def\hatphi{{\widehat\phi}}
\def\Frac{{\rm Frac}}
\begin{document}
\title{Luna's fundamental lemma for diagonalizable groups}

\author[D. Abramovich]{Dan Abramovich}
\address{Department of Mathematics, Box 1917, Brown University,
Providence, RI, 02912, U.S.A}
\email{abrmovic@math.brown.edu}
\author[M. Temkin]{Michael Temkin}
\address{Einstein Institute of Mathematics\\
               The Hebrew University of Jerusalem\\
                Giv'at Ram, Jerusalem, 91904, Israel}
\email{temkin@math.huji.ac.il}
\thanks{This research is supported by  BSF grant 2010255}

\date{\today}

\begin{abstract}
We study relatively affine actions of a diagonalizable group $G$ on locally noetherian schemes. In particular, we generalize  Luna's fundamental lemma when applied to a diagonalizable group: we obtain criteria for a $G$-equivariant morphism $f: X'\to X$ to be {\em strongly equivariant}, namely the base change of the morphism $f\sslash G$ of quotient schemes, and establish descent criteria for $f\sslash G$ to be an open embedding, \'etale, smooth, regular, syntomic, or lci.
\end{abstract}

\maketitle
\setcounter{tocdepth}{1}

\tableofcontents

\section{Introduction}

\subsection{Luna's Fundamental Lemma}
In its original formulation, Luna's Fundamental Lemma  \cite[Lemme fondamental]{Luna} addresses the following classical question: we are given an \'etale equivariant morphism $f:X' \to X$ between two {\em normal complex varieties} with the action of a reductive algebraic group $G$. We assume there are quotients $ Y =  X\sslash G$ and $Y'  =X'\sslash G$ with a resulting quotient morphism $f\sslash G :   Y' \to Y$. Under what conditions is $f$ {\em strongly \'etale, namely $X' = Y' \times_{Y} X$ and $f\sslash G$ is \'etale?} In other words, when is the following diagram cartesian and the bottom row  \'etale as well?

$$\xymatrix{X' \ar[r]^f\ar[d]_{q'} & X\ar[d]^{q}\\ Y' \ar[r]_{f\sslash G} & Y}
$$

\subsection{Generalized context}
In this paper we restrict attention to {\em diagonalizable groups}, relax all the other conditions, and deduce further properties of the quotient:
\begin{enumerate}
\item Our schemes are only required to be locally noetherian, with no assumption on being normal or even reduced.
\item The action of $G$ on $X$ and $X'$ is assumed to be {\em relatively affine} in the sense of Section  \ref{relaffinesec}.
\item We provide necessary and sufficient conditions for an arbitrary equivariant morphism $f$ to be {\em strongly equivariant} in the sense that $f$ is the base change of $f\sslash G$.
\item We establish descent of various properties of $f$, ensuring that $f\sslash G$ is regular, smooth, \'etale, or syntomic (without any finite type assumption), when $f$ has the corresponding property.
\end{enumerate}
Condition (2) guarantees the existence of good quotients $Y$ and $Y'$, see Section~\ref{relaffinesec}. It also guarantees that over any point $y \in Y$ the fiber $X_y$ contains a unique closed orbit - called here a {\em special orbit} (Definition \ref{Def:special-orbit}).

Already in the complex case, most equivariant morphisms are not strongly equivariant: one must require something about how $f$ treats stabilizers  and special orbits. Accordingly, we say that $f:X'\to X$ is {\em fiberwise inert} (\ref{Sec:inert-def}) if it sends special orbits to special orbits, and if for each point $x\in X$ the stabilizer $G_x$ acts trivially on the fiber $X'\times_X x$.

\subsection{The main result}
The main result of this article is the following variant of Luna's Fundamental Lemma

\begin{theorem}[See Theorem \ref{Th:Luna}]\label{Th:maintheorem}
Let $X$ and $X'$ be locally noetherian schemes provided with relatively affine actions of a diagonalizable group $G$, and let $f\:X'\to X$ be a $G$-equivariant morphism. Then,

(i) $f$ is strongly equivariant if and only if $f$ is fiberwise inert and any special orbit in $X'$ contains a point $x'$ such that the action of $G_{x'}$ on $H_1(\LL_{X'/X}\otimes^{\rm L}k(x'))$ is trivial.

(ii) Let $P$ be one of the following properties: (a) regular, (b) smooth, (c) \'etale, (d) an open embedding. Then the following conditions are equivalent: (1) $f$ is inert and satisfies $P$, (2) $f$ is strongly equivariant and satisfies $P$, (3) $f$ strongly satisfies $P$.

(iii) $f$ is strongly syntomic if and only if it is syntomic and strongly equivariant. Moreover, if $f$ is strongly equivariant then the following claims hold: (a) if $f$ is lci then $f\sslash G$ is lci, (b) if $f\sslash G$ is lci and $\Tor$-independent with the morphism $X \to X\sslash G$ then $f$ is lci.
\end{theorem}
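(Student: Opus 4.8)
The plan is to reduce the whole statement to graded commutative algebra and then to extract everything from the cotangent complex. Since $G$ is diagonalizable with character group $M$, a relatively affine $G$-action is the same datum as an $M$-grading, so working locally over $Y=X\sslash G$ and $Y'=X'\sslash G$ I may write $X=\Spec A$ and $X'=\Spec B$ with $A=\bigoplus_{m\in M}A_m$ and $B=\bigoplus_{m\in M}B_m$, where the degree-zero pieces $A_0$ and $B_0$ are the coordinate rings of $Y$ and $Y'$ and $f$ corresponds to a graded homomorphism $A\to B$. The $G$-equivariant cotangent complex $\LL_{X'/X}$ is then $M$-graded, and $G_{x'}$ acts on $H_1(\LL_{X'/X}\otimes^{\rm L}k(x'))$ through the degrees surviving in the residue field of the special orbit; triviality of that action is exactly the vanishing of the offending graded components. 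The organizing device is the scheme $W:=X\times_Y Y'$: it carries a canonical $G$-action with $W\sslash G=Y'$, the projection $W\to X$ is the base change of $f\sslash G$ and is strongly equivariant by construction, and $f$ factors as $X'\xrightarrow{h}W\to X$ with $h\sslash G=\id_{Y'}$. Thus $f$ is strongly equivariant if and only if $h$ is an isomorphism, and the transitivity triangle for $X'\to W\to X$ transports the hypotheses on $\LL_{X'/X}$ to $\LL_{X'/W}$, because $\LL_{W/X}$ is pulled back from $\LL_{Y'/Y}$.

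I would establish (i) first; its reverse implication is the heart of the argument. The forward direction is formal: if $h$ is an isomorphism then fibers of $f$ are pulled back from degree-zero data, so stabilizers act trivially on them and special orbits map to special orbits, while $\LL_{X'/X}\simeq\LL_{Y'/Y}\otimes_{\cO_{Y'}}\cO_{X'}$ shows $H_1$ sits in degree zero. For the converse I would localize at a special orbit of $X'$, reducing to a graded homomorphism $h\colon \cO(W)\to B$ that is the identity in degree zero, and prove the rigidity statement that such an $h$ is an isomorphism. The fiberwise-inert hypothesis controls $H_0(\LL_{X'/W})=\Omega_{X'/W}$: it forces every homogeneous generator of $B$ over $\cO(W)$ into a degree on which the stabilizer acts trivially, and a graded Nakayama argument at the special orbit then yields surjectivity of $h$. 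Injectivity --- the absence of unexpected relations --- is obstructed precisely by the graded components of $H_1(\LL_{X'/W}\otimes^{\rm L}k(x'))$, which the hypothesis on $H_1(\LL_{X'/X})$ kills after transport along the transitivity triangle. The main obstacle is exactly this successive-approximation step: arranging that the pointwise hypothesis at a single point of each special orbit propagates to a sheaf isomorphism while remaining inside the locally noetherian, possibly non-reduced and non-flat world, where neither completion nor Artin approximation is available without extra justification.

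Granting (i), part (ii) follows by matching the homological amplitude of $\LL_{X'/X}$ against the property $P$. For $P$ regular, smooth, \'etale, or an open embedding, $\LL_{X'/X}$ is concentrated in homological degree $0$ and flat there (and vanishes in the last two cases), so $H_1(\LL_{X'/X}\otimes^{\rm L}k(x'))=0$ and the extra clause of (i) is automatic; hence ``inert and $P$'' already forces strong equivariance and gives $(1)\Rightarrow(2)$. For the equivalence with $(3)$ I would use the factorization through $W$: once $f$ is strongly equivariant, $f$ coincides with the base change $W\to X$ of $f\sslash G$, so $P$ for $f\sslash G$ ascends to $f$ by stability of $P$ under arbitrary base change, and $P$ for $f$ descends to $f\sslash G$ using the relatively affine structure of $q\colon X\to Y$ together with the grading, which lets one test and descend flatness and the cotangent-complex conditions along the special orbits. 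This closes the cycle $(1)\Rightarrow(2)\Rightarrow(3)\Rightarrow(1)$.

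Finally, for (iii) I would use that $f$ is lci exactly when $\LL_{X'/X}$ is perfect of Tor-amplitude $[-1,0]$, and syntomic when it is moreover flat; the equivalence ``strongly syntomic $\Leftrightarrow$ syntomic and strongly equivariant'' is then (i) combined with the flat case of the descent in (ii). Claim (a) is the descent direction: with $f$ strongly equivariant, $f$ is the base change of $f\sslash G$, and since the invariants (degree-zero) functor is exact for the diagonalizable group $G$, it exhibits $\LL_{Y'/Y}$ as a direct summand of the descent of $\LL_{X'/X}$; as a direct summand of a perfect complex of Tor-amplitude $[-1,0]$ is again such, $f\sslash G$ is lci with no flatness hypothesis needed. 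Claim (b) is the ascent direction, where flatness of $q$ can genuinely fail: here the $\Tor$-independence hypothesis is exactly what makes the base-change formula $\LL_{X'/X}\simeq\LL_{Y'/Y}\otimes^{\rm L}_{\cO_{Y'}}\cO_{X'}$ valid, and then perfectness and amplitude $[-1,0]$ ascend from $f\sslash G$ to $f$, so $f$ is lci.
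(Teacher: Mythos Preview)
Your organizational device---factoring $f$ through $W=X\times_Y Y'$ and reducing strong equivariance to the isomorphism $h\colon X'\to W$---is exactly right, and the paper's proof is equivalent to it. But you have named rather than closed the central gap. You write that the obstacle is the successive-approximation step and that ``neither completion nor Artin approximation is available without extra justification.'' In fact completion \emph{is} the tool, and developing it is the technical heart of the paper. After localizing along a special orbit one has a strictly $L$-local homomorphism; Proposition~\ref{Prop:complete} shows its $m$-adic completion carries a formal $L$-grading $\hatA=\prod_n\hatA_n$, and Lemma~\ref{localtoformal} reduces strong homogeneity to the completed case. There one constructs a \emph{graded} Cohen factorization $\hatA\to D\to\hatB$ with $D$ a trivially-graded formal base change of $\hatA$ (Lemma~\ref{cohenfactorlem}), and then the triviality of the grading on $H_0$ and $H_1$ of $\LL_{B/A}\otimes^{\rm L}l$ translates, via the transitivity triangle for $\hatA\to D\to\hatB$, into the kernel $I\subset D$ being generated in degree zero (Lemma~\ref{cohenidealgrading}). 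Your Nakayama/rigidity sketch is the right intuition, but it does not go through at the level of the uncompleted $L$-local ring; the passage to completions and the existence of a graded Cohen factorization are what make it work. There is also a reduction step you omit: when the closed orbit is not a fixed point one first quotients by the stabilizer $G_x$ (landing in the strictly $L$-local case) and then by $G/G_x$ (a free action, where flat descent applies); see Corollary~\ref{Cor:freeact} and Lemma~\ref{localLuna}.

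Two smaller issues. In (ii), once strong equivariance is established, you still owe the descent ``$f$ regular $\Rightarrow f\sslash G$ regular''; this is not a formal consequence of the grading, and the paper obtains it from the same local analysis (formal smoothness of $\phi$ forces formal smoothness of $\phi_0$ via Corollary~\ref{liftcor}), together with finite-type and fiber bookkeeping (Corollary~\ref{fgcor}, Proposition~\ref{sepandprop}) for smooth/\'etale/open-embedding. In (iii)(a), your direct-summand argument asserts that $\LL_{Y'/Y}$ is a summand of ``the descent'' of $\LL_{X'/X}$, but without $\Tor$-independence of $X\to Y$ and $Y'\to Y$ there is no base-change identification $\LL_{X'/X}\simeq q'^*\LL_{Y'/Y}$, and it is not clear that taking degree-zero invariants of a graded simplicial resolution of $B$ over $A$ yields a resolution of $B_0$ over $A_0$. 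The paper avoids this by working inside the graded Cohen factorization: the kernel $I$ is generated by a regular sequence, the grading hypothesis lets one choose that sequence in degree zero, and one checks directly that it remains regular in $D_0$, so $A_0\to D_0\to B_0$ is a Cohen factorization witnessing that $\phi_0$ is lci (Lemma~\ref{cilem}).
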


Part (iii) refers to Avramov's definition of lci morphisms and the resulting notion of syntomic morphisms described in Section \ref{Sec:lci-syntomic}.

Theorem \ref{Th:maintheorem} is the first ingredient in our forthcoming work  \cite{AT2} on weak factorization of birational maps, generalizing the main theorems of \cite{AKMW} and \cite{W-Factor} to the appropriate generality of qe schemes, and  further proving factorization results in other geometric categories of interest.
While \cite{AT2} only requires actions of $\GG_m$, we find it both convenient and fruitful to work with an arbitrary diagonalizable group. A second ingredient  for  \cite{AT2} - called {\em torification} - is developed in the appropriate generality in \cite{AT1}, which builds on this paper.

\subsection{Comparison with the literature}
Luna's original work applied to normal complex varieties with reductive group actions, and it was extended to algebraically closed field of arbitrary characteristics by Bardsley and Richardson, see \cite{Bardsley-Richardson}. In both works, the only condition one has to impose on an \'etale equivariant morphism $f$ is pointwise inertia preservation.

Recently Alper generalized Luna's Fundamental Lemma to a much more general context of good moduli spaces of Artin stacks, see \cite[Theorem~6.10]{Alper-local-structure}. In this situation, the morphism from a stack to its moduli space is an analogue of the morphism $X\to X\sslash G$, and Alper studied the classical question of describing strongly \'eale morphisms. In this generality, an additional condition has to be imposed on $f$, and the weak saturation condition introduced in \cite[Section~2.2]{Alper-local-structure}) is analogous to our condition on special orbits.

\subsection{Further directions}
It would be interesting, perhaps in future work,  to further extend Luna's Fundamental Lemma  to other tame actions, i.e. actions having  linearly reductive (or even reductive) stabilizers and affine orbits. Moreover, one may hope to extend this to tame groupoids and their quotients, that one may call tame stacks (if the stabilizers are of dimension zero then those are the tame stacks of \cite{AOV1}). We note, however, that a simple descent argument shows that all results of this paper hold for groups of multiplicative type, see Section \ref{nonsplitsec}.

\subsection{Auxiliary results} On the way to proving Theorem  \ref{Th:maintheorem}  we study related notions of regularity, formal smoothness, and group actions. Some results which may be of independent interest and will be used in \cite{AT1} are recorded here. The reader interested only in Theorem \ref{Th:maintheorem} may wish to skip directly to Section \ref{section:reg}.

\subsubsection{Splitting of formally smooth morphisms} We review in Section \ref{section:reg} the notions of regularity and formal smoothness. In particular, Section \ref{mixedcase} introduces the notion of a Cohen ring $C(k) \to k$ of a field $k$; further, by
\cite[tag/032A]{stacks} any formally smooth $g:k \to D$ with $D$ a complete noetherian $k$-algebra lifts to a formally smooth morphism  we denote by $C(g): C(k) \to C(D)$, where $C(D)$ is a complete noetherian local ring so that $D = C(D) \otimes_{C(k)} k$. This applies in  characteristic 0 by taking $C(k)=k$ and $C(D)=D$. The following result describes an arbitrary formally smooth homomorphism in terms of such $C(g)$.


\begin{theorem}[See Theorem \ref{Th:splitfsmoothmixed}]
Let $f\:A\to B$ be a local homomorphism of complete noetherian local rings with closed fiber $\of\:k=A/m_A\to\oB=B/m_AB$.

(i) Assume $\of$ is formally smooth. Then there exist homomorphisms $i\:C(k)\to A$ and $j\:C(\oB)\to B$ making the following diagram commutative.

\begin{equation*}\xymatrix{ A\ar[rr]^f\ar[dd] && B\ar[dd] \\
 & C(k) \ar@{.>}[ul]^i \ar[rr]^(0.35){C(\of)}|!{[ur];[dr]}\hole\ar[dl] && C(\oB)\ar[dl]\ar@{.>}[ul]^j\\
 k\ar[rr]^\of && \oB}
 \end{equation*}

(ii) Assume $f$ is formally smooth. Then for any choice of $i$ and $j$ the homomorphism $A\wtimes_{C(k)}C(\oB)\to B$ is an isomorphism. In particular, $f$ is (non-canonically) isomorphic to the formal base change of a Cohen lift $C(\of)$ of its closed fiber $\of$.
\end{theorem}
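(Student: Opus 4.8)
The plan is to read (i) as two successive lifting problems and (ii) as an isomorphism criterion for flat local $A$-algebras. For (i) I would first produce $i\colon C(k)\to A$ as a coefficient homomorphism lifting the residue map $A\to k$: this is the existence of a Cohen subring of the complete noetherian local ring $A$, and since $C(k)$ is formally smooth over its prime ring while $A$ is complete, one builds the lift by successively lifting modulo the powers $m_A^n$ and passing to the limit. Next, regarding $B$ as a $C(k)$-algebra through $f\circ i$, I would obtain $j\colon C(\oB)\to B$ by lifting the structural map $C(\oB)\to\oB=B/m_AB$ along the surjection $B\to\oB$ in the category of $C(k)$-algebras, the inputs being the formal smoothness of $C(\of)\colon C(k)\to C(\oB)$ and the completeness of $B$ (one lifts through the tower $\{B/m_B^n\}$ and takes the limit). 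The crucial bookkeeping point is that $j$ is a homomorphism of $C(k)$-algebras, which forces $j\circ C(\of)=f\circ i$ and hence delivers the commutativity of the whole diagram for free.

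For (ii) I would first note that formal smoothness of $f$ descends to the closed fiber $\of$ by base change along $A\to k$, so part (i) applies and furnishes $i,j$. Writing $T=A\wtimes_{C(k)}C(\oB)$, the two $C(k)$-algebra maps $f\colon A\to B$ and $j\colon C(\oB)\to B$ assemble, via the compatibility from (i), into a continuous $A$-algebra homomorphism $\psi\colon T\to B$; by construction $\psi$ is the formal base change of $C(\of)$ along $i$, so proving $\psi$ an isomorphism is exactly the asserted statement. The structural inputs I would record here are that $T$ is again a noetherian complete local ring and is flat (indeed formally smooth) over $A$, because formal smoothness and noetherianity are stable under completed base change, and that $B$ is flat over $A$ since $f$ is formally smooth.

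The heart of the matter is then an isomorphism criterion. Reducing modulo $m_A$ identifies $T\otimes_A k$ with $\oB$ and $B\otimes_A k$ with $B/m_AB=\oB$, and $\psi$ induces the identity there. A topological Nakayama argument over the complete local ring $B$ upgrades surjectivity modulo $m_A$ to genuine surjectivity of $\psi$ (the image is a closed $A$-submodule with $\mathrm{im}\,\psi+m_AB=B$). For injectivity I would use flatness of $B$ over $A$: from $0\to\ker\psi\to T\to B\to 0$ and the vanishing of $\Tor_1^A(B,k)$, tensoring with $k$ gives $\ker\psi\otimes_A k=0$; since $T$ is noetherian, $\ker\psi$ is finitely generated, so Nakayama forces $\ker\psi=0$.

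I expect the main obstacle to be the completeness bookkeeping rather than any deep idea: making the successive approximation that produces $j$ genuinely converge in $B$, that is, checking that the adic formal smoothness of $C(\of)$ supplies compatible lifts modulo the $m_B^n$, and verifying that $T=A\wtimes_{C(k)}C(\oB)$ really is noetherian, complete, and $A$-flat so that both Nakayama steps are legitimate. These are precisely the places where the hypotheses ``complete'', ``noetherian'', and ``formally smooth'' are used at once.
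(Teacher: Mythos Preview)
Your proposal is correct. Part (i) follows the paper's route essentially verbatim: produce $i$ as a Cohen coefficient map, then lift $C(\oB)\to\oB$ to $j$ using the formal smoothness of $C(\of)$ and the completeness of $B$.

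For part (ii), both you and the paper assemble $\psi\colon T=A\wtimes_{C(k)}C(\oB)\to B$ and obtain surjectivity by the same topological Nakayama argument modulo $m_A$. The injectivity arguments, however, differ. The paper packages the whole isomorphism test into Corollary~\ref{Cor:liftadic}: having established that $\psi$ is surjective, it invokes the adic lifting property (Lemma~\ref{Lem:liftadic}) once more, using the formal smoothness of $B$ over $A$, to produce a \emph{section} $h\colon B\to T$ of $\psi$; since $\oh=\og^{-1}$ is surjective, the same surjectivity argument applied to $h$ shows $h$ is onto, whence $\psi$ is injective. You instead use flatness of $B$ over $A$ directly: $\Tor_1^A(B,k)=0$ forces $\ker\psi\otimes_Ak=0$, and then Nakayama (over $T$, using that $\ker\psi$ is a finitely generated $T$-module and $m_AT\subset m_T$; you should make this explicit, since $\ker\psi$ need not be finitely generated over $A$) gives $\ker\psi=0$. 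Your route is slightly more self-contained, avoiding a second appeal to the lifting lemma; the paper's route has the virtue of isolating a clean reusable criterion (Corollary~\ref{Cor:liftadic}) that only requires the \emph{target} to be formally smooth over $A$.
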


\subsubsection{$L$-local rings} After reviewing groups, actions and diagonalizable groups in Section \ref{Sec:groups} we study diagonalizable group actions on affine schemes in Section \ref{Sec:L-graded}. Consider a finitely generated abelian group $L$ and its Cartier dual $G = \mathbf{D}_L$. An action of $G$ on an affine scheme $X = \Spec A$ is an $L$-grading on $A$. An important class of $L$-graded rings that we study is the class of {\em $L$-local rings}, namely rings $A$ possessing a single maximal $L$-homogeneous ideal $m_A$, see Section \ref{Sec:local}. The residue ring $A/m_A$ has no nontrivial homogeneous ideals, making it a ``graded field".  Several standard results on local rings generalize to this setting, including the following:

\begin{proposition}[Graded Nakayama's Lemma, see Proposition \ref{nakayamaprop} and Corollary \ref{nakayamacor}]
Let $(A,m)$ be an $L$-local ring with residue graded field $A/m_A$ and let $M$ be a finitely generated $L$-graded $A$-module. Then

(i) $mM=M$ if and only if $M=0$.

(ii) A homogeneous homomorphism of $L$-graded $A$-modules $\phi\:N\to M$ is surjective if and only if $\phi\otimes_A (A/m_A)$ is surjective.

(iii) Homogeneous elements $m_1\.m_l$ generate $M$ if and only if their images generate $M/mM$.

(iv) The minimal cardinality of a set of homogeneous generators of $M$ equals to the rank of the free $(A/m_A)$-module $M/mM$.
\end{proposition}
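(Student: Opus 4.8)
The plan is to follow the template of the classical (ungraded) Nakayama lemma, replacing the step ``$1-a$ is a unit because $a$ lies in the Jacobson radical'' by the homogeneous analogue available in an $L$-local ring. The preliminary I would record first is the \emph{unit criterion}: a homogeneous element $a\in A$ is a unit if and only if $a\notin m_A$. Indeed $Aa$ is a homogeneous ideal, and if it were proper it would be contained in a maximal homogeneous ideal, hence in $m_A$ by $L$-locality; so $a\notin m_A$ forces $Aa=A$. (Existence of maximal homogeneous ideals follows from Zorn applied to proper homogeneous ideals, using that $1$ is homogeneous and that a union of a chain of proper homogeneous ideals is again proper and homogeneous.)

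For (i) the only nontrivial direction is that $mM=M$ implies $M=0$. Since $M$ is finitely generated and graded it admits a finite generating set of homogeneous elements; I would choose one, say $x_1\.x_n$, of minimal cardinality, and suppose $n\geq 1$. Writing $d_i=\deg x_i$ and using $x_n\in mM$, I would express $x_n=\sum_{i=1}^n a_i x_i$ and, by extracting the degree-$d_n$ homogeneous component, arrange that each $a_i$ is homogeneous of degree $d_n-d_i$ and lies in $m_A$ (legitimate because $m_A$ is homogeneous). In particular $a_n$ is homogeneous of degree $0$ and lies in $m_A$, so $1-a_n$ is homogeneous of degree $0$ and lies outside $m_A$ (otherwise $1\in m_A$); by the unit criterion it is invertible. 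Then $x_n=(1-a_n)^{-1}\sum_{i<n}a_i x_i$ lies in the submodule generated by $x_1\.x_{n-1}$, contradicting minimality. Hence $n=0$ and $M=0$.

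Parts (ii) and (iii) then follow formally. Write $\bar\phi=\phi\otimes_A(A/m_A)$. For (ii), the forward implication is that base change preserves surjections. For the converse, let $C$ be the cokernel of $\phi$, a finitely generated graded module as a quotient of $M$; right-exactness of $-\otimes_A(A/m_A)$ identifies $C\otimes_A(A/m_A)$ with the cokernel of $\bar\phi$, so surjectivity of $\bar\phi$ gives $mC=C$, whence $C=0$ by (i). For (iii), given homogeneous $m_1\.m_l$ of degrees $d_i$, I would apply (ii) to the homogeneous map $\phi\colon\bigoplus_{i=1}^l A(-d_i)\to M$ sending the $i$-th basis vector to $m_i$: the $m_i$ generate $M$ exactly when $\phi$ is surjective, and their images generate $M/mM$ exactly when $\bar\phi$ is surjective.

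Finally, (iv) combines (iii) with the structure of graded modules over the graded field $K=A/m_A$. Here I would first record that $K$ is, as a graded ring, a twisted group algebra: its degree-zero part $K_0$ is a field, its support $H=\{l\in L: K_l\neq 0\}$ is a subgroup of $L$, each nonzero $K_l$ is a one-dimensional $K_0$-vector space, and every nonzero homogeneous element is invertible. It follows that any graded $K$-module $V$ is free on a homogeneous basis and that the cardinality of such a basis --- the rank --- is well-defined, equal to $\sum_{c\in L/H}\dim_{K_0}V_{l(c)}$ for any choice of representatives $l(c)\in c$. A minimal set of homogeneous generators of $M$ maps, by (iii), to a minimal homogeneous generating set of $M/mM$ over $K$, i.e.\ to a homogeneous basis, so its cardinality equals the rank. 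The main obstacle is precisely this last structural input: verifying that the ``graded field'' condition forces freeness and a well-defined rank for graded modules, which is where the $L$-grading genuinely intervenes and cannot be imported verbatim from the ungraded theory; the engine for (i)--(iii), by contrast, is just the homogeneous unit criterion together with the degree-$0$ reduction in (i).
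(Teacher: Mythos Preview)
Your argument is correct. The route you take for (i), however, is genuinely different from the paper's. You run the classical minimal-generators proof in the graded setting, the crux being that $1-a_n$ is a degree-zero homogeneous element outside $m_A$ and hence a unit by your unit criterion. The paper instead argues geometrically: the support of $M$ in $\Spec(A)$ is a $\bfD_L$-equivariant closed set (because $M$ is graded), hence cut out by a homogeneous ideal; by $L$-locality it is either empty or contains $V(m)$, and the classical (ungraded) Nakayama at each point identifies $\operatorname{Supp}(M)$ with $\{x: M\otimes_A k(x)\neq 0\}$, so $M=0$ iff $M/mM=0$. Your approach is self-contained and does not invoke the ungraded Nakayama as input; the paper's approach is shorter once one is willing to quote the classical lemma pointwise, and it fits the ambient philosophy of translating graded statements into equivariance. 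For (ii)--(iv) both proceed identically; the paper simply cites \cite[Lemma~1.2]{Temkin-local-properties} for freeness of graded modules over graded fields, whereas you supply a sketch of that structural fact, which is a reasonable thing to include.
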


\begin{proposition}[Characterization of equivariant  Cartier divisors, see Proposition \ref{Cartierprop}]
Assume that $(A,m)$ is an $L$-local integral domain and $D\subset\Spec(A)$ is an equivariant finitely presented closed subscheme. Let $x$ be an arbitrary point of $V(m)$ and let $X_x=\Spec(\cO_{X,x})$ be the localization at $x$. Then the following conditions are equivalent:

(i) $D=V(f)$ for a homogeneous element $f\in A$,

(ii) $D$ is a Cartier divisor in $X$,

(iii) $D_x=D\times_XX_x$ is a Cartier divisor in $X_x$.
\end{proposition}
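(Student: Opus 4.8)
The plan is to prove the cyclic chain (i) $\Rightarrow$ (ii) $\Rightarrow$ (iii) $\Rightarrow$ (i), with essentially all the content in the last step. Writing $I\subseteq A$ for the (homogeneous) ideal of $D$, the first implication is immediate: if $I=(f)$ with $f$ a nonzero homogeneous element, then $f$ is a nonzerodivisor since $A$ is a domain, so multiplication by $f$ identifies $I$ with $A$ as an $A$-module, exhibiting $D=V(f)$ as an effective Cartier divisor. The implication (ii) $\Rightarrow$ (iii) is flat base change: the localization $X_x\to X$ is flat, hence pulls an invertible ideal sheaf back to an invertible one, and over the local ring $\cO_{X,x}=A_{p_x}$ invertibility coincides with being generated by a single nonzerodivisor. (The only degenerate case is $I=0$, i.e. $D=X$, where (i) holds with $f=0$ but $D$ is not a Cartier divisor; this is excluded once one asks that a Cartier divisor have invertible rather than merely principal ideal sheaf, so I will assume $I\neq 0$ throughout.)

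The main step is (iii) $\Rightarrow$ (i). Let $p_x\subset A$ be the prime corresponding to $x$; since $x\in V(m)$ we have $m\subseteq p_x$. As $D$ is finitely presented, $I$ is finitely generated, and decomposing a finite generating set into its homogeneous components (all of which lie in the homogeneous ideal $I$) yields finitely many homogeneous generators $f_1\.f_r$. Hypothesis (iii) says $I_{p_x}=IA_{p_x}$ is principal in the local domain $A_{p_x}$, so by ordinary Nakayama one of the generators, say $f_{i_0}$, already generates $I_{p_x}$. Thus for each $i$ we may write $f_i=c_if_{i_0}$ with $c_i=u_i/v_i$, $u_i\in A$ and $v_i\in A\setminus p_x$, and clearing denominators inside the domain $A$ gives the relation $v_if_i=u_if_{i_0}$.

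The crux is to upgrade each such relation to a homogeneous one. Because $m\subseteq p_x$, the denominator $v_i$ lies outside $m$; since $m$ is the unique maximal homogeneous ideal, some homogeneous component $(v_i)_{j_0}$ of $v_i$ must also lie outside $m$, and a homogeneous element not in $m$ generates the unit ideal, so $(v_i)_{j_0}$ is a unit. Comparing the homogeneous components of total degree $j_0+\deg f_i$ on the two sides of $v_if_i=u_if_{i_0}$ yields $(v_i)_{j_0}\,f_i=(u_i)_{j_0+\deg f_i-\deg f_{i_0}}\,f_{i_0}$, and dividing by the unit $(v_i)_{j_0}$ exhibits $f_i=w_if_{i_0}$ with $w_i\in A$ homogeneous of degree $\deg f_i-\deg f_{i_0}$. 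Hence every generator is a homogeneous multiple of $f_{i_0}$, so $I=(f_{i_0})$ and (i) follows. I expect this homogeneous-component extraction to be the only genuine obstacle: the hypothesis is local at the single point $x$, yet the conclusion is a global homogeneous statement, and what bridges the gap is precisely that $m\subseteq p_x$ forces a unit to appear among the homogeneous components of any admissible denominator. One could alternatively phrase the conclusion through Graded Nakayama (Proposition~\ref{nakayamaprop}) as the inequality $\dim_{A/m}I/mI\le 1$, but the argument above produces the single homogeneous generator outright.
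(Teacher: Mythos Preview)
Your proof is correct, and the argument for (iii)$\Rightarrow$(i) is genuinely different from the paper's. The paper proceeds via the structure theory of graded modules: since $A/m$ is a graded field, $I/mI$ is free over $A/m$ of some rank $d$; tensoring with $k(x)$ (which kills $m$ because $m\subseteq p_x$) identifies this with $I\otimes_A k(x)\cong I_{p_x}/m_xI_{p_x}$, which is one-dimensional by (iii), forcing $d=1$, and then graded Nakayama (Corollary~\ref{nakayamacor}) produces the single homogeneous generator. Your route is more bare-handed: you pick homogeneous generators, use ordinary Nakayama at $x$ to see one of them already generates $I_{p_x}$, and then extract a homogeneous relation from each $v_if_i=u_if_{i_0}$ by observing that some homogeneous component of the denominator $v_i\notin m$ must be a unit. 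This avoids invoking freeness over graded fields and the graded Nakayama machinery entirely; the paper's version is slicker and fits its general framework, while yours is self-contained and makes transparent exactly where the hypothesis $x\in V(m)$ enters (forcing a unit among the homogeneous pieces of any denominator). Your remark about the degenerate case $I=0$ is well taken; the paper glosses over it.
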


To characterize the situation discussed above, we say that a relatively affine action of $G$ on a scheme $X$ is {\em local} if $X$ is quasi-compact and contains a single closed orbit, see Section \ref{Sec:local-action}.

\begin{proposition}[Characterization of local actions, see Lemma \ref{Lem:local-action}]
Assume we have a relatively affine action of  $G=\bfD_L$  on a scheme $X$. Then the following conditions are equivalent:

(i) The action is local.

(ii) $X$ is affine, say $X=\Spec(A)$, and the $L$-graded ring $A$ is $L$-local.

(iii) The quotient $Y=X\sslash G$ is local.
\end{proposition}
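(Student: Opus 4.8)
The plan is to prove the equivalence of the three conditions characterizing local actions by establishing a cycle of implications, leaning heavily on the earlier structural results about $L$-graded rings and the relatively affine setup. Recall that a relatively affine action guarantees the existence of a good quotient $q\:X\to Y=X\sslash G$, and that $Y$ is covered by affine opens over which $X$ is affine. The whole point is to translate the geometric notion of ``a single closed orbit'' into the algebraic notion of $L$-locality, so the proof is fundamentally a dictionary between orbits and homogeneous ideals.

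First I would show (i)$\Rightarrow$(ii). Assuming the action is local, so that $X$ is quasi-compact with a unique closed orbit, I would argue that $X$ must be affine. The key observation is that $X\sslash G$ is covered by affine opens, and each such open is the quotient of an invariant affine open of $X$; the closed orbit lies in exactly one of these. Since a closed orbit in a relatively affine setting maps to a closed point of $Y$, and every orbit closure contains a closed orbit, the uniqueness of the closed orbit forces $Y$ to have a single closed point in the relevant sense, so $Y$ is local and $X$ is the single affine piece lying over it. Writing $X=\Spec(A)$, I would then identify closed orbits with minimal homogeneous primes that are maximal among homogeneous ideals: an $L$-homogeneous maximal ideal $m_A$ corresponds precisely to a closed orbit, by the correspondence between homogeneous ideals and $G$-invariant closed subschemes. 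Uniqueness of the closed orbit then says there is a single maximal $L$-homogeneous ideal, i.e. $A$ is $L$-local.

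Next, (ii)$\Rightarrow$(iii) should follow by passing to invariants: if $A$ is $L$-local with unique maximal homogeneous ideal $m_A$, then the degree-zero part $A_0=A^G$ (the coordinate ring of $Y$) is a local ring, because the homogeneous maximal ideal $m_A$ contracts to the unique maximal ideal $m_A\cap A_0$ of $A_0$. Here I would use that any maximal ideal of $A_0$ generates, together with the positively and negatively graded pieces, a maximal homogeneous ideal of $A$, so distinct maximal ideals of $A_0$ would produce distinct maximal homogeneous ideals of $A$, contradicting $L$-locality. Thus $Y=\Spec(A_0)$ is local. Finally (iii)$\Rightarrow$(i) reverses this: if $Y$ is local with closed point $y$, then every closed orbit of $X$ maps to a closed point of $Y$, hence to $y$, and the fiber $X_y$ contains a unique closed (special) orbit by the structural fact quoted in the introduction; so $X$ has a single closed orbit, and quasi-compactness of $X$ follows from that of the local scheme $Y$ together with relative affineness.

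The main obstacle I anticipate is the passage (i)$\Rightarrow$(ii), namely proving that a quasi-compact relatively affine $X$ with a unique closed orbit is actually \emph{affine} rather than merely relatively affine over a possibly non-affine $Y$. The subtlety is that relative affineness only gives affineness over affine opens of $Y$, so one must show $Y$ itself is affine, which ultimately reduces to showing $Y$ is local. The cleanest route is probably to observe that the unique closed orbit pins down a unique closed point of $Y$ around which everything concentrates, and then invoke the correspondence between homogeneous ideals of $A$ and invariant closed subschemes together with the graded Nakayama machinery to control the homogeneous prime spectrum. Care is needed because $X$ need not be reduced or irreducible, so I would phrase all orbit-closure arguments scheme-theoretically and rely on the existence and uniqueness of special orbits in each fiber rather than on naive topological intuition about orbit closures.
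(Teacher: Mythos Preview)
Your proposal is correct and ultimately follows the same logic as the paper, but the organization differs in a way worth noting. The paper proves (i)$\Leftrightarrow$(iii) directly in one line: closed points of $Y$ are in bijection with closed orbits of $X$ (each closed point has a closed fiber whose special orbit is then closed in $X$), and quasi-compactness transfers along the surjective affine map $q$. Then (ii)$\Leftrightarrow$(iii) is immediate from the already-proven Lemma~\ref{Lem:Llocal} once one observes that $Y$ local implies $Y$ affine, hence $X$ affine.

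Your cycle (i)$\Rightarrow$(ii)$\Rightarrow$(iii)$\Rightarrow$(i) reaches the same conclusion, but your step (i)$\Rightarrow$(ii) secretly passes through (iii): you argue that $Y$ has a unique closed point, hence is local, hence affine, hence $X$ is affine. So the ``main obstacle'' you anticipate---proving $X$ is affine directly from (i)---is not an obstacle at all once you recognize that you have already established (iii) along the way. The paper's ordering makes this transparent and avoids the detour. Two minor imprecisions in your sketch: the closed orbit need not lie in \emph{exactly one} strongly equivariant affine chart (it lies in at least one, which suffices), and the assertion ``a closed orbit maps to a closed point of $Y$'' is true but you use it without proof---the cleaner route is the contrapositive you also mention (distinct closed points of $Y$ yield distinct closed orbits of $X$).
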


\subsubsection{Completions} By an {\em $L$-complete local ring}\index{Lcomplete local ring@$L$-complete local ring} we mean a complete local ring $(A,m)$ provided with a {\em formal $L$-grading} $A=\prod_{n\in L}A_n$ such that $A_n\subset m$ for each $n\neq 0$. Here is a key result about completions which is used in the paper to prove the formal version of Luna's Fundamental Lemma, see Theorem~\ref{mainformalth}.
\begin{proposition}[Completions of $L$-local rings, see Proposition \ref{Prop:complete}]
Assume that $L$ is a finitely generated abelian group and $(A,m)$ is a noetherian strictly $L$-local ring. Set $m_0=m\cap A_0$ and for each $n\in L$ let $\hatA_n$ denote the $m_0$-adic completion of the $A_0$-module $A_n$. Then, the $m$-adic completion of $A$ is isomorphic to $\prod_{n\in L}\hatA_n$; in particular, it is an $L$-complete local ring.
\end{proposition}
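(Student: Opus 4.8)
The plan is to exploit the homogeneity of the $m$-adic filtration to split the completion into its graded pieces, and then to identify the filtration induced on each piece with the $m_0$-adic one. First I would record the structural consequences of $A$ being noetherian and strictly $L$-local: the ring $A_0$ is noetherian local with maximal ideal $m_0=m\cap A_0$ and residue field $\kappa=A_0/m_0=A/m$, one has $A_n\subseteq m$ for every $n\ne0$, and (as recalled in Section \ref{Sec:L-graded}) $A$ is a finitely generated $A_0$-algebra, generated by homogeneous elements $t_1,\dots,t_r$ of nonzero degrees $d_1,\dots,d_r$, with each $A_n$ a finitely generated $A_0$-module. Since $m$ is homogeneous, so is every $m^k$, whence $A/m^k=\bigoplus_n A_n/(m^k\cap A_n)$ as $L$-graded rings, compatibly in $k$. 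Projecting to graded components yields a natural injection $\hatA=\projlim_k A/m^k \to \prod_n \projlim_k A_n/(m^k\cap A_n)$, and it remains to prove (a) that this map is surjective, and (b) that $\projlim_k A_n/(m^k\cap A_n)$ is the $m_0$-adic completion $\hatA_n$.

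For (a) I would show that, for each fixed $k$, one has $A_n\subseteq m^k$ for all but finitely many $n$. Writing $m=(m_0,t_1,\dots,t_r)$, any monomial in the $t_i$ of length $\ge k$ lies in $m^k$, while $A_n$ is spanned over $A_0$ by monomials of degree $n$. Only finitely many exponent vectors $\alpha$ have length $<k$, and these realize only finitely many degrees $n=\sum\alpha_id_i$; for every other $n$ all degree-$n$ monomials have length $\ge k$, so $A_n\subseteq m^k$ and the corresponding factor vanishes modulo $m^k$. Hence a compatible system $(y_n)_n$ in $\prod_n\projlim_k A_n/(m^k\cap A_n)$ has, at each level $k$, only finitely many nonzero components, so the tuples $(y_n^{(k)})_n$ lie in the direct sum $A/m^k$ and assemble to an element of $\hatA$ mapping to $(y_n)_n$. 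This gives surjectivity, and therefore $\hatA\cong\prod_n\projlim_k A_n/(m^k\cap A_n)$.

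For (b) I would prove that the filtrations $\{m^k\cap A_n\}_k$ and $\{m_0^kA_n\}_k$ on $A_n$ are cofinal. The inclusion $m_0^kA_n\subseteq m^k\cap A_n$ is immediate from $m_0\subseteq m$. For the reverse, fix $K$; I claim $m^k\cap A_n\subseteq m_0^KA_n$ once $k\gg0$. Passing to $R:=A/m_0^KA$ and writing $\tilde m$ for the image of $m$, this is equivalent to $(\tilde m^k)_n=0$ for $k\gg0$, where $(\,\cdot\,)_n$ denotes the degree-$n$ part (using $m_0^KA\cap A_n=m_0^KA_n$). Now $R$ is a noetherian $L$-local ring with residue graded field $\kappa$, so by the graded Krull intersection theorem — which follows from the graded Nakayama lemma (Proposition \ref{nakayamaprop}) together with the graded Artin–Rees lemma in $R$ — one has $\bigcap_k\tilde m^k=0$. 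Since $R_n=A_n/m_0^KA_n$ has finite length over the Artinian ring $A_0/m_0^K$, the decreasing sequence $(\tilde m^k)_n$ stabilizes to $(\bigcap_k\tilde m^k)_n=0$ and hence vanishes for $k\gg0$. This proves cofinality, so $\projlim_k A_n/(m^k\cap A_n)=\hatA_n$.

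Combining (a) and (b) gives $\hatA\cong\prod_n\hatA_n$. Finally I would verify this is an $L$-complete local ring: the only prime of $A/m^k$ is $m$ (as $A/m=\kappa$), so each $A/m^k$ is Artinian local and $\hatA$ is a complete noetherian local ring; the product decomposition is a formal $L$-grading; and $\hatA_n\subseteq\hat m$ for $n\ne0$ because $A_n\subseteq m$ there. The step I expect to be the main obstacle is the cofinality in the third paragraph: the $m$-adic filtration mixes all degrees, and because nonzero-degree elements can multiply back into $A_0$ it does not restrict to $A_n$ as the naive $m_0$-adic filtration; the reduction to $\bigcap_k\tilde m^k=0$ in $A/m_0^KA$, and thence to graded Krull intersection, is what makes the comparison precise.
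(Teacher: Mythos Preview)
Your proposal is correct and follows the same overall architecture as the paper: first show that at each level $k$ only finitely many graded components survive, so the direct sum coincides with the product and the map $\hatA\to\prod_n\projlim_kA_n/(m^k\cap A_n)$ is an isomorphism; then show the induced filtration on each $A_n$ is cofinal with the $m_0$-adic one. Your argument for step (a) is essentially the paper's Lemma~\ref{Lem:combinatorics}(2).

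The genuine difference lies in step (b). The paper proves an explicit combinatorial estimate (Lemma~\ref{Lem:combinatorics}(1)): choosing homogeneous $A_0$-generators $f_1,\dots,f_k$ with degree map $\phi\:\NN^k\to L$, it analyzes the fibers $\phi^{-1}(n)$ using finite generation of $\phi^{-1}(0)$ and noetherianity of $\NN^k$ to produce constants $b,r(n)$ with $m^N\cap A_n\subseteq m_0^qA_n$ whenever $N\ge bq+r(n)$. You instead pass to $R=A/m_0^KA$, invoke the graded Krull intersection theorem (ungraded Artin--Rees gives $\bigcap_k\tilde m^k=\tilde m\cdot\bigcap_k\tilde m^k$, and graded Nakayama, Proposition~\ref{nakayamaprop}, then forces the intersection to vanish since it is a graded submodule of the $L$-local ring $R$), and use that $R_n$ has finite length over $A_0/m_0^K$ to conclude $(\tilde m^k)_n=0$ for $k\gg0$. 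Your route is softer and arguably cleaner, avoiding the monoid combinatorics entirely; the paper's route is self-contained and yields a uniform linear bound in $q$, which is more than is needed for the proposition but makes the cofinality completely explicit.
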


\subsubsection{Quotients and strong equivariance} Using the affine theory as a starting point, a global theory of relatively affine actions is developed in Section~\ref{Lunasection}, leading to the main theorem. We highlight here the following basic results.

\begin{theorem}[Properties preserved by quotients, see Theorem \ref{quottheorem}]
Assume that a diagonalizable group $G=\bfD_L$ acts trivially on a scheme $S$ and an $S$-scheme $X$ is provided with a relatively affine action of $G$, then:

(i) Assume that $X$ satisfies one of the following properties: (a) reduced, (b) integral, (c) normal with finitely many connected components, (d) locally of finite type over $S$, (e) of finite type over $S$, (f) quasi-compact over $S$, (g) locally noetherian, (h) noetherian. Then $X\sslash G$ satisfies the same property.

(ii) If $X$ is locally noetherian then the quotient morphism $X\to X\sslash G$ is of finite type.
\end{theorem}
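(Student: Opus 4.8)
The plan is to reduce every assertion to a statement about the inclusion of graded rings $A_0\hookrightarrow A$ and then to exploit that $A_0$ is a direct summand of $A$. Since the action is relatively affine, the quotient $Y=X\sslash G$ is affine over $S$ and is formed, over an affine open $\Spec R\subseteq S$, as $\Spec A_0$, where $X|_{\Spec R}=\Spec A$ for an $L$-graded $R$-algebra $A$ and $q\colon X\to Y$ corresponds to the inclusion of the degree-zero subring $A_0\hookrightarrow A$. All properties in (i) are either local on $S$ or testable on such a cover, so after this reduction the only global input needed is surjectivity of $q$. The central observation, used throughout, is that the degree-zero projection $\pi\colon A\to A_0$ is an $A_0$-linear retraction, so $A=A_0\oplus\ker\pi$ as $A_0$-modules and $A_0\hookrightarrow A$ is pure. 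Purity yields $IA\cap A_0=I$ for every ideal $I\subseteq A_0$ (apply $\pi$ to a representation of an element of $IA\cap A_0$), while $A\otimes_{A_0}\kappa(\mathfrak p)\supseteq\kappa(\mathfrak p)\neq 0$ for every prime $\mathfrak p\subseteq A_0$ shows $\Spec A\to\Spec A_0$, hence $q$, is surjective.

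With these tools most cases of (i) are short. For (a),(b) a subring of a reduced ring (resp. a domain) is again reduced (resp. a domain), so $A_0$ inherits the property; irreducibility of $Y$ in (b) follows because $Y=q(X)$ is the continuous image of the irreducible space $X$. For (f), the preimage in $Y$ of an affine $U\subseteq S$ is $q$ of the preimage in $X$, the image of a quasi-compact set, hence quasi-compact. For (g), given an ascending chain $I_1\subseteq I_2\subseteq\cdots$ in $A_0$, the chain $I_jA$ stabilizes as $A$ is noetherian, and purity gives $I_j=I_jA\cap A_0$, so the original chain stabilizes and $A_0$ is noetherian; (h) then combines (g) with the surjectivity argument of (f). For the normal case (c) I use that $G$ permutes the finitely many connected components of $X$; grouping them into $G$-orbits decomposes $X$, and correspondingly $Y$, into finitely many $G$-stable pieces, and on each piece the orbit--stabilizer description of invariants reduces me to showing that the degree-zero subring of a normal $L$-graded domain is normal. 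There the key point is the identity $A\cap\Frac(A_0)=A_0$: if $c\in A$ equals $a/b$ with $a,b\in A_0$, $b\neq 0$, then comparing homogeneous components in $bc=a$ forces every nonzero-degree component of $c$ to vanish. Since an element of $\Frac(A_0)$ integral over $A_0$ is integral over the normal ring $A$ and lies in $\Frac(A)$, it lies in $A\cap\Frac(A_0)=A_0$, so $A_0$ is integrally closed.

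The finite-type cases (d),(e) are treated separately, since there one has explicit generators. Writing $X$ locally of finite type over $S$ as $\Spec A$ with $A$ generated over $R$ by finitely many elements and replacing these by their (finitely many) homogeneous components, I may assume $A=R[a_1,\dots,a_m]$ with $a_i$ homogeneous of degree $d_i\in L$. The monomials $a^e=a_1^{e_1}\cdots a_m^{e_m}$ span $A$ over $R$, and such a monomial lies in $A_0$ exactly when $e$ lies in the submonoid $M=\{e\in\NN^m:\sum_i e_id_i=0\}=\ker(\ZZ^m\to L)\cap\NN^m$. By Gordan's lemma $M$ is finitely generated, and the corresponding finitely many monomials generate $A_0$ as an $R$-algebra; hence $Y\to S$ is locally of finite type, and quasi-compact by (f), giving (d),(e).

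The real obstacle is part (ii): when $A$ is noetherian I must show that $A_0\hookrightarrow A$ makes $A$ a finitely generated $A_0$-algebra (quasi-compactness of $q$ is then immediate). Unlike in the finite-type case there is no a priori bound on degrees of generators, and because $L$ is a group the naive induction on degree fails in the presence of homogeneous units: already $A_0[t,t^{-1}]$ shows that the maximal ideal carries no degree information. My plan is to first split off the finite torsion subgroup of $L$, reducing to a $\ZZ^d$-grading, and then to prove finite generation over $A_0$ by a Goto--Yamagishi-type argument. Concretely, one takes homogeneous generators $x_1,\dots,x_t$ of the ideal generated by all nonzero-degree homogeneous elements (finite since $A$ is noetherian) and must augment them by finitely many homogeneous units accounting for the support subgroup of $L$; the difficulty is precisely that these ``unit directions'' are invisible to the ideal-theoretic data and can only be controlled using the graded local-ring and completion results established earlier (the graded Nakayama lemma and Proposition~\ref{Prop:complete}). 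This finite generation is the step I expect to be hardest, in contrast to the soft purity and surjectivity arguments that settle part (i).
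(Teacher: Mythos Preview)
Your overall strategy matches the paper's, and most of the individual arguments (purity for (g), Gordan's lemma for (d), the fraction computation for (c)) are correct and essentially reproduce the paper's Lemmas~\ref{invarquotlem}, \ref{fgquotlem}, and part of Proposition~\ref{noetherprop}. There are, however, two genuine problems.

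\medskip

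\textbf{The reduction step is wrong.} A relatively affine action does \emph{not} mean $X$ is affine over $S$; it means there exists \emph{some} scheme $Z$ with trivial $G$-action and an affine equivariant $X\to Z$. Consequently $Y=X\sslash G$ need not be affine over $S$: take $X=\PP^1_k$ with the trivial action and $S=\Spec k$. So your sentence ``the quotient $Y=X\sslash G$ is affine over $S$ and is formed, over an affine open $\Spec R\subseteq S$, as $\Spec A_0$'' is false as stated. The correct reduction, as in the paper, is: for (d) first pass to $S$ affine, and in all cases then work locally on $Y$. The morphism $X\to Y$ is affine by construction, so over an affine open of $Y$ one has $X=\Spec A$ with $A$ an $L$-graded ring and the open piece of $Y$ equal to $\Spec A_0$. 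After this correction your arguments for (a)--(h) go through.

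\medskip

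\textbf{The plan for (ii) is circular and misdiagnoses the difficulty.} You correctly name the Goto--Yamagishi theorem (Proposition~\ref{noetherprop}) as the relevant input, but you then propose to control the ``unit directions'' via the graded completion result Proposition~\ref{Prop:complete}. The proof of that proposition opens by invoking Proposition~\ref{noetherprop} to obtain that $A$ is finitely generated over $A_0$, so this would assume exactly what you are trying to prove. Moreover, the obstacle you describe does not actually occur in the Goto--Yamagishi argument. One first shows (via your purity observation $MA\cap A_n=M$) that each $A_n$ is a finitely generated $A_0$-module; this already handles finite $L$. One then reduces to $L=\ZZ$ and proves that $A_{\ge 0}$ and $A_{\le 0}$ are each finitely generated over $A_0$ by a straightforward induction on degree using homogeneous generators of the ideal $A_{>0}A$. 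Working with the half-line $\NN$ rather than with $\ZZ$ is precisely what makes the degree induction succeed and renders the ``homogeneous units'' issue irrelevant; no local-ring or completion machinery is needed.
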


\begin{proposition}[Preservation of strong equivariance, see Lemma \ref{stronglem}]
Let $G=\bfD_L$ be a diagonalizable group.

(i) The composition of strongly $G$-equivariant morphisms is strongly $G$-equivariant.

(ii) If $Y\to X$ is a strongly $G$-equivariant morphism and $g\:Z\to Y$ is a $G$-equivariant morphism such that the composition is strongly $G$-equivariant, then $g$ is strongly $G$-equivariant.

(iii) If $Y\to X$ is strongly $G$-equivariant and $Z\to X$ is $G$-equivariant then the base change $Y\times_XZ\to Z$ is strongly $G$-equivariant.

(iv) If $f\:Y\to X$ is strongly equivariant then the diagonal $\Delta_f\:Y\to Y\times_XY$ is strongly equivariant and $\Delta_f\sslash G$ is the diagonal of $f\sslash G$.
\end{proposition}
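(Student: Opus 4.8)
The plan is to work throughout with the affine-local characterization of strong equivariance. Recall that a $G$-equivariant morphism $f\:Y\to X$ is strongly $G$-equivariant precisely when the natural square relating $Y, X$ and their quotients is cartesian, i.e.\ $Y \toisom X\times_{X\sslash G}(Y\sslash G)$; equivalently, forming the quotient commutes with the morphism. Since all four assertions are statements about cartesian squares and the quotient functor $(-)\sslash G$, the strategy is to reduce each to a standard diagram chase, using two facts from the affine theory: that $(-)\sslash G$ is the functor $\Spec$ of the degree-zero subring $A\mapsto A_0$ on the relatively affine (hence $L$-graded) charts, and that base change along $X\to X\sslash G$ recovers the original scheme on strongly equivariant inputs.

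For (i), I would take morphisms $Z\to Y\to X$ each strongly equivariant and verify that the outer square for $Z\to X$ is cartesian by pasting the two given cartesian squares; here the key is that the quotient of a strongly equivariant morphism behaves functorially, so $(Z\sslash G)\to(Y\sslash G)\to(X\sslash G)$ composes and the pasting lemma for cartesian squares applies. For (iii), the base-change statement, I would observe that strong equivariance means $Y = X\times_{X\sslash G}(Y\sslash G)$; then $Y\times_X Z$ is computed by substituting this expression and using associativity/commutativity of fiber products, after checking that $(Y\times_X Z)\sslash G = (Y\sslash G)\times_{X\sslash G}(Z\sslash G)$, which again reduces on affine charts to the statement that taking degree-zero parts commutes with the relevant tensor product. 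Part (ii) is the cancellation property: given $Y\to X$ strongly equivariant and $Z\to Y$ with $Z\to X$ strongly equivariant, I would use (i)-style pasting in reverse — since the outer and the right-hand squares are cartesian, the left-hand square (for $g\:Z\to Y$) is cartesian as well, which is the dual pasting lemma. Part (iv) follows by applying (iii): the diagonal $\Delta_f$ is the base change of $f$ along $f$ itself in the appropriate sense, or more directly one computes $Y\times_X Y$ and its quotient and checks that $\Delta_f\sslash G$ is the diagonal of $f\sslash G$, using that $(Y\times_X Y)\sslash G = (Y\sslash G)\times_{X\sslash G}(Y\sslash G)$.

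The main obstacle I expect is establishing the single compatibility that underlies (iii) and hence (iv): that the quotient functor commutes with fiber products of relatively affine $G$-schemes in the strongly equivariant setting, i.e.\ that $(Y\times_X Z)\sslash G \cong (Y\sslash G)\times_{X\sslash G}(Z\sslash G)$. On affine charts this amounts to showing that for $L$-graded rings the degree-zero part of a tensor product $(B\otimes_A C)_0$ agrees with $B_0\otimes_{A_0}C_0$, which is \emph{not} true for arbitrary $L$-graded rings but should hold once one of the factors is strongly equivariant over $A$ (so that its grading is induced from $A$ by base change). Isolating and proving this degree-zero compatibility is the technical heart; once it is in hand, all four parts reduce to formal manipulations of cartesian squares via the pasting and cancellation lemmas.
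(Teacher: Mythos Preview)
Your proposal is correct and matches the paper's approach essentially verbatim. The paper dismisses (i), (ii), (iv) as ``a simple diagram chase of the relevant cartesian squares'' --- exactly your pasting and cancellation lemmas --- and for (iii) it localizes to affines $X=\Spec A$, $Y=\Spec B$, $Z=\Spec A'$ with $B=A\otimes_{A_0}B_0$, then computes $B':=A'\otimes_A B=A'\otimes_{A_0}B_0$, so $B'_n=A'_n\otimes_{A_0}B_0$ and hence $B'_0=A'_0\otimes_{A_0}B_0$ and $B'_n=A'_n\otimes_{A'_0}B'_0$; this is precisely the degree-zero compatibility you singled out as the technical heart, and your caveat that it requires one factor to be strongly equivariant is exactly right.
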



\section{Regularity and formal smoothness}\label{section:reg}
We recall in this section basic facts about formal smoothness in the local case, which is the only case we will use later. The cited results are due to Grothendieck, but we will cite \cite{stacks} in addition to \cite{ega}. In addition, we will prove in Theorems~\ref{Th:splitfsmooth} and \ref{Th:splitfsmoothmixed} a splitting result for local formally smooth homomorphisms, which seems to be new, although it is close in spirit to what was known.

\subsection{Definitions}

\subsubsection{Regular morphisms} Regular morphisms\index{regular morphism} are a generalization of smooth morphisms in situations of morphisms which are not necessarily of finite type. Following \cite[$\rm IV_2$, 6.8.1]{ega}  a morphism of schemes $f\:Y\to X$ is said to be  {\em regular} if
\begin{itemize}
\item  the morphism $f$ is flat and
\item all geometric fibers of $f\: X \to Y$ are regular.
\end{itemize}

\subsubsection{Formal smoothness of local homomorphisms}
Let $f\:(A,m_A)\to(B,m_B)$ be a local homomorphism of local rings. By saying that $f$ is {\em formally smooth} we mean that it is formally smooth\index{formally smooth morphism} with respect to the $m_A$-adic and $m_B$-adic topologies. This means that for any ring $C$ with a square zero ideal $I$ and compatible homomorphisms $A\to C$ and $B\to C/I$ that vanish on large powers of $m_A$ and $m_B$, respectively,
$$
\xymatrix{
A\ar[r]\ar[d]_f & C\ar[d]\\
B\ar[r]\ar@{.>}[ur] &C/I
}
$$
there exists a lifting $B\to C$ making the diagram commutative. Using a slightly non-standard terminology, we say that a morphism of schemes $f\:Y\to X$ is {\em formally smooth at} $y\in Y$ if the local homomorphism $\cO_{X,f(y)}\to\cO_{Y,y}$ is formally smooth.

\begin{remark}
Unlike regularity, formal smoothness is not a local property. Consider an example of a DVR which is not excellent in characteristic $p$: a DVR $R$ such that $\hatK=\Frac(\hatR)$ is not separable over $K=\Frac(R)$. For example, set $\hatR=k[[x]]$, take an element $y\in k[[x]]$ which is transcendental over $k(x)$, and set $R=\hatR\cap k(x,y^p)$. Then $\hatR$ is, indeed, the completion of $R$, and the homomorphism $R\to\hatR$ is formally smooth but its generic fiber $K\to\hatK$ is not.
\end{remark}

\subsubsection{Formally factorizable homomorphisms}\label{factorizablesec}
Following \cite{Franco-Rodicio}, we say that a homomorphism of noetherian local rings $\phi\:A\to B$ is {\em formally factorizable} if its completion can be factored into a composition of a formally smooth homomorphism of noetherian complete local rings $\hatA\to D$ and a surjective homomorphism $D\to\hatB$. Any such factorization $\hatA\to D\to\hatB$ will be called a {\em formally smooth factorization} of $\hatphi$. It is proved in \cite[Theorem~4]{Franco-Rodicio} that $\phi$ is formally factorizable if and only if the extension of the residue fields $l/k$ has a finite-dimensional imperfection module $\Upsilon_{l/k}=H_1(\LL_{l/k})$ (see \cite[$\rm 0_{IV}$, 20.6.1 and 21.4.8]{ega}).\index{formally factorizable homomorphism}

\begin{remark}
Note that $\dim_l\Upsilon_{l/k}=\infty$ can happen only when $k$ has infinite $p$-rank (i.e. $[k:k^p]=\infty$) and the extension $l/k$ is not finitely generated. In particular, formally non-factorizable homomorphisms are pretty exotic and almost never show up in applications.
\end{remark}

\subsection{Properties of formal smoothness}

\subsubsection{Criteria}
Here are the main criteria for formal smoothness of local homomorphisms of noetherian rings.

\begin{theorem}\label{Th:fsmoothcrit}
For a local homomorphism $f\:(A,m)\to(B,n)$ of noetherian local rings the following conditions are equivalent:

(i) $f$ is formally smooth,

(ii) the completion $\hatf\:\hatA\to\hatB$ is formally smooth,

(ii)' the completion $\hatf\:\hatA\to\hatB$ is a regular homomorphism,

(iii) $f$ is flat and its closed fiber $\of\:k=A/m\to\oB=B/mB$ is formally smooth,

(iii)' $f$ is flat and its closed fiber $\of\:k=A/m\to\oB=B/mB$ is a regular homomorphism (i.e. $\oB$ is geometrically regular over $k$).
\end{theorem}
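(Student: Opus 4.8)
The plan is to prove the equivalence of the five conditions for formal smoothness of a local homomorphism $f\:(A,m)\to(B,n)$ of noetherian local rings by establishing a cycle of implications, relying on the fact that formal smoothness is defined with respect to the adic topologies and hence is detected after completion. First I would settle the equivalence $(i)\Leftrightarrow(ii)$, which is essentially formal: a lifting problem against an Artinian (square-zero) test ring $C/I \to C$ only sees the quotients $A/m^N$ and $B/n^N$, since the structure maps kill large powers of the maximal ideals by hypothesis; therefore $f$ and its completion $\hatf$ pose literally the same lifting problems, and one is formally smooth if and only if the other is. This reduces everything to the complete case.

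Next I would dispose of $(ii)\Leftrightarrow(ii)'$. Here the point is that for a local homomorphism of \emph{complete} noetherian local rings, formal smoothness coincides with regularity (flat with geometrically regular fibers). This is the local-complete form of Grothendieck's theorem relating formal smoothness to geometric regularity of fibers; I would cite \cite[$\rm 0_{IV}$, 19.7.1]{ega} or the corresponding tag in \cite{stacks}. The essential input is that over a complete local ring flatness can be checked via the associated graded / Tor criteria, and that geometric regularity of the closed fiber propagates to all fibers in the complete setting.

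The geometric heart of the theorem is the descent of formal smoothness to the closed fiber, i.e. the equivalences $(ii)\Leftrightarrow(iii)$ and $(ii)'\Leftrightarrow(iii)'$; I regard this as the main obstacle. The strategy is to show that $\hatf$ is formally smooth precisely when $f$ is flat and the closed fiber $\of\:k\to\oB$ is formally smooth (equivalently geometrically regular over $k$). One direction is the standard fact that formal smoothness is stable under base change, so formally smooth $\hatf$ yields a formally smooth closed fiber, and flatness of a formally smooth local homomorphism follows from the local criterion of flatness together with the lifting property applied to the truncations $A/m^N$. For the converse, assuming $f$ flat with $\of$ formally smooth, I would lift solutions of the infinitesimal lifting problem inductively up the $m_A$-adic filtration: flatness guarantees that the obstruction classes, which a priori live in an $\Ext$ or $H^2$ of the cotangent complex of the fiber, actually come from the fiber and hence vanish by formal smoothness of $\of$. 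The cleanest packaging is via the transitivity triangle for the cotangent complex of $k\to A\to B$ (or $A\to B\to \oB$), where flatness makes $\LL_{B/A}\otimes^{\mathrm L}_B k \simeq \LL_{\oB/k}$, so that vanishing of the relevant André--Quillen cohomology for $\of$ transfers to $f$.

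Finally I would note that $(iii)\Leftrightarrow(iii)'$ is just the equality, for the morphism of fields (more precisely the residually-trivial situation) $\of\:k\to\oB$, of formal smoothness and geometric regularity of $\oB$ over $k$, again an instance of Grothendieck's criterion but now for the special case where the source is a field, where it reduces to the statement that a noetherian local $k$-algebra is formally smooth over $k$ iff it is geometrically regular. Assembling the cycle $(iii)\Rightarrow(ii)\Rightarrow(i)\Rightarrow(ii)\Rightarrow(iii)$ together with the primed variants then yields all five equivalences. The only genuine work is the inductive lifting argument via the cotangent complex in the previous paragraph; everything else is a citation or a formal manipulation of the lifting diagrams.
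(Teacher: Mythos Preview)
Your proposal is correct and outlines the standard arguments behind these equivalences. The paper, however, treats this theorem purely as a known result: its entire proof consists of citations to the Stacks project (\texttt{tag/07ED} for $(i)\Leftrightarrow(ii)$, \texttt{tag/07NQ} for the equivalence of $(ii)$, $(iii)$, $(iii)'$, and \texttt{tag/07PM} for $(ii)\Leftrightarrow(ii)'$), with no argument given. What you have written is essentially a sketch of what lies behind those tags---the formal reduction to completions, Grothendieck's criterion relating formal smoothness to geometric regularity, and the flatness-plus-fiber argument via the cotangent complex---so you have done more than the paper asks, but nothing you wrote is wrong or misdirected. If you want to match the paper's level, simply replace your sketch with the three citations; if you want to keep your version, it stands on its own as an expanded account.
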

\begin{proof}
The equivalence of (i) and (ii) is almost obvious, see \cite[{\tt tag/07ED}]{stacks}. See \cite[{\tt tag/07NQ}]{stacks} for the equivalence of (ii), (iii), and (iii)', and see \cite[{\tt tag/07PM}]{stacks} for the equivalence of (ii) and (ii)'.
\end{proof}

\begin{corollary}\label{Cor:regfsmooth}
A morphism of noetherian schemes $f\:Y\to X$ is regular if and only if it is formally smooth at all points of $Y$.
\end{corollary}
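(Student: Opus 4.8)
The plan is to deduce Corollary~\ref{Cor:regfsmooth} directly from Theorem~\ref{Th:fsmoothcrit} by translating both the global condition ``$f$ is regular'' and the pointwise condition ``$f$ is formally smooth at every $y\in Y$'' into the local equivalences already established. The point is that regularity of a morphism is defined via flatness together with geometric regularity of fibers, and by the definition in Section~\ref{factorizablesec} formal smoothness at $y$ refers precisely to the local homomorphism $\cO_{X,f(y)}\to\cO_{Y,y}$; so both sides of the asserted equivalence are visibly local on $Y$, and it suffices to check the equivalence one point at a time.

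First I would fix a point $y\in Y$, set $x=f(y)$, and write $f_y\:(A,m)=\cO_{X,x}\to(B,n)=\cO_{Y,y}$ for the induced local homomorphism of noetherian local rings. By Theorem~\ref{Th:fsmoothcrit}, the condition that $f_y$ be formally smooth is equivalent to condition (iii)$'$: that $f_y$ be flat and that its closed fiber $\of\:k=A/m\to\oB=B/mB$ be geometrically regular over $k$. Thus ``$f$ is formally smooth at $y$'' is equivalent to ``$f_y$ is flat and the fiber ring $\oB$ is geometrically regular over the residue field $k(x)$.''

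Next I would observe that quantifying over all $y\in Y$ recovers exactly the definition of a regular morphism. On the flatness side, $f$ is flat if and only if $f_y$ is flat for every $y$, since flatness is checked at the local rings. On the fiber side, the fiber of $f$ over $x$ is $Y_x=\Spec\bigl(B\otimes_A k(x)\bigr)$, and its local ring at $y$ is precisely the closed fiber $\oB=B/mB$ of $f_y$; so requiring that every geometric fiber of $f$ be regular is the same as requiring, for each $y$, that $\oB$ be geometrically regular over $k(x)$. Running these equivalences over all points $y\in Y$ simultaneously, I get that $f$ is flat with geometrically regular fibers---that is, $f$ is regular in the sense of \cite[$\mathrm{IV}_2$, 6.8.1]{ega}---if and only if $f_y$ satisfies (iii)$'$ for every $y$, which by the previous paragraph holds if and only if $f$ is formally smooth at every point of $Y$.

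The only genuine point requiring care, rather than a real obstacle, is the compatibility between ``geometric regularity of the whole fiber $Y_x$ over $k(x)$'' and ``geometric regularity of the single local fiber ring $\oB=\cO_{Y_x,y}$ over $k(x)$,'' together with the passage between the ``geometric fiber'' phrasing in the definition of regular morphism and the ``geometrically regular'' phrasing in Theorem~\ref{Th:fsmoothcrit}(iii)$'$. Since geometric regularity of a locally noetherian $k(x)$-scheme is itself a pointwise condition on its local rings, and is exactly what geometric regularity of the fiber over the point $y$ records, these match up; quantifying over all $y$ lying over $x$ exhausts the fiber $Y_x$. Assembling these observations completes the proof.
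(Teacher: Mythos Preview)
Your proof is correct and follows essentially the same route as the paper: both apply the equivalence (i)$\Longleftrightarrow$(iii)$'$ of Theorem~\ref{Th:fsmoothcrit} pointwise and then assemble the local flatness and geometric-regularity conditions into the global definition of a regular morphism. One small notational slip: the fiber $Y_x$ is not literally $\Spec(B\otimes_A k(x))$ since $B=\cO_{Y,y}$ sees only a neighborhood of $y$, but your subsequent (correct) identification of $\cO_{Y_x,y}$ with $\oB=B/mB$ is all that the argument actually uses.
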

\begin{proof}
First assume $f$ is formally smooth at all points of $Y$, so Statement (i) above is satisfied locally at every point. By the theorem,  Statement  (iii)' is satisfied locally at every point, so $f$ is flat and the geometric  fibers are locally regular, hence regular.  Being flat with regular geometric fibers, the morphism $f$ is regular.

The other direction is similar: assume $f$ is regular, so it satisfies Statment (iii)' of the theorem at every point. By the theorem,  Statement (i) holds at every point, namely $f$ is formally smooth at every point.
\end{proof}

As another corollary, we obtain that formal smoothness behaves nicely for qe (or quasi-excellent) schemes (e.g., see \cite[Definition~I.2.10]{Illusie-Temkin}). In particular, it becomes a local property (this is due to Andr\'e, see \cite{Andre}).

\begin{corollary}
Assume that $\phi\:A\to B$ is a local homomorphism of noetherian rings and $A$ is a qe ring. Then $\phi$ is formally smooth if and only if it is regular. In particular, if $f\:Y\to X$ is a morphism of noetherian schemes with $X$ a qe scheme, and if $f$ is formally smooth at all closed points of $Y$, then $f$ is regular.
\end{corollary}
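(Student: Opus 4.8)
The plan is to prove the local statement first and then bootstrap to schemes. Let $\phi\:(A,m_A)\to(B,m_B)$ be a local homomorphism of noetherian local rings with $A$ quasi-excellent. One implication is immediate and uses nothing about $A$ being qe: if $\phi$ is regular then in particular its closed fiber $\of$ is geometrically regular, so condition (iii)' of Theorem \ref{Th:fsmoothcrit} is satisfied and $\phi$ is formally smooth.

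For the converse, suppose $\phi$ is formally smooth. By Theorem \ref{Th:fsmoothcrit} the completion $\hatphi\:\hatA\to\hatB$ is a regular homomorphism. Since $A$ is quasi-excellent it is a $G$-ring, so the completion map $A\to\hatA$ is faithfully flat with geometrically regular fibers, i.e. regular; composing with $\hatphi$ and using that regular homomorphisms compose, I get that $A\to\hatB$ is regular. This composite also factors as $A\to B\to\hatB$ with $B\to\hatB$ faithfully flat, so the problem reduces to descending regularity of $A\to\hatB$ along the faithfully flat map $B\to\hatB$ to conclude that $\phi\:A\to B$ is regular.

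The descent step is the technical heart of the argument. Flatness of $\phi$ follows from flatness of $A\to\hatB$ together with faithful flatness of $B\to\hatB$. For the fibers, fix a prime $\mathfrak{p}\subset A$ and set $F=B\otimes_A\kappa(\mathfrak{p})$; base change identifies $\hatB\otimes_A\kappa(\mathfrak{p})$ with $\hatB\otimes_BF$, and $F\to\hatB\otimes_BF$ is again faithfully flat. Since $A\to\hatB$ is regular, $\hatB\otimes_A\kappa(\mathfrak{p})$ is geometrically regular over $\kappa(\mathfrak{p})$; after an arbitrary finite extension $\kappa'/\kappa(\mathfrak{p})$, faithfully flat descent of regularity applied to $F\otimes_{\kappa(\mathfrak{p})}\kappa'\to(\hatB\otimes_BF)\otimes_{\kappa(\mathfrak{p})}\kappa'$ shows $F\otimes_{\kappa(\mathfrak{p})}\kappa'$ is regular, so $F$ is geometrically regular. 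Hence all fibers of $\phi$ are geometrically regular and $\phi$ is regular. The only nontrivial input is faithfully flat descent of regularity of local rings (a flat local homomorphism with regular target has regular source), which is standard.

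Finally, for the `in particular' clause I would pass from closed points to all points, using crucially that regularity---unlike formal smoothness---is a local property. For a closed point $y\in Y$ the ring $\cO_{X,f(y)}$ is a localization of a qe ring, hence qe, so the local statement just proved converts formal smoothness of $\cO_{X,f(y)}\to\cO_{Y,y}$ into regularity. Now every point $y'\in Y$ is a generization of some closed point $y$ (the nonempty closed subset $\overline{\{y'\}}$ of the noetherian, hence quasi-compact, scheme $Y$ contains a closed point), so $\cO_{X,f(y')}\to\cO_{Y,y'}$ is a localization of the regular homomorphism $\cO_{X,f(y)}\to\cO_{Y,y}$ and is therefore itself regular. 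Thus $\phi$ is regular at every point, equivalently formally smooth at every point, and $f$ is regular by Corollary \ref{Cor:regfsmooth}. The main obstacle is the descent step of the previous paragraph; the passage between points is routine once one observes that regularity localizes.
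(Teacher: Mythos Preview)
Your proof is correct and follows essentially the same approach as the paper: both directions use Theorem~\ref{Th:fsmoothcrit}, and for the nontrivial implication both argue that $A\to\hatA\to\hatB$ is regular (using that $A$ is qe and $\hatphi$ is regular) and then descend regularity along the faithfully flat map $B\to\hatB$. The paper simply cites \cite[\texttt{tag/07NT}]{stacks} for this descent, whereas you unpack the argument by hand; you also spell out the ``in particular'' clause via generization to closed points, which the paper leaves implicit.
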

\begin{proof}
Assume $\phi$ is formally smooth. By Theorem~\ref{Th:fsmoothcrit}, the completion $\hatphi\:\hatA\to\hatB$ is regular. Since $A$ is quasi-excellent, the composition $A\to\hatA\to\hatB$ is regular. Since $B$ is noetherian, the completion homomorphism $B\to\hatB$ is flat and surjective on spectra.  By \cite[{\tt tag/07NT}]{stacks}  the homomorphism $\phi$ is regular.

The other direction follows from Theorem \ref{Th:fsmoothcrit} or Corollary \ref{Cor:regfsmooth}.
\end{proof}

\subsubsection{Adic lifting property}
It is shown in \cite[{\tt tag/07NJ}]{stacks} that a formally smooth $f$ satisfies a strong lifting property with respect to continuous homomorphisms to adic rings. We will need the following particular case:

\begin{lemma}\label{Lem:liftadic}
Let $f\:A\to B$ be a formally smooth local homomorphism, let $C$ be a complete noetherian local ring, and let $I\subset C$ be any ideal. Then any pair of compatible homomorphisms of topological rings $A\to C$ and $B\to C/I$ admits a lifting $B\to C$.
\end{lemma}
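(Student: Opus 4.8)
The plan is to reduce the lifting problem to the square-zero case supplied by the very definition of formal smoothness, by approximating the surjection $C\to C/I$ by artinian quotients and then reassembling via completeness. First I would dispose of the trivial case $I=C$ and assume $I\subseteq\m_C$, where $\m_C$ is the maximal ideal of $C$. Since $C$ is complete noetherian local we have $C=\varprojlim_m C/\m_C^m$, and since $C/I$ is again a complete local ring we have $C/I=\varprojlim_m C/(I+\m_C^m)$. It therefore suffices to produce a compatible system of homomorphisms $\gamma_m\:B\to C/\m_C^m$ that are $A$-algebra maps (i.e.\ $\gamma_m\circ f$ equals the reduction $\alpha_m$ of the given $\alpha\:A\to C$) and that reduce modulo $I$ to the reduction $\bar\beta_m\:B\to C/(I+\m_C^m)$ of the given $\beta\:B\to C/I$; passing to the limit then yields the desired lift $\gamma\:B\to C$.

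I would construct the $\gamma_m$ by induction on $m$. For $m=1$ one has $C/\m_C=C/(I+\m_C)$, so $\gamma_1:=\bar\beta_1$ works and is automatically an $A$-algebra map by compatibility of $\alpha$ and $\beta$. For the step $m\to m+1$, set $R=C/\m_C^{m+1}$ and consider its two ideals $J_1=\m_C^m/\m_C^{m+1}$ and $J_2=(I+\m_C^{m+1})/\m_C^{m+1}$, so that $R/J_1=C/\m_C^m$, $R/J_2=C/(I+\m_C^{m+1})$ and $R/(J_1+J_2)=C/(I+\m_C^m)$. By the inductive hypothesis $\gamma_m$ and $\bar\beta_{m+1}$ agree over $C/(I+\m_C^m)$, hence define a homomorphism $B\to Q:=R/J_1\times_{R/(J_1+J_2)}R/J_2$. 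The canonical map $R\to Q$ is surjective by a Chinese-remainder-type argument, with kernel $J_1\cap J_2\subseteq\m_C^m/\m_C^{m+1}$, which is square zero since $m\geq 1$. The point is that $R$ and $Q=R/(J_1\cap J_2)$ are artinian local with nilpotent maximal ideals, so the compatible pair $\alpha_{m+1}\:A\to R$ and $B\to Q$ does satisfy the hypotheses of the definition of formal smoothness (the images of $m_A$ and $m_B$ are nilpotent). Applying formal smoothness of $f$ to the square-zero extension $R\to Q$ produces $\gamma_{m+1}\:B\to R$, which by construction is an $A$-algebra map reducing to $\gamma_m$ and to $\bar\beta_{m+1}$.

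Finally I would set $\gamma=\varprojlim_m\gamma_m\:B\to\varprojlim_m C/\m_C^m=C$; that $\gamma$ is a continuous $A$-algebra map lifting $\beta$ follows by passing to the limit of the identities $\gamma_m\circ f=\alpha_m$ and $\gamma_m\equiv\bar\beta_m\pmod I$. The conceptual heart of the argument, and the step where a naive approach breaks down, is precisely this reduction to nilpotent test rings: one cannot lift directly along $C\to C/I$ using, say, the filtration $\{\m_C^nI\}$, because the intermediate rings $C/\m_C^nI$ need not have nilpotent maximal ideal, so the definition of formal smoothness (which only furnishes lifts when the images of $m_A$ and $m_B$ are nilpotent) simply does not apply to them. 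Interposing the artinian quotients $C/\m_C^m$ and forming the fiber products $Q$ keeps every individual lifting problem inside the artinian world where the definition is available, while completeness of $C$ recovers the genuine lift in the limit. This is the local noetherian incarnation of the general adic lifting property of \cite[{\tt tag/07NJ}]{stacks}.
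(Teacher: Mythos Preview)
Your argument is correct and is precisely the local noetherian unpacking of \cite[{\tt tag/07NJ}]{stacks}; the paper's own proof consists of exactly that citation, after observing that $I$ is closed (since $C$ is noetherian) and that $I\subseteq\m_C$, so your proof is a self-contained expansion of what the paper leaves to the reference. The fiber-product trick with $Q=R/J_1\times_{R/(J_1+J_2)}R/J_2$ to simultaneously keep track of the previous lift $\gamma_m$ and the target reduction $\bar\beta_{m+1}$ is the right way to organize the induction, and your check that the intermediate test rings are artinian local (so the definition of formal smoothness applies) is exactly the point.

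One minor quibble: the case $I=C$ is not ``trivial'' and should not be disposed of separately. If $I=C$ the statement asks for an arbitrary continuous $A$-algebra map $B\to C$, which need not exist (e.g.\ $A=\QQ$, $B=\QQ(\sqrt2)$, $C=\QQ$). The paper tacitly reads $I\subset C$ as a proper ideal, so $I\subseteq\m_C$ holds automatically since $C$ is local; you should simply start from that observation rather than treat $I=C$ as a case.
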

\begin{proof}
Since $C$ is noetherian $I$ is a closed ideal, see \cite[Theorem 8.14]{Matsumura-ringtheory}. Also $I$ is contained in the maximal ideal of $C$, which is an ideal of definition. Therefore the claim follows from \cite[{\tt tag/07NJ}]{stacks}.
\end{proof}

\begin{corollary}\label{Cor:liftadic}
Let $A$ be a local ring with residue field $k$, let $B$ and $C$ be complete noetherian local $A$-algebras, and assume that $B$ is formally smooth over $A$. Then an $A$-homomorphism $g\:C\to B$ is an isomorphism if and only if its closed fiber $\og=g\otimes_A k$ is an isomorphism.
\end{corollary}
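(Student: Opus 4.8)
The forward implication is immediate: tensoring the isomorphism $g$ with $k=A/m_A$ over $A$ yields an isomorphism $\og$. So the plan is to prove the converse, assuming $\og\colon\oC\to\oB$ is an isomorphism and deducing that $g$ is one. First I would record that $g$ is then automatically local: an isomorphism of local rings is a local isomorphism, so $\og$ carries the maximal ideal of $\oC$ onto that of $\oB$, and reducing modulo $m_A B$ gives $g(m_C)\subseteq m_B$. The strategy is to produce a continuous $A$-homomorphism $h\colon B\to C$ lifting $\og^{-1}$, and then to show that the two composites $g\circ h\colon B\to B$ and $h\circ g\colon C\to C$ are automorphisms; granting this, $h\circ(g\circ h)^{-1}$ is a right inverse of $g$ and $(h\circ g)^{-1}\circ h$ a left inverse, so $g$ is an isomorphism.

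To construct $h$, I would apply Lemma \ref{Lem:liftadic} to the formally smooth local homomorphism $A\to B$, taking the complete noetherian local ring to be $C$ and the ideal to be $I=m_A C$. The required compatible pair is the structure map $A\to C$ together with the composite $B\to\oB\xrightarrow{\og^{-1}}\oC=C/m_A C$. Both are continuous for the adic topologies (a local homomorphism of noetherian local rings is adically continuous), and they agree over $A$ because $\og$, being the base change of the $A$-homomorphism $g$, is an $A$-algebra map. The lemma then yields a continuous $A$-homomorphism $h\colon B\to C$ whose closed fiber is $\overline{h}=\og^{-1}$. Consequently $\overline{g\circ h}=\og\,\og^{-1}=\mathrm{id}_{\oB}$ and $\overline{h\circ g}=\og^{-1}\og=\mathrm{id}_{\oC}$, so both composites are local $A$-endomorphisms whose closed fibers are the identity.

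The crux is therefore the following assertion: a local $A$-endomorphism $\phi$ of a complete noetherian local $A$-algebra $B$ with $\overline\phi=\mathrm{id}_{\oB}$ is an automorphism. Here I would set $\psi=\phi-\mathrm{id}$, an $A$-linear endomorphism of $B$. The hypothesis $\overline\phi=\mathrm{id}$ says exactly that $\psi(B)\subseteq m_A B$, and $A$-linearity gives $\psi(m_A B)=m_A\psi(B)$, whence inductively $\psi^n(B)\subseteq m_A^n B\subseteq m_B^n$ (using $m_A\subseteq m_B$). Thus for every $b\in B$ the series $\theta(b)=\sum_{n\ge0}(-1)^n\psi^n(b)$ converges $m_B$-adically, since $B$ is $m_B$-adically complete. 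A telescoping computation gives $\phi\circ\theta=\theta\circ\phi=\mathrm{id}$, so $\phi$ is bijective and hence a ring isomorphism. Applying this to $\phi=g\circ h$ and to $\phi=h\circ g$ completes the argument.

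I expect the main obstacle to be the bookkeeping in the application of Lemma \ref{Lem:liftadic}: checking that the map $B\to C/m_A C$ assembled from $\og^{-1}$ is continuous and compatible over $A$ with the structure map $A\to C$, so that the lemma genuinely applies. The second delicate point is justifying the convergence and the termwise (telescoping) manipulation of the Neumann series $\theta$; this is precisely where completeness of $B$ and the locality of the structure map $A\to B$ are used in an essential way.
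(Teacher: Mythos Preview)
Your proof is correct, but it proceeds differently from the paper's argument. The paper first shows directly that $g$ is surjective: since $\og$ is surjective, Nakayama for nilpotent ideals gives surjectivity of $C/m_A^nC\to B/m_A^nB$ for all $n$, and $m_A$-adic completeness then yields surjectivity of $g$. Only then does the paper invoke Lemma~\ref{Lem:liftadic}, with the ideal $I=\Ker(g)$, producing an \emph{exact} section $h\:B\to C$ (so $g\circ h=\mathrm{id}_B$ on the nose). Applying the surjectivity step to $h$ shows that this section is onto, whence $g$ is injective as well.

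You instead apply Lemma~\ref{Lem:liftadic} with $I=m_AC$, obtaining only an approximate inverse $h$ with $\oh=\og^{-1}$, and then run a Neumann-series argument to upgrade the endomorphisms $g\circ h$ and $h\circ g$ (which are congruent to the identity modulo $m_A$) to genuine automorphisms. This is a perfectly valid alternative: your route avoids the separate surjectivity step but trades it for the analytic convergence argument, while the paper's route is slightly more elementary (only Nakayama and a section) but requires first knowing $g$ is onto before the lifting lemma can be applied with $I=\Ker(g)$. Both approaches exploit formal smoothness of $B$ over $A$ in the same way, namely through Lemma~\ref{Lem:liftadic}; the difference lies only in which ideal of $C$ one lifts through and how one finishes.
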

\begin{proof}
Only the inverse implication needs a proof. We first claim that when the closed fiber $\og$ is surjective then $g$ is surjective.  Since $B$ and $C$ are complete with respect to their maximal ideals, they are also complete with respect to $m_AB$ and $m_AC$, respectively. As $\og$ is surjective, the $(n-1)$st infinitesimal fibers $C/m_A^nC\to B/m_A^nB$ are surjective by Nakayama's lemma for nilpotent ideals, and we obtain that $g$ is surjective too.

 Setting $I=\Ker(g)$ and using Lemma~\ref{Lem:liftadic}, we obtain that there exists a homomorphism $h\:B\to C$ making the following diagram commutative
$$
\xymatrix{
A\ar[r]\ar[d] & C\ar@{->>}[d]^g\\
B\ar@{=}[r]\ar@{.>}[ur]^h &C/I
}
$$
In other words, we have found a section $h$ of $g$. Note that the closed fiber of $h$ is surjective (in fact, $\oh=\og^{-1}$). The first statement proven above with $g$ replaced by $h$ gives that $h$ is surjective.
\end{proof}

\subsubsection{Splitting of formally smooth homomorphisms}
Now we are ready to prove the main result of section~\ref{section:reg} in the equal characteristic case: any formally smooth homomorphism between complete noetherian local rings is a base change of its closed fiber. For convenience of the exposition we will deal with the mixed characteristic case separately.

Recall that if $A$ is a complete noetherian local ring with residue field $k$, and if $A$ contains a field, then $A$ contains $k$ as a coefficient field $i\:k\into A$  by Cohen's Structure Theorem, \cite[{\tt tag/032A}]{stacks}.
\begin{theorem}\label{Th:splitfsmooth}
Let $f\:A\to B$ be a local homomorphism of complete noetherian local rings with closed fiber $\of\:k=A/m_A\to\oB=B/m_AB$. Assume that $A$ contains a field.

(i) Assume $\of$ is formally smooth. Then there is a coefficient field $i\:k\to A$ and a section $j\:\oB\into B$  of the surjection $B\onto\oB$ which extends the composition $f\circ i\:k\into A\to B$.

(ii) Assume $f$ is formally smooth. Then for any choice of such $i$ and $j$, the homomorphism $A\wtimes_k\oB\to B$ is an isomorphism.
In particular, $f$ is (non-canonically) isomorphic to the formal base change of its closed fiber $\of$ with respect to $i$.
\end{theorem}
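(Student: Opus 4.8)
The plan is to obtain (i) from the adic lifting property of the formally smooth closed fiber $\of$, and to obtain (ii) from Corollary~\ref{Cor:liftadic}, which is tailored to detect isomorphisms of complete local algebras on their closed fibers.

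For (i), since $A$ contains a field, Cohen's Structure Theorem furnishes a coefficient field $i\:k\into A$, i.e.\ a section of $A\onto A/m_A=k$. Writing $p\:B\onto\oB$ for the reduction modulo $m_AB$, commutativity of the square defining $\of$ gives $p\circ f\circ i=\of$. I then apply Lemma~\ref{Lem:liftadic} to the formally smooth homomorphism $\of\:k\to\oB$, taking the lemma's complete target to be $B$ and its ideal to be $m_AB$, so that the quotient is $\oB$; the compatible pair consists of $f\circ i\:k\to B$ and $\id_{\oB}\:\oB\to B/m_AB$, which agree over $k$ by the identity just noted. The lemma produces $j\:\oB\to B$ lifting $\id_{\oB}$, that is a section of $p$, and satisfying $j\circ\of=f\circ i$, i.e.\ extending $f\circ i$. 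This is precisely the content of (i).

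For (ii), note first that formal smoothness of $f$ forces $\of$ to be formally smooth by Theorem~\ref{Th:fsmoothcrit}(iii), so (i) indeed supplies $i$ and $j$. Since $j\circ\of=f\circ i$, the homomorphisms $f\:A\to B$ and $j\:\oB\to B$ agree on $k$ and hence induce a $k$-algebra map $A\otimes_k\oB\to B$ under which both $m_A$ and $m_{\oB}$ land in $m_B$; as $B$ is complete this extends to $g\:A\wtimes_k\oB\to B$. Using $i$, the ring $A$ is a quotient $k[[x_1\.x_n]]/I$, so $C=A\wtimes_k\oB$ is the quotient $\oB[[x_1\.x_n]]/I\oB[[x_1\.x_n]]$, which is complete noetherian local. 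Thus $B$ and $C$ are complete noetherian local $A$-algebras with $B$ formally smooth over $A$, and Corollary~\ref{Cor:liftadic} reduces the isomorphism claim to showing that the closed fiber $\og=g\otimes_Ak$ is an isomorphism.

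It remains to identify this closed fiber. Reducing modulo $m_A$ collapses the first tensor factor to $k$, yielding canonical identifications $C\otimes_Ak\cong\oB$ and $B\otimes_Ak=\oB$; tracing $b\in\oB$ through $b\mapsto 1\otimes b\mapsto j(b)\mapsto p(j(b))=b$ shows that $\og=\id_{\oB}$. Hence $g$ is an isomorphism by Corollary~\ref{Cor:liftadic}, and $f$ is the formal base change of $\of$ along $i$. The one place demanding care is the bookkeeping for the completed tensor product---verifying that $A\wtimes_k\oB$ is complete noetherian local and that its reduction modulo $m_A$ is canonically $\oB$---whereas the genuine input is entirely concentrated in the two cited lifting results, Lemma~\ref{Lem:liftadic} and Corollary~\ref{Cor:liftadic}.
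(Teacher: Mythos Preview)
Your proof is correct and follows essentially the same approach as the paper: apply Lemma~\ref{Lem:liftadic} to the formally smooth $\of$ to produce the section $j$, then invoke Corollary~\ref{Cor:liftadic} to upgrade the closed-fiber isomorphism $\og=\id_{\oB}$ to an isomorphism $A\wtimes_k\oB\toisom B$. You have simply fleshed out a few details (noetherianity of the completed tensor product, the explicit identification of $\og$) that the paper leaves implicit.
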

\begin{proof}
To establish existence of $j$ we apply Lemma~\ref{Lem:liftadic} to the following diagram:
$$
\xymatrix{
k\ar[d]_{\of}\ar[r]^{f\circ i} & B\ar@{->>}[d]\\
\oB\ar@{=}[r]\ar@{.>}[ur]^j &\oB
}
$$
Once $j$ is fixed, we obtain a homomorphism $g\:A\wtimes_k\oB\to B$ of complete noetherian local $A$-algebras whose closed fiber $\og\:k\otimes_k\oB\toisom\oB$ is an isomorphism. Hence $g$ is an isomorphism by Corollary~\ref{Cor:liftadic}.
\end{proof}

\subsubsection{The mixed characteristic case}\label{mixedcase}
Recall that given a field $k$ with $\chara(k)=p>0$, a Cohen ring $C(k)$ is a complete DVR with residue field $k$ and maximal ideal $(p)$. Since $C(k)$ is formally smooth over $\ZZ_p$ (see Theorem~\ref{Th:fsmoothcrit}), given a complete local ring $A$ with $A/m_A=k$, the homomorphism $C(k)\to k$ lifts to $f\:C(k)\to A$. In fact, this argument is used in the proof of Cohen's Structure Theorem, \cite[{\tt tag/032A}]{stacks}: in the mixed characteristic case, $C(k)\into A$ is a ring of coefficients of $A$, and in the equal characteristic case $Im(f)=k$ is a field of coefficients of $A$.

Assume  $\chara(k)$ is of characteristic $p>0$. By \cite[{\tt tag/07NR}]{stacks}, any formally smooth homomorphism $g\:k\to D$ with $D$ a complete noetherian $k$-algebra admits a formally smooth lifting $f\:C(k)\to E$, with $E$ a suitable complete noetherian ring,  in the sense that $g=f\otimes_{C(k)} k$. Moreover, if $f'\:C(k)\to E'$ is another such lifting then by formal smoothness of $f$ the homomorphism $E\to D$ lifts to a homomorphism $E\to E'$, which is necessarily an isomorphism. For this reason, we will use the notation $C(D)=E$ and $C(g)=f$.

In order to unify the notation, if $R$ is a ring containing $\QQ$, we set $C(R)=R$, and for any homomorphism $f\:R\to S$ we set $C(f)=f$. Here is the analogue of Theorem~\ref{Th:splitfsmooth}.

\begin{theorem}\label{Th:splitfsmoothmixed}
Let $f\:A\to B$ be a local homomorphism of complete noetherian local rings with closed fiber $\of\:k=A/m_A\to\oB=B/m_AB$.

(i) Assume $\of$ is formally smooth. Then there exist homomorphisms $i\:C(k)\to A$ and $j\:C(\oB)\to B$ making the following diagram commutative.

\begin{equation}\label{Eq:splitfsmoothmixed}\xymatrix{ A\ar[rr]^f\ar[dd] && B\ar[dd] \\
 & C(k) \ar@{.>}[ul]^i \ar[rr]^(0.35){C(\of)}|!{[ur];[dr]}\hole\ar[dl] && C(\oB)\ar[dl]\ar@{.>}[ul]^j\\
 k\ar[rr]^\of && \oB}
 \end{equation}

(ii) Assume $f$ is formally smooth. Then for any choice of $i$ and $j$ the homomorphism $A\wtimes_{C(k)}C(\oB)\to B$ is an isomorphism. In particular, $f$ is (non-canonically) isomorphic to the formal base change of a Cohen lift $C(\of)$ of its closed fiber $\of$.
\end{theorem}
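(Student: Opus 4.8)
The plan is to run the equal-characteristic argument of Theorem~\ref{Th:splitfsmooth} essentially verbatim, replacing the coefficient field $k$ by the Cohen ring $C(k)$ and the section $\oB$ by its Cohen lift $C(\oB)$. The only genuinely new inputs are the two facts recalled in Section~\ref{mixedcase}: a coefficient-ring homomorphism $C(k)\to A$ exists because $A$ is complete with residue field $k$, and the Cohen lift $C(\of)\:C(k)\to C(\oB)$ of the formally smooth closed fiber $\of$ is itself a formally smooth local homomorphism of complete noetherian local rings. With these in hand, both the lifting step and the isomorphism step reduce to the results already proved above.

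For part (i), I would first choose $i\:C(k)\to A$ to be any lift of the surjection $C(k)\to k$ through $A\onto k$; such an $i$ exists by Cohen's structure theorem as recalled in Section~\ref{mixedcase}, and it makes the lower-left triangle of \eqref{Eq:splitfsmoothmixed} commute by construction. Next, since $\of$ is formally smooth, $C(\of)\:C(k)\to C(\oB)$ is formally smooth, so I would apply the adic lifting property, Lemma~\ref{Lem:liftadic}, to the formally smooth homomorphism $C(\of)$, the complete noetherian local ring $B$, and the ideal $I=m_AB$ (so that $B/I=\oB$), feeding in the compatible pair $f\circ i\:C(k)\to B$ and the residue map $C(\oB)\to\oB$. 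The requisite compatibility is that the two composites $C(k)\to\oB$ agree: the reduction of $f\circ i$ modulo $I$ equals $\of$ composed with $C(k)\to k$ because $A\xrightarrow{f}B\onto\oB$ factors through the closed fiber $\of$, while $(C(\oB)\to\oB)\circ C(\of)$ equals the same map by the defining property of $C(\of)$ as a lift of $\of$. The lemma then produces $j\:C(\oB)\to B$ which lifts the residue map and satisfies $j\circ C(\of)=f\circ i$; this last identity is precisely the top-face commutativity required in \eqref{Eq:splitfsmoothmixed}.

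For part (ii), the identity $f\circ i=j\circ C(\of)$ allows me to define a homomorphism of complete noetherian local $A$-algebras
$g\:A\wtimes_{C(k)}C(\oB)\to B$ by $a\otimes c\mapsto f(a)\,j(c)$. I would then compute its closed fiber $\og=g\otimes_A k$: the source is $(A\wtimes_{C(k)}C(\oB))\otimes_A k\cong C(\oB)\otimes_{C(k)}k=\oB$, using the defining property $C(\oB)\otimes_{C(k)}k=\oB$ of the Cohen lift, and the target is $B\otimes_A k=\oB$; since $j$ lifts the residue map, $\og$ is the identity of $\oB$, hence an isomorphism. Because $f$ is formally smooth, $B$ is formally smooth over $A$, so Corollary~\ref{Cor:liftadic} applies and forces $g$ to be an isomorphism, identifying $f$ with the formal base change of $C(\of)$ along $i$.

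I expect no substantially new difficulty beyond the equal-characteristic case: the main care is purely in the bookkeeping of the closed-fiber computation — confirming that $A\wtimes_{C(k)}C(\oB)$ is again complete noetherian local, and that base change to $k$ collapses the completed tensor product to $\oB$ through the identity $C(\oB)\otimes_{C(k)}k=\oB$. Once the Cohen lifts $C(k)$, $C(\oB)$ and the formal smoothness of $C(\of)$ are imported from Section~\ref{mixedcase}, the argument is formally parallel to Theorem~\ref{Th:splitfsmooth}, with $C(k)$ and $C(\oB)$ playing the roles of $k$ and $\oB$.
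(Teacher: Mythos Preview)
Your proposal is correct and follows essentially the same approach as the paper: obtain $i$ from Cohen's structure theorem, use the formal smoothness of $C(\of)$ together with Lemma~\ref{Lem:liftadic} (with $I=m_AB$) to produce $j$, and then conclude (ii) via Corollary~\ref{Cor:liftadic} after checking the closed fiber of $g$ is an isomorphism. Your write-up is slightly more explicit about the compatibility checks and the identification $(A\wtimes_{C(k)}C(\oB))\otimes_A k\cong\oB$, but the argument is the same as the paper's.
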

\begin{proof}
As we saw in the beginning of Section \ref{mixedcase}, the homomorphism $C(k)\to k$ lifts to $i\:C(k)\to A$. In particular, $B$ becomes a $C(k)$-algebra. Since $C(\of)$ is formally smooth by Theorem~\ref{Th:fsmoothcrit}, the $C(k)$-homomorphism $C(\oB)\to\oB$ lifts to a $C(k)$-homomorphism $j\:C(\oB)\to B$.

Given $i$ and $j$, we obtain a homomorphism $g\:A\wtimes_{C(k)}C(\oB)\to B$ of complete noetherian local $A$-algebras whose closed fiber $\og\:k\otimes_k\oB\toisom\oB$ is an isomorphism. Hence $g$ is an isomorphism by Corollary~\ref{Cor:liftadic}.
\end{proof}

\begin{remark}\label{formalrem}
In a sense, Theorem \ref{Th:splitfsmooth} reduces the classification of formally smooth homomorphisms $f\:A\to B$ of complete local rings to the case when the source is a field. By Theorem~\ref{Th:fsmoothcrit}, $g\:k\to\Lam$ is formally smooth if and only if $\Lam$ is geometrically regular over $k$. Perhaps, this is the ``best'' characterization of formally smooth $k$-algebras one can give in general. On the other hand, if we further assume that $K=\Lam/m_\Lam$ is separable over $k$ (e.g., if $k$ is perfect) then a better characterization is possible: $g$ is formally smooth if and only if $\Lam$ is of the form $K\llbracket t_1,\dots,t_n\rrbracket$. Indeed, since  $k\to K$ is formally smooth we can extend $g$ to a field of coefficients $K\into\Lam$. Then we choose $t_1\.t_n$ to be any family of regular parameters. Applying Theorem~\ref{Th:splitfsmoothmixed} we obtain as a consequence that if $f$ is formally smooth and the extension of the residue fields $K/k$ is separable then there is an isomorphism of $A$-algebras $A\wtimes_{C(k)}C(K)\llbracket t_1\.t_n\rrbracket\toisom B$.
\end{remark}

\section{Generalities on group scheme actions}\label{Sec:groups}
In this section we fix some basic terminology, including group schemes, orbits, stabilizers, etc.

\subsection{General groups}

\subsubsection{Group schemes and actions}
A $\ZZ$-flat groupscheme (resp. flat $S$-group scheme) $G$ will be simply referred to as a {\em group} (resp. {\em $S$-group}). An {\em action} of a group $G$ on a scheme $X$ is given by an {\em action morphism} $\mu\:G\times X\to X$ satisfying the usual compatibilities: associativity and the triviality of the unit action. Similarly an action of an $S$-group $G$ on an $S$-scheme  $X$ is given by a morphism $\mu\:G\times_S X\to X$ satisfying the analogous requirements.  If $X$ is an $S$-scheme and $G$ is a group then an action of $G$ on $X$ is called an {\em $S$-action} if $\mu$ is an $S$-morphism. Giving such an action is equivalent to providing $X$ with an action of the $S$-group $G_S=G\times S$.

\begin{remark}
The projection and the action morphisms give rise to an fppf groupoid $G\times X\rightrightarrows X$, see \cite[{\tt tag/0234}]{stacks}, which often shows up in constructions related to the action.
\end{remark}

\subsubsection{Stabilizers}
	The {\em stabilizer} or {\em inertia group}\index{inertia!group} of an action of $G$ on $X$ is the $X$-group scheme $$I_X\ \ =\ \ {\rm Eq}(G_X\rightrightarrows X)\ \ =\ \ G_X\ \mathop\times\limits_{X\times X}\ X,$$ where the component maps of $G_X \to X\times X$ are the action and projection maps and $X \to X\times X $ on the right is the diagonal.  This is a subgroup of the $X$-group $G_X$, which is often not flat. For any point $x\in X$, we define its {\em stabilizer} as the fiber $G_x=I_X\times_X\Spec(k(x))$.

\subsubsection{Orbits}
Working with varieties one usually considers only ``classical" orbits of group actions, which can be characterized as orbits of closed points or locally closed orbits. When $G$ acts on a more general scheme $X$ it is more natural to take into account orbits of all points. A set-theoretic orbit of $x\in X$ is the image of the map $G\times\Spec(k(x))\to X$. This definition ignores the non-reduced structure which becomes essential when $G$ is non-reduced. For example, free and non-free actions of $\mu_p$ are distinguished by the nilpotent structure of the orbits.

In order to define scheme-theoretic orbits one should use the scheme-theoretic image, see \cite[II, Exercise 3.11 (d)]{Hartshorne}, \cite[{\tt tag/01R6}]{stacks}. Let $\oO_x$ be the scheme-theoretic image of $G\times\Spec(k(x))\to X$, and let $O_x$ be obtained from $\oO_x$ by removing all proper closed subsets of the form $\oO_y$. We provide $\oO_x$ with the structure sheaf $\cO_{\oO_x}|_{O_x}$ and call it the {\em $G$-orbit} of $x$. We do not know general criteria for $O_x$ to be a scheme, but the orbits we will use below are in fact schemes. Note that in this case, $O_x$ is a limit of open subschemes of $\oO_x$.

\subsection{Diagonalizable groups}
Starting from this point, we consider only diagonalizable groups $G$. Probably, many results can be extended to the case of arbitrary linearly reductive or even reductive groups, but we do not pursue that direction.

\subsubsection{The definition}
By a {\em diagonalizable group} we mean a finite type diagonalizable group\index{diagonalizable group} $G$ over $\ZZ$, see \cite[VIII.1.1]{SGA3-2}. In other words, $G=\mathbf{D}_L=\Spec(\ZZ[L])$ for a finitely generated abelian group $L$. Note that for any subgroup $L'\subseteq L$ with the quotient $L''=L/L'$, we have a natural embedding $\mathbf{D}_{L''}\into \mathbf{D}_L$ and $\mathbf{D}_L/\mathbf{D}_{L''}\toisom \mathbf{D}_{L'}$. Moreover, this construction exhausts all subgroups and quotient groups of $\mathbf{D}_L$.

\subsubsection{Action of a diagonalizable group}
Any element $d\in L$ induces a character $\chi_d\:G\to\GGm=\mathbf{D}_\ZZ$, and this construction identifies $L$ with the group of all characters of $G$. An action of a diagonalizable group $G=\mathbf{D}_L$ on a scheme $X$ can also be described in the dual language by giving a comultiplication homomorphism of $\cO_X$-algebras $\mu^\#\:\cO_X\to\cO_X[L]$.

\subsubsection{The affine case}
If $X=\Spec(A)$ is affine then the action is described by the homomorphism $\mu^\#\:A\to A[L]$, and it is easy to see that such a homomorphism $\mu^\#$ is, indeed, a comultiplication if and only if it corresponds to an $L$-grading $A=\oplus_{n\in L}A_n$ on $A$, see \cite[I.4.7.3]{SGA3-1}; if $a=\sum a_n$ with $a_n\in A_n$ then $\mu^\#(a)=\sum a_n n$. In the sequel, we will identify $G$-actions on $X=\Spec(A)$ with the corresponding $L$-gradings of $A$.

\section{$L$-graded rings}\label{Sec:L-graded}
In this section we study how a diagonalizable group $G=\bfD_L$ acts on affine schemes $X=\Spec(A)$. On the algebraic side, this corresponds to studying $L$-graded rings $A=\oplus_{n\in L}A_n$.

\subsection{Coinvariants and the scheme of fixed points}

\subsubsection{Coinvariants}
Given an $L$-graded ring $A$ consider the ideal $I$ generated by all modules $A_n$ with $n\neq 0$. It is a graded ideal, and one has $I_n=A_n$ for $n\neq 0$ and $I_0=\sum_{n\neq 0}A_nA_{-n}$. Note that $A/I=A_0/I_0$ is the maximal graded quotient of $A$ with trivial $L$-grading. We call it the {\em ring of coinvariants} of $A$ and denote it $A_G$ or $A_L$.

\subsubsection{The scheme of fixed points}\label{Sec:fixed}
If $X=\Spec(A)$ then $X^G:=\Spec(A_G)$ is the maximal closed subscheme of $X$ on which the action is trivial. Obviously, $X\mapsto X^G$ is a functor on the category of affine $G$ schemes. We call it the {\em fixed points} functor, see also \S\ref{fixedpointssec} below.

\subsection{Invariants and the quotient}

\subsubsection{The definition}
If $A$ is an $L$-graded ring then $G=\bfD_L$ acts on $A$, and $A_d$ is the set of all elements on which $G$ acts through $\chi_d$. In particular, the ring of invariants $A^{G}$ coincides with $A_0$. The {\em quotient}\index{quotient} of $X=\Spec(A)$ by the action is the scheme $X\sslash G:=\Spec(A_0)$. Obviously, $X\mapsto X\sslash G$ is a functor on the category of affine $G$ schemes. We call it the {\em quotient functor}.

\subsubsection{Some properties preserved by the quotient functor}
It is classical, that the properties of being reduced, integral, and normal are preserved by quotients.

\begin{lemma}\label{invarquotlem}
Assume that an $L$-graded ring $A=\oplus_{n\in L}A_n$ satisfies one of the following properties: reduced, an integral domain, a normal domain. Then $A_0$ satisfies the same property.
\end{lemma}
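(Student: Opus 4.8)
The plan is to prove each of the three properties separately, reducing in each case to a statement about the degree-zero part $A_0$ of an $L$-graded ring $A$. The common theme is that any homogeneous decomposition respects these ring-theoretic conditions, so the degree-zero piece inherits them. I would first dispose of the reduced case, which is the easiest, then the integral domain case, and finally the normal domain case, which I expect to be the genuine obstacle.

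\emph{Reduced.} Suppose $A$ is reduced. Since $A_0 \subseteq A$ is a subring, and any subring of a reduced ring is reduced, the conclusion is immediate: if $a \in A_0$ satisfies $a^n = 0$, then $a = 0$ already in $A$. This requires no use of the grading beyond the inclusion $A_0 \hookrightarrow A$.

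\emph{Integral domain.} Again $A_0$ is a subring of the domain $A$, and any subring of an integral domain is an integral domain, so this case is also formally immediate. Here one does not even need the grading; the inclusion suffices.

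\emph{Normal domain.} This is the substantive case. I would take $A$ to be a normal domain, with fraction field $K = \Frac(A)$ and $K_0 = \Frac(A_0)$; the goal is to show $A_0$ is integrally closed in $K_0$. The natural strategy is to exploit the $G = \bfD_L$-action: the grading on $A$ induces an action of $G$ on $K$, and $A_0 = A^G = A \cap K^G$ where $K^G = K_0$ is the subfield of invariants (one must first check $\Frac(A_0) = \Frac(A)^G$, which holds because a homogeneous fraction of degree zero is a ratio of homogeneous elements of equal degree). Now let $x \in K_0$ be integral over $A_0$; then $x$ is a fortiori integral over $A$, and since $A$ is integrally closed in $K$ we get $x \in A$. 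But $x \in K_0 = K^G$ is also $G$-invariant, hence lies in $A \cap K^G = A^G = A_0$. The only care needed is to verify that the invariant subring under the $L$-grading really is $A_0$ and that passing to fraction fields commutes appropriately with taking invariants. The main obstacle is therefore the identification $\Frac(A)^G = \Frac(A_0)$: one inclusion is clear, while for the other one writes a $G$-invariant element of $\Frac(A)$ as a quotient and uses the grading to clear denominators into degree-zero form. Once this lemma on fraction fields is in hand, the integral-closure argument is a short formal chase, and the proof concludes.
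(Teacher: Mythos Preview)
Your treatment of the reduced and integral-domain cases matches the paper's, which simply remarks that only the normal case needs justification.

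For the normal-domain case your core argument is correct, but you have wrapped it in an unnecessary and in fact \emph{false} claim. You assert that $\Frac(A)^G = \Frac(A_0)$ and call this the ``main obstacle''. This equality fails in general: take $L = \ZZ$ and $A = k[x,y]$ with $\deg x = 2$, $\deg y = 3$; then $A_0 = k$, but $y^2/x^3 \in \Frac(A)^G \setminus k$. Fortunately your argument only uses the trivial inclusion $\Frac(A_0) \subseteq \Frac(A)^G$ together with $A \cap \Frac(A)^G = A^G = A_0$, both of which are immediate. So the proof survives once you drop the spurious equality and the paragraph devoted to it.

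The paper's argument is the same in substance but avoids extending the action to $\Frac(A)$ altogether: given $c = a/b$ with $a,b \in A_0$, $b \neq 0$, integral over $A_0$, normality of $A$ gives $c \in A$; then from $bc = a$ in the graded domain $A$ with $a,b$ of degree zero one reads off directly that $c$ is homogeneous of degree zero. This is exactly your step $A \cap K^G = A_0$ unpacked, done with the grading on $A$ rather than via an auxiliary action on the fraction field.
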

\begin{proof}
Only the last case needs a justification. Assume that $A$ is a normal domain and so $A_0$ is a domain. If $a,b\in A_0$ are such that $b\neq 0$ and $c=\frac{a}{b}$ is integral over $A_0$ then $c\in A$. Since $bc=a$ in the domain $A$, and $a,b\in A_0$, we obtain that $c$ is also homogeneous of degree zero.
\end{proof}

Another property preserved by quotients is being of finite type.

\begin{lemma}\label{fgquotlem}
Assume that an $L$-graded ring $A=\oplus_{n\in L}A_n$ is finitely generated over a subring $C\subseteq A_0$. Then $A_0$ is finitely generated over $C$.
\end{lemma}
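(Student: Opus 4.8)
The plan is to reduce the statement to a purely combinatorial fact about finitely generated monoids. First I would replace the given $C$-algebra generators of $A$ by homogeneous ones: if $A=C[x_1\.x_r]$, write each $x_i=\sum_n x_{i,n}$ as a finite sum of its homogeneous components $x_{i,n}\in A_n$. Since $C\subseteq A_0$ is homogeneous, the $C$-subalgebra generated by the (finitely many) elements $x_{i,n}$ is a graded subalgebra, and it contains every $x_i$, hence equals $A$. Thus I may assume from the start that $A=C[y_1\.y_s]$ with $y_j\in A_{d_j}$ homogeneous of degree $d_j\in L$.

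Next I would describe $A_0$ explicitly in terms of monomials. Let $\phi\:\ZZ^s\to L$ be the homomorphism sending the $j$-th standard basis vector to $d_j$, so that the monomial $y^a:=y_1^{a_1}\cdots y_s^{a_s}$ (for $a=(a_1\.a_s)\in\NN^s$) is homogeneous of degree $\phi(a)$. Since the $y^a$ span $A$ as a $C$-module and the homogeneous decomposition of any element picks out, in degree $n$, exactly the $C$-combination of those $y^a$ with $\phi(a)=n$, the degree-zero part is the $C$-span of $\{y^a : a\in M\}$, where $M=\{a\in\NN^s:\phi(a)=0\}=\Ker(\phi)\cap\NN^s$ is a submonoid of $\NN^s$. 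If $M$ is generated as a monoid by $v_1\.v_t$, then each $y^a$ with $a\in M$ is a product of the $y^{v_k}$, so $A_0=C[y^{v_1}\.y^{v_t}]$ is finitely generated over $C$.

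It therefore remains to prove that $M=\Ker(\phi)\cap\NN^s$ is a finitely generated monoid, and this is the heart of the matter. Here $\Ker(\phi)$ is a subgroup of the free abelian group $\ZZ^s$, hence itself a lattice of finite rank; the key point is that any torsion of $L$ becomes irrelevant once one passes to this kernel subgroup. I would then invoke Gordan's lemma \emph{for the lattice} $N=\Ker(\phi)$ and the rational polyhedral cone $\sigma=(N\otimes\RR)\cap\RR^s_{\ge 0}$: since $\sigma$ is rational with respect to $N$, one may choose cone generators $w_1\.w_p\in N$, and then the finitely many points of $N$ in the bounded region $\{\sum\lambda_i w_i:0\le\lambda_i\le 1\}$ together with the $w_i$ generate $M=\sigma\cap N$. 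The main obstacle is precisely this finiteness of $M$, and it is essential to use the lattice $\Ker(\phi)$ rather than $\ZZ^s$ (otherwise torsion in $L$, e.g.\ the $\ZZ/2$-grading of $C[y]$, gives the wrong monoid). Everything else is formal; in particular no noetherian hypothesis on $A$ or $C$ is needed.
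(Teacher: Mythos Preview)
Your proof is correct and follows essentially the same route as the paper: reduce to homogeneous generators, observe that $A_0$ is the $C$-algebra on monomials indexed by the monoid $M=\phi^{-1}(0)\subset\NN^s$, and conclude by finite generation of $M$. The only difference is that the paper simply asserts ``since the monoid $M$ is finitely generated'' without further comment, whereas you spell out the Gordan's lemma argument for the sublattice $\Ker(\phi)$; your extra care about torsion in $L$ is well-placed but not a genuine divergence in strategy.
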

\begin{proof}
We can choose homogeneous $C$-generators of $A$, and this gives a presentation of $A$ as a quotient of an $L$-graded polynomial algebra $B=C[t_1\.t_l]$ by a homogeneous ideal $I$. Then $A_n=B_n/I_n$ for any $n\in L$, and so $A_0$ is a quotient of $B_0$. Note that $B_0=C[M]$ for the monoid $M= \phi^{-1}(0)$ with  $\phi\:\NN^l\to L$ given by $\phi(a_1\.a_l)=\sum_{i=1}^l a_i\deg(t_i)$. Since the monoid $M$ is finitely generated, $B_0$ is finitely generated over $C$, and the lemma follows.
\end{proof}

\subsubsection{Universality of quotients and fixed point schemes}
As opposed to the case of general reductive groups, the quotient is universal in the sense of \cite[Ch. 0, \S1]{GIT}, regardless of the characteristic:

\begin{lemma}\label{Lem:univers}
Assume that $G=\bfD_L$ acts on $X=\Spec(A)$, write $Y=X\sslash G$,  let $Y'\to Y$ be an affine morphism, and let $X'=Y'\times_YX$. Then $X'\sslash G=Y'$ and $(X')^G=X^G\times_XX'$.
\end{lemma}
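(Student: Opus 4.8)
The plan is to translate the statement into a computation with $L$-graded rings, exploiting that both the quotient and the fixed-point constructions are defined algebraically from the grading. Since $A_0=A^G$, we have $Y=\Spec(A_0)$, and an affine morphism $Y'\to Y$ is $\Spec(B)$ for some $A_0$-algebra $B$, which I regard as $L$-graded, concentrated in degree $0$. Then $X'=Y'\times_Y X=\Spec(A')$ with $A'=B\otimes_{A_0}A$, where $G$ acts through the $A$-factor. Because tensoring commutes with direct sums, the decomposition $A=\bigoplus_n A_n$ yields $A'=\bigoplus_n A'_n$ with $A'_n=B\otimes_{A_0}A_n$, and this is exactly the $L$-grading encoding the (product) $G$-action on $X'$; note that no flatness of $B$ is needed. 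The first assertion is now immediate by reading off degree $0$: since $A'_0=B\otimes_{A_0}A_0=B$, we get $X'\sslash G=\Spec(A'_0)=\Spec(B)=Y'$.

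For the fixed points, recall that the coinvariant ring is $A_G=A_0/I_0$ with $I_0=\sum_{n\neq 0}A_nA_{-n}$, and that as an $A$-algebra $A_G=A/I$, where $I$ is generated by the homogeneous components $A_n$ with $n\neq 0$. I would compute $A'_G$ in two ways and compare. Directly, $A'_G=A'_0/I'_0$ with $I'_0=\sum_{n\neq 0}A'_nA'_{-n}$; using the multiplication rule $(b\otimes a)(b'\otimes a')=bb'\otimes aa'$ on $A'$ one checks that $I'_0$ is precisely the extension $I_0B$ of $I_0$ along $A_0\to B$, whence $A'_G=B/I_0B=B\otimes_{A_0}A_G$. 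On the other hand, the base-change identity gives $A_G\otimes_A A'=A_G\otimes_A(B\otimes_{A_0}A)=B\otimes_{A_0}A_G$. Since $X^G\times_X X'=\Spec(A_G\otimes_A A')$, the two computations coincide, yielding $(X')^G=X^G\times_X X'$.

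The verifications are mostly bookkeeping with the grading; the one point deserving care — and the main, though still modest, obstacle — is the identity $I'_0=I_0B$, i.e. that forming the coinvariant ideal commutes with the base change $A_0\to B$. This reduces to observing that for $n\neq 0$, $a\in A_n$, and $a'\in A_{-n}$, the product $aa'$ lies in $A_0$ and is identified with $aa'\cdot 1\in B$ under $A'_0=B\otimes_{A_0}A_0\cong B$, so that $\sum_{n\neq 0}A'_nA'_{-n}$ is generated as a $B$-module by the image of $I_0$, which is the extension $I_0B$.
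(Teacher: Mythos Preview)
Your proof is correct and follows essentially the same approach as the paper: translate to $L$-graded rings, read off the degree-zero component to get $X'\sslash G=Y'$, and for the fixed points show that the coinvariant ideal of $A'=B\otimes_{A_0}A$ is the extension of the coinvariant ideal of $A$. The paper phrases the fixed-point step as the single observation $A_nA'=A'_nA'$ for all $n$, which immediately gives $A'_G=A'\otimes_A A_G$, whereas you compute both sides as $B\otimes_{A_0}A_G$ separately; the underlying content is identical.
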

\begin{proof}
Let $Y'=\Spec(B_0)$. Since $Y=\Spec(A_0)$, we obtain that $X'=\Spec(B)$, where $B=B_0\otimes_{A_0}A$ with the grading $B=\oplus_{n\in L}B_0\otimes_{A_0}A_n$. So, $B_0$ is the degree zero component of $B$, and hence $X'\sslash G=Y'$. In addition, $A_nB=B_nB$ for any $n\in L$, hence $B_G=B\otimes_AA_G$ giving $$(X')^G=\Spec B_G = \Spec  B\otimes_AA_G = X^G\times_XX'.$$
\end{proof}

\begin{remark}
A much more general result is proved by Alper in \cite[Prop. 4.7(i)]{Alper}.
\end{remark}

\subsubsection{Fibers}
The quotient morphism $X\to Y=X\sslash G$ is $G$-equiva\-riant with respect to the trivial action on $Y$, hence it contracts the orbits of the action on $X$. It is well known that two orbits of closed points are mapped to the same point if and only if their closures intersect, so on the set-theoretical level one can view $Y$ as the ``separated" quotient of $X$. For example, if $X$ is defined over a field then the same fact is proved for any reductive group action in \cite[Cor. 1.2]{GIT}. In the case of a diagonalizable group, one can obtain a more precise description as follows.

\begin{lemma}\label{Lem:fib}
Assume that $G=\bfD_L$ acts on $X=\Spec(A)$ and let $y$ be a point of $Y=X\sslash G$. Then the fiber $X_y$ contains a single orbit $O$ which is closed in $X_y$, and this orbit belongs to the closure of any other orbit contained in $X_y$.
\end{lemma}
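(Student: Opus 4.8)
The plan is to pass to the fiber, recognize it as the spectrum of an $L$-graded ring over a \emph{field}, and then exhibit the special orbit as the spectrum of the associated graded residue field.

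\emph{Reduction to a field.} The morphism $\Spec k(y)\to Y$ is affine (any morphism out of the spectrum of a field is affine), so Lemma~\ref{Lem:univers} applies to the base change $\bar X=X\times_Y\Spec k(y)=\Spec\bar A$, where $\bar A=A\otimes_{A_0}k(y)$: the degree-zero part of the $L$-graded ring $\bar A$ is $\bar A_0=k(y)$, a field. The points of $X_y$ are exactly the points of $\bar X$, the action of $G$ restricts to $\bar X$, and being ``closed in $X_y$'' is intrinsic to $\bar X$. Thus it suffices to treat the case $A_0=k$ a field and to produce in $X=\Spec A$ a unique closed orbit lying in the closure of every orbit.

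\emph{Graded-local structure.} Let me set $L_0=\{\,n\in L:A_n\text{ contains a unit}\,\}$. Since $A_0=k$ is a field, for $n\in L_0$ any unit $u_n\in A_n$ yields $A_n=ku_n$, and every nonzero element of $A_n$ is then a unit; hence $L_0$ is a subgroup of $L$ and $\kappa:=\bigoplus_{n\in L_0}A_n$ is a graded field. I claim $\mathfrak m:=\bigoplus_{n\notin L_0}A_n$ is a graded ideal: the only thing to check is that $A_mA_n=0$ whenever $n\notin L_0$ while $m+n\in L_0$, which holds because any nonzero product would lie in the one-dimensional $A_{m+n}=ku_{m+n}$ and so be a unit, forcing $A_n$ to contain a unit. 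The same computation shows that every proper graded ideal $\mathfrak a$ has $\mathfrak a_0=0$ and $\mathfrak a_n=0$ for $n\in L_0$, hence $\mathfrak a\subseteq\mathfrak m$; so $\mathfrak m$ is the unique maximal graded ideal, $A/\mathfrak m=\kappa$, and $V(\mathfrak m)=\Spec\kappa$ is contained in every nonempty closed $G$-invariant subscheme of $X$. In particular $\Spec\kappa\subseteq\oO_x$ for every $x$, since each orbit closure $\oO_x$ is a nonempty closed $G$-invariant subscheme.

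\emph{The minimal subscheme is a single orbit.} The grading of $\kappa$ is supported on $L_0$, so the action of $G$ on $\Spec\kappa$ factors through the quotient $\mathbf D_{L_0}$, and I claim $\Spec\kappa$ is a $\mathbf D_{L_0}$-torsor. Writing $u_mu_n=c(m,n)u_{m+n}$ with $c(m,n)\in k^\times$, and letting $e_q$ ($q\in L_0$) be the standard basis of $k[L_0]$, the (action, projection) morphism $\mathbf D_{L_0}\times\Spec\kappa\to\Spec\kappa\times\Spec\kappa$ corresponds to the $k$-algebra map $\kappa\otimes_k\kappa\to\kappa\otimes_k k[L_0]$ given on the homogeneous basis by $u_m\otimes u_n\mapsto c(m,n)\,u_{m+n}\otimes e_m$. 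Since $(m,n)\mapsto(m+n,m)$ is a bijection of $L_0\times L_0$, this map is an isomorphism, so the action is simply transitive and $\Spec\kappa$ is a single orbit $O$, closed because it equals $V(\mathfrak m)$. Moreover $\kappa$ has no proper nonempty closed $G$-invariant subscheme, so no smaller orbit closure sits inside $\Spec\kappa$ and indeed $O=O_x=\oO_x$ for each $x\in\Spec\kappa$.

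\emph{Conclusion and main obstacle.} By the second step $O=\Spec\kappa$ lies in the closure of every orbit of $X$ (hence of $X_y$). If $O'$ is any orbit that is closed in $X$, then $O'$, being its own closure, contains $O$; as two orbits are either disjoint or equal, $O'=O$, so $O$ is the unique closed orbit. The step I expect to be hardest is the third one: pinning down $\Spec\kappa$ as the minimal closed $G$-invariant subscheme and proving it is a single orbit through the torsor computation, while keeping the scheme-theoretic orbit bookkeeping of Section~\ref{Sec:groups} (orbit closures versus orbits) consistent.
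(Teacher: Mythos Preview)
Your proof is correct and follows essentially the same route as the paper: reduce to $A_0=k$ via Lemma~\ref{Lem:univers}, identify the subgroup $L_0=L'$ of degrees supporting units, show $\mathfrak m=\bigoplus_{n\notin L_0}A_n$ is the unique maximal homogeneous ideal, and conclude that $V(\mathfrak m)$ is a single orbit contained in every orbit closure. The only difference is cosmetic: where the paper simply asserts $O\simeq\Spec(k[L'])$ is a single orbit, you spell this out as a $\mathbf D_{L_0}$-torsor computation, which is in fact slightly more careful (your $\kappa$ need not be isomorphic to $k[L_0]$ as a $k$-algebra, only a twist of it).
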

\begin{proof}
By Lemma~\ref{Lem:univers} we can replace $Y$ with $y=\Spec(k(y))$ and $X$ with $X_y=X\times_Yy$. So, we can assume that $Y=\Spec(k)$ for a field $k$ and $X=\Spec(A)$ for an $L$-graded $k$-algebra $A=\oplus_{n\in L}A_n$ with $A_0=k$. The set $L'$ of elements $n\in L$, such that $A_n$ contains a unit of $A$, is a subgroup of $L$. Furthermore, if $n\in L'$ then any non-zero element of $A_n$ is a unit because $A_0$ is a field. So, $I=\oplus_{n\in L\setminus L'}A_n$ is the maximal {\em homogeneous} ideal of $A$. The closure of any orbit is given by a homogeneous ideal, hence contains $O=\Spec(A/I)$. It remains to observe that $O\toisom\Spec(k[L'])$ is a single orbit with stabilizer $\mathbf{D}_{L/L'}$.
\end{proof}

\begin{example}
If $G=\GGm$ then there are two types of fibers: either $X_y$ contains a single $G$-invariant closed point, or $X_y$ is a single orbit with stabilizer $\mu_n$.
\end{example}

Recall that a morphism $X\to Y$ is {\em submersive} if $U\subset Y$ is open if and only if its preimage is open \cite[{\tt tag/0406}]{stacks}. Lemmas~\ref{Lem:univers} and \ref{Lem:fib}, and \cite[\S0.2, Remarks 5 and 6]{GIT} imply the following result.

\begin{corollary}\label{univcor}
If $G=\bfD_L$ acts on $X=\Spec(A)$ then $X\sslash G$ is a universal categorical quotient and the quotient morphism $X\to X\sslash G$ is submersive.
\end{corollary}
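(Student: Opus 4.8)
The plan is to assemble the two preceding lemmas with the cited remarks from GIT, so the proof is essentially a bookkeeping argument rather than a new computation. First I would record the affine picture: the quotient morphism $q\colon X=\Spec(A)\to Y=\Spec(A_0)$ is affine and $G$-invariant for the trivial action on $Y$, and $A_0=A^G$ is precisely the ring of invariants. This already handles invariant morphisms to \emph{affine} test schemes: such a morphism $g\colon X\to\Spec(R)$ corresponds to a ring map $R\to A$ whose image is $G$-invariant, hence lands in $A_0$, giving a unique factorization through $q$. So $q$ has the categorical-quotient universal property against affine targets, and by Lemma~\ref{Lem:univers} this property persists under affine base change $Y'\to Y$, since there $X'\sslash G=Y'$.

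Next I would upgrade to arbitrary test schemes and to full universality using the locality of (universal) categorical quotients on the base, \cite[\S0.2, Remark 5]{GIT}. Given an arbitrary $Y'\to Y$, cover $Y'$ by affine opens $Y'_i$. The point enabling the reduction is that every morphism \emph{between} affine schemes is affine (distinguished opens of $Y=\Spec(A_0)$ pull back to distinguished opens of each affine $Y'_i$), so each $Y'_i\to Y$ is affine and Lemma~\ref{Lem:univers} applies, yielding $X\times_YY'_i\sslash G=Y'_i$. Remark~5 then glues these local quotients to show $X\times_YY'\to Y'$ is again a categorical quotient, so $q$ is a \emph{universal} categorical quotient. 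The same gluing, applied on a general target $Z$ after restricting to an affine cover of $Z$, is what promotes the affine-target statement of the first paragraph to the categorical-quotient property against all schemes.

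For submersiveness I would invoke Lemma~\ref{Lem:fib}. Each fiber $X_y$ is nonempty, as it contains the unique special orbit $O\toisom\Spec(k(y)[L'])$; hence $q$ is surjective. Moreover Lemma~\ref{Lem:fib} says every orbit in $X_y$ specializes to this single closed orbit, which is exactly the saturation/connectedness input needed to conclude that $G$-invariant opens of $X$ are pulled back from $Y$. With $q$ surjective and a universal categorical quotient, \cite[\S0.2, Remark 6]{GIT} gives that $Y$ carries the quotient topology, i.e.\ $q$ is submersive: concretely, a subset $U\subseteq Y$ with $q^{-1}(U)$ open is recognized as open by base-changing along $U$ and its closed complement and using surjectivity.

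I expect the genuine content to be concentrated in two places. The first is the passage from affine to arbitrary base change, which is smooth only because maps between affine schemes are affine, letting Lemma~\ref{Lem:univers} feed Remark~5; I would state this observation explicitly. The second, and the real geometric heart, is the saturation of invariant opens underlying both the factorization through $q$ for non-affine targets and the submersiveness statement; this is precisely where Lemma~\ref{Lem:fib}'s description of fibers (a unique closed orbit lying in the closure of every other orbit) is indispensable, and everything else is a direct citation of the GIT remarks.
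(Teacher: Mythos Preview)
Your overall strategy matches the paper's one-line proof: feed Lemmas~\ref{Lem:univers} and \ref{Lem:fib} into \cite[\S0.2, Remarks 5 and 6]{GIT}, with Lemma~\ref{Lem:univers} + Remark~5 handling universality and Lemma~\ref{Lem:fib} + Remark~6 handling submersiveness.

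There is, however, a genuine error in your expansion. The claim that ``$G$-invariant opens of $X$ are pulled back from $Y$'' is false: with $\GGm$ acting hyperbolically on $\bbA^2=\Spec k[x,y]$ (so $Y=\Spec k[xy]$), the open $\{x\neq 0\}$ is $G$-invariant but not $q$-saturated, since the fiber over $0\in Y$ also contains the origin and the punctured $y$-axis. You lean on this false claim both for submersiveness and for the factorization against non-affine targets, and your sentence ``the same gluing, applied on a general target $Z$ after restricting to an affine cover of $Z$'' does not work as written, because Remark~5 is locality on $Y$, not on $Z$.

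What Lemmas~\ref{Lem:univers} and \ref{Lem:fib} actually give is the weaker (but sufficient) statement that closed $G$-invariant subsets have closed image: for $W=V(I)$ with $I$ homogeneous, Lemma~\ref{Lem:univers} yields $(A/I)_0=A_0/I_0$, and Lemma~\ref{Lem:fib} makes $\Spec(A/I)\to\Spec(A_0/I_0)$ surjective, so $q(W)=V(I_0)$. This immediately gives submersiveness. For a $G$-invariant $g\:X\to Z$ with $\{Z_i\}$ an affine cover of $Z$, set $V_i=Y\setminus q(X\setminus g^{-1}(Z_i))$; these are open, one checks $q^{-1}(V_i)\subset g^{-1}(Z_i)$, and the unique-closed-orbit part of Lemma~\ref{Lem:fib} shows $\{V_i\}$ covers $Y$. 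Now the affine case plus locality on $Y$ finishes the factorization. This is the content hidden in the GIT remarks; once you replace your saturation claim with the closed-image statement, the rest of your outline goes through.
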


\begin{example}\label{borelexam}
For completeness, we note that the above theory completely breaks down for non-reductive groups. The classical example is obtained when $X=\Spec(A)$ is ${\rm GL}_2(k)$ and $G$ is a Borel subgroup acting on $X$ on the left. Then $A^G=k$, but the categorical quotient is $\PP^1_k$. In particular, $\Spec(A^G)$ is just the affine hull of the categorical quotient.
\end{example}

\subsection{Noetherian $L$-graded rings}
A theorem of S. Goto and K. Yamagishi states that any noetherian $L$-graded ring is finitely generated over the subring of invariants, see \cite{Goto-Yamagishi}. In the case of a finite group action (not necessarily commutative) an analogous claim was recently proved by Gabber, see \cite[Exp. IV, Prop. 2.2.3]{Illusie-Temkin}. It seems that the work \cite{Goto-Yamagishi} is not widely known in algebraic geometry; at least, we have reproved the theorem (in a more complicated way!) before finding the reference. For the sake of completeness, we outline the proof of \cite[Theorem 1.1]{Goto-Yamagishi} below.

\begin{proposition}[Goto-Yamagishi]\label{noetherprop}
Assume that $A=\oplus_{n\in L}A_n$ is a noetherian $L$-graded ring. Then $A_0$ is noetherian, each $A_n$ is a finitely generated $A_0$-module, and $A$ is a finitely generated $A_0$-algebra.
\end{proposition}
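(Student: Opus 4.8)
The plan is to prove the three assertions in turn, reducing the algebra-generation statement to the case of a single $\ZZ$-grading, which carries all the content. First, to see that $A_0$ is noetherian, take an ascending chain of ideals $J_1\subseteq J_2\subseteq\cdots$ in $A_0$ and extend each to $J_iA\subseteq A$. Since $J_i\subseteq A_0$, the ideal $J_iA$ is homogeneous with degree-zero part $(J_iA)_0=J_iA_0=J_i$; as $A$ is noetherian the chain $J_iA$ stabilizes, hence so does $J_i$.

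Next I would show, for an arbitrary grading group, that each piece $A_n$ is a finitely generated $A_0$-module. Given an ascending chain $N_1\subseteq N_2\subseteq\cdots$ of $A_0$-submodules of $A_n$, consider the ideals $N_iA$. Because every element of $N_i$ is homogeneous of the single degree $n$, one has $(N_iA)_n=N_iA_0=N_i$, so distinct $N_i$ produce distinct ideals; noetherianity of $A$ then forces the chain $N_i$ to stabilize. Thus $A_n$ has ACC on $A_0$-submodules and is in particular finitely generated. This settles the second assertion and will be used in what follows.

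For the algebra-finiteness of $A$ over $A_0$ I would induct on the free rank $r$ in $L\cong\ZZ^r\oplus T$. When $r=0$ the group $L=T$ is finite, so $A=\bigoplus_{n\in L}A_n$ is a finite direct sum of finitely generated $A_0$-modules, hence finite as an $A_0$-module and a fortiori as an algebra. When $r\ge 1$, write $L=\ZZ\oplus L''$ with $L''=\ZZ^{r-1}\oplus T$ and coarsen the grading to a $\ZZ$-grading $A=\bigoplus_{m\in\ZZ}A^{(m)}$, where $A^{(m)}=\bigoplus_{n''\in L''}A_{(m,n'')}$. The $\ZZ$-graded case, treated below, shows that $A^{(0)}$ is noetherian and that $A$ is a finitely generated $A^{(0)}$-algebra. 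Since $A^{(0)}$ is an $L''$-graded noetherian ring with degree-zero part $A_0$, the inductive hypothesis makes it a finitely generated $A_0$-algebra, and composing the two yields the claim for $A$.

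The essential and, I expect, hardest step is the two-sided $\ZZ$-graded case, where degrees are not well-ordered and the usual positively-graded termination fails. Here I would work with the ideals $\mathfrak p=(A_{>0})$ and $\mathfrak n=(A_{<0})$ generated by the strictly positive and strictly negative parts. Being finitely generated and generated by the positive parts, $\mathfrak p$ is already generated by $\bigoplus_{0<m\le d}A_m$ for some $d$; combined with the second assertion, this produces a finite homogeneous generating set $u_1,\dots,u_a$ of $\mathfrak p$ with degrees in $[1,d]$, and symmetrically $v_1,\dots,v_b$ of $\mathfrak n$ with degrees in $[-e,-1]$. For $n>0$ the inclusion $A_n\subseteq\mathfrak p$ gives $A_n=\sum_iA_{n-d_i}u_i$, and for $n<0$ likewise $A_n=\sum_jA_{n+e_j}v_j$. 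Put $C=A_0[u_i,v_j]$ and $B=\max(d,e)$. An induction on $|n|$ then shows $A_n\subseteq C\cdot\bigoplus_{|k|\le B}A_k$: the cases $|n|\le B$ are immediate, while for $n>B$ each index satisfies $0<n-d_i<n$, so $|n-d_i|<|n|$ and the inductive hypothesis applies, the case $n<-B$ being symmetric. Hence $A=C\cdot\bigoplus_{|k|\le B}A_k$; replacing the finitely many pieces $A_k$, $|k|\le B$, by finite $A_0$-generating sets exhibits $A$ as a finite module over the finitely generated $A_0$-algebra $C$, whence $A$ is a finitely generated $A_0$-algebra. The crux is exactly this $|n|$-induction, which terminates only because the generators of $\mathfrak p$ and $\mathfrak n$ have bounded degree — a fact that itself rests on the module-finiteness of the graded pieces.
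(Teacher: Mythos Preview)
Your proof is correct and follows essentially the same approach as the paper's: both first show $MA\cap A_n=M$ for $A_0$-submodules $M\subseteq A_n$ to get noetherianity of $A_0$ and module-finiteness of each $A_n$, then reduce the algebra-finiteness to the case $L=\ZZ$, and finally use that the ideal generated by the positive (resp.\ negative) part has generators of bounded degree to run an induction on the degree. The only cosmetic differences are that the paper splits $L$ directly into cyclic summands rather than peeling off one $\ZZ$ at a time, and in the $\ZZ$-graded step it treats $A_{\ge 0}$ alone (invoking symmetry for $A_{\le 0}$) instead of your simultaneous induction on $|n|$; neither variation changes the substance.
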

\begin{proof}
If $n\in L$ then for any $A_0$-submodule $M\subseteq A_n$ the ideal $MA$ of $A$ satisfies $MA\cap A_n=M$. It follows that each $A_n$ is a noetherian $A_0$-module, so $A_0$ is a noetherian ring and each $A_0$-module $A_n$ is finitely generated. It remains to prove that $A$ is finitely generated over $A_0$, and the case of a finite $L$ is, now, obvious.

In general, $L$ is a direct sum of cyclic groups, and using that $A^{L'\oplus L''}=(A^{L'})^{L''}$, we reduce the claim to the case when $L$ is cyclic. Thus, we can assume that $L=\ZZ$. It suffices to prove that $A_{\ge 0}=\oplus_{n\ge 0}A_n$ is finitely generated over $A_0$. Indeed, the same is then true for $A_{\le 0}=\oplus_{n\le 0}A_n$ by the symmetry, and we win.

Set $A_+=\oplus_{n>0}A_n$. The homogeneous ideal $A_+A$ is finitely generated, so we can choose its homogeneous generators $f_1\.f_l\in A_+$. Note that $n_i=\deg(f_i)>0$. Set $n=\max_i n_i$ and let $C\subseteq A_{\ge 0}$ be the $A_0$-subalgebra generated by $A_0\.A_n$. We claim that $C=A_{\ge 0}$, so the latter is finitely generated over $A_0$. Indeed, by induction on $m>n$ we can assume that $A_0\.A_{m-1}$ lie in $C$. Any $g\in A_m$ can be represented as $\sum g_if_i$, and the equality is preserved when we remove from each $g_i$ the components of degree different from $m-n_i$. But then $g_i\in A_{m-n_i}\subset C$ by the induction assumption, and so $g\in C$.
\end{proof}

\subsection{$L$-local rings}\label{Sec:local}
In this section we study $L$-local rings that play the role of local rings among $L$-graded rings.

\begin{remark}
In fact, one can develop an $L$-graded analogue of commutative algebra (and algebraic geometry) which goes rather far. See \cite[\S1]{Temkin-local-properties}, where graded versions of fields, local rings, fields of fractions, prime ideals, spectra, valuation rings, etc., were introduced. Many formulations and arguments are extended to the graded case just by replacing ``elements" (of a ring or a module) with ``homogeneous elements".
\end{remark}

\subsubsection{Maximal homogeneous ideals}
By a {\em maximal homogeneous ideal} of an $L$-graded ring $A$ we mean any homogeneous ideal $m\subsetneq A$ such that no homogeneous ideal $n$ satisfies $m\subsetneq n\subsetneq A$. Note that $m$ does not have to be a maximal ideal of $A$.

\subsubsection{Graded fields}
An $L$-graded ring $k$ is called an {\em $L$-graded field} if $0$ is the only proper homogeneous ideal of $A$. Equivalently, any non-zero homogeneous element of $A$ is invertible. Note that $m\subseteq A$ is a maximal homogeneous ideal if and only if $A/m$ is a graded field. Graded fields are analogues of fields in the category of graded rings. In particular, it is easy to see that any graded module over $k$ is a free $k$-module, see \cite[Lemma 1.2]{Temkin-local-properties}.

\subsubsection{$L$-local rings}
An $L$-graded ring $A$ that possesses a single maximal homogeneous ideal $m$ will be called {\em $L$-local},\index{Llocal ring@$L$-local ring} and we will often use the notation $(A,m)$. A homogeneous homomorphism $\phi\:A\to B$ of $L$-local rings is called {\em $L$-local} if it takes the maximal homogeneous ideal of $A$ to that of $B$. Here are a few other ways to characterize $L$-local rings.

\begin{lemma}\label{Lem:Llocal}
For an $L$-graded ring $A$ the following conditions are equivalent:

(i) $A$ is $L$-local.

(ii) The ring $A_0$ is local.

(iii) The action of $\bfD_L$ on $\Spec(A)$ possesses a single closed orbit.
\end{lemma}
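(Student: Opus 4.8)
The statement to prove is Lemma~\ref{Lem:Llocal}, characterizing $L$-local rings via three equivalent conditions. Let me sketch a proof plan.

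=== BEGIN PROOF PROPOSAL ===

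The plan is to prove the cycle of implications $(i)\Leftrightarrow(ii)$ and $(i)\Leftrightarrow(iii)$, treating the two halves separately since each translates the defining condition into a different language (ring-theoretic for (ii), geometric for (iii)).

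First I would establish $(i)\Leftrightarrow(ii)$ by setting up a bijection between homogeneous ideals of $A$ and certain ideals of $A_0$. The key observation is that a maximal homogeneous ideal $m$ of $A$ is determined by its degree-zero part together with the set of degrees $n$ for which $A_n$ is \emph{not} contained in $m$; more precisely, if $A_0$ is local with maximal ideal $m_0$, I would define $m=m_0\oplus\bigoplus_{n\neq 0}A_n$ (mimicking the construction in the proof of Lemma~\ref{Lem:fib}) and verify it is the unique maximal homogeneous ideal. Conversely, given a maximal homogeneous ideal $m$ of $A$, its degree-zero part $m\cap A_0$ should be the unique maximal ideal of $A_0$: the quotient $A/m$ is a graded field, so $(A/m)_0=A_0/(m\cap A_0)$ has no nontrivial homogeneous ideals of degree zero, which for an ungraded ring means it is a field, forcing $m\cap A_0$ to be maximal. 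The main point to check carefully is uniqueness on both sides—that any homogeneous ideal contracting to $m_0$ and lying between $m$ and $A$ must equal $m$, and that any maximal ideal of $A_0$ arises this way. This is where the argument is most delicate, since one must rule out the possibility of several maximal homogeneous ideals with the same degree-zero contraction; but this is controlled by the fact that in degrees $n\neq 0$ a maximal homogeneous ideal must contain all of $A_n$ that it possibly can, pinning it down.

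Next I would establish $(i)\Leftrightarrow(iii)$ using the orbit theory already developed. Closed orbits in $\Spec(A)$ correspond to maximal homogeneous ideals: an orbit is closed exactly when its defining ideal is a maximal homogeneous ideal, since orbit closures are cut out by homogeneous ideals (as noted in the proof of Lemma~\ref{Lem:fib}) and a closed orbit is one whose homogeneous ideal admits no strictly larger proper homogeneous ideal. Thus the action has a single closed orbit if and only if $A$ has a single maximal homogeneous ideal, which is precisely condition $(i)$. I expect this equivalence to be the more routine of the two, essentially a dictionary translation once the correspondence ``closed orbits $\leftrightarrow$ maximal homogeneous ideals'' is spelled out.

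The hardest part will be the uniqueness direction in $(i)\Rightarrow(ii)$ and its converse, namely showing the maximal homogeneous ideals of $A$ are in bijection with the maximal ideals of $A_0$, rather than just relating existence. Once that bijection is clean, all three conditions reduce to the cardinality of the same set being one, and the lemma follows immediately.

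=== END PROOF PROPOSAL ===
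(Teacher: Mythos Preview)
Your treatment of $(i)\Leftrightarrow(iii)$ matches the paper's: maximal homogeneous ideals correspond to minimal closed equivariant subsets, which are exactly the closed orbits.

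For the remaining equivalence you take a different route than the paper. The paper proves $(ii)\Leftrightarrow(iii)$ in one line by invoking Lemma~\ref{Lem:fib}: each fiber of $\Spec(A)\to\Spec(A_0)$ contains a unique closed orbit, so closed orbits of $\Spec(A)$ biject with closed points of $\Spec(A_0)$, i.e.\ with maximal ideals of $A_0$. You instead attempt $(i)\Leftrightarrow(ii)$ directly, via an explicit bijection between maximal homogeneous ideals of $A$ and maximal ideals of $A_0$.

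There is a genuine gap in your construction for $(ii)\Rightarrow(i)$. The ideal you write down, $m=m_0\oplus\bigoplus_{n\neq 0}A_n$, is \emph{not} in general the maximal homogeneous ideal, and need not even be a proper ideal. Take $A=k[t,t^{-1}]$ with the standard $\ZZ$-grading: then $A_0=k$ is local with $m_0=0$, but your $m$ contains $t$ and $t^{-1}$, hence $1=t\cdot t^{-1}$, so $m=A$. The actual maximal homogeneous ideal here is $(0)$. Your parenthetical remark that the degrees $n$ with $A_n\not\subset m$ matter is exactly right, but the formula you then write ignores it; the correct construction (as in Lemma~\ref{Lem:fib}) sets $L'=\{n:A_n\text{ contains a unit}\}$ and takes $m=m_0\oplus\bigoplus_{n\notin L'}A_n$ only when $A_0$ is a field, and in general requires more care.

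Your direct approach can be salvaged without the explicit formula: if $m,m'$ are distinct maximal homogeneous ideals then $m+m'=A$, so looking at degree-zero components gives $1\in(m\cap A_0)+(m'\cap A_0)$; since both contractions are maximal ideals of $A_0$ (as you correctly argue via graded fields), this forces $A_0$ to have at least two maximal ideals. This yields the bijection cleanly. But the paper's route via Lemma~\ref{Lem:fib} is shorter, since that lemma has already done the work.
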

\begin{proof}
Homogeneous ideals of $A$ correspond to closed $\bfD_L$-equivariant subsets of $\Spec(A)$, hence (i) is equivalent to (iii). On the other hand, each fiber of the quotient map $\Spec(A)\to\Spec(A_0)$ contains a single closed orbit hence (ii) is equivalent to (iii).
\end{proof}

\begin{example}\label{localexam}
A homogeneous ideal $p\subsetneq A$ is called {\em $L$-prime} in \cite[\S1]{Temkin-local-properties} if for any two homogeneous elements $x,y$ with $xy\in p$ at least one of them lies in $p$. Inverting all homogeneous elements in $A\setminus p$ one obtains an $L$-local ring $A_{p,L}$ that we call the {\em homogeneous localization} of $A$ at $p$. Even if $p$ is prime in the usual sense, $A_{p,L}$ is usually smaller than the usual localization $A_p$.
\end{example}

\subsubsection{Homogeneous nilpotent radical}
By the {\em homogeneous nilpotent radical} we mean the ideal generated by all homogeneous nilpotent elements. The following result is proved precisely as its classical ungraded analogue.

\begin{lemma}\label{homradlem}
Let $A$ be an $L$-graded ring. Then the homogeneous nilpotent radical of $A$ coincides with the intersection of all $L$-prime ideals of $A$.
\end{lemma}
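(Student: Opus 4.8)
The plan is to transcribe the classical proof that the nilradical equals the intersection of all prime ideals, systematically replacing ``element'' by ``homogeneous element''. Write $N$ for the homogeneous nilpotent radical and $P=\bigcap_{\mathfrak p}\mathfrak p$ for the intersection taken over all $L$-prime ideals $\mathfrak p$ of $A$. Both $N$ and $P$ are homogeneous ideals ($N$ is generated by homogeneous elements, and $P$ is an intersection of homogeneous ideals), so to prove $N=P$ it suffices to compare them on homogeneous elements.

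For the inclusion $N\subseteq P$, I would first show that every homogeneous nilpotent element $x$ lies in every $L$-prime ideal $\mathfrak p$, by induction on the least $n$ with $x^n\in\mathfrak p$. For $n=1$ this is immediate; for $n>1$ the factorization $x^n=x\cdot x^{n-1}$ together with the $L$-prime property (both factors being homogeneous) gives $x\in\mathfrak p$ or $x^{n-1}\in\mathfrak p$, and the second case is handled by the induction hypothesis. Since $\mathfrak p$ is an ideal containing all the homogeneous generators of $N$, we conclude $N\subseteq\mathfrak p$, and intersecting over all $\mathfrak p$ yields $N\subseteq P$.

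For the reverse inclusion $P\subseteq N$, I would argue contrapositively on a homogeneous element $x$ with $x\notin N$. Then $x$ is not nilpotent, so the homogeneous multiplicative set $S=\{1,x,x^2,\dots\}$ avoids $0$. Applying Zorn's lemma to the poset of homogeneous ideals disjoint from $S$ (the union of a chain of such ideals is again homogeneous and disjoint from $S$, and the zero ideal shows the poset is nonempty), I select a homogeneous ideal $\mathfrak p$ maximal subject to $\mathfrak p\cap S=\emptyset$; since $1\in S$ this $\mathfrak p$ is proper. The crux is to verify that $\mathfrak p$ is $L$-prime: if $a,b$ are homogeneous with $ab\in\mathfrak p$ but $a,b\notin\mathfrak p$, then the strictly larger homogeneous ideals $\mathfrak p+Aa$ and $\mathfrak p+Ab$ must meet $S$ by maximality, say $x^m=p_1+r_1a$ and $x^n=p_2+r_2b$ with $p_i\in\mathfrak p$. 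Expanding $x^{m+n}=(p_1+r_1a)(p_2+r_2b)$ places every term in $\mathfrak p$, the last using $ab\in\mathfrak p$, contradicting $\mathfrak p\cap S=\emptyset$. Hence $\mathfrak p$ is $L$-prime with $x\notin\mathfrak p$, so $x\notin P$, proving $P\subseteq N$.

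The only step where the graded setting genuinely diverges from the classical one is this Zorn's lemma construction: it must be carried out entirely inside the lattice of homogeneous ideals, and one must observe that the $L$-prime condition is tested only on homogeneous $a,b$, which is precisely what the maximality argument supplies. Every other ingredient is a direct transcription of the ungraded proof and should present no difficulty.
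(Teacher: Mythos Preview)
Your proof is correct and is exactly what the paper intends: it states that the result ``is proved precisely as its classical ungraded analogue'' without giving further details, and your transcription of the standard nilradical argument with ``element'' replaced by ``homogeneous element'' is precisely that.
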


\subsubsection{Graded Nakayama's lemma}

\begin{proposition}\label{nakayamaprop}
Let $(A,m)$ be an $L$-local ring and let $M$ be a finitely generated $L$-graded $A$-module. Then $mM=M$ if and only if $M=0$.
\end{proposition}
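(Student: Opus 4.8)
The plan is to prove the nontrivial implication $mM=M\Rightarrow M=0$ by a graded version of the determinant (Cayley--Hamilton) trick, the one subtlety being to arrange that the resulting ``$1+a$'' has $a$ of degree zero, so that we may invoke locality of $A_0$. The reverse implication is immediate, since $m\cdot 0=0$.

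First I would reduce to a homogeneous generating set. Since $M$ is finitely generated and $L$-graded, replacing any finite generating set by the (finitely many) nonzero homogeneous components of its members yields homogeneous generators $m_1\.m_r$ with $m_i$ of degree $d_i$. Assuming $mM=M$, each $m_i$ lies in $mM$, so $m_i=\sum_j a_{ij}m_j$ for some $a_{ij}\in m$; projecting the right-hand side onto the degree-$d_i$ component lets me take each $a_{ij}$ homogeneous of degree $d_i-d_j$, still lying in the homogeneous ideal $m$.

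Next comes the determinant trick. Writing $N=(a_{ij})$, the relations say $(I-N)(m_1\.m_r)^{\mathrm{t}}=0$, so multiplying by the adjugate gives $\det(I-N)\,m_j=0$ for every $j$. The key computation is that $\det(I-N)$ is homogeneous of degree $0$: each entry $(I-N)_{ij}$ is homogeneous of degree $d_i-d_j$, so in the Leibniz expansion every term $\prod_i(I-N)_{i\sigma(i)}$ has degree $\sum_i(d_i-d_{\sigma(i)})=0$, as $\sigma$ is a permutation. Hence $\det(I-N)=1+a$, where $a$ collects the nonconstant terms, each of which involves some $a_{ij}\in m$, so that $a\in m\cap A_0$.

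Finally I would conclude using locality. By Lemma~\ref{Lem:Llocal}, $A_0$ is a local ring, and $m\cap A_0$ is a proper ideal of $A_0$ (since $1\notin m$), hence contained in its maximal ideal; thus $a$ is a non-unit and $1+a$ is a unit of $A_0$, a fortiori of $A$. From $(1+a)m_j=0$ for all $j$ we get that every generator vanishes, so $M=0$. The main obstacle---and the only place the grading is genuinely used---is the degree bookkeeping that forces $a$ into $A_0$: without it $1+a$ need not be invertible (e.g.\ $1+t$ in $k[t]$), so this homogeneity of the determinant is the heart of the argument.
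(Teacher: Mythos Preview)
Your proof is correct, and it takes a genuinely different route from the paper's.

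The paper argues geometrically: the support of $M$ in $\Spec(A)$ is $\bfD_L$-equivariant, hence cut out by a homogeneous ideal, so if nonempty it must contain the closed orbit $V(m)$. Thus $M=0$ if and only if $M\otimes_A k(x)=0$ for every $x\in V(m)$, and the latter is equivalent to $M/mM=0$ since $A/m$ is a graded field. In effect the paper reduces the graded statement to the ungraded Nakayama (hidden in the characterization of the support) plus the equivariance of the support.

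Your approach is a direct algebraic adaptation of the Cayley--Hamilton proof, with the grading bookkeeping doing the real work: forcing $\det(I-N)$ into $A_0$ so that locality of $A_0$ (from Lemma~\ref{Lem:Llocal}) makes $1+a$ a unit. This is more self-contained---it does not appeal to the ungraded Nakayama---and it actually yields the slightly stronger conclusion that for any homogeneous ideal $I$ with $IM=M$ there exists $a\in I\cap A_0$ with $(1+a)M=0$. The paper's argument, on the other hand, is shorter and highlights the geometric picture (equivariant support, closed orbit) that pervades the rest of Section~\ref{Sec:L-graded}.
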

\begin{proof}
Note that $M=0$ if and only if its support is empty. By Nakayama's lemma, the latter consists of all points $x\in\Spec(A)$ such that $M(x)=M\otimes_Ak(x)\neq 0$. Since the support is $\bfD_L$-equivariant, it is given by a homogeneous ideal, and hence it is either empty or contains $V(m)$. It follows that $M=0$ if and only if $M(x)=0$ for any $x\in V(m)$. The latter is equivalent to the vanishing of $M/mM$.
\end{proof}

As in the usual situation, Nakayama's lemma has the following immediate corollary.

\begin{corollary}\label{nakayamacor}
Let $(A,m)$ be an $L$-local ring with residue graded field $k=A/m$, and let $M$ a finitely generated $L$-graded $A$-module. Then,

(i) A homogeneous homomorphism of $L$-graded $A$-modules $\phi\:N\to M$ is surjective if and only if $\phi\otimes_A k$ is surjective.

(ii) Homogeneous elements $m_1\.m_l$ generate $M$ if and only if their images generate $M/mM$.

(iii) The minimal cardinality of a set of homogeneous generators of $M$ equals to the rank of the free $k$-module $M/mM$.
\end{corollary}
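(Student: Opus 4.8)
The plan is to deduce all three parts of Corollary~\ref{nakayamacor} from the Graded Nakayama's Lemma (Proposition~\ref{nakayamaprop}) exactly as one deduces the classical corollaries from the classical Nakayama lemma, being careful to stay within the homogeneous category throughout. First I would prove (i). Let $\phi\:N\to M$ be a homogeneous homomorphism of finitely generated $L$-graded $A$-modules and set $Q=\coker(\phi)$, which is again a finitely generated $L$-graded $A$-module since $M$ is. Right-exactness of $-\otimes_A k$ gives that $\phi\otimes_A k$ is surjective if and only if $Q\otimes_A k = Q/mQ = 0$. By Proposition~\ref{nakayamaprop} applied to $Q$, the vanishing of $Q/mQ$ is equivalent to $Q=0$, i.e. to surjectivity of $\phi$. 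The only point requiring a word is that $\coker(\phi)$ is finitely generated, which holds because it is a quotient of $M$.

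Next I would derive (ii) as a special case of (i). Given homogeneous elements $m_1\.m_l\in M$, form the free graded module $N=\bigoplus_{i=1}^l A(-\deg m_i)$ with the degree shifts chosen so that the map $\phi\:N\to M$ sending the $i$-th basis element to $m_i$ is a homogeneous homomorphism. Then $m_1\.m_l$ generate $M$ precisely when $\phi$ is surjective, and their images generate $M/mM$ precisely when $\phi\otimes_A k$ is surjective; so (ii) is immediate from (i). Here one uses the existence of graded free modules and the ability to shift gradings, which is routine in the $L$-graded setting.

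Finally I would prove (iii). Since $k=A/m$ is a graded field, every graded $k$-module is free (this is recorded in the discussion of graded fields above, citing \cite[Lemma 1.2]{Temkin-local-properties}), so $M/mM$ has a well-defined rank $r$ as a free $k$-module, namely the cardinality of any homogeneous $k$-basis. Choosing homogeneous elements of $M$ whose images form such a basis, part (ii) shows they generate $M$, so the minimal number of homogeneous generators is at most $r$. Conversely, if $m_1\.m_s$ is any homogeneous generating set of $M$, then by (ii) their images generate the free $k$-module $M/mM$, whence $s\ge r$. Combining the two inequalities gives that the minimal cardinality equals $r$.

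I do not expect a genuine obstacle here, since the entire argument is a transcription of the classical proof with ``element'' replaced by ``homogeneous element'' and ``free module'' replaced by ``graded free module.'' The one place demanding mild care is part (iii): one must know that finitely generated graded modules over the graded field $k$ are free with a well-defined rank, so that the phrase ``rank of the free $k$-module $M/mM$'' in the statement is meaningful; this is exactly what the theory of graded fields recalled earlier supplies.
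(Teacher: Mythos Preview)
Your argument is correct and is exactly the standard deduction the paper has in mind when it writes ``As in the usual situation, Nakayama's lemma has the following immediate corollary'' without further proof. One tiny wording slip: in (i) you call $N$ finitely generated, but the statement does not assume this and your own argument does not need it, since only $Q=\coker(\phi)$ must be finitely generated, which follows from $M$ being so.
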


\subsubsection{Equivariant Cartier divisors}
As a corollary of the graded Naka\-yama's lemma we can give the following characterization of equivariant divisors that will be used in \cite{AT1} when toroidal actions are studied. (The finite presentation assumption is only essential in the non-noetherian case.)

\begin{proposition}\label{Cartierprop}
Assume that $(A,m)$ is an $L$-local integral domain and $D\subset\Spec(A)$ is an equivariant finitely presented closed subscheme. Let $x$ be an arbitrary point of $V(m)$ and let $X_x=\Spec(\cO_{X,x})$ be the localization at $x$. Then the following conditions are equivalent:

(i) $D=V(f)$ for a homogeneous element $f\in A$,

(ii) $D$ is a Cartier divisor in $X$,

(iii) $D_x=D\times_XX_x$ is a Cartier divisor in $X_x$.
\end{proposition}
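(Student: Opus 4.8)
The plan is to prove the cycle of implications by disposing of the two easy directions (i)$\Rightarrow$(ii)$\Rightarrow$(iii) directly, and then reducing the crucial converse to a single comparison of minimal numbers of generators via the graded Nakayama lemma. Throughout, since $D$ is equivariant and finitely presented, it is cut out by a finitely generated homogeneous ideal $I\subseteq A$, where $X=\Spec(A)$. For (i)$\Rightarrow$(ii): if $I=(f)$ with $f$ homogeneous then, $A$ being a domain, a nonzero $f$ is a nonzerodivisor and $V(f)$ is a globally principal effective Cartier divisor (the degenerate case where $f$ is a unit gives $D=\emptyset$, still Cartier). The implication (ii)$\Rightarrow$(iii) is just the stability of Cartier divisors under the localization $X_x\to X$.

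For the substantive implication (iii)$\Rightarrow$(i), first I would invoke the graded Nakayama lemma (Corollary~\ref{nakayamacor}(iii)): the minimal number $d$ of homogeneous generators of the finitely generated graded module $I$ equals the rank of the free graded $k$-module $I/mI$, where $k=A/m$ is the residue graded field. Since graded modules over a graded field are free \cite[Lemma~1.2]{Temkin-local-properties}, we have $I/mI\cong k^{\oplus d}$ even as a plain $k$-module. The goal is then exactly to show $d=1$: this produces a single homogeneous generator $f$ of $I$, necessarily nonzero as $A$ is a domain and $I\neq 0$, giving $D=V(f)$ and hence (i).

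The heart of the matter is to relate $d$, which is measured at the closed orbit $V(m)$, to the number of generators of $I_x$ at the chosen point $x\in V(m)$. Here I would use that $x\in V(m)$ means the corresponding prime $\mathfrak p$ contains $m$, so that the structure map $A\to k(x)=\Frac(A/\mathfrak p)$ factors as $A\twoheadrightarrow A/m=k\to k(x)$, making $k(x)$ a $k$-algebra. Consequently
\[
I_x/m_xI_x \;=\; I\otimes_A k(x)\;=\;(I/mI)\otimes_k k(x)\;\cong\; k(x)^{\oplus d},
\]
so that $\dim_{k(x)} I_x/m_xI_x=d$. On the other hand, hypothesis (iii) says $D_x$ is a Cartier divisor in the local domain $\cO_{X,x}$, i.e.\ $I_x$ is principal and nonzero, whence $\dim_{k(x)} I_x/m_xI_x=1$ by the ordinary Nakayama lemma. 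Comparing the two values gives $d=1$, completing the argument.

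The only nontrivial point — and thus the main obstacle — is precisely the identification $\dim_k(I/mI)=\dim_{k(x)}(I_x/m_xI_x)$, namely that the number of homogeneous generators required at the closed orbit equals the number of generators required at the single point $x$. This rests on the freeness of $I/mI$ over the graded field $k$, which makes its rank insensitive to the base change $k\to k(x)$, together with the factorization of $A\to k(x)$ through $k$ — and it is exactly there that the hypothesis $x\in V(m)$ is used. Everything else is the formal behavior of principal and Cartier ideals under localization in a domain.
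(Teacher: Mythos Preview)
Your proposal is correct and follows essentially the same route as the paper: both arguments reduce (iii)$\Rightarrow$(i) to computing the rank of the free $A/m$-module $I/mI$ via the identification $I\otimes_A k(x)=(I/mI)\otimes_{A/m} k(x)$, using that $D_x$ Cartier forces this to be one-dimensional, and then invoking the graded Nakayama lemma (Corollary~\ref{nakayamacor}) to produce a single homogeneous generator. You have simply made explicit the factorization $A\to A/m\to k(x)$ and the freeness of $I/mI$ that the paper's two-line proof leaves implicit.
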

\begin{proof}
The only implication that requires a proof is (iii)$\implies$(i). By our assumptions, $D=V(I)$ for a finitely generated homogeneous ideal $I$, so $I/mI$ is a free $A/m$-module of a finite rank $d$. Since $D_x$ is Cartier, the $k(x)$-vector space $I\otimes_A k(x)=(I/mI)\otimes_A k(x)$ is one-dimensional, and we obtain that $d=1$. Then $I$ is generated by a single homogeneous element $f$ by Corollary~\ref{nakayamacor}.
\end{proof}

\subsubsection{Regular parameters}
Let $(A,m)$ be an $L$-local ring. Then $O=\Spec(A/m)$ is the closed orbit of $\Spec(A)$ and hence $A/m=k[L']$, where $k=A_0/m_0$ is the residue field of $A_0$ and $L'\subseteq L$ is a subgroup.

\begin{lemma}\label{paramlem}
Keep the above notation and assume that $A$ is a regular ring and the torsion degree of $L'$ is invertible in $k$. Let $n$ be the codimension of $O$ in $\Spec(A)$, then there exist homogeneous elements $t_1\.t_n\in A$ that generate $m$.
\end{lemma}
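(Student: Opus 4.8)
The plan is to reduce the statement to a count of homogeneous generators via the graded Nakayama lemma, and then to evaluate that count at a generic point of the closed orbit $O$, where the regularity of $A$ makes everything transparent. Since $A$ is regular it is in particular noetherian, so $m$ is finitely generated and $m/m^2$ is a finitely generated $L$-graded module over the residue graded field $A/m=k[L']$. Such a module is free over a graded field; write $d$ for its rank. By Corollary~\ref{nakayamacor}(iii) the number $d$ equals the minimal cardinality of a set of homogeneous generators of $m$, and by Corollary~\ref{nakayamacor}(ii) any homogeneous elements $t_1\.t_d$ whose images form a homogeneous basis of $m/m^2$ already generate $m$. Thus it suffices to prove $d=n$.

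To compute $d$ I would pass to a generic point $\eta$ of $O=V(m)$, that is, to a prime of $A$ minimal over $m$, with local ring $\cO_{X,\eta}=A_\eta$, which is regular because $A$ is. The decisive point is that $A/m=k[L']$ is reduced: writing $L'=\ZZ^r\oplus T$ with $T$ its torsion subgroup, the hypothesis that the torsion degree of $L'$ is invertible in $k$ makes $k[T]$ a finite separable (\'etale) $k$-algebra by Maschke's theorem, so $k[L']=\prod_i k_i[\ZZ^r]$ is a finite product of domains. Consequently $(A/m)_\eta=k(\eta)$ is a field, so $m\cO_{X,\eta}$ is the maximal ideal of the regular local ring $\cO_{X,\eta}$. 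Since localization is exact and $m/m^2$ is free of rank $d$ over $k[L']$, tensoring over $A/m$ with $k(\eta)$ gives
$$m\cO_{X,\eta}/(m\cO_{X,\eta})^2=(m/m^2)\otimes_{A/m}k(\eta)\cong k(\eta)^{\,d},$$
while regularity of $\cO_{X,\eta}$ identifies the left-hand dimension with $\dim\cO_{X,\eta}=\codim(\overline{\{\eta\}},\Spec(A))$. Hence $d=\codim(\overline{\{\eta\}},\Spec(A))$ for every generic point $\eta$ of $O$; as this common value is $\codim(O,\Spec(A))=n$, we conclude $d=n$ and obtain the desired homogeneous parameters $t_1\.t_n$.

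The main obstacle is exactly this reducedness input: without the assumption on the torsion degree of $L'$ the ring $A/m=k[L']$ acquires nilpotents (for instance $k[\ZZ/p]\cong k[s]/(s-1)^p$ in characteristic $p$), the localization $(A/m)_\eta$ becomes an artinian ring rather than a field, and then $m\cO_{X,\eta}$ is no longer the maximal ideal of $\cO_{X,\eta}$, so the comparison of $d$ with $\codim(O,\Spec(A))$ collapses. Everything else is routine: freeness over the graded field, exactness of localization, and the equality $\dim_{k(\eta)}\mathfrak n/\mathfrak n^2=\dim$ for a regular local ring $(\,\cO_{X,\eta},\mathfrak n)$. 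A pleasant byproduct is that, since $d$ is intrinsic to $A$, all irreducible components of $O$ automatically have the same codimension $d$ in $\Spec(A)$, so no convention about the codimension of a reducible closed set is needed.
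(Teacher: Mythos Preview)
Your proof is correct and follows essentially the same approach as the paper's. Both reduce via the graded Nakayama lemma to showing that the free $A/m$-module $m/m^2$ has rank $n$; the paper phrases this as ``$O$ is regular, hence the conormal sheaf has rank $n$'', while you unpack that statement by localizing at a generic point of $O$ and using regularity of $A_\eta$ directly. The hypothesis on the torsion of $L'$ is used identically in both arguments (to make $A/m=k[L']$ reduced, equivalently $O$ regular), and your explicit computation at $\eta$ is exactly how one verifies the conormal-rank claim the paper invokes.
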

\begin{proof}
By Nakayama's lemma~\ref{nakayamacor}(iii), we should only check that the rank of the free $A/m$-module $m/m^2$ is $n$. Note that $m/m^2$ defines the conormal sheaf to $O$. But $O$ is regular by our assumption on the torsion of $L'$, hence the rank of the conormal sheaf is $n$.
\end{proof}

\subsection{Strictly $L$-local rings}

\subsubsection{The definition}

An $L$-local ring $(A,m)$ is called {\em strictly $L$-local}\index{strictly $L$-local ring} if $m$ contains any $A_n$ with $n\neq 0$. Here are a few natural ways to reformulate this:

\begin{lemma}\label{strictlylocallem}
Let $(A,m)$ be an $L$-local ring and let $m_0=m\cap A_0$. Then the following conditions are equivalent:

(i) $A$ is strictly $L$-local.

(ii) the closed orbit of $\bfD_L$ on $\Spec(A)$ is a point.

(iii) $A/m=A_0/m_0$.

(iv) $m=m_0\oplus(\oplus_{0\neq n\in L}A_n)$.
\end{lemma}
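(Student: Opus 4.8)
The plan is to show that all four conditions are equivalent to the single auxiliary condition that the subgroup $L' \subseteq L$ supporting the residue graded field is trivial, and to reduce everything else to bookkeeping of graded pieces. First I would record the structure of the residue ring. Since $m$ is \emph{the} maximal homogeneous ideal, it is in particular homogeneous, so $m = \bigoplus_{n \in L} m_n$ with $m_n = m \cap A_n$, and $m_0 = m \cap A_0$ by definition. The quotient $A/m$ is a graded field (\S\ref{Sec:local}), with degree-zero part $(A/m)_0 = A_0/m_0 =: k$; since every nonzero homogeneous element of a graded field is invertible, the support $L' := \{n \in L : (A/m)_n \neq 0\}$ is a subgroup of $L$, each $(A/m)_n$ with $n \in L'$ is one-dimensional over $k$, and $A/m \cong k[L']$ as graded $k$-algebras, exactly as recalled in the discussion preceding Lemma~\ref{paramlem}.

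With this in hand, the equivalences (i)$\iff$(iii)$\iff$(iv) are immediate unwindings of the definitions, all amounting to $L' = \{0\}$. Indeed, (i) asserts $A_n \subseteq m$, i.e. $m_n = A_n$, i.e. $(A/m)_n = 0$, for every $n \neq 0$, which is precisely $L' = \{0\}$. Condition (iii), $A/m = A_0/m_0$, says $A/m$ is concentrated in degree zero, i.e. $(A/m)_n = 0$ for $n \neq 0$, again $L' = \{0\}$. Finally, using $m = \bigoplus_n m_n$ and $m_0 = m \cap A_0$, condition (iv) is the assertion $m_n = A_n$ for all $n \neq 0$, once more $L' = \{0\}$.

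The only substantive point is (ii). By the discussion preceding Lemma~\ref{paramlem} the closed orbit is $O = \Spec(A/m) = \Spec(k[L'])$, which by (the proof of) Lemma~\ref{Lem:fib} is a single $\bfD_L$-orbit with stabilizer $\bfD_{L/L'}$. I would then argue that $O$ is a point precisely when $L' = \{0\}$: if $L' = \{0\}$ then $A/m = k$ and $O = \Spec(k)$ is a point, while if $L' \neq \{0\}$ then $k[L']$ is not a field — being finitely generated as a subgroup of $L$, $L'$ either is infinite, whence $k[L']$ contains a Laurent polynomial ring $k[t,t^{-1}]$ and has positive Krull dimension, or is finite nontrivial, whence $k[L']$ is an Artinian $k$-algebra of dimension $\lvert L'\rvert > 1$. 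In either case $O = \Spec(k[L'])$ is not the spectrum of a field. This closes the cycle, since $L' = \{0\}$ is equivalent to each of (i), (iii), (iv).

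The main obstacle here is interpretive rather than technical: one must read ``$O$ is a point'' as ``$O$ is the spectrum of a field'' (equivalently, $O$ is reduced and topologically a single point). This is because in residue characteristic $p > 0$ a torsion subgroup $L'$ that is a $p$-group produces $O = \Spec(k[L'])$ with a single \emph{underlying} point even though $L' \neq \{0\}$ — the prototype being $\mu_p = \Spec(k[x]/(x-1)^p)$, a non-reduced fat point. With this understanding the equivalence (ii)$\iff L' = \{0\}$ holds, and the rest is the routine graded-piece bookkeeping carried out above.
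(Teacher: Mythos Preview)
The paper states this lemma without proof, treating it as an immediate reformulation of the definition. Your argument is correct and supplies the details the paper omits: the equivalences (i)$\Leftrightarrow$(iii)$\Leftrightarrow$(iv) are indeed pure graded bookkeeping, and your identification of the closed orbit with $\Spec(k[L'])$ matches the paper's own statement just before Lemma~\ref{paramlem}.

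Your observation about the interpretive subtlety in (ii) is a genuine point the paper glosses over. If one reads ``is a point'' as ``has a single underlying topological point,'' the equivalence fails in the example you give ($L'=\ZZ/p\ZZ$ in characteristic $p$, where $\Spec(k[L'])$ is a non-reduced fat point yet (i) fails). Reading ``point'' scheme-theoretically as $\Spec$ of a field, as you do, is consistent with the paper's scheme-theoretic definition of orbits and is the only reading under which the lemma is true. So your proof is both correct and slightly more careful than the paper's implicit treatment.
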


\subsubsection{Regularity and coinvariants}

\begin{lemma}[\protect{\cite[Corollary of Theorem 5.4]{Fogarty}}]\label{coinvlem}
If a strictly $L$-local ring $(A,m)$ is regular then the ring of coinvariants $A_L$ is regular.
\end{lemma}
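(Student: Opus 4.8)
The plan is to reduce the statement to a computation in the regular local ring $A_m$, where $m$ is the maximal homogeneous ideal, and then to exhibit the coinvariant ideal $I=\Ker(A\to A_L)$ as generated by part of a regular system of parameters. First I would record that in the strictly $L$-local case the closed orbit $O=V(m)$ is a single point, so by Lemma~\ref{strictlylocallem} the quotient $A/m=A_0/m_0$ is an honest field; hence $m$ is a genuine maximal ideal of $A$ and, since $A$ is regular, $A_m$ is a regular local ring. Because $A_L=A_0/I_0$ is itself local (its maximal ideal being $m_0/I_0$, as $I_0\subseteq m_0$) and regularity is a local property, it suffices to prove that $A_L=A_m/IA_m$ is regular, using that localizing the local ring $A_L$ at its maximal ideal returns $A_L$ and that localization commutes with the quotient $A\to A/I$.

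Next I would invoke the regular-parameters Lemma~\ref{paramlem}. In the strictly $L$-local case the subgroup $L'$ is trivial, so its torsion hypothesis is vacuous, and the lemma yields homogeneous generators $t_1\.t_n$ of $m$ which, having cardinality $n=\codim(O)=\dim A_m$, form a regular system of parameters of $A_m$. Writing $d_i=\deg(t_i)$, I would reorder so that $d_1\.d_r\neq 0$ and $d_{r+1}=\cdots=d_n=0$.

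The heart of the argument is the claim that $I=(t_1\.t_r)$. Since $t_1\.t_r$ are homogeneous of nonzero degree they lie in $I$, giving one inclusion. For the reverse inclusion I would apply the graded Nakayama Lemma (Proposition~\ref{nakayamaprop}) to the finitely generated module $I/(t_1\.t_r)$, reducing to the statement $I\subseteq (t_1\.t_r)+mI$. I would verify this degree by degree on homogeneous elements: for $a\in I_d=A_d$ with $d\neq 0$, writing $a=\sum_i c_it_i$ with $c_i$ homogeneous of degree $d-d_i$, the terms with $i\le r$ lie in $(t_1\.t_r)$, while for $i>r$ one has $d_i=0$, so $c_i\in A_d\subseteq I$ and hence $c_it_i\in mI$; for the degree-zero part, each generator $uv$ of $I_0=\sum_{n\neq 0}A_nA_{-n}$ has $u\in A_n\subseteq I$ and $v\in A_{-n}\subseteq m$ by strict locality, whence $I_0\subseteq mI$. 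Granting the claim, $A_L=A_m/(t_1\.t_r)A_m$ is the quotient of a regular local ring by part of a regular system of parameters, hence regular of dimension $n-r$.

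The main obstacle is precisely the identification $I=(t_1\.t_r)$: one must see that killing only the nonzero-degree parameters already produces the full coinvariant ideal, the subtle point being the degree-zero piece $I_0=\sum_{n\neq 0}A_nA_{-n}$, which is not visibly generated by the $t_i$ but lands inside $mI$ by strict locality and is thus absorbed by Nakayama. Everything else is bookkeeping with the grading.
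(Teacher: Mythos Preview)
Your proof is correct and follows essentially the same approach as the paper: both arguments use Lemma~\ref{paramlem} to obtain homogeneous regular parameters $t_1,\dots,t_n$ of $m$ and then identify the coinvariant ideal with the ideal generated by those $t_i$ of nonzero degree, so that $A_L$ is the quotient of the regular local ring $A_m$ by part of a regular system of parameters. The only organizational difference is that the paper first passes to $A'=A/(t_i:\deg t_i\neq 0)$ and then shows $A'$ is trivially graded (applying ordinary Nakayama to each $A'_n$ over $A'_0$), whereas you establish $I=(t_1,\dots,t_r)$ in one shot via graded Nakayama on $I/(t_1,\dots,t_r)$; the underlying mechanism is the same.
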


\begin{proof}
Choose $t_1\.t_l$ as in Lemma~\ref{paramlem}. Since $A/m$ is a field, they form a regular family of homogeneous parameters. First, consider the case when all $t_i$ are of degree zero. We claim that $A=A_0$, and so $A_L=A$ is regular. Indeed, we have that $m=m_0A$, where $m_0=m\cap A_0$. Choose $0\neq n\in L$. Since $A_n\subset m$, we obtain that $m_0A_n=A_n$. But $A_n$ is a finitely generated $A_0$-module by Proposition~\ref{noetherprop}, and so $A_n=0$ by Nakayama's lemma.

Assume now that the degrees are arbitrary, and reorder $t_i$ so that $t_1\.t_q$ are the only elements of degree 0. Then $A'=A/(t_{q+1}\.t_l)$ is regular and we have that $A'_L=A_L$. It remains to observe that the images of $t_1\.t_q$ form a regular family of parameters of $A'$, and so $A'_L=A'$ by the above case.
\end{proof}

\subsubsection{Completion}
By an {\em $L$-complete local ring}\index{Lcomplete local ring@$L$-complete local ring} we mean a complete local ring $(A,m)$ provided with a {\em formal $L$-grading} $A=\prod_{n\in L}A_n$ such that $A_n\subset m$ for each $n\neq 0$. In particular, $A$ is strictly $L$-local in the formal sense and its residue field is trivially graded. Homogeneous homomorphisms of $L$-complete local rings are defined in the obvious way. The main motivation for considering this class of rings is the following result.

\begin{proposition}\label{Prop:complete}
Assume that $L$ is a finitely generated abelian group and $(A,m)$ is a noetherian strictly $L$-local ring. Set $m_0=m\cap A_0$ and for each $n\in L$ let $\hatA_n$ denote the $m_0$-adic completion of the $A_0$-module $A_n$. Then, the $m$-adic completion of $A$ is isomorphic to $\prod_{n\in L}\hatA_n$; in particular, it is an $L$-complete local ring.
\end{proposition}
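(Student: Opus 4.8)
The plan is to exploit that every power $m^k$ is a homogeneous ideal, so that $A/m^k=\bigoplus_{n\in L}A_n/(m^k)_n$ with $(m^k)_n=m^k\cap A_n$, and then to pass to the limit degree by degree. Concretely, I would compare $\hat A=\varprojlim_k A/m^k$ with $\prod_{n\in L}\hat A_n$ through the graded projections $A/m^k\to A_n/(m^k)_n$, which are compatible in $k$ and induce a homomorphism $\Phi\:\hat A\to\prod_{n\in L}\varprojlim_k A_n/(m^k)_n$. The whole argument then rests on two points: (A) for each fixed $k$ only finitely many graded pieces of $A/m^k$ are non-zero, and (B) for each $n$ the filtration $\{(m^k)_n\}_k$ of $A_n$ is cofinal with the $m_0$-adic filtration $\{m_0^kA_n\}_k$, so that $\varprojlim_k A_n/(m^k)_n=\hat A_n$. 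Throughout I would use that $A_0$ is noetherian and each $A_n$ is a finitely generated $A_0$-module by Proposition~\ref{noetherprop}, which is what makes these statements tractable.

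For (A) I would reduce to the residue situation. Set $\bar A=A/m_0A$; this is a noetherian $L$-graded ring with $\bar A_0=A_0/m_0=k$ a field, and it is again strictly $L$-local with maximal homogeneous ideal $\bar m=\bigoplus_{n\neq 0}\bar A_n$ (Lemma~\ref{strictlylocallem}). Choosing finitely many homogeneous algebra generators of $\bar A$ over $k$, all of non-zero degree, the ideal $\bar m^k$ contains every monomial of length $\ge k$, so $\bar A/\bar m^k$ is spanned over $k$ by the finitely many monomials of length $<k$; in particular it is finite-dimensional and supported in finitely many degrees. Thus $\bar A_n=(\bar m^k)_n$, i.e. $A_n=(m^k)_n+m_0A_n$, for all but finitely many $n$. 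Applying the graded Nakayama lemma (Proposition~\ref{nakayamaprop}) to the finitely generated $A_0$-module $A_n/(m^k)_n$, which then satisfies $A_n/(m^k)_n=m_0\cdot\bigl(A_n/(m^k)_n\bigr)$, yields $A_n=(m^k)_n$ for all but finitely many $n$, which is (A).

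For (B), one inclusion is immediate: $m_0^kA_n\subseteq(m^k)_n$. For the reverse I would fix $j$ and work in $A^{(j)}=A/m_0^jA$. Its degree-$n$ piece $A_n/m_0^jA_n$ is a finitely generated module over the Artinian ring $A_0/m_0^j$, hence of finite length, and the filtration induced on it by $m$ is exactly $(\bar m^{(j)})^k\cap(A^{(j)})_n$, where $\bar m^{(j)}$ denotes the image of $m$. Graded Krull intersection in the noetherian $L$-local ring $A^{(j)}$ — obtained from ordinary Artin--Rees together with the graded Nakayama lemma, since $N=\bigcap_k(\bar m^{(j)})^k$ is a finitely generated graded ideal with $\bar m^{(j)}N=N$, whence $N=0$ — shows that $\bigcap_k\bigl((\bar m^{(j)})^k\cap(A^{(j)})_n\bigr)=0$. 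As the module has finite length this descending filtration must vanish from some $k_0$ on, which unwinds to $(m^k)_n\subseteq m_0^jA_n$ for $k\ge k_0$. This gives the desired cofinality and hence $\varprojlim_k A_n/(m^k)_n=\hat A_n$.

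It remains to assemble $\Phi$. Injectivity is clear, since the components of an element of $\hat A$ in the various $A_n/(m^k)_n$ determine it. For surjectivity, given $(\eta_n)_n$ with $\eta_n=(\eta_{n,k})_k$, I would set $\xi_k=\sum_n\eta_{n,k}$; by (A) only finitely many $\eta_{n,k}$ are non-zero for each fixed $k$, so $\xi_k\in\bigoplus_n A_n/(m^k)_n=A/m^k$ is well defined, and the $\xi_k$ are visibly compatible. Thus $\Phi\:\hat A\toisom\prod_{n\in L}\hat A_n$, and because $A/m=k$ is a field (Lemma~\ref{strictlylocallem}) the ideal $m$ is maximal, so each $A/m^k$ and hence $\hat A$ is local; since $A_n\subseteq m$ for $n\neq0$ the same holds after completion, exhibiting $\hat A$ as an $L$-complete local ring. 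The main obstacle is precisely the interchange of the inverse limit with the infinite direct sum, which is false without control on the graded support; so establishing the finiteness (A), together with the topological comparison (B) that legitimizes reading off $\hat A_n$ degreewise, is the technical heart of the argument.
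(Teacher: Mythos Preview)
Your proof is correct and establishes the same two key facts as the paper's argument: finiteness of the graded support of each $A/m^k$, and cofinality of the $m$-adic and $m_0$-adic filtrations on each $A_n$. The route, however, is genuinely different. The paper chooses homogeneous $A_0$-generators $f_1,\dots,f_k$ of $A$ and argues combinatorially about monomials: it analyzes the monoid map $\phi\:\NN^k\to L$ sending the $i$-th generator to $\deg f_i$, and from the finite generation of $\phi^{-1}(0)$ and of the ideals generated by each $\phi^{-1}(n)$ extracts explicit constants $b$ and $r(n)$ with $A_n\cap m^N\subseteq (m_0A)^q$ whenever $N\ge bq+r(n)$. Your argument replaces this lattice combinatorics with soft commutative algebra: reduction modulo $m_0$ plus ordinary Nakayama for (A), and reduction modulo $m_0^j$ plus Artin--Rees/graded Krull intersection plus a finite-length stabilization for (B). Your approach is shorter and more conceptual; the paper's approach, in exchange, yields uniform quantitative bounds (the constant $b$ is independent of $n$), which your argument does not produce and which the paper does not actually need here. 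One cosmetic point: in your step (A) the Nakayama you invoke is the ordinary one for the local ring $(A_0,m_0)$ applied to the finitely generated $A_0$-module $A_n/(m^k)_n$, not the graded version of Proposition~\ref{nakayamaprop}.
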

\begin{proof}
Write $m_0^A = m_0A$. We need to relate the $m$-adic and $m_0^A$-adic topologies on $A$. We have the following:

\begin{lemma}\label{Lem:combinatorics}
\begin{enumerate}
\item There is a constant $b>0$ and, for every $n\in L$ a constant $r(n)>0$, such that the following holds. Let $a\in A_n \cap m^N$. Then if $N \geq bq + r(n)$ then $a \in  (m_0^A)^q$.
\item For each $N$ the set $\{n\in L | A_n\not\subset m^N\}$ is finite.
\end{enumerate}
\end{lemma}

\begin{proof}[Proof of the lemma]
(1) By Proposition~\ref{noetherprop}, $A$ is finitely generated over $A_0$, hence we can choose homogeneous $A_0$-generators $f_1,\ldots,f_k\in A\setminus A_0$. Denote their degrees by $n_1,\ldots,n_k\in L\setminus\{0\}$ and consider the monoid homomorphism $\phi\:\NN^k \to L$ sending the $i$-th generator to $n_i$. An element of $A_n$ is a polynomial in the $f_i$ whose monomials have exponents in $\phi^{-1} (n)$. Any monomial in $\phi^{-1} (0)$ lies in $m_0$, and the claim is proven if we find  $b>0$ and $r(n)>0$ and show that each monomial appearing in $a\in A_n \cap m^N$ with $N\geq bq+r(n)$ factors at least $q$ monomials in $m_0$. Writing $|(l_1,\ldots,l_k)| = \sum l_i$, we need to find $b>0$ and $r(n)>0$ and  show that each monomial $f_1^{l_1}\cdots f_k^{l_k}$ of degree $|(l_1,\ldots,l_k)|=N\geq bq+r(n)$ in $f_i$ such that $\phi(l_1,\ldots,l_k) = n$  factors into at least $q$ monomials lying in $m_0$. This is a combinatorial question on lattices.

Consider first the case $n=0$. The monoid $\phi^{-1} (0)$ is finitely generated, $\phi^{-1}(0)  = \<g_1,\ldots, g_s\>$.  Write $b = \max(|g_i|)$. If $(l_1,\ldots,l_k) \in \phi^{-1}(0)$ then it has an integer expression $(l_1,\ldots,l_k) = \sum q_i g_i$, and $q=\sum q_i \geq  N/\max(|g_i|)$. So, $f_1^{l_1}\cdots f_k^{l_k}$ is the product of at least $q$ elements of the form $f^g$ in $m_0$. This gives the result in this case.

Consider the general case. Since the monoid $\NN^k$ is noetherian, the ideal generated by $\phi^{-1} (n)$ has finitely many generators $h_1,\ldots,h_t\in \phi^{-1} (n)$. It follows that $\phi^{-1} (n) = \cup_i \left(h_i + \phi^{-1} (0)\right)$. Write $r = \max(|h_i|)$. Then any
 $(l_1,\ldots,l_k) \in \phi^{-1} (n)$ of degree $N\geq bq+r$ can be written as $h_i + t$, with $t\in \phi^{-1} (0)$ of degree $\geq bq$, hence $t$ is the sum of at least $q$ elements of $\phi^{-1} (0)$, which gives the general case.

(2) Let $a\in A_n\subset A$ with $a\not\in m^N$. Then any expression $a = \sum c_j f_1^{l_{j1}} \cdots f_k^{l_{jk}}$ has at least one monomial of degree $|(l_{j1},\ldots,l_{jk})| < N$.  But the set of $(l_{1},\ldots,l_{k})$ of degree $<N$ is finite, therefore the set of  $n = \phi(l_{1},\ldots,l_{k})$ with $|(l_{1},\ldots,l_{k})|<N$ is finite, as required.
\end{proof}
We continue to prove Proposition~\ref{Prop:complete}. Note that $\hatA_n = \varprojlim_N A_n / (m^N\cap A_n)$ since $m_0^NA_n \subset (m^N\cap A_n) \subset m_0^{q(N)}A_n$ by Lemma~\ref{Lem:combinatorics}(1). Also, claim (2) of the same lemma implies that $\oplus_n A_n/(m^N\cap A_n)=\prod_n A_n/(m^N\cap A_n)$. Using that products are compatible with limits we obtain that
\begin{eqnarray*}
\hatA=\varprojlim_N A/m^N=\varprojlim_N\oplus_{n\in L}A_n/(m^N\cap A_n)=\varprojlim_N\prod_{n\in L}A_n/(m^N\cap A_n)=\\\prod_{n\in L}\varprojlim_NA_n/(m^N\cap A_n)=\prod_{n\in L}\hatA_n,
\end{eqnarray*}
as required.
\end{proof}

\subsection{Strongly homogeneous homomorphisms}\label{strhomsec}
We say that a homogeneous homomorphism $\phi\:A\to B$ of $L$-graded rings (resp. $L$-complete local rings) is {\em strongly homogeneous} if $A\otimes_{A_0}B_0=B$ (resp. $A\wtimes_{A_0}B_0=B$) as $L$-graded rings (resp. $L$-complete local rings).\index{strongly homogeneous homomorphism} Similarly, if $P$ is a property of homomorphisms stable under base changes, e.g. formally smooth, then we say that $\phi$ is strongly $P$ if it is strongly homogeneous and $\phi_0\:A_0\to B_0$ satisfies $P$. Our next aim is to establish a criterion of strong homogeneity in one of the following two cases:
\begin{itemize}
\item[(1)] The local case: $\phi$ is a local homomorphism of noetherian strictly $L$-local rings.
\item[(2)] The formal case: $\phi$ is a local homomorphism of noetherian $L$-complete local rings.
\end{itemize}

\subsubsection{Reduction to the formal case}
First, let us reduce the problem to the formal case.

\begin{lemma}\label{localtoformal}
Assume that $\phi\:A\to B$ is a homogeneous local homomorphism of noetherian strictly $L$-local rings. Then $\phi$ is strongly homogeneous if and only if its completion $\hatphi\:\hatA\to\hatB$ is strongly homogeneous.
\end{lemma}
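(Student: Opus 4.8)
The plan is to reduce strong homogeneity, on both the local and the formal side, to a statement about the individual graded pieces, and then to pass between the two sides by faithfully flat base change along the completion $B_0\to\hatB_0$. First I would record the relevant canonical map. Since $\phi$ is local, its degree-$0$ part $\phi_0\:A_0\to B_0$ is a local homomorphism of noetherian local rings ($A_0,B_0$ are noetherian by Proposition~\ref{noetherprop} and local by Lemma~\ref{Lem:Llocal}), and $\phi$ induces a homogeneous homomorphism $g\:C:=A\otimes_{A_0}B_0\to B$ with $C_0=B_0$ and with degree-$n$ component $g_n\:A_n\otimes_{A_0}B_0\to B_n$. By definition $\phi$ is strongly homogeneous if and only if $g$ is an isomorphism, which happens if and only if every $g_n$ is an isomorphism of $B_0$-modules. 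All modules in sight are finitely generated over $B_0$: indeed $A_n$ is a finitely generated $A_0$-module by Proposition~\ref{noetherprop}, so both $A_n\otimes_{A_0}B_0$ and $B_n$ are finitely generated over $B_0$.

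Next I would perform the same reduction for $\hatphi$. By Proposition~\ref{Prop:complete} we have $\hatA=\prod_n\hatA_n$ and $\hatB=\prod_n\hatB_n$, where $\hatA_n$ (resp.\ $\hatB_n$) is the $m_0$-adic completion of $A_n$ (resp.\ $B_n$); in particular $\hatA_0,\hatB_0$ are the usual completions of $A_0,B_0$. The completed tensor product $\hatA\wtimes_{\hatA_0}\hatB_0$ carries a formal grading whose degree-$n$ piece is $\hatA_n\wtimes_{\hatA_0}\hatB_0$, and since $\hatA_n$ is finitely generated over the complete local ring $\hatA_0$ (being the completion of the finitely generated $A_0$-module $A_n$), this completed tensor is already the ordinary tensor $\hatA_n\otimes_{\hatA_0}\hatB_0$. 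Hence $\hatphi$ is strongly homogeneous if and only if each $\hat{g}_n\:\hatA_n\otimes_{\hatA_0}\hatB_0\to\hatB_n$ is an isomorphism.

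The heart of the argument is the identification $\hat{g}_n=g_n\otimes_{B_0}\hatB_0$. Because $A_n$ is finitely generated over the noetherian ring $A_0$, completion commutes with tensor, giving $\hatA_n=A_n\otimes_{A_0}\hatA_0$; and since the $A_0$-algebra structure on $\hatB_0$ factors through $\hatA_0$ (the map $\hatA_0\to\hatB_0$ is the completion of the local homomorphism $\phi_0$, and $A_0\to\hatA_0\to\hatB_0$ agrees with $A_0\to B_0\to\hatB_0$), one obtains
$$\hatA_n\otimes_{\hatA_0}\hatB_0=A_n\otimes_{A_0}\hatA_0\otimes_{\hatA_0}\hatB_0=A_n\otimes_{A_0}\hatB_0=(A_n\otimes_{A_0}B_0)\otimes_{B_0}\hatB_0,$$
and likewise $\hatB_n=B_n\otimes_{B_0}\hatB_0$; under these identifications $\hat{g}_n$ is exactly $g_n\otimes_{B_0}\hatB_0$. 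As $B_0$ is noetherian local, $B_0\to\hatB_0$ is faithfully flat, so the map $g_n$ of finitely generated $B_0$-modules is an isomorphism if and only if $g_n\otimes_{B_0}\hatB_0$ is. Chaining the two reductions then yields $\phi$ strongly homogeneous $\iff$ all $g_n$ are isomorphisms $\iff$ all $\hat{g}_n$ are isomorphisms $\iff$ $\hatphi$ strongly homogeneous.

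The only real obstacle I anticipate is bookkeeping rather than substance: one must verify carefully that the completed tensor product of $L$-complete local rings genuinely computes degree by degree to the ordinary tensor products above (this is exactly where Proposition~\ref{Prop:complete} together with the finite generation from Proposition~\ref{noetherprop} does the work), and that the formal grading and the $\hatA_0$-algebra structures are compatible so that $\hat{g}_n$ is literally the base change of $g_n$. Once this is pinned down, faithful flatness of completion closes the argument at once.
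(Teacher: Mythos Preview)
Your argument is correct and follows essentially the same route as the paper: reduce strong homogeneity on each side to the degree-$n$ maps being isomorphisms, identify the formal degree-$n$ map as the base change of the local one along $B_0\to\hatB_0$ (using Proposition~\ref{Prop:complete} and finite generation of the graded pieces from Proposition~\ref{noetherprop}), and conclude by faithful flatness of completion. The paper's proof is terser but the content is the same.
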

\begin{proof}
Clearly, $\phi$ is strongly homogeneous if and only if $f_n\:A_n\otimes_{A_0}B_0\to B_n$ is an isomorphism for any $n\in L$, and it follows from Proposition~\ref{Prop:complete} that $\hatphi$ is strongly homogeneous if and only if $g_n\:\hatA_n\wtimes_{\hatA_0}\hatB_0\to\hatB_n$ is an isomorphism for any $n\in L$. Since $B_n$ is finitely generated over $B_0$ we have that $\hatB_n=B_n\otimes_{B_0}\hatB_0$, and since $A_n$ is finitely generated over $A_0$ we have that $\hatA_n\wtimes_{\hatA_0}\hatB_0=A_n\otimes_{A_0}\hatB_0$. Thus, $g_n\:A_n\otimes_{A_0}\hatB_0\to B_n\otimes_{B_0}\hatB_0$ is the base change of $f_n$ with respect to the faithfully flat homomorphism $B_0\to\hatB_0$, and hence $f_n$ is an isomorphism if and only $g_n$ is an isomorphism. The lemma follows.
\end{proof}

\subsubsection{The fiber}\label{Sec:fib}
Let $\phi$ be as in cases (1) or (2). Set $m=m_A$ and $k=A/m$ and consider the {\em fiber} $\ophi\:k\to\Lam=B/mB$ of $\phi$. If $\phi$ is strongly homogeneous then the fiber is strongly homogeneous too, and since $k$ is trivially graded we obtain that $\Lam$ is trivially graded too. The geometric meaning of the latter condition is that the action of $\bfD_L$ on the fiber $\Spec(\Lambda)$ is trivial.

\begin{lemma}\label{fiblem}
Let $k$ be a trivially graded field, $\Lam$ a noetherian local $k$-algebra with residue field $l=\Lam/m_\Lam$ and an $L$-grading making the structure homomorphism $\psi\:k\to\Lam$ homogeneous. Provide the $l$-vector spaces $\Omega_{\Lam/k}\otimes_\Lam l$ and $m_\Lam/m_\Lam^2$ with the induced $L$-grading. Then the following conditions are equivalent:

(i) $\Lam$ is trivially graded.

(ii) The field $l$ and the space $m_\Lam/m_\Lam^2$  are trivially graded.

(iii) The field $l$ and the space $\Omega_{\Lam/k}\otimes_\Lam l$ are trivially graded.
\end{lemma}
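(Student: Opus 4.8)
The plan is to prove the easy forward implications (i)$\Rightarrow$(ii) and (i)$\Rightarrow$(iii), and then close the logical cycle with the two substantive converses (ii)$\Rightarrow$(i) and (iii)$\Rightarrow$(ii). A single preliminary observation feeds all of them. In each of the three conditions the residue field $l$ is trivially graded — in (i) because $\Lam$ itself is — and, since the induced grading on $m_\Lam/m_\Lam^2$ presupposes that $m_\Lam$ is homogeneous (automatic in our application, where $\Lam$ is the fiber of a homogeneous map of strictly $L$-local rings), Lemma~\ref{strictlylocallem} shows that $\Lam$ is then strictly $L$-local. Hence $m_\Lam=m_0\oplus\bigoplus_{n\neq0}\Lam_n$ with $m_0=m_\Lam\cap\Lam_0$, the ring $\Lam_0$ is local by Lemma~\ref{Lem:Llocal}, and each $\Lam_n$ is a finitely generated $\Lam_0$-module by Proposition~\ref{noetherprop}. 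This is the structure I will exploit.

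The implications (i)$\Rightarrow$(ii) and (i)$\Rightarrow$(iii) need no work: if $\Lam$ and $k$ are trivially graded, then so are $l$, $m_\Lam/m_\Lam^2$, $\Omega_{\Lam/k}$, and hence $\Omega_{\Lam/k}\otimes_\Lam l$, all being built from degree-zero data. For (ii)$\Rightarrow$(i) I would argue by graded Nakayama. By Corollary~\ref{nakayamacor}, a family of homogeneous elements of $m_\Lam$ generates $m_\Lam$ as soon as its image is an $l$-basis of $m_\Lam/m_\Lam^2$; since the latter is trivially graded by hypothesis, such a generating family $t_1,\dots,t_r$ can be chosen in degree $0$, i.e. in $m_0\subseteq\Lam_0$. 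Fixing $n\neq0$ and taking degree-$n$ parts of $m_\Lam=\sum_i t_i\Lam$, using $\deg t_i=0$, gives $\Lam_n=(m_\Lam)_n=\sum_i t_i\Lam_n=m_0\Lam_n$. As $\Lam_n$ is finitely generated over the local ring $(\Lam_0,m_0)$, ordinary Nakayama forces $\Lam_n=0$, so $\Lam$ is trivially graded.

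For (iii)$\Rightarrow$(ii) I would invoke the transitivity exact sequence of the cotangent complex for $k\to\Lam\to l$, in which $H_1(\LL_{l/\Lam})=m_\Lam/m_\Lam^2$:
$$\Upsilon_{l/k}\ \lrar\ m_\Lam/m_\Lam^2\ \xrightarrow{\ \delta\ }\ \Omega_{\Lam/k}\otimes_\Lam l\ \lrar\ \Omega_{l/k}\ \lrar\ 0,$$
all of whose maps are homogeneous. Because $l$ is trivially graded, the extension $k\to l$ consists of degree-zero data, so $\Upsilon_{l/k}=H_1(\LL_{l/k})$ and $\Omega_{l/k}$ are trivially graded. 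Consequently $\ker\delta$, a quotient of $\Upsilon_{l/k}$, is trivially graded, and $\mathrm{im}\,\delta$, a graded submodule of the trivially graded module $\Omega_{\Lam/k}\otimes_\Lam l$, is trivially graded. Being an extension of two trivially graded modules, $m_\Lam/m_\Lam^2$ is trivially graded, which is (ii); combined with (ii)$\Rightarrow$(i) this gives (iii)$\Rightarrow$(i) and closes the cycle.

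The one genuinely delicate point is the control of $\ker\delta$ in the last step. The naive second fundamental sequence $m_\Lam/m_\Lam^2\to\Omega_{\Lam/k}\otimes_\Lam l\to\Omega_{l/k}\to0$ says nothing about injectivity of $\delta$, and indeed $\delta$ acquires a kernel precisely when $l/k$ is inseparable; it is the imperfection term $\Upsilon_{l/k}$ that repairs this. The essential input is therefore that $\Upsilon_{l/k}$ is trivially graded, which holds because the residue extension lives entirely in degree zero. Everything else reduces to graded and ordinary Nakayama.
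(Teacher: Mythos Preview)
Your proof is correct, and your treatment of (iii)$\Leftrightarrow$(ii) via the transitivity sequence with the imperfection term $\Upsilon_{l/k}$ is exactly the paper's argument. The one place you diverge is (ii)$\Rightarrow$(i): the paper observes that if $l$ and $m_\Lam/m_\Lam^2$ are trivially graded then each successive quotient $m_\Lam^n/m_\Lam^{n+1}$ is trivially graded, and since $\bigcap_n m_\Lam^n=0$ by Krull's intersection theorem in the noetherian local ring $\Lam$, this forces $\Lam$ to be trivially graded. This is a two-line associated-graded argument that uses nothing beyond the noetherian hypothesis. Your route instead sets up the strictly $L$-local structure, invokes graded Nakayama to produce degree-zero generators of $m_\Lam$, and then applies ordinary Nakayama to each $\Lam_n$ via Proposition~\ref{noetherprop}. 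Both are fine; the paper's argument is shorter and avoids appealing to the Goto--Yamagishi finiteness result, while yours makes the strictly $L$-local picture (which is indeed the ambient situation in the applications) more explicit.
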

\begin{proof}
Clearly, (i) implies (ii). Conversely, if (ii) holds then each vector space $m_\Lam^n/m_\Lam^{n+1}$ is trivially graded. Since $\cap_n m_\Lam^n=0$ by the noetherian assumption, it follows that $\Lam$ is trivially graded.

To prove equivalence of (ii) and (iii) we observe that the homomorphisms $k\to\Lam\to l$ induce an exact triangle of cotangent complexes  - the transitivity triangle \cite[II.2.1.2.1]{Illusie-cotangent} - whose associated exact sequence of homologies contains
$$\Upsilon_{l/k}\to m_\Lam/m_\Lam^2\to\Omega_{\Lam/k}\otimes_\Lam l\to\Omega_{l/k}.$$
Both in (ii) and (iii), $l$ is trivially graded and hence the end terms $\Upsilon_{l/k}$ and $\Omega_{l/k}$ are trivially graded. Thus, $m_\Lam/m_\Lam^2$ is trivially graded if and only if $\Omega_{\Lam/k}$ is trivially graded, as needed.\end{proof}

\begin{remark}
The assumption on the grading of $l$ is automatic when $\Lam$ is strictly $L$-local (as will be in all our applications), but it cannot be omitted in general. For example, consider the case when $k=\RR$ and $\Lam=l=\CC=\RR\oplus i\RR$ as a $\ZZ/2\ZZ$-graded field.
\end{remark}

\subsubsection{Lifting a formal grading}
The following lemma will be our main tool in studying homomorphism with trivially graded fiber.

\begin{lemma}\label{liftlem}
Let $\phi\:A\to B$ be a homogeneous local homomorphism of $L$-complete noetherian local rings with a trivially graded fiber $k\to\oB=B/m_AB$. Assume that $g\:A\to D$ and $h\:D\onto B$ form a formally smooth factorization of $\phi$ (\ref{factorizablesec}). Then one can provide $D$ with a formal grading such that the following conditions hold:

(i) Both $g$ and $h$ are homogeneous homomorphisms.

(ii) The fiber $\oD=D/m_AD$ is trivially graded.

(iii) $g$ is a graded base change of the trivially graded homomorphism $C(k)\to C(\oD)$ introduced in \ref{mixedcase}.
\end{lemma}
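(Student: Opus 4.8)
The plan is to apply the splitting Theorem~\ref{Th:splitfsmoothmixed} to the formally smooth homomorphism $g\:A\to D$ and to read the grading on $D$ off the resulting isomorphism $D\cong A\wtimes_{C(k)}C(\oD)$. Since $g$ is formally smooth, its closed fiber $\og\:k\to\oD$ is formally smooth by Theorem~\ref{Th:fsmoothcrit}, so the Cohen lift $C(\og)\:C(k)\to C(\oD)$ of Section~\ref{mixedcase} is defined and is itself formally smooth. I will \emph{choose} coefficient homomorphisms $i\:C(k)\to A$ and $j\:C(\oD)\to D$ fitting into diagram~\eqref{Eq:splitfsmoothmixed} for $g$, arranged so that $i(C(k))\subseteq A_0$ and $(h\circ j)(C(\oD))\subseteq B_0$. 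Granting this, Theorem~\ref{Th:splitfsmoothmixed}(ii) gives $D\cong A\wtimes_{C(k)}C(\oD)$, and placing $C(k)$ and $C(\oD)$ in degree $0$ equips $D$ with the formal grading $D_n=A_n\wtimes_{C(k)}C(\oD)$. As $A_n\subseteq m_A$ for $n\neq0$ we get $D_n\subseteq m_D$, so this is a genuine $L$-complete local structure; moreover $g$ is then the graded base change of the trivially graded $C(k)\to C(\oD)$, which gives (iii) and homogeneity of $g$, while the fiber $\oD=D/m_AD\cong C(\oD)\otimes_{C(k)}k$ is trivially graded, giving (ii).

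It remains to see that the two placement conditions make $h$ homogeneous. For the condition on $j$: under the isomorphism above $D_n$ is topologically generated by products $g(a)\,j(c)$ with $a\in A_n$, $c\in C(\oD)$, so $h(D_n)$ is generated by $\phi(a)\,(h\circ j)(c)\in B_n\cdot B_0=B_n$, using that $\phi$ is homogeneous and $(h\circ j)(C(\oD))\subseteq B_0$; hence $h(D_n)\subseteq B_n$ for all $n$, i.e. $h$ is homogeneous. The choice of $i$ is immediate: $A_0$ is a complete local ring with residue field $k$, so $C(k)\to k$ lifts to $i\:C(k)\to A_0\subseteq A$ as in Section~\ref{mixedcase}; this $i$ is homogeneous of degree $0$ and makes the left part of \eqref{Eq:splitfsmoothmixed} commute.

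The construction of $j$ proceeds in two lifting steps, both exploiting formal smoothness of $C(\og)$ via the adic lifting property (Lemma~\ref{Lem:liftadic}). Since the fiber $\oB$ is trivially graded it is a quotient of $B_0$, so the map $\gamma\:C(\oD)\xrightarrow{s}\oD\xrightarrow{\oh}\oB$ factors through $B_0\onto\oB$; applying Lemma~\ref{Lem:liftadic} to $C(\og)$ with source point $\phi_0\circ i\:C(k)\to B_0$ lifts $\gamma$ to a $C(k)$-algebra homomorphism $\tilde\gamma\:C(\oD)\to B_0$. Next consider $P=\oD\times_{\oB}B$; the map $D\to P$ is surjective, because $h\:D\onto B$ is surjective and $\pi_D(\ker h)=\ker(\oh)$ lets one correct the $\oD$-component of any preimage, so $P\cong D/I$ for $I=\ker(D\to P)$. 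The pair $(s,\tilde\gamma)$ defines a homomorphism $C(\oD)\to P=D/I$, compatible with $g\circ i\:C(k)\to D$: indeed both $\pi_D\circ g\circ i$ and $s\circ C(\og)$ equal the canonical structure map $C(k)\to k\xrightarrow{\og}\oD$, while $h\circ g\circ i=\phi\circ i=\phi_0\circ i=\tilde\gamma\circ C(\og)$. A final application of Lemma~\ref{Lem:liftadic} to $C(\og)$, with target $D$ and ideal $I$, produces $j\:C(\oD)\to D$ lifting $(s,\tilde\gamma)$; by construction $j$ is $C(k)$-linear with $\pi_D\circ j=s$ (so \eqref{Eq:splitfsmoothmixed} commutes, as needed for Theorem~\ref{Th:splitfsmoothmixed}) and $h\circ j=\tilde\gamma$ has image in $B_0$.

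The main obstacle is exactly this simultaneous control of $j$: it must at once serve as a coefficient map for the splitting of $g$ (lifting the canonical reduction $s\:C(\oD)\to\oD$) and satisfy $(h\circ j)(C(\oD))\subseteq B_0$, and reconciling these is what forces the fiber-product lifting along $D\onto\oD\times_{\oB}B$. Everything else—the commutativity checks in \eqref{Eq:splitfsmoothmixed}, the identification $D_n=A_n\wtimes_{C(k)}C(\oD)$, and the surjectivity of $D\to\oD\times_{\oB}B$—is routine. In the equal characteristic case the same argument applies verbatim with $C(k)=k$, $C(\oD)=\oD$ and $C(\og)=\og$, as stipulated in Section~\ref{mixedcase}.
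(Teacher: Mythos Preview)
Your proof is correct and follows essentially the same route as the paper: choose $i\:C(k)\to A_0$, lift $C(\oD)\to\oB$ through $B_0$, then lift the resulting pair $(s,\tilde\gamma)$ along the surjection $D\onto\oD\times_{\oB}B$ to obtain $j$, and conclude via Theorem~\ref{Th:splitfsmoothmixed}(ii). The paper invokes the Chinese remainder theorem for the surjectivity of $D\to\oD\times_{\oB}B$ where you argue it directly, and you spell out the compatibility checks more explicitly, but the architecture is identical.
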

\begin{proof}
Since $\oB$ is trivially graded we have $\oB=B_0/m_0B_0$, where $m_0$ is the trivially graded component of $m_A$. Fix a ring of coefficients $i\:C(k)\to A_0$, so that all rings we consider become $C(k)$-algebras. In particular we obtain homomorphisms $i_{B_0}:C(k) \to B_0$ and $i_D: C(K) \to D$ which we utilize below.

Recall that the homomorphism $C(\og)\:C(k)\to C(\oD)$ discussed in \ref{mixedcase} is formally smooth; considering the composed homomorphism $C(\oD)\to\oD\to\oB$ and the diagram
$$\xymatrix{ C(k) \ar[r]^{i_{B_0}}\ar[d]_{C(\og)} & B_0 \ar[d] \\
C(\oD)\ar[r] \ar@{.>}[ru] & \oB}$$
we can lift  $C(\oD)\to\oB$ to a $C(k)$-homomorphism $C(\oD)\to B_0$.

We claim that the composed homomorphism $\lambda\:C(\oD)\to B_0\to B$ lifts to a homomorphism $C(\oD)\to D$ such that the composition $C(\oD)\to D\to\oD$ is the canonical projection $p\:C(\oD)\to\oD$. Indeed, we have  natural homomorphisms $(\lambda,p)\:C(\oD)\to B\times_\oB\oD$ and $D\to B\times_\oB\oD$, giving a {\em commutative} diagram
$$\xymatrix{ C(k) \ar[r]^{i_{D}}\ar[d]_{C(\og)} & D \ar[d] \\
C(\oD)\ar[r]_{(\lambda,p)} \ar@{.>}[ru] &  B\times_\oB\oD.}$$
By the Chinese remainder theorem, the homomorphism $D\to B\times_\oB\oD$ is surjective, and using the formal smoothness of $C(\og)$ again we can lift $(\lambda,p)$ to a homomorphism $C(\oD)\to D$.

At this stage Theorem~\ref{Th:splitfsmoothmixed}(ii), applied to the formally smooth homomorphism $g: A \to D$, asserts that $D=A\widehat\otimes_{C(k)}C(\oD)$, making $g$ the base change of  $C(k)\to C(\oD)$. Provide $C(\oD)$ with the trivial grading. This induces the trivial grading on $\oD$, giving (ii). The formal grading $A = \prod_{n\in L} A_n$ provides a formal grading $D = \prod_{n\in L} A_n\widehat\otimes_{C(k)}C(\oD)$ making $g$ a graded base change, giving (iii); in particular $g$ is homogeneous. The homomorphism $C(\oD)\to B$ is homogeneous since it factors through $B_0$. This implies that $A_n\widehat\otimes_{C(k)}C(\oD) \to B$ factors through $A_nB_0 \subset B_n$, so $h$ is homogenous, completing (i) as needed.
\end{proof}

\subsubsection{Strong formal smoothness}
Already the case $D=B$ of the above lemma allows to completely describe strongly formally smooth homomorphism (as defined in Section \ref{strhomsec}).

\begin{corollary}\label{liftcor}
Assume that $\phi\:A\to B$ is a homogeneous local homomorphism of $L$-complete noetherian local rings. Then,

(i) $\phi$ is strongly formally smooth if and only if it is formally smooth and has a trivially graded fiber.

(ii) If $\phi$ is formally smooth then the following conditions are equivalent: (a) $\phi$ has a trivially graded fiber, (b) $\phi$ is strongly homogeneous, (c) $\phi$ is strongly formally smooth.
\end{corollary}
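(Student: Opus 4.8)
The plan is to read this corollary off the $D=B$ case of Lemma~\ref{liftlem}, treating everything else as bookkeeping with completed tensor products. Recall that by definition ``strongly formally smooth'' means strongly homogeneous together with $\phi_0\:A_0\to B_0$ formally smooth, so the whole statement amounts to relating three conditions: formal smoothness of $\phi$ with a trivially graded fiber, the identity $B=A\wtimes_{A_0}B_0$, and formal smoothness of $\phi_0$.

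First I would dispose of the easy implications, which do not use Lemma~\ref{liftlem}. If $\phi$ is strongly homogeneous, so $B_n=A_n\wtimes_{A_0}B_0$ for all $n$, then since $A$ is $L$-complete local we have $A_n\subset m_A$ for $n\neq 0$, whence $B_n\subset m_AB$ and the graded piece $\oB_n=B_n/(m_AB\cap B_n)$ vanishes for $n\neq 0$; thus a strongly homogeneous $\phi$ automatically has a trivially graded fiber. Moreover, when $\phi$ is strongly homogeneous, $\phi$ is the completed base change of $\phi_0$ along $A_0\to A$, so formal smoothness of $\phi_0$ forces formal smoothness of $\phi$ (formal smoothness being stable under base change, as noted in Section~\ref{strhomsec}). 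This already yields the forward direction of (i), and in (ii) it supplies both (c)$\Rightarrow$(b), which is immediate from the definition, and (b)$\Rightarrow$(a).

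The substantive direction is the converse in (i): assuming $\phi$ formally smooth with trivially graded fiber, I would apply Lemma~\ref{liftlem} to the trivial formally smooth factorization $A\xrightarrow{\phi}B\xrightarrow{\id}B$, legitimate precisely because $g=\phi$ is formally smooth and $h=\id_B$ is surjective. The lemma, through the identity $D=A\wtimes_{C(k)}C(\oD)$ established in its proof, then produces a grading with $B_n=A_n\wtimes_{C(k)}C(\oB)$ and $C(\oB)$ trivially graded. Specializing to $n=0$ gives $B_0=A_0\wtimes_{C(k)}C(\oB)$, and transitivity of completed tensor products yields $A_n\wtimes_{A_0}B_0=A_n\wtimes_{C(k)}C(\oB)=B_n$, that is, $\phi$ is strongly homogeneous. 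Since $\phi$ is flat with formally smooth closed fiber $\of$ by Theorem~\ref{Th:fsmoothcrit}, the Cohen lift $C(\of)\:C(k)\to C(\oB)$ is formally smooth, so $\phi_0$ is its completed base change along $C(k)\to A_0$ and hence formally smooth as well. Together these say $\phi$ is strongly formally smooth, completing (i). Part (ii) then closes as a cycle: (a)$\Rightarrow$(c) is exactly this converse under the standing hypothesis that $\phi$ is formally smooth, while (c)$\Rightarrow$(b)$\Rightarrow$(a) were handled above.

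I expect the only real care to lie in the grading bookkeeping of the previous paragraph, namely rewriting the lemma's description of $B$ as a base change over $C(k)$ as a base change over $B_0$, using $B_0=A_0\wtimes_{C(k)}C(\oB)$ and associativity of $\wtimes$. All the genuine content, which is the lifting of the coefficient field, the splitting off of the formally smooth part via Theorem~\ref{Th:splitfsmoothmixed}, and the construction of the grading, is already carried by Lemma~\ref{liftlem}, so no new difficulty should arise here.
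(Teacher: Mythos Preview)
Your proof is correct and follows essentially the same route as the paper: apply Lemma~\ref{liftlem} with $D=B$ to obtain $B=A\wtimes_{C(k)}C(\oB)$ with $C(\oB)$ trivially graded, read off $B_0=A_0\wtimes_{C(k)}C(\oB)$, conclude strong homogeneity, and note that $\phi_0$ is a formal base change of the formally smooth $C(\of)$. One small remark: the identity $D=A\wtimes_{C(k)}C(\oD)$ you cite from the lemma's proof is in fact already encoded in part~(iii) of its statement (``$g$ is a graded base change of $C(k)\to C(\oD)$''), so you need not reach into the proof.
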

\begin{proof}
The forward direction in (i) follows from the discussion in \ref{Sec:fib}, so we assume $\phi$ is formally smooth with trivially graded fiber, and prove that $\phi$ is strongly homogeneous and $\phi_0$ is formally smooth. Applying Lemma~\ref{liftlem} with $D=B$ we obtain that $B=A\widehat\otimes_{C(k)}C(\oB)$, where $C(k)$ and $C(\oB)$ are trivially graded. In particular, $B_0=A_0\widehat\otimes_{C(k)}C(\oB)$ and hence $\phi$ is strongly homogeneous. Moreover, $\phi_0$ is a base change of the formally smooth homomorphism $C(k)\to C(\oB)$, hence $\phi_0$ is formally smooth and we obtain (i).

To prove (ii) we note that the implications (c)$\implies$(b)$\implies$(a) are obvious, and the implication (a)$\implies$(c) follows from (i).
\end{proof}

\subsubsection{Strong homogeneity and the cotangent complex}
Now we will study general factorizable homomorphisms, see \ref{factorizablesec}. In this case, one should also control the grading of the kernel of a factorization $D\onto B$. Naturally, this is related to the first homology of the cotangent complex $\LL_{B/A}$.

\begin{lemma}\label{idealgrading}
Assume that $g\:A\to D$ is a formally smooth homogeneous local homomorphism of $L$-complete noetherian local rings with a trivially graded fiber $k\to\oD$, let $I\subset D$ be a homogeneous ideal, $B=D/I$ and $l$ the residue field of $B$. Then the following conditions are equivalent:

(i) The composition $A\to B$ is strongly homogeneous.

(ii) $I$ is generated by elements of degree zero.

(iii) $(I/I^2)\otimes_Bl$ is trivially graded.

(iv) $H_1(\LL_{B/A}\otimes^{\rm L}_Bl)$ is trivially graded.
\end{lemma}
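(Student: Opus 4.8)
The plan is to prove the four conditions equivalent by establishing the chain (i)$\Leftrightarrow$(ii), (ii)$\Leftrightarrow$(iii), (iii)$\Leftrightarrow$(iv). The standing input is that $g\:A\to D$ is formally smooth with trivially graded fiber, so Corollary~\ref{liftcor} makes $g$ \emph{strongly homogeneous}: $D=A\wtimes_{A_0}D_0$ as $L$-complete local rings. From this I would record two facts to be used repeatedly: first, by base change of differentials $\Omega_{D/A}=\Omega_{D_0/A_0}\otimes_{D_0}D$, a module pulled back from the trivially graded rings $A_0\to D_0$; second, the residue field $l$ of $B=D/I$ equals that of $D$, hence is trivially graded since $D$ is $L$-complete local.

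For (i)$\Leftrightarrow$(ii) I would argue as follows. Set $I_0=I\cap D_0$. If (ii) holds then $I=I_0D$, so using associativity of $\wtimes$ and $D_0\wtimes_{D_0}B_0=B_0$ one gets $B=D\wtimes_{D_0}B_0=A\wtimes_{A_0}D_0\wtimes_{D_0}B_0=A\wtimes_{A_0}B_0$, which is exactly strong homogeneity of $A\to B$, giving (i). Conversely, assuming (i), consider the natural homogeneous surjection $D/I_0D\to D/I=B$, which is the identity on degree-$0$ parts. Its source is strongly homogeneous over $A$ by the implication (ii)$\Rightarrow$(i) already proved (applied to the degree-$0$-generated ideal $I_0D$), and its target is strongly homogeneous by (i); since a homogeneous $A$-algebra map between strongly homogeneous $A$-algebras that is an isomorphism in degree $0$ is $\id_A\wtimes_{A_0}(\cdot)$ and hence an isomorphism, we conclude $I=I_0D$, i.e. (ii).

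For (ii)$\Leftrightarrow$(iii), I would note that $I\subset m_D$ forces $I^2\subset m_DI$, so $(I/I^2)\otimes_Bl\cong I/m_DI=I\otimes_Dl$. By the graded Nakayama Lemma (Corollary~\ref{nakayamacor}) a family of homogeneous elements of $I$ generates $I$ if and only if its image is an $l$-basis of $I\otimes_Dl$; since a degree-$0$ class in this quotient lifts to a degree-$0$ element of $I$, the ideal $I$ is generated by elements of degree $0$ precisely when $I\otimes_Dl$ is concentrated in degree $0$, that is, trivially graded.

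The technical heart is (iii)$\Leftrightarrow$(iv), and this is the step I expect to be the main obstacle. I would apply $-\otimes^{\rm L}_Bl$ to the transitivity triangle $\LL_{D/A}\otimes^{\rm L}_DB\to\LL_{B/A}\to\LL_{B/D}$ of \cite{Illusie-cotangent}. Formal smoothness of $g$ gives $\LL_{D/A}\simeq\Omega_{D/A}$ flat in degree $0$, so $H_1(\LL_{D/A}\otimes^{\rm L}_Dl)=0$ and $H_0(\LL_{D/A}\otimes^{\rm L}_Dl)=\Omega_{D/A}\otimes_Dl$; and since $B=D/I$ one has $\Omega_{B/D}=0$ and $H_1(\LL_{B/D})=I/I^2$, whence the lowest homology survives the derived base change and $H_1(\LL_{B/D}\otimes^{\rm L}_Bl)=(I/I^2)\otimes_Bl$. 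The resulting exact sequence of graded modules is
\[
0\to H_1(\LL_{B/A}\otimes^{\rm L}_Bl)\to (I/I^2)\otimes_Bl\xrightarrow{\ \delta\ }\Omega_{D/A}\otimes_Dl .
\]
The key observation is that the target $\Omega_{D/A}\otimes_Dl=\Omega_{D_0/A_0}\otimes_{D_0}l$ is trivially graded, being assembled from the trivially graded objects $A_0,D_0,l$. Hence the homogeneous map $\delta$ kills every component of nonzero degree, so for $n\neq0$ the degree-$n$ parts of $(I/I^2)\otimes_Bl$ and of its submodule $H_1(\LL_{B/A}\otimes^{\rm L}_Bl)=\ker\delta$ agree; thus one is trivially graded iff the other is, which is (iii)$\Leftrightarrow$(iv). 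The care required here lies in running all of this inside the category of $L$-graded (formal) modules, where the cotangent complex and its transitivity triangle are naturally graded because the ring maps are homogeneous, and in the homological identification of $H_1(\LL_{B/D}\otimes^{\rm L}_Bl)$ that rests on $\Omega_{B/D}=0$.
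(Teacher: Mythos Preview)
Your proposal is correct and follows essentially the same route as the paper: you use Corollary~\ref{liftcor} to get strong homogeneity of $g$ for (i)$\Leftrightarrow$(ii), graded Nakayama for (ii)$\Leftrightarrow$(iii), and the transitivity triangle for (iii)$\Leftrightarrow$(iv), with the key point that $\Omega_{D/A}\otimes_Dl$ is trivially graded. The only cosmetic differences are that the paper phrases (i)$\Leftrightarrow$(ii) by reducing to strong homogeneity of $D\to B$ rather than comparing $D/I_0D$ with $D/I$, and it applies Nakayama to $I/I^2$ first and then lifts, rather than identifying $(I/I^2)\otimes_Bl$ with $I\otimes_Dl$ directly.
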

\begin{proof}
(i)$\Longleftrightarrow$(ii) The homomorphism $A\to D$ is strongly homogeneous by Corollary~\ref{liftcor}(i), hence $A\to B$ is strongly homogeneous if and only if $D\to B$ is strongly homogeneous. The latter happens if and only if $D/I=B=D\otimes_{D_0}B_0=D/I_0B$, where $I_0$ is the trivially graded part of $I$, i.e. if and only if (ii) holds.

(ii)$\Longleftrightarrow$(iii) The direct implication is clear. Conversely, assume that (iii) holds, then the $B$-module $I/I^2$ is generated by elements of degree 0 by the graded Naka\-yama's Lemma, see Corollary~\ref{nakayamacor}(ii). Since elements of $I$ generating  $I/I^2$ generate $I$ by the usual Nakayama's Lemma, we obtain (ii).

(iii)$\Longleftrightarrow$(iv) Consider the exact triangle $\Delta$ obtained from the transitivity triangle
$$\LL_{D/A}\otimes^{\rm L}_DB\to\LL_{B/A}\to\LL_{B/D}\to\LL_{D/A}\otimes^{\rm L}_DB[1]$$ by applying $\cdot\otimes^{\rm L}_B l$.
By the formal smoothness of $A\to D$, the complex $\LL_{D/A}\otimes^{\rm L}_Dl$ is quasi-isomorphic to $\Omega_{D/A}\otimes_D l$. By \cite[III.1.2.8.1]{Illusie-cotangent} we have  $H_0(\LL_{B/D})=0$ and $H_1(\LL_{B/D})=I/I^2$, therefore $H_1(\LL_{B/D}\otimes_B^{\rm L} l)=(I/I^2)\otimes_Bl$. It follows that the exact sequence of homologies associated with $\Delta$ contains the sequence $$0\to H_1(\LL_{B/A}\otimes^{\rm L}_Bl)\to(I/I^2)\otimes_Bl\to\Omega_{D/A}\otimes_D l.$$ By our assumption on $A\to D$, the  term $\Omega_{D/A}\otimes_D k$ is trivially graded, hence $(I/I^2)\otimes_Bl$ is  trivially graded if and only if $H_1(\LL_{B/A}\otimes^{\rm L}_Bl)$ is trivially graded.
\end{proof}

\subsubsection{The main result: formal factorizable case}
We summarize what we have done so far in the factorizable case. This will be later generalized to arbitrary homomorphisms, see Theorem~\ref{mainformalth}, but our result here is slightly more precise since we use only formally smooth factorizations in (ii).

\begin{theorem}\label{formalcaseth}
Assume that $\phi\:A\to B$ is a formally factorizable homogeneous local homomorphism of $L$-complete noetherian local rings. Then the following conditions are equivalent:

(i) $\phi$ is strongly homogeneous.

(ii) $\phi$ factors into a composition of homogeneous homomorphisms $A\to D\to D/I=B$, where $D$ is $L$-complete, $A\to D$ is formally smooth with a trivially graded fiber and $I$ is generated by elements of degree zero.

(iii) The $l$-vector spaces $H_0(\LL_{B/A}\otimes^{\rm L}_Bl)=\Omega_{B/A}\otimes_Bl$ and $H_1(\LL_{B/A}\otimes^{\rm L}_Bl)$ are trivially graded.
\end{theorem}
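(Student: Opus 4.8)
The plan is to establish the two equivalences (i)$\Leftrightarrow$(ii) and (ii)$\Leftrightarrow$(iii), using Lemma~\ref{liftlem} to produce a \emph{graded} formally smooth factorization and Lemma~\ref{idealgrading} to control its kernel. Two standing facts will be used repeatedly: since $A$ and $B$ are $L$-complete local, their residue fields $k=A/m_A$ and $l$ are trivially graded; and since $\phi$ is formally factorizable and $B$ is already complete, there exists a formally smooth factorization $A\to D\to B$ with $A\to D$ formally smooth and $D\to B$ surjective.

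First I would treat (i)$\Leftrightarrow$(ii). For (i)$\Rightarrow$(ii), assume $\phi$ is strongly homogeneous; then by the discussion in \S\ref{Sec:fib} its fiber $\oB=B/m_AB$ is trivially graded. Picking a formally smooth factorization $A\to D\to B$ and applying Lemma~\ref{liftlem}, I may grade $D$ so that $A\to D$ is formally smooth and homogeneous with trivially graded fiber and $D\to B$ is homogeneous, whence $I=\ker(D\to B)$ is a homogeneous ideal. Lemma~\ref{idealgrading}, applied to $A\to D\to D/I=B$, then tells me that strong homogeneity of $A\to B$ (its (i)) is equivalent to $I$ being generated by degree-zero elements (its (ii)); so $I$ is generated in degree zero, which is exactly (ii). The converse (ii)$\Rightarrow$(i) is the same application of Lemma~\ref{idealgrading} in the other direction: a factorization as in (ii) satisfies that lemma's hypotheses, so degree-zero generation of $I$ forces $A\to B$ to be strongly homogeneous.

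Next I would prove (ii)$\Leftrightarrow$(iii) by splitting (iii) into its $H_0$- and $H_1$-clauses. The key identification is that, by base change for K\"ahler differentials, $\Omega_{B/A}\otimes_B l=\Omega_{\oB/k}\otimes_{\oB}l=H_0(\LL_{B/A}\otimes^{\rm L}_B l)$, so that, since $l$ is trivially graded, Lemma~\ref{fiblem} shows the $H_0$-clause of (iii) is equivalent to the fiber $\oB$ being trivially graded. For (ii)$\Rightarrow$(iii): given the factorization, $\oB=\oD/\overline{I}$ is a quotient of the trivially graded ring $\oD$, hence trivially graded, yielding the $H_0$-clause; and Lemma~\ref{idealgrading} ((ii)$\Rightarrow$(iv)) gives that $H_1(\LL_{B/A}\otimes^{\rm L}_B l)$ is trivially graded, the $H_1$-clause. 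For (iii)$\Rightarrow$(ii): the $H_0$-clause forces $\oB$ trivially graded, so as in the previous paragraph Lemma~\ref{liftlem} builds a graded formally smooth factorization $A\to D\to D/I=B$ with $A\to D$ having trivially graded fiber; then the $H_1$-clause together with Lemma~\ref{idealgrading} ((iv)$\Rightarrow$(ii)) forces $I$ to be generated by degree-zero elements, which is (ii).

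The hard part will be the bookkeeping around the trivially graded fiber: one cannot invoke Lemma~\ref{idealgrading} until one has a formally smooth factorization whose smooth part $A\to D$ has a trivially graded fiber, and producing such a graded factorization is exactly what Lemma~\ref{liftlem} supplies, but only once the fiber $\oB$ of $\phi$ itself is known to be trivially graded. Thus the crux is to first extract ``$\oB$ trivially graded'' --- from strong homogeneity in (i) via \S\ref{Sec:fib}, and from the $H_0$-clause in (iii) via the differential base-change identity and Lemma~\ref{fiblem} --- and only then bring in the cotangent-complex criterion of Lemma~\ref{idealgrading} to handle the kernel via the $H_1$-clause.
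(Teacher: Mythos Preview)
Your proposal is correct and follows essentially the same route as the paper: first extract from any of (i), (ii), (iii) that the fiber $\oB$ is trivially graded (via \S\ref{Sec:fib} or Lemma~\ref{fiblem}), then use Lemma~\ref{liftlem} to produce a graded formally smooth factorization, and finally invoke Lemma~\ref{idealgrading} to translate between the three conditions. The only organizational difference is that the paper handles all three conditions in parallel rather than via pairwise equivalences, but the ingredients and logic are identical.
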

\begin{proof}
By Lemma~\ref{fiblem} the homomorphism $\phi$ has a trivially graded fiber if and only if $\Omega_{B/A}\otimes_Bl$ is trivially graded. This means that any of the  conditions (i), (ii) or (iii) implies that both  $\phi$ has a trivially graded fiber and $\Omega_{B/A}\otimes_Bl$ is trivially graded.

Fix a factorization $A\to D\onto B$ with a formally smooth $A\to D$. Since $\phi$ has trivially graded fiber we may apply Lemma~\ref{liftlem}: in case (i) or (iii) holds we may  grade $D$ so that $A \to D$ and  $D \to B$  become homogeneous and with a trivially graded fiber; this holds by assumption in case (ii).

It follows that under any of the assumptions (i), (ii) or (iii) the setup of  Lemma~\ref{idealgrading} holds. Furthermore, (i) is equivalent to  Assumption (i) of Lemma~\ref{idealgrading}, (ii) is equivalent to  Assumption (ii) of Lemma~\ref{idealgrading},  and (iii) is equivalent to  Assumption (iv) of Lemma~\ref{idealgrading}.  The equivalence of (i), (ii) and (iii) now follows from the equivalence in Lemma~\ref{idealgrading}.
\end{proof}

\begin{remark}\label{seprem}
Let $\phi\:A\to B$ be a local homomorphism of $L$-complete noetherian local rings and assume that the extension of the residue fields $l/k$ is separable (in particular, $\phi$ is factorizable). In this case, one can describe $\phi$ very explicitly.

Fix a ring of coefficients $C(k)\to A$. If $\phi$ is formally smooth then by Remark~\ref{formalrem} there exists an isomorphism of $A$-algebras $A\wtimes_{C(k)}C(l)\llbracket t_1\.t_n\rrbracket\toisom B$. It follows that in general $\phi$ is strongly homogeneous if and only if there exists a homogeneous isomorphism of $A$-algebras $$\left(A\wtimes_{C(k)}C(l)\llbracket t_1\.t_n\rrbracket\right)/(f_1\.f_m)\toisom B,$$ where $t_i,f_j$ are of degree zero.
\end{remark}

\subsection{Elimination of the formal factorization assumption}
The aim of this section is to extend main results of Section \ref{strhomsec} to non-factorizable homomorphisms. This only requires to replace formally smooth factorizations by so-called Cohen factorizations. The arguments are very similar, so we will mainly indicate the modifications one has to make.

\subsubsection{Cohen factorization}
Let $\phi\:A\to B$ be a homomorphism of noetherian complete local rings. A {\em Cohen factorization} of $\phi$ is a factorization of the form $A\stackrel{f}\to D\stackrel{g}\onto B$, where $D$ is a noetherian complete local ring, $g$ is surjective, $f$ is flat, and the ring $D/m_AD$ is regular.

\begin{rem}
(i) Cohen factorizations were introduced by Avramov, Foxby and Herzog in \cite{AFH} to study local homomorphisms between noetherian complete local rings. In particular, they showed that such a factorization always exists. Later, this notion was exploited by Avramov in \cite{Avramov-lci} to establish foundational properties of general lci morphisms.

(ii) For the sake of comparison with the formally smooth factorizations recall that $f$ is formally smooth if it is flat and $D/m_AD$ is geometrically regular over $A/m_A$. Flat morphisms with regular fibers are sometimes called weakly regular, but they are not especially useful. So it may be surprising that
Cohen factorization do provide a useful tool.
\end{rem}

\subsubsection{Graded Cohen factorization}
The following result will serve as a replacement of Lemma~\ref{liftlem}.

\begin{lemma}\label{cohenfactorlem}
Let $\phi\:A\to B$ be a homogeneous local homomorphism of $L$-complete noetherian local rings with a trivially graded fiber $k\to\oB=B/m_AB$. Then there exists a Cohen factorization $A\stackrel{g}\to D\stackrel{h}\onto B$ such that the following conditions hold:

(i) Both $g$ and $h$ are homogeneous homomorphisms.

(ii) $g$ is a graded formal base change of a homomorphism $\psi\:C(k)\to C(l)\llbracket y_1\.y_n\rrbracket$, where $l=B/m_B$ and the gradings are trivial. In particular, the fiber $\oD=D/m_AD$ is trivially graded.
\end{lemma}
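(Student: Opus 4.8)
The plan is to mirror the proof of Lemma~\ref{liftlem}, replacing the formally smooth factorization (whose existence in \ref{factorizablesec} required factorizability, hence essentially separability of the residue extension) by a directly constructed \emph{graded} Cohen factorization. As in \ref{Sec:fib}, since the fiber is trivially graded we have $\oB=B_0/m_0B_0$, where $B_0$ is the degree-zero subring, a noetherian complete local ring with maximal ideal $m_{B_0}=m_B\cap B_0$ and residue field $l$. First I would fix a ring of coefficients $i\:C(k)\to A_0$, so that $A$, $B$ and $B_0$ all become $C(k)$-algebras and $\phi$ is a $C(k)$-homomorphism. Note that the fiber $\oB$ is equal characteristic, being an algebra over $k=A/m_A$.

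The heart of the construction, and what I expect to be the main obstacle, is to produce a homogeneous (degree-zero) coefficient ring $C(l)\to B_0$ over $C(k)$. In Lemma~\ref{liftlem} the analogous lifting was obtained from the formal smoothness of $C(\og)$, which is \emph{not} available here, since for an inseparable extension $l/k$ the homomorphism $C(k)\to C(l)$ need not be formally smooth. Instead I would invoke Cohen structure theory directly: extending a $p$-basis of $k$ to a $p$-basis of $l$ and lifting the new elements produces a coefficient ring $C(l)\subseteq B_0$ containing the image of $C(k)$, with no separability hypothesis on $l/k$ (cf. \cite[{\tt tag/032A}]{stacks} and the theory of $p$-bases). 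This is precisely the feature distinguishing Cohen factorizations from formally smooth ones. Its reduction modulo $m_A$ is a coefficient field $l\into\oB$ containing $k$. I then choose a minimal generating set $\bar x_1\.\bar x_n$ of $m_{\oB}=m_B/m_AB$, together with degree-zero lifts $x_i\in m_{B_0}$, set $\oD=l\llbracket y_1\.y_n\rrbracket$ (trivially graded, regular, with residue field $l$) and form its flat lift $C(\oD)=C(l)\llbracket y_1\.y_n\rrbracket$ with trivial grading; the map $\psi\:C(k)\to C(\oD)$ induced by $C(k)\to C(l)$ is flat (as $C(l)$ is torsion-free, hence flat, over the DVR $C(k)$) with regular fiber $C(\oD)\otimes_{C(k)}k=\oD$, though it need not be formally smooth.

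Finally I would assemble the factorization. Set $D=A\,\widehat\otimes_{C(k)}C(\oD)$ with the formal grading $D=\prod_{n\in L}A_n\,\widehat\otimes_{C(k)}C(\oD)$; then $g\:A\to D$ is a homogeneous graded base change of $\psi$, it is flat with regular closed fiber $D/m_AD=\oD$, and $D$ is a noetherian complete local ring. This yields (ii) and the homogeneity of $g$ in (i). The $C(k)$-homomorphism $C(\oD)\to B_0\into B$ sending $y_i\mapsto x_i$ (and using the coefficient ring on $C(l)$) together with $\phi$ determines, by the universal property of $\,\widehat\otimes\,$, a homomorphism $h\:D\to B$ with $h|_A=\phi$ and $h(C(\oD))\subseteq B_0$; hence $h(A_n\,\widehat\otimes_{C(k)}C(\oD))\subseteq A_nB_0\subseteq B_n$, so $h$ is homogeneous, completing (i). Since the closed fiber $\oh\:\oD\onto\oB$ is surjective by construction, the argument of Corollary~\ref{Cor:liftadic} (Nakayama for the $m_A$-adic filtration on the complete rings $D$ and $B$) shows that $h$ is surjective. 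Thus $A\xrightarrow{g}D\xrightarrow{h}B$ is the desired graded Cohen factorization.
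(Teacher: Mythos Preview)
Your overall architecture is right and close to the paper's, but the step you flag as ``the main obstacle'' is a genuine gap. You claim that by extending a $p$-basis of $k$ to a $p$-basis of $l$ one obtains a coefficient ring $C(l)\subseteq B_0$ containing the image of $C(k)$. This fails in two ways. First, a $p$-basis of $k$ need not extend to a $p$-basis of $l$: if $l=k^{1/p}$ then every element of $k$ lies in $l^p$, so no element of $k$ is $p$-independent in $l$. Second, and more seriously, the desired coefficient ring need not exist at all. Take $k=\FF_p(t)$, $l=\FF_p(t^{1/p})$, $B_0=l[\epsilon]/(\epsilon^2)$, and let $\phi\:k\to B_0$ send $t\mapsto t+\epsilon$. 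Any coefficient field $F\subset B_0$ contains some $s+a\epsilon$ lifting $t^{1/p}$, and $(s+a\epsilon)^p=s^p=t$; hence $t\in F$ but $t+\epsilon\notin F$ (else $\epsilon\in F$, absurd). So the image of $k$ lies in no coefficient field of $B_0$. Taking $A=k$ with trivial grading, this satisfies all hypotheses of the lemma.

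The paper avoids this by \emph{not} trying to make $C(k)\to C(l)$ compatible. Instead it first fixes any coefficient ring $C(l)\to B_0$ and a surjection $C(l)\llbracket y_1\.y_n\rrbracket\onto B_0$ (sending $y_i$ to generators of $m_{B_0}$), and then lifts the given $C(k)\to B_0$ through this surjection using formal smoothness of $C(k)$ over $\ZZ_p$ (Lemma~\ref{Lem:liftadic}). The resulting $\psi\:C(k)\to C(l)\llbracket y_1\.y_n\rrbracket$ need not carry $C(k)$ into $C(l)$; the paper notes this explicitly just before the proof. In the counterexample above, $\psi$ would send $t\mapsto t+y$. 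Once $\psi$ is in hand, the rest of your argument (forming $D=A\,\widehat\otimes_{C(k)}C(l)\llbracket y\rrbracket$, checking homogeneity, and deducing surjectivity of $h$ from surjectivity of $\oh$) works and matches the paper.
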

We note that, unlike the situation in Remark~\ref{seprem}, the homomorphism $\psi$ does not take $C(k)$ to $C(l)$ in general.
\begin{proof}
The proof is a graded variation on the proof of \cite[Theorem 1.1]{AFH}, and it is also close to the proof of Lemma~\ref{liftlem}, so we only describe the construction. The rings $A$ and $B$ have the same residue fields as $A_0$ and $B_0$, respectively, so we can fix structure homomorphisms $C(k)\to A_0$ and $C(l)\to B_0$. Choose a surjective homomorphism $C(l)\llbracket y_1\.y_n\rrbracket\onto B_0$ that takes $y_i$ to a family of generators of $m_{B_0}$, then the composed homomorphism $C(k)\to A_0\to B_0$ lifts to a homomorphism $\psi\:C(k)\to C(l)\llbracket y_1\.y_n\rrbracket$. We provide $\psi$ with the trivial grading and define $g$ to be the graded formal base change $A\to D=A\wtimes_{C(k)}C(l)\llbracket y_1\.y_n\rrbracket$. We claim that $g$ satisfies (ii). Indeed, $g$ is flat because $\psi$ and the completion homomorphism $A\otimes_{C(k)}C(l)\llbracket y_1\.y_n\rrbracket\to D$ are flat, and $D/m_AD=l\llbracket y_1\.y_n\rrbracket$ is regular.

It remains to construct $h$. The graded homomorphisms $C(l)\llbracket y_1\.y_n\rrbracket\onto B_0\to B$ and $\phi$ induce a homomorphism $h\:D\to B$, so we should only check that $h$ is onto. Since $D/m_D=l$ we should only prove that $h(D)$ contains a set of generators of $m_B$. Clearly, $h(D)$ contains $m_AB$ and $m_{B_0}$, and it remains to note that since $B/m_AB$ is trivially graded, $m_B=m_AB+m_{B_0}$.
\end{proof}

\subsubsection{The cotangent complex}
Next, we adjust Lemma \ref{idealgrading} to Cohen factorizations.

\begin{lemma}\label{cohenidealgrading}
Assume that $g\:A\to D$ is a graded formal base change of a trivially graded homomorphism $\psi\:C(k)\to C(l)\llbracket y_1\.y_n\rrbracket$, where $k=A/m_A$ and $l=D/m_D$. If $I\subset D$ is a homogeneous ideal and $B=D/I$ then the following conditions are equivalent:

(i) The composition $A\to B$ is strongly homogeneous.

(ii) $I$ is generated by elements of degree zero.

(iii) $(I/I^2)\otimes_Bl$ is trivially graded.

(iv) $H_1(\LL_{B/A}\otimes^{\rm L}_Bl)$ is trivially graded.
\end{lemma}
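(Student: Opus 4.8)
The plan is to mimic the proof of Lemma~\ref{idealgrading} almost verbatim, replacing the role played there by formal smoothness of $A\to D$ with the fact that here $g$ is a graded formal base change of the trivially graded homomorphism $\psi$. The only genuinely new input is a statement about the grading on $\LL_{D/A}\otimes^{\rm L}_D l$, so I would isolate that and leave the rest as in the factorizable case.

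First I would dispose of (i)$\Longleftrightarrow$(ii) and (ii)$\Longleftrightarrow$(iii). Since $E:=C(l)\llbracket y_1\.y_n\rrbracket$ is trivially graded, the degree-zero part of $D=A\wtimes_{C(k)}E$ is $D_0=A_0\wtimes_{C(k)}E$, whence $A\wtimes_{A_0}D_0=A\wtimes_{A_0}(A_0\wtimes_{C(k)}E)=D$; that is, $g\:A\to D$ is strongly homogeneous. This is the substitute for the appeal to Corollary~\ref{liftcor}(i) in Lemma~\ref{idealgrading}. Exactly as there, strong homogeneity of $A\to D$ reduces strong homogeneity of the composition $A\to B$ to that of $D\to B=D/I$, which holds if and only if $B=D/I_0D$ with $I_0=I\cap D_0$, i.e. if and only if $I$ is generated by degree-zero elements; this gives (i)$\Longleftrightarrow$(ii). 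The equivalence (ii)$\Longleftrightarrow$(iii) is then unchanged: the forward direction is clear, and the converse follows from the graded Nakayama lemma (Corollary~\ref{nakayamacor}(ii)), which produces degree-zero generators of $I/I^2$, together with ordinary Nakayama to lift these to generators of $I$.

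The heart of the matter is (iii)$\Longleftrightarrow$(iv). As in Lemma~\ref{idealgrading}, I would apply $\cdot\otimes^{\rm L}_Bl$ to the transitivity triangle $\LL_{D/A}\otimes^{\rm L}_DB\to\LL_{B/A}\to\LL_{B/D}\to\LL_{D/A}\otimes^{\rm L}_DB[1]$ and use $H_0(\LL_{B/D})=\Omega_{B/D}=0$ together with $H_1(\LL_{B/D})=I/I^2$ to extract the exact sequence of graded $l$-vector spaces
\[
H_1(\LL_{D/A}\otimes^{\rm L}_Dl)\to H_1(\LL_{B/A}\otimes^{\rm L}_Bl)\to(I/I^2)\otimes_Bl\to\Omega_{D/A}\otimes_Dl.
\]
Here the proof diverges from the formally smooth case: since $A\to D$ need not be formally smooth, $H_1(\LL_{D/A}\otimes^{\rm L}_Dl)$ need not vanish, so I cannot conclude that the middle map is injective as was done before. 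Instead I would show that $\LL_{D/A}\otimes^{\rm L}_Dl$ is trivially graded in \emph{every} homological degree. Because $g$ is a graded formal base change of $\psi$ and $\psi$ is flat, $A$ and $E$ are $\Tor$-independent over $C(k)$, so the base-change property of the cotangent complex yields $\LL_{D/A}\otimes^{\rm L}_Dl\simeq\LL_{E/C(k)}\otimes^{\rm L}_El$; as $C(k)$, $E$ and $l$ are all trivially graded, every homology module of the right-hand side is trivially graded. In particular both outer terms $H_1(\LL_{D/A}\otimes^{\rm L}_Dl)$ and $\Omega_{D/A}\otimes_Dl=H_0(\LL_{D/A}\otimes^{\rm L}_Dl)$ of the displayed sequence are trivially graded.

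Finally I would conclude using the semisimplicity of graded modules over the trivially graded residue field $l$: a graded $l$-module is free (see \cite[Lemma~1.2]{Temkin-local-properties}), so every short exact sequence of graded $l$-vector spaces splits, and an extension of trivially graded spaces by trivially graded spaces is trivially graded. Reading the displayed sequence as $0\to\operatorname{im}(\alpha)\to H_1(\LL_{B/A}\otimes^{\rm L}_Bl)\to\operatorname{im}(\beta)\to0$ (with $\operatorname{im}(\alpha)$ a graded quotient of the trivially graded $H_1(\LL_{D/A}\otimes^{\rm L}_Dl)$ and $\operatorname{im}(\beta)$ a graded submodule of $(I/I^2)\otimes_Bl$) gives (iii)$\Longrightarrow$(iv); reading it instead as $0\to\operatorname{im}(\beta)\to(I/I^2)\otimes_Bl\to\operatorname{im}(\gamma)\to0$ (with $\operatorname{im}(\gamma)$ a graded submodule of the trivially graded $\Omega_{D/A}\otimes_Dl$) gives (iv)$\Longrightarrow$(iii). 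The main obstacle I anticipate is the cotangent-complex base-change identification $\LL_{D/A}\otimes^{\rm L}_Dl\simeq\LL_{E/C(k)}\otimes^{\rm L}_El$ in the \emph{completed} setting: one must verify that the completed tensor product $A\wtimes_{C(k)}E$ is flat enough ($\Tor$-independent over $C(k)$) for the usual base-change formula to apply, and that the grading on the resulting complex is exactly the one induced from the trivial grading of $E$.
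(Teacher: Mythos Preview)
Your proposal is correct and follows essentially the same approach as the paper's proof: copy the proof of Lemma~\ref{idealgrading}, observe that $H_1(\LL_{D/A}\otimes^{\rm L}_Dl)$ need not vanish but is trivially graded via the base-change identification $\LL_{D/A}\otimes^{\rm L}_Dl\simeq\LL_{E/C(k)}\otimes^{\rm L}_El$ coming from the flatness of $\psi$ and $\Tor$-independence, and conclude. The paper is terser about the final step (it simply asserts that triviality of the outer terms gives the equivalence of triviality of the inner two), whereas you spell out the semisimplicity argument; and the paper dispatches your flagged obstacle by directly invoking \cite[Corollary~III.2.2.3]{Illusie-cotangent}.
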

\begin{proof}
The proof is a copy of the proof of Lemma \ref{idealgrading} with the only difference that in the exact sequence
$$H_1(\LL_{D/A}\otimes^{\rm L}_Dl)\to H_1(\LL_{B/A}\otimes^{\rm L}_Bl)\to(I/I^2)\otimes_Bl\to\Omega_{D/A}\otimes_Dl$$ the first term can be non-zero. However, we claim that it is trivially graded, and hence it is still true that $(I/I^2)\otimes_Bl$ is trivially graded if and only if $H_1(\LL_{B/A}\otimes^{\rm L}_Bl)$ is trivially graded.

It remains to study $H_1(\LL_{D/A}\otimes^{\rm L}_Dl)$. Set $E=C(l)\llbracket y_1\.y_n\rrbracket$. Since $\psi$ is flat, the homomorphisms $\psi$ and $C(k)\to A$ are $\Tor$-independent and hence $\LL_{D/A}\toisom\LL_{E/C(k)}\otimes^{\rm L}_ED$ by \cite[Corollary~III.2.2.3]{Illusie-cotangent}. Thus, $\LL_{D/A}\otimes^{\rm L}_Dl=\LL_{E/C(k)}\otimes^{\rm L}_El$ is trivially graded since $\psi$ is trivially graded.
\end{proof}

\subsubsection{The main result: formal case}
Now, we can eliminate the formal factorization assumption from our main formal result.

\begin{theorem}\label{mainformalth}
Assume that $\phi\:A\to B$ is a homogeneous local homomorphism of $L$-complete noetherian local rings. Then the following conditions are equivalent:

(i) $\phi$ is strongly homogeneous.

(ii) $\phi$ possesses a Cohen factorization $A\stackrel{g}\to D\to D/I=B$, where $g$ a graded formal base change of a trivially graded homomorphism $\psi\:C(k)\to C(l)\llbracket y_1\.y_n\rrbracket$ and $I$ is generated by elements of degree zero.

(iii) The $l$-vector spaces $H_0(\LL_{B/A}\otimes^{\rm L}_Bl)=\Omega_{B/A}\otimes_Bl$ and $H_1(\LL_{B/A}\otimes^{\rm L}_Bl)$ are trivially graded.
\end{theorem}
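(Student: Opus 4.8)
The plan is to follow the proof of Theorem~\ref{formalcaseth} almost verbatim, substituting the Cohen-factorization analogue Lemma~\ref{cohenfactorlem} for Lemma~\ref{liftlem} and Lemma~\ref{cohenidealgrading} for Lemma~\ref{idealgrading}. First I would reduce all three conditions to the common hypothesis that $\phi$ has a trivially graded fiber. By Lemma~\ref{fiblem}, applied to the fiber $k\to\Lam=B/m_AB$, this is equivalent to $H_0(\LL_{B/A}\otimes^{\rm L}_B l)=\Omega_{B/A}\otimes_B l$ being trivially graded, so condition (iii) supplies it directly. Condition (i) supplies it by the discussion in \S\ref{Sec:fib}, since a strongly homogeneous homomorphism has a trivially graded fiber. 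For condition (ii), the factorization $A\stackrel{g}\to D\to D/I=B$ has trivially graded fiber $D/m_AD=l\llbracket y_1\.y_n\rrbracket$ and $I$ generated by degree-zero elements, so the fiber $B/m_AB=D/(I+m_AD)$ is trivially graded as well.

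Once the trivially graded fiber is in hand, I would, in cases (i) and (iii), invoke Lemma~\ref{cohenfactorlem} to produce a graded Cohen factorization $A\stackrel{g}\to D\stackrel{h}\onto B$ in which $g$ is a graded formal base change of a trivially graded $\psi\:C(k)\to C(l)\llbracket y_1\.y_n\rrbracket$; in case (ii) such a factorization is given by hypothesis. This places us in the hypotheses of Lemma~\ref{cohenidealgrading}, whose conditions I would match to those of the theorem as follows: condition (i) here is exactly condition (i) of the lemma; condition (ii) here is condition (ii) of the lemma, the structural part of the factorization being already secured; and condition (iii) here, once $H_0(\LL_{B/A}\otimes^{\rm L}_B l)$ is known to be trivially graded, reduces to the triviality of the grading on $H_1(\LL_{B/A}\otimes^{\rm L}_B l)$, which is condition (iv) of the lemma. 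The equivalence of (i), (ii), (iv) in Lemma~\ref{cohenidealgrading} then yields the equivalence of (i), (ii), (iii) of the theorem.

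The only genuinely new point relative to Theorem~\ref{formalcaseth} is that a Cohen factorization is weaker than a formally smooth one: the fiber $D/m_AD$ is merely regular rather than geometrically regular, so $l/k$ need not be separable and the term $H_1(\LL_{D/A}\otimes^{\rm L}_D l)$ appearing in the transitivity sequence need not vanish. I expect this to be the main obstacle, but it is precisely what Lemma~\ref{cohenidealgrading} resolves: there one uses the $\Tor$-independence of $\psi$ with $C(k)\to A$ together with \cite[Corollary~III.2.2.3]{Illusie-cotangent} to identify $\LL_{D/A}\otimes^{\rm L}_D l\simeq\LL_{E/C(k)}\otimes^{\rm L}_E l$, which is trivially graded because $\psi$ is; hence the possibly nonzero term does no harm. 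Consequently, at the level of the theorem the remaining work is purely the bookkeeping of identifying the three conditions with their counterparts in the lemma.
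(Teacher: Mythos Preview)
Your proposal is correct and follows essentially the same approach as the paper: the paper's proof simply states that the argument of Theorem~\ref{formalcaseth} applies with Lemma~\ref{cohenfactorlem} in place of Lemma~\ref{liftlem} and Lemma~\ref{cohenidealgrading} in place of Lemma~\ref{idealgrading}, which is exactly the substitution you carry out. Your additional paragraph explaining why the possibly nonzero $H_1(\LL_{D/A}\otimes^{\rm L}_D l)$ causes no trouble is a helpful gloss, but that content is already absorbed into the proof of Lemma~\ref{cohenidealgrading} and is not needed again here.
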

\begin{proof}
The proof of Theorem \ref{formalcaseth} applies, with Lemmas~\ref{fiblem} and \ref{idealgrading} replaced by Lemmas~\ref{cohenfactorlem} and \ref{cohenidealgrading}.
\end{proof}

\subsubsection{Descent of lci}\label{Sec:lci-syntomic}
The notion of general lci morphisms between noetherian schemes was introduced by Avramov, see Definition on pages 458-459 of \cite{Avramov-lci}. This is equivalent to the following: a local homomorphism of notherian local rings $\phi\:A\to B$ is called {\em complete intersection} if its completion possesses a Cohen factorization $\hatA\to D\to\hatB$ such that the kernel of $D\to\hatB$ is generated by a regular sequence. This turns out to be independent of the factorization. A morphism of locally noetherian schemes is {\em lci}\index{lci morphism} if all its local homomorphisms are complete intersections. In particular, $\phi$ is complete intersection if and only if it is lci, and we will use the notion lci in the sequel.

Recall  that a flat lci morphism is called {\em syntomic}. Traditionally one assumes a syntomic morphism to be locally of finite presentation, but with Avramov's general notion of lci morphisms this assumption becomes redundant. Unlike general lci morphisms, syntomic morphisms are preserved by arbitrary base changes, so the notion of being strongly syntomic makes sense (as defined in Section \ref{strhomsec}).

\begin{lemma}\label{cilem}
Let $\phi\:A\to B$ be a homogeneous local homomorphism of $L$-complete noetherian local rings.

(i) Assume that $\phi$ is strongly homogeneous. If $\phi$ is lci then $\phi_0\:A_0\to B_0$ is lci. Conversely, if $\phi_0$ is lci and the homomorphisms $\phi_0$ and $A_0\into A$ are $\Tor$-independent then $\phi$ is lci.

(ii) $\phi$ is strongly syntomic if and only if it is syntomic and strongly homogeneous.
\end{lemma}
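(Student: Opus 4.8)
The plan is to reduce both assertions to statements about regular sequences inside the rings of a graded Cohen factorization. By Theorem~\ref{mainformalth}, strong homogeneity of $\phi$ supplies a Cohen factorization $A\to D\to D/I=B$ in which $D=A\wtimes_{C(k)}E$ with $E=C(l)\llbracket y_1\.y_n\rrbracket$, the map $A\to D$ is flat with regular closed fiber $\oD=D/m_AD=l\llbracket y_1\.y_n\rrbracket$, and $I=I_0D$ with $I_0=I\cap D_0$ generated by degree-zero elements. Writing $D_0=A_0\wtimes_{C(k)}E$ one has $D=A\wtimes_{A_0}D_0$, the map $A_0\to D_0$ is flat with the same closed fiber $\oD=D_0/m_{A_0}D_0$, and $A_0\to D_0\to D_0/I_0=B_0$ is a Cohen factorization of $\phi_0$. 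Throughout I will use the fact, recalled in Section~\ref{Sec:lci-syntomic}, that the lci property is independent of the chosen Cohen factorization, so that $\phi$ (resp.\ $\phi_0$) is lci if and only if $I$ (resp.\ $I_0$) is generated by a regular sequence in $D$ (resp.\ $D_0$).

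For the forward implication of (i), assume $\phi$ is lci, so $I$ is generated by a regular sequence in $D$; then $I/I^2$ is free of rank $r$ over $B$ and $\mu(I)=r$. By graded Nakayama (Corollary~\ref{nakayamacor}) and the equality $I=I_0D$ I would choose a minimal system of homogeneous generators of $I$ lying in $I_0$, say $f_1\.f_r$. Two minimal generating systems of $I$ differ by a matrix invertible over the local ring $D$, so their Koszul complexes are isomorphic; since the given regular sequence has acyclic Koszul complex, so does $f_1\.f_r$, whence $f_1\.f_r$ is itself a regular sequence in $D$. Applying the $D_0$-linear degree-zero retraction $\pi_0\:D\to D_0$ shows that $f_1\.f_r$ generate $I_0$ and, by a short induction on $i$, that they remain a regular sequence in $D_0$: if $f_ia\in(f_1\.f_{i-1})D_0$ with $a\in D_0$, then $a\in(f_1\.f_{i-1})D$ by regularity in $D$, and applying $\pi_0$ to such an expression returns it to $(f_1\.f_{i-1})D_0$. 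Hence $\phi_0$ is lci. For the converse, assume $\phi_0$ is lci, so $I_0$ is generated by a regular sequence $f_1\.f_r$ in $D_0$; then $I=I_0D$ is generated by the same elements, and these form a regular sequence in $D$ precisely when the Koszul complex $K_\bullet(f_1\.f_r;D_0)\otimes_{D_0}D$ stays acyclic, i.e.\ when $\Tor^{D_0}_{>0}(B_0,D)=0$. Since $A_0\to D_0$ is flat one has $\Tor^{D_0}_{i}(B_0,D)=\Tor^{A_0}_i(B_0,A)$, which vanishes for $i>0$ exactly by the assumed $\Tor$-independence of $\phi_0$ with $A_0\into A$; thus $\phi$ is lci.

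For (ii) I would route both syntomicity statements through the same factorization and compare them on the common closed fiber. Fix degree-zero elements $f_1\.f_r$ forming a minimal generating system of $I_0$, hence of $I$. By the standard fiberwise criterion for flat local homomorphisms (\cite{Matsumura-ringtheory}), applied to the flat map $A\to D$: the sequence $f_1\.f_r$ is $D$-regular with $D/(f_1\.f_r)=B$ flat over $A$ if and only if the images $\of_1\.\of_r$ form a regular sequence in $\oD=l\llbracket y_1\.y_n\rrbracket$. If $\phi$ is syntomic then $B$ is flat over $A$ and, by the lci analysis of part (i), $f_1\.f_r$ is $D$-regular, so $\of_1\.\of_r$ is $\oD$-regular; conversely, if $\of_1\.\of_r$ is $\oD$-regular the criterion returns both the regularity of $f_1\.f_r$ in $D$ and the flatness of $B$ over $A$, i.e.\ $\phi$ is syntomic. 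The same criterion for $A_0\to D_0$ shows that $\phi_0$ is syntomic if and only if $\of_1\.\of_r$ is regular in $\oD_0$. Since $\oD=D\otimes_Ak=D_0\otimes_{A_0}k=\oD_0$ canonically and the images $\of_i$ coincide, $\phi$ is syntomic if and only if $\phi_0$ is, which is exactly the equivalence in (ii), strong homogeneity being part of both sides.

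The genuinely delicate point is the forward half of (i): descending a regular sequence from $D$ to the subring $D_0$ along the inclusion $D_0\into D$, which is \emph{not} flat. This is where the grading is indispensable --- the degree-zero projection $\pi_0$ is only $D_0$-linear and not a ring map, yet that linearity is all the descent requires. The converse half of (i) is where the $\Tor$-independence hypothesis is forced on us, through the base-change identity $\Tor^{D_0}_i(B_0,D)=\Tor^{A_0}_i(B_0,A)$. Part (ii) is then comparatively formal once one notices that flatness lets one read syntomicity off the single regular fiber $l\llbracket y_1\.y_n\rrbracket$ shared by the two factorizations; in particular the separate implication ``$\phi$ flat $\Rightarrow\phi_0$ flat'' never has to be proved in isolation, as it is absorbed into the fiberwise regularity criterion.
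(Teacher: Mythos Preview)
Your argument is correct and, for the forward direction of~(i), essentially coincides with the paper's: both choose a minimal generating sequence of $I$ in degree zero via graded Nakayama, observe that any minimal generating set of an ideal generated by a regular sequence is again regular, and then descend regularity to $D_0$. Your explicit use of the $D_0$-linear retraction $\pi_0$ spells out the step the paper compresses into ``hence also in $D_0$''.

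There are two genuine, if minor, differences. For the converse of~(i) the paper does not stay inside the factorization: it simply invokes that lci morphisms are preserved under $\Tor$-independent base change and under composition, noting that $A\otimes_{A_0}B_0\to A\wtimes_{A_0}B_0=B$ is a completion and hence lci. Your Koszul computation $\Tor^{D_0}_i(B_0,D)=\Tor^{A_0}_i(B_0,A)$ reaches the same conclusion by a direct route; the paper's version is shorter but relies on the general base-change result for Avramov's lci notion. For~(ii) the paper merely writes ``(i)$\Rightarrow$(ii) is obvious'', whereas you supply the missing content --- namely the descent of flatness from $\phi$ to $\phi_0$ --- by comparing the two Cohen factorizations on their common closed fiber $l\llbracket y_1\.y_n\rrbracket$ via the local criterion. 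This is exactly the argument one has to produce to unpack the paper's ``obvious'', so your version is the more complete one here.
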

\begin{proof}
The implication (i)$\implies$(ii) is obvious, so let us prove (i). The inverse implication is clear since lci morphisms are preserved by $\Tor$-independent base changes and compositions, and the completion homomorphism $A\otimes_{A_0}B_0\to A\wtimes_{A_0}B_0=B$ is lci.

Assume now that $\phi$ is lci. Consider a Cohen factorization $A\to D\to D/I=B$ as in Theorem~\ref{mainformalth}. Then $I$ is generated by a regular sequence of length $n$ and hence any generating sequence of $I$ of size $n$ is a regular sequence. Note that $n$ is the dimension of $(I/I^2)\otimes_Dl$, where $l=D/m_D$. By Lemma~\ref{cohenidealgrading}, $(I/I^2)\otimes_Dl$ is trivially graded, hence we can find $x_1\.x_n$ of degree zero whose images generate $(I/I^2)\otimes_Dl$. By Nakayama's lemma, $x_i$ generate $I$. As we noted above, $x_i$ form a regular sequence in $D$ and hence also in $D_0$. It remains to note that $A_0\to D_0$ is a formal base change of $C(k)\to C(l)\llbracket y_1\.y_m\rrbracket$ and hence $A_0\to D_0\to B_0$ is a Cohen factorization of $\phi_0$, and the kernel of $D_0\to B_0$ coincides with $I_0$ and hence is generated by the regular sequence $x_1\.x_n$.
\end{proof}

\begin{remark}
It may happen that $\phi$ is strongly homogeneous and $\phi_0$ is lci but $\phi$ is not lci. For example, take $A_0=k\llbracket x\rrbracket$ and $B_0=A_0/(x)=k$. Extend $A_0$ to a graded algebra $A=A_0[\veps]/(\veps^2,x\veps)$, where $\veps$ is homogeneous of a non-zero degree. Finally, set $B=A\wtimes_{A_0}B_0=A/xA=k[\veps]/(\veps^2)$. Then $\phi\:A\to B$ is strongly homogeneous but not lci since $x\veps=0$ and hence $\{x\}$ is not a regular sequence.
\end{remark}

\subsection{The strictly local Luna's Fundamental Lemma}
We complete our study of strong homogeneity with a summary of  the strictly  local case. Section~\ref{Lunasection} is devoted to the {\em global} relatively affine case.

\begin{theorem}\label{localcaseth}
Assume that $\phi\:A\to B$ is a homogeneous local homomorphism of noetherian strictly $L$-local rings. Then

(i) $\phi$ is strongly homogeneous if and only if the $l$-vector spaces $H_0(\LL_{B/A}\otimes^{\rm L}_Bl)=\Omega_{B/A}\otimes_Bl$ and $H_1(\LL_{B/A}\otimes^{\rm L}_Bl)$ are trivially graded.

(ii) Assume that $\phi$ is formally smooth then the following conditions are equivalent: (a) $\Omega_{B/A}\otimes_Bl$ is trivially graded, (b) $\phi$ has a trivially graded fiber, (c) $\phi$ is strongly homogeneous, (d) $\phi$ is strongly formally smooth.

(iii) $\phi$ is strongly syntomic if and only if it is syntomic and strongly homogeneous. Moreover, if $\phi$ is strongly homogeneous then the following claims hold: (a) if $\phi$ is lci then $\phi_0$ is lci, (b) if $\phi_0$ is lci and $\Tor$-independent with the homomorphism $A_0\to A$ then $\phi$ is lci.
\end{theorem}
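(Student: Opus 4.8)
The plan is to deduce all three parts from the corresponding statements in the formal case by passing to completions, so the only genuinely new ingredient is a base-change comparison showing that completion does not alter the graded homology of the cotangent complex that controls strong homogeneity. Throughout, by Proposition~\ref{Prop:complete} the completions $\hatA$ and $\hatB$ are $L$-complete noetherian local rings, $\hatphi\:\hatA\to\hatB$ is a homogeneous local homomorphism, one has $\hatA_0=\widehat{A_0}$ and $\hatB_0=\widehat{B_0}$, and $l=B/m_B=\hatB/m_{\hatB}$. By Lemma~\ref{localtoformal}, $\phi$ is strongly homogeneous if and only if $\hatphi$ is.

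The key step is a homogeneous quasi-isomorphism $\LL_{B/A}\otimes^{\rm L}_B l\toisom\LL_{\hatB/\hatA}\otimes^{\rm L}_{\hatB}l$. To establish it I would first record two vanishings. Since $A\to\hatA$ is flat with $\hatA\otimes_A k=k$ for $k=A/m_A$, flat base change for the cotangent complex \cite[Corollary~III.2.2.3]{Illusie-cotangent} gives $\LL_{\hatA/A}\otimes^{\rm L}_{\hatA}k\simeq\LL_{k/k}=0$, hence $\LL_{\hatA/A}\otimes^{\rm L}_{\hatA}l=0$; likewise $\LL_{\hatB/B}\otimes^{\rm L}_{\hatB}l\simeq\LL_{l/l}=0$. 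Applying $\cdot\otimes^{\rm L}_{\hatB}l$ to the transitivity triangles of $A\to B\to\hatB$ and of $A\to\hatA\to\hatB$ and using these vanishings yields $\LL_{B/A}\otimes^{\rm L}_B l\toisom\LL_{\hatB/A}\otimes^{\rm L}_{\hatB}l\toisom\LL_{\hatB/\hatA}\otimes^{\rm L}_{\hatB}l$. As every homomorphism in sight is homogeneous, these are isomorphisms of $L$-graded $l$-vector spaces, so $H_0$ and $H_1$ on the two sides carry the same grading.

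Part (i) now follows: $\phi$ is strongly homogeneous iff $\hatphi$ is (Lemma~\ref{localtoformal}), iff $H_0(\LL_{\hatB/\hatA}\otimes^{\rm L}_{\hatB}l)$ and $H_1(\LL_{\hatB/\hatA}\otimes^{\rm L}_{\hatB}l)$ are trivially graded (Theorem~\ref{mainformalth}), iff $H_0(\LL_{B/A}\otimes^{\rm L}_B l)$ and $H_1(\LL_{B/A}\otimes^{\rm L}_B l)$ are trivially graded by the comparison. For part (ii), $A$ strictly $L$-local makes $k$ trivially graded, so Lemma~\ref{fiblem} gives (a)$\iff$(b) directly. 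Since $\phi$ is formally smooth iff $\hatphi$ is (Theorem~\ref{Th:fsmoothcrit}), Corollary~\ref{liftcor}(ii) applied to $\hatphi$ gives the equivalence of ``trivially graded fiber'', ``strongly homogeneous'' and ``strongly formally smooth'' for $\hatphi$; translating back via Lemma~\ref{localtoformal}, via the fact that the fiber of $\hatphi$ is the completion of the fiber of $\phi$ (Proposition~\ref{Prop:complete} and Nakayama), and via $\phi_0$ formally smooth $\iff\widehat{\phi_0}$ formally smooth, yields (b)$\iff$(c)$\iff$(d).

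Finally, for part (iii) I would observe that flatness, being lci (defined through the completion), and strong homogeneity (Lemma~\ref{localtoformal}) are each equivalent for $\phi$ and for $\hatphi$; hence $\phi$ is strongly syntomic iff $\hatphi$ is, and Lemma~\ref{cilem}(ii) for $\hatphi$ gives the first assertion. The moreover-part reduces in the same way to Lemma~\ref{cilem}(i) for $\hatphi$, using $\hatphi_0=\widehat{\phi_0}$ together with the facts that $\phi_0$ is lci iff $\widehat{\phi_0}$ is and that $\Tor$-independence of $\phi_0$ with $A_0\to A$ is preserved and reflected by the faithfully flat completions. The main obstacle is the cotangent-complex comparison of the second paragraph: one must be sure the flat base-change isomorphisms are homogeneous so that the grading---not merely the dimension---of $H_0$ and $H_1$ is transported faithfully between $\phi$ and $\hatphi$; once this is in hand, the rest is bookkeeping over the completion.
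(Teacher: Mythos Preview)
Your proposal is correct and follows essentially the same route as the paper: reduce to completions via Lemma~\ref{localtoformal} and Proposition~\ref{Prop:complete}, compare the graded homology of the cotangent complexes of $\phi$ and $\hatphi$, and then invoke the formal results (Theorem~\ref{mainformalth}, Corollary~\ref{liftcor}(ii), Lemma~\ref{cilem}). The one noteworthy difference is that where the paper simply cites \cite[Lemma~1]{Franco-Rodicio} for the identification $H_i(\LL_{B/A}\otimes^{\rm L}_Bl)\cong H_i(\LL_{\hatB/\hatA}\otimes^{\rm L}_{\hatB}l)$, you supply a direct self-contained argument via flat base change and the two transitivity triangles for $A\to B\to\hatB$ and $A\to\hatA\to\hatB$; this is exactly the content of that lemma, and your version has the advantage of making the homogeneity of the comparison transparent.
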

\begin{proof}
Recall that by Lemma~\ref{localtoformal}, $\phi$ is strongly homogeneous if and only if its completion $\hatphi\:\hatA\to\hatB$ is so. Since $H_i(\LL_{B/A}\otimes^{\rm L}_Bl)=H_i(\LL_{\hatB/\hatA}\otimes^{\rm L}_\hatB l)$ by \cite[Lemma~1]{Franco-Rodicio}, claim (i) follows from Theorem~\ref{formalcaseth}.

Parts (a) and (b) of claim (ii) are equivalent by Lemma~\ref{fiblem}. Equivalence of (b), (c) and (d) reduces to Corollary~\ref{liftcor}(ii) because $\phi_0\:A_0\to B_0$ is formally smooth if and only if its completion is formally smooth and the latter coincides with the degree-zero part of $\hatphi$ by Proposition~\ref{Prop:complete}.

Note that $\phi$ is lci or syntomic if and only if $\hatphi$ is so. Since $(\hatphi)_0$ is the completion of $\phi_0$ by Proposition~\ref{Prop:complete}, it suffices to prove (iii) for $\hatphi$, and this has already been done in Lemma \ref{cilem}.
\end{proof}

\section{Luna's fundamental lemma}\label{Lunasection}
In Section~\ref{Lunasection} we study relatively affine actions of diagonalizable groups on general noetherian schemes and extend the classical Luna's fundamental lemma to this case. To simplify the exposition we work with split groups, and indicate in Section~\ref{nonsplitsec} how the non-split case can be deduced.

\subsection{Relatively affine actions}\label{relaffinesec}

\subsubsection{The definition}
An action of $G=\bfD_L$ on a scheme $X$ is called {\em relatively affine}\index{relatively affine action} if there exists a scheme $Z$ provided with the trivial $G$-action and an affine $G$-equivariant morphism $f\:X\to Z$. In this case we define the {\em quotient} $X\sslash G={\mathbf\Spec}_Z(f_*(\cO_X)^G)$. We omit $Z$ in the notation because the quotient is categorical by the following theorem, and hence it is independent of the scheme $Z$. By definition, if $Y=X\sslash G$ is covered by affine open subschemes $Y_i$ then $X_i=Y_i\times_YX$ form an open affine equivariant covering of $X$ and $Y_i=X_i\sslash G$. Therefore, Lemma~\ref{Lem:fib} and Corollary~\ref{univcor} extend to the relative situation:

\begin{theorem}\label{relafftheor}
Assume that a scheme $X$ is provided with a relatively affine action of a diagonalizable group $G$. Then,

(i) The morphism $X\to Y=X\sslash G$ is submersive and $Y$ is the universal categorical quotient of the action.

(ii) For each $y\in Y$, the fiber $X_y$ contains a single orbit $O$ which is closed in $X_y$, and this orbit belongs to the closure of any other orbit contained in $X_y$.
\end{theorem}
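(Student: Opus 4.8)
The plan is to reduce everything to the affine case, which is already handled by Lemma~\ref{Lem:fib}, Lemma~\ref{Lem:univers}, and Corollary~\ref{univcor}, and then to glue. Denote by $q\:X\to Y$ the quotient morphism. By the definition of the relatively affine quotient, I may choose an affine open cover $Y=\bigcup_i Y_i$ such that each $X_i:=q^{-1}(Y_i)=Y_i\times_Y X$ is affine, equivariant, and satisfies $Y_i=X_i\sslash G$; write $X_i=\Spec(A^{(i)})$ and $Y_i=\Spec(A^{(i)}_0)$. Since the $X_i$ are the full preimages $q^{-1}(Y_i)$, they form a cover of $X$ by $G$-saturated open subschemes, and the fiber $X_y$ over a point $y\in Y_i$ coincides with the fiber of the affine quotient $X_i\to Y_i$.

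Part (ii) is then immediate, being local on $Y$: given $y\in Y$, pick an index $i$ with $y\in Y_i$ and apply Lemma~\ref{Lem:fib} to the $L$-graded ring $A^{(i)}$. This produces inside $X_y$ a single orbit $O$ that is closed in $X_y$ and lies in the closure of every other orbit of $X_y$, exactly as claimed. In particular every fiber is nonempty, so $q$ is surjective.

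For the submersiveness in Part (i) I would use that the property is local on the target for open covers: a subset $U\subseteq Y$ has open preimage if and only if each $q^{-1}(U)\cap X_i=q^{-1}(U\cap Y_i)$ is open, which by the submersiveness of $X_i\to Y_i$ (Corollary~\ref{univcor}) is equivalent to each $U\cap Y_i$ being open, i.e. to $U$ being open. To see that $Y$ is the universal categorical quotient, note that each $X_i\to Y_i$ is a universal categorical quotient by Corollary~\ref{univcor}, and that this property is preserved under affine base change by Lemma~\ref{Lem:univers}; since the $X_i$ form a $G$-saturated open cover and $q$ is submersive, gluing of local universal categorical quotients along a saturated cover (\cite[\S0.2, Remarks 5 and 6]{GIT}) shows that $q$ itself is a universal categorical quotient. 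For an arbitrary base change $Y'\to Y$ one refines to an affine cover of $Y'$ whose members map into the $Y_i$, applies Lemma~\ref{Lem:univers} and Corollary~\ref{univcor} on each piece, and glues again.

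The only genuine subtlety, which is what the cited GIT remarks package up, is the compatibility of the local factorizations on overlaps: a $G$-invariant morphism $\phi\:X\to Z$ restricts on each $X_i$ to a unique factorization $\psi_i\:Y_i\to Z$, and I must check $\psi_i=\psi_j$ on $Y_i\cap Y_j$. This holds because the quotient map is an epimorphism there: it is surjective by Part (ii), and the structure map $\cO_Y\to q_*\cO_X$ is injective, since locally it is the inclusion $A^{(i)}_0\hookrightarrow A^{(i)}$ of the degree-zero component. Thus two morphisms out of $Y_i\cap Y_j$ that agree after composing with $q$ must coincide. I expect this gluing step, rather than either of the two local inputs, to be the main point requiring care; everything else is a formal consequence of the affine theory.
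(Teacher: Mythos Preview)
Your proposal is correct and follows exactly the approach the paper takes: reduce to the affine case via an open affine cover of $Y$ and invoke Lemma~\ref{Lem:fib} and Corollary~\ref{univcor}. The paper is terser---its entire argument is the single sentence preceding the theorem statement---whereas you spell out the gluing for submersiveness and the categorical quotient property, including the epimorphism check on overlaps; but the substance is identical.
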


In the sequel, we will only consider relatively affine actions.

\begin{remark}
The notion of a relatively affine action is not as meaningful for non-reductive groups because it does depend on $Z$. For instance, in the situation of Example~\ref{borelexam} we can take $Z$ to be either $\Spec(k)$ or $\PP_k^1$. For both choices, the relative quotient coincides with $Z$.
\end{remark}

\subsubsection{Strongly equivariant open subschemes}
Assume that $X$ is provided with a relatively affine action of $G=\bfD_L$. An open subscheme $U\into X$ is called {\em strongly equivariant} if it is the preimage of an open subscheme $V\into Y\sslash G$. Note that $V=U\sslash G$. We used a covering of $X$ by strongly equivariant schemes to prove Theorem~\ref{relafftheor}. In general, one can  use strongly equivariant open subschemes to work locally on $X\sslash G$ without describing the quotients explicitly.

\begin{lemma}\label{streqlem}
Assume that a diagonalizable group $G=\bfD_L$ acts on a scheme $X$ and $X_i$ form a covering of $X$ by open equivariant subschemes. If the action on each $X_i$ is relatively affine and each intersection $X_{ij}$ is strongly equivariant in both $X_i$ and $X_j$, then the action on $X$ is relatively affine and each $X_i$ is strongly equivariant in $X$.
\end{lemma}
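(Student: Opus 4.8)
The plan is to construct the quotient $Y=X\sslash G$ by gluing the local quotients $Y_i=X_i\sslash G$ along the quotients of the overlaps, and then to verify that the resulting map $q\:X\to Y$ is affine with the trivial action on $Y$, so that the action on $X$ is relatively affine. Write $q_i\:X_i\to Y_i$ for the quotient morphisms. Since $X_{ij}$ is strongly equivariant in $X_i$, by definition there is an open subscheme $V_{ij}\subseteq Y_i$ with $X_{ij}=q_i^{-1}(V_{ij})$ and $V_{ij}=X_{ij}\sslash G$; symmetrically $X_{ij}$ is strongly equivariant in $X_j$ with quotient $V_{ji}\subseteq Y_j$. Because quotients of relatively affine actions are universal categorical quotients (Theorem~\ref{relafftheor}), they are unique up to unique isomorphism, so the two descriptions of $X_{ij}\sslash G$ are canonically identified by an isomorphism $\theta_{ij}\:V_{ij}\toisom V_{ji}$ compatible with $q_i$ and $q_j$ over $X_{ij}$.

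Next I would check the cocycle condition so that the $Y_i$ glue. The triple overlap satisfies $X_{ijk}=X_i\cap X_j\cap X_k=q_i^{-1}(V_{ij}\cap V_{ik})$, hence it is strongly equivariant in $X_i$ with quotient $V_{ij}\cap V_{ik}$, and likewise after permuting indices. The isomorphisms $\theta$ restrict to identifications of these opens, and since each of them is \emph{the} canonical identification of the single scheme $X_{ijk}\sslash G$, the relation $\theta_{jk}\circ\theta_{ij}=\theta_{ik}$ holds on the relevant triple overlaps by uniqueness of the quotient. Gluing the $Y_i$ along the $\theta_{ij}$ then produces a scheme $Y$ with open immersions $Y_i\into Y$ (the $V_{ij}$ being open in $Y_i$), and the morphisms $q_i$ glue to a single $G$-equivariant morphism $q\:X\to Y$, where $Y$ carries the trivial action.

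The step that needs the most care is identifying $q^{-1}(Y_i)$ with $X_i$, since this is what makes $q$ affine and exhibits each $X_i$ as strongly equivariant. Clearly $X_i\subseteq q^{-1}(Y_i)$. Conversely, if $x\in X_j$ has $q(x)=q_j(x)\in Y_i$, then $q_j(x)$ lies in the glued locus $Y_i\cap Y_j$, which by construction is the image of $V_{ji}=X_{ij}\sslash G$; as $q_j^{-1}(V_{ji})=X_{ij}=X_{ji}\subseteq X_i$, we get $x\in X_i$. Thus $q^{-1}(Y_i)=X_i$, and since $q|_{X_i}=q_i$ is affine, $q$ is affine (affineness being local on the target). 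Consequently the action on $X$ is relatively affine via $q$.

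Finally I would verify $Y=X\sslash G$ and conclude. The invariants compute locally: over $Y_i$ we have $(q_*\cO_X)^G|_{Y_i}=(q_{i*}\cO_{X_i})^G=\cO_{Y_i}$ since $q_i$ is the quotient morphism, so these glue to $(q_*\cO_X)^G=\cO_Y$, whence $X\sslash G={\mathbf\Spec}_Y((q_*\cO_X)^G)=Y$. Each $X_i=q^{-1}(Y_i)$ is then the preimage of an open subscheme of $X\sslash G$, i.e.\ strongly equivariant in $X$, as claimed. The only genuine subtlety throughout is the bookkeeping of the canonical nature of the overlap identifications, which is guaranteed by the universal categorical quotient property of Theorem~\ref{relafftheor}; everything else is standard gluing.
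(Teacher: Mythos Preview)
Your argument is correct and follows exactly the same approach as the paper: glue the local quotients $Y_i=X_i\sslash G$ along the overlaps $Y_{ij}=X_{ij}\sslash G$ to obtain $Y$, and observe that the $q_i$ glue to an affine $G$-equivariant morphism $X\to Y$. The paper's proof is a two-sentence sketch of precisely this construction, whereas you have carefully spelled out the cocycle verification, the identification $q^{-1}(Y_i)=X_i$, and the computation $X\sslash G=Y$; none of this is a different route, just the details the paper leaves to the reader.
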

\begin{proof}
By definition, the schemes $Y_i=X_i\sslash G$ glue along their open subschemes $Y_{ij}=X_{ij}\sslash G$ to  form a scheme $Y$. Hence the quotient morphisms $X_i\to Y_i$ glue to an affine quotient morphism $X\to Y$.
\end{proof}

\begin{example}\label{streqexam}
If $X=\Spec(A)$, where $A$ is $L$-graded, and $f\in A$ is a homogeneous element then $U=\Spec(A_f)$ is equivariant but usually not strongly equivariant. However, it is strongly equivariant if $f$ is of degree zero. In particular, an important particular case of Lemma~\ref{streqlem} is when $X_i$ are affine and each $X_{ij}$ is a localization of both $X_i$ and $X_j$ at elements of degree zero.
\end{example}

\subsubsection{Geometric quotients}
If the action of $G$ on $X$ is relatively affine and each fiber of the quotient morphism $p\:X\to X\sslash G$ consists of a single orbit then we say that the quotient is {\em geometric}\index{geometric quotient} and use the notation $X/G$ instead of $X\sslash G$. This matches the terminology, but not the notation, of GIT, see \cite[Definition 0.6]{GIT}.

\subsubsection{Special orbits}\label{Def:special-orbit}
If an orbit of a $G$-action on $X$ is closed in the fiber of $X\to X\sslash G$ then we say that the orbit is {\em special}.\index{special orbit} Obviously, such an orbit is a scheme.

\subsubsection{Local actions}\label{Sec:local-action}
We say that a relatively affine action of $G$ on a scheme $X$ is {\em local}\index{local action} if $X$ is quasi-compact and contains a single closed orbit.

\begin{lemma}\label{Lem:local-action}
Assume we have a relatively affine action of  $G=\bfD_L$  on a scheme $X$. Then the following conditions are equivalent:

(i) The action is local.

(ii) $X$ is affine, say $X=\Spec(A)$, and the $L$-graded ring $A$ is $L$-local.

(iii) The quotient $Y=X\sslash G$ is local.
\end{lemma}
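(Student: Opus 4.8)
The plan is to prove the cyclic chain of implications (ii)$\implies$(i)$\implies$(iii)$\implies$(ii), using the affine characterization Lemma~\ref{Lem:Llocal} as the basic input and Theorem~\ref{relafftheor} to pass between $X$ and its quotient $Y=X\sslash G$. The implication (ii)$\implies$(i) is immediate: if $X=\Spec(A)$ with $A$ an $L$-local ring, then $X$ is affine, hence quasi-compact, and by the equivalence (i)$\iff$(iii) of Lemma~\ref{Lem:Llocal} the action of $\bfD_L$ on $\Spec(A)$ possesses a single closed orbit; thus the action is local.

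For (i)$\implies$(iii) I would work with the quotient map $p\:X\to Y$. By Theorem~\ref{relafftheor}(ii) every fiber of $p$ is nonempty, so $p$ is surjective; since the action is local, $X$ is quasi-compact and nonempty, whence $Y=p(X)$ is a nonempty quasi-compact scheme and in particular has a closed point. By Theorem~\ref{relafftheor}(i) the morphism $p$ is submersive, which identifies the closed points of $Y$ with those $y$ for which the fiber $X_y=p^{-1}(y)$ is closed in $X$, because $\{y\}$ is closed if and only if $p^{-1}(\{y\})$ is closed. For any such $y$, Theorem~\ref{relafftheor}(ii) provides a special orbit $O_y\subseteq X_y$ that is closed in $X_y$, hence closed in $X$; as $X$ has a unique closed orbit $O$, we obtain $O_y=O$ and therefore $y=p(O)$. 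Thus $p(O)$ is the only closed point of $Y$, and $Y$ is local.

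For (iii)$\implies$(ii) the decisive point is topological: a quasi-compact scheme $Y$ with a unique closed point is affine and is the spectrum of a local ring. I would argue this by picking an affine open $U=\Spec(R)$ containing the closed point; if $Y\setminus U$ were nonempty it would be a nonempty quasi-compact scheme, hence would contain a closed point of $Y$ lying outside $U$, contradicting uniqueness, so $Y=U$ and $R$ is local. Because the action is relatively affine, the quotient morphism $p\:X\to Y$ is affine — the preimage of an affine open of $Y$ is affine by the very definition of $X\sslash G$ — so $Y$ affine forces $X=\Spec(A)$ affine, with $A_0=R$ the coordinate ring of $Y$. Since $A_0$ is local, Lemma~\ref{Lem:Llocal} yields that $A$ is $L$-local, which is (ii).

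The step I expect to require the most care is the bookkeeping in (i)$\implies$(iii): translating ``single closed orbit of $X$'' into ``single closed point of $Y$'' through submersiveness and the special-orbit structure of the fibers. The topological lemma used in (iii)$\implies$(ii) is the other nontrivial ingredient; both it and the existence of a closed point of $Y$ rest on the standard fact that every nonempty quasi-compact scheme has a closed point, which I would invoke explicitly.
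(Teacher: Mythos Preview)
Your proof is correct and follows essentially the same approach as the paper's: the paper proves (i)$\iff$(iii) via the correspondence between closed orbits in $X$ and closed points in $Y$, and (ii)$\iff$(iii) via Lemma~\ref{Lem:Llocal}, whereas you arrange the same ingredients into the cycle (ii)$\implies$(i)$\implies$(iii)$\implies$(ii). Your version is considerably more explicit---in particular you spell out the submersiveness argument for (i)$\implies$(iii) and the topological lemma that a quasi-compact scheme with a unique closed point is affine for (iii)$\implies$(ii), both of which the paper's two-line proof leaves to the reader.
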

\begin{proof}
A scheme is local if and only if it is quasi-compact and contains a single closed point. Therefore (i) and (iii) are equivalent. Equivalence of (ii) and (iii) was proved in Lemma~\ref{Lem:Llocal}.
\end{proof}

\subsubsection{Localization along a special orbit}\label{Sec:localspecial}
Assume that $O$ is a special orbit of a relatively affine action on $X$, and $y$ is its image in $Y$. Consider the localization $Y_y=\Spec(\cO_{Y,y})$ and set $X_O=X\times_YY_y$. We call $X_O$ the {\em equivariant localization} of $X$ along $O$. Note that $X_O\sslash G=Y_y$ by the universality of the quotient, in particular, $G$ acts locally on $X_O$.

\begin{remark}
(i) Set-theoretically, $X_O$ consists of all orbits whose closure contains $O$. So, even if $O=\{x\}$ is a closed point, it typically happens that $X_O$ is larger than the localization of $X$ at $x$. Equivariant localization of a scheme $X=\Spec(A)$ corresponds to homogeneous localization of $A$ in the sense of Example~\ref{localexam}.

(ii) Even if $O$ is only locally closed in the fiber of $X \to X\sslash G$, one can define an equivariant localization $X_O\into X$ whose only closed orbit is $O$. We will not use this construction. If $O$ is not special then the localization morphism $X_O\into X$ is not inert (see \S\ref{Sec:inert} below), and the morphism $X_O\sslash G\to Y$ can be bad (e.g. non-flat).
\end{remark}

\subsubsection{The schemes of fixed points}\label{fixedpointssec}
If $X$ is acted on by $G$ then the {\em scheme of fixed points} of $X$ is the maximal closed subscheme $X^G$ of $X$ such that $X^G$ is equivariant and the action of $G$ on it is trivial. In other words, it is the maximal closed subscheme over which the inclusion $I_X\into G\times X$ becomes an isomorphism. For diagonalizable groups, existence and functoriality of fixed points schemes is guaranteed by \cite[VIII.6.5(a)]{SGA3-2}, where one sets $Y = G$, $Z = X \times X$ and $Z'\subset Z$ the diagonal. If $X=\Spec(A)$ is affine then, as we noted in \S\ref{Sec:fixed}, $X^G=\Spec(A_G)$.

\subsubsection{Inertia stratification}
If $G$ is diagonalizable and $G'\subseteq G$ is a subgroup then $$X(G'):=X^{G'}\setminus\cup_{G''\supsetneq G'}X^{G''}$$ is the maximal $G$-equivariant subscheme $Y$ with constant inertia equal to $G'$ (i.e. such that $I_Y=G'\times Y$). The family of subschemes $\{X(G')\}_{G'\subseteq G}$ provides a $G$-equivariant stratification of $X$ that we call the {\em inertia stratification}.\index{inertia!stratification} Set-theoretically, this is the stratification of $X$ by the stabilizers of points.

\subsubsection{Regularity}
The fixed points functor preserves regularity, see \cite[Corollary of Theorem 5.4]{Fogarty}. We provide a simple proof in our situation.

\begin{proposition}\label{Prop:reg}
Assume that a diagonalizable group $G$ acts on a regular scheme $X$. Then the scheme of fixed points $X^G$ is regular. In particular, the strata of the inertia stratification of $X$ are regular.
\end{proposition}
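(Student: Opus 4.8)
The plan is to reduce everything to the affine graded situation and then invoke Fogarty's result, Lemma~\ref{coinvlem}, once the hypotheses have been arranged. Since regularity is a local property and the formation of $X^G$ is compatible with passage to equivariant open subschemes, I would first cover $X$ by equivariant affine opens (which exist because the action is relatively affine), reducing to the case $X=\Spec(A)$ with $A=\oplus_{n\in L}A_n$ a regular $L$-graded ring. Here $X^G=\Spec(A_G)$, and since $A_G=A_0/I_0$ with $I_0=\sum_{n\neq 0}A_nA_{-n}$, the fixed-point scheme is a closed subscheme of the quotient $Y=\Spec(A_0)$. It therefore suffices to show that for every prime $\mathfrak{p}_0\subseteq A_0$ containing $I_0$ the local ring $(A_0/I_0)_{\mathfrak{p}_0}$ is regular.

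The heart of the argument is to produce, from such a prime, a strictly $L$-local regular ring to which Lemma~\ref{coinvlem} applies. I would set $A'=A\otimes_{A_0}(A_0)_{\mathfrak{p}_0}$, the homogeneous localization of $A$ inverting $A_0\setminus\mathfrak{p}_0$. Then $A'$ is $L$-graded with $A'_0=(A_0)_{\mathfrak{p}_0}$ local, so $A'$ is $L$-local by Lemma~\ref{Lem:Llocal}, and it is regular because localizations of regular rings are regular. The main point -- and the only place where the fixed-point hypothesis is used, rather than merely that of a point of $Y$ -- is that $A'$ is in fact strictly $L$-local. Indeed, if some $A'_n$ with $n\neq 0$ contained a unit, a homogeneous inverse would lie in $A'_{-n}$ and their product would be $1\in A'_nA'_{-n}\subseteq I_0(A_0)_{\mathfrak{p}_0}\subseteq\mathfrak{p}_0(A_0)_{\mathfrak{p}_0}$, which is absurd since the latter is the maximal ideal of the local ring $A'_0$. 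Hence no nonzero-degree component contains a unit, and $A'$ is strictly $L$-local by Lemma~\ref{strictlylocallem}.

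With $A'$ strictly $L$-local and regular, Lemma~\ref{coinvlem} gives that $A'_L=A'_0/I'_0$ is regular. Since $I'_0=I_0(A_0)_{\mathfrak{p}_0}$, this ring is exactly $(A_0/I_0)_{\mathfrak{p}_0}$, the local ring of $X^G$ at the chosen point, so $X^G$ is regular. For the final assertion, each subgroup $G'\subseteq G$ is again diagonalizable, so $X^{G'}$ is regular by the statement just proved, and the stratum $X(G')=X^{G'}\setminus\bigcup_{G''\supsetneq G'}X^{G''}$ is an open subscheme of $X^{G'}$, hence regular. I expect the only genuinely delicate step to be the verification of strict $L$-locality, i.e. recognizing that the containment $I_0\subseteq\mathfrak{p}_0$ forces the closed orbit of the localized action to be a single point; the remaining reductions are routine once Lemma~\ref{coinvlem} is granted.
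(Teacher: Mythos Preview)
Your proposal is correct and follows essentially the same route as the paper: localize at a point of $X^G$, observe that the resulting $L$-local ring is strictly $L$-local because the closed orbit is a fixed point, and invoke Lemma~\ref{coinvlem}. The paper's proof is simply a terse three-sentence version of what you have written; your verification that $I_0\subseteq\mathfrak{p}_0$ forces strict $L$-locality is exactly the content of the paper's remark that ``$x$ is $G$-invariant'' makes the equivariant localization strictly $L$-local.
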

\begin{proof}
The claim is local at a point $x\in X^G\into X$. Since $x$ is $G$-invariant, we can replace $X$ with the equivariant localization along $x$. Then $X$ is the spectrum of a strictly $L$-local ring, and it remains to use Lemma~\ref{coinvlem}.
\end{proof}

\subsubsection{The case of $\GGm$}\label{xplusminus}

We discuss a construction which is specific to $G=\GGm=\bfD_\ZZ$ and will be used in \cite{AT2}; we indicate the general case in Remark \ref{Rem:gen-GIT} below. Assume that $X$ is provided with a relatively affine action of $G$. Following \cite{W-Cobordism} we define $X_+$ (resp. $X_-$) to be the open subscheme obtained by removing all orbits that have a limit at $+\infty$ (resp. $-\infty$). The construction of $X_\pm$ is local on $X\sslash G$ and if the latter is affine, say $X=\Spec A$ and $X\sslash G=\Spec A_0$, then $X_+=X\setminus V(A_-)$ with $A_-=\oplus_{n<0}A_n$. Similarly, $X_-=X\setminus V(A_+)$ for $A_+=\oplus_{n>0}A_n$.

\begin{lemma}
Assume that $G=\GG_m$ acts in a  relatively affine manner on a scheme $X$. Then,

(i) $G$ acts in a relatively affine manner on $X_+$ and $X_-$.

(ii) If $X=\Spec(A)$ is affine then the schemes $X_f=\Spec(A[f^{-1}])$ with homogeneous $f\in A_-$ (resp. the schemes $X_f$ with $f$ homogeneous in $A_+$) form an open strongly equivariant covering of $X_+$ (resp. $X_-$).
\end{lemma}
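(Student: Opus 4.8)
The plan is to treat only $X_+$, since the substitution $n\mapsto -n$ on the grading interchanges $X_+$ with $X_-$ and $A_-$ with $A_+$, so the case of $X_-$ follows by symmetry. First I would reduce (i) to the affine case, using the fact noted just above the lemma that the formation of $X_\pm$ is local on $Y=X\sslash G$: covering $Y$ by affine opens $Y_i$ makes each $X_i=Y_i\times_Y X$ an affine, strongly equivariant open of $X$ with $X_{i,+}=X_+\cap X_i$, and once the affine case is known these pieces are relatively affine and glue along the strongly equivariant intersections $(X_{ij})_+$ to give (i). Thus the whole content lies in the affine case, where I would prove (i) and (ii) together.

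So assume $X=\Spec(A)$ and $Y=\Spec(A_0)$. Since $A_-=\oplus_{n<0}A_n$ is generated by homogeneous elements, $V(A_-)$ is the common zero locus of the homogeneous $f\in A_-$, so $X_+=X\setminus V(A_-)=\bigcup_f X_f$ is an open cover indexed by homogeneous $f\in A_-$, and each $X_f=\Spec(A[f^{-1}])$ is an affine $G$-scheme, hence relatively affine. By Lemma~\ref{streqlem}, to conclude that the action on $X_+$ is relatively affine and that each $X_f$ is strongly equivariant in $X_+$ — which is exactly (ii) — it suffices to show each intersection $X_{ff'}=X_f\cap X_{f'}$ is strongly equivariant in $X_f$ and in $X_{f'}$.

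This last point is the crux, and it is where Example~\ref{streqexam} enters. A localization at $f'$ of nonzero degree is in general not strongly equivariant, so the plan is to replace $f'$ by a degree-zero unit defining the same localization inside $A[f^{-1}]$. Writing $\deg f=-d$ and $\deg f'=-d'$ with $d,d'>0$, the element $g:=f'^{\,d}f^{-d'}$ is homogeneous of degree $-dd'+dd'=0$; since $f^{-d'}$ is already a unit in $A[f^{-1}]$, inverting $g$ amounts to inverting $f'$, so $X_{ff'}=(X_f)_g$ is the localization of $X_f$ at the degree-zero element $g$, which is strongly equivariant in $X_f$ by Example~\ref{streqexam}. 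The symmetric element $f^{d'}f'^{-d}$ handles $X_{f'}$. The one genuine obstacle is locating this $g$ and recognizing the equality $A[f^{-1}][g^{-1}]=A[f^{-1}][f'^{-1}]$; granting it, Lemma~\ref{streqlem} finishes the affine case, and the locality of $X_\pm$ over $Y$ then upgrades this to the global statement of (i).
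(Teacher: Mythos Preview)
Your proposal is correct and follows essentially the same approach as the paper: cover $X_+$ by the $X_f$ for homogeneous $f\in A_-$, then invoke Lemma~\ref{streqlem} by exhibiting each intersection $X_f\cap X_{f'}$ as the localization of $X_f$ at a degree-zero element built from suitable powers of $f$ and $f'$, exactly as in Example~\ref{streqexam}. Your explicit reduction of the global statement (i) to the affine case via the locality of $X_\pm$ on $X\sslash G$ is a detail the paper leaves implicit, but otherwise the arguments coincide.
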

\begin{proof}
If $f\in A_-$ then $V(A_-)\subseteq V(f)$ and hence $X_f$ is an equivariant open subscheme of $X_+$. By definition $X_+=\cup_{f\in A_m, m<0}X_f$. By Lemma~\ref{streqlem} it suffices to check that $X_{fg}$ is strongly equivariant in $X_f$ for $f\in A_n$ and $g\in A_m$ with $m,n<0$. It remains to notice that $X_{fg}$ can be described as the localization of $X_f$ obtained by inverting the degree-zero element $f^{-n}g^{m}\in A[f^{-1}]$, see Remark \ref{streqexam}.
\end{proof}

\begin{remark}[see  \cite{Thaddeus}]\label{Rem:gen-GIT}
For arbitrary $G = \bfD_L$ and affine $X=\Spec A$ one defines $X\sslash_mG = \Proj A[z]^G$ with $G$ acting on $z$ via $-m\in L$. Then there is a locally finite decomposition $\Sigma = \coprod \sigma_i^\circ$ of the monoid $\Sigma\subset L$ of characters figuring in $A$ into relative interiors of poyhedral cones, and  $X\sslash_mG= X\sslash_{\sigma^\circ}G$ is constant on such interiors of cones.  The replacement of $X_\pm$ is $X^{ss}(m)$, the semistable locus of $X$ with respect to the linearization provided by the character $m$.
\end{remark}

\subsection{Basic properties of the quotient functor}
In this section, we study the quotient map $X\to X\sslash G$ and properties of schemes and morphisms preserved by the quotient functor. The most subtle result is that the quotient map is finite when $X$ is noetherian.

\subsubsection{Quotients of schemes}

\begin{theorem}\label{quottheorem}
Assume that a diagonalizable group $G=\bfD_L$ acts trivially on a scheme $S$ and an $S$-scheme $X$ is provided with a relatively affine action of $G$, then:

(i) Assume that $X$ satisfies one of the following properties: (a) reduced, (b) integral, (c) normal with finitely many connected components, (d) locally of finite type over $S$, (e) of finite type over $S$, (f) quasi-compact over $S$, (g) locally noetherian, (h) noetherian. Then $X\sslash G$ satisfies the same property.

(ii) If $X$ is locally noetherian then the quotient morphism $X\to X\sslash G$ is of finite type.
\end{theorem}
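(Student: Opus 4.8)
The plan is to reduce every assertion to the affine case and then feed it into the three algebraic inputs already available: Lemma~\ref{invarquotlem} (quotients preserve reduced, domain, and normal-domain), Lemma~\ref{fgquotlem} (finite generation descends to $A_0$), and the Goto--Yamagishi Proposition~\ref{noetherprop} (a noetherian $L$-graded ring is noetherian and indeed finitely generated as an algebra over $A_0$). By the construction of the relatively affine quotient, $Y=X\sslash G$ is covered by affine opens $Y_i=\Spec A_0^{(i)}$ whose preimages $X_i=Y_i\times_YX=\Spec A^{(i)}$ are affine and equivariant with $Y_i=X_i\sslash G$. Since the action is an $S$-action, the morphism $X\to S$ is $G$-equivariant for the trivial action on $S$, so over an affine $V=\Spec R\subseteq S$ the map $R\to A^{(i)}$ factors through the invariants $A_0^{(i)}$; this is exactly what lets me take $C=R$ in Lemma~\ref{fgquotlem}.

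Properties local on $Y$ then follow chart by chart: reducedness from Lemma~\ref{invarquotlem}; locally of finite type over $S$ (reducing also to affine $S=\Spec R$) from Lemma~\ref{fgquotlem} applied to the finitely generated $R$-algebra $A^{(i)}$; and locally noetherian from Proposition~\ref{noetherprop}, which makes each $A_0^{(i)}$ noetherian. The global and relative properties I would assemble using that $p\:X\to Y$ is surjective and submersive (Theorem~\ref{relafftheor}): quasi-compactness of $X\to S$ transfers to $Y\to S$ because the continuous surjective image of a quasi-compact space is quasi-compact, whence (e) follows as finite type being locally finite type together with quasi-compactness, and (h) follows as noetherian being locally noetherian together with quasi-compactness. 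For (b) I combine reducedness of $Y$ with the fact that $Y=p(X)$ is irreducible, being the image of the irreducible space $X$, so $Y$ is integral. Part (ii) is then a restatement of Goto--Yamagishi: when $X$ is locally noetherian each $A^{(i)}$ is a finitely generated $A_0^{(i)}$-algebra by Proposition~\ref{noetherprop}, so $X_i\to Y_i$ is of finite type, and, finite type being local on the target, these glue to give that $X\to Y$ is of finite type.

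The genuinely delicate case is (c). The subtlety is that $G=\bfD_L$ need not be connected and may permute the connected components of $X$ (already $\mu_2$ acting regularly on $\Spec(k\times k)$ does so, with quotient $\Spec k$), so idempotents of $A^{(i)}$ need not be homogeneous and one cannot simply quotient component by component. Instead I would localize: normality being local on $Y$, fix $y\in Y$ and pass to the equivariant localization $X_O=\Spec\cA$ of Section~\ref{Sec:localspecial}, where $\cA$ is $L$-local with $\cA_0=\cO_{Y,y}$ local (Lemma~\ref{Lem:Llocal}), $\cA$ is normal as a localization of $X$, and $\cA$ has finitely many connected components since $X$ does. A connected normal scheme is integral (normality forces distinct irreducible components to be disjoint), so $\cA=\prod_{j=1}^s\cA^{(j)}$ is a finite product of integral normal domains that $G$ permutes; locality of $\cA_0$ forces the factors to form a single $G$-orbit, and projection to one factor identifies $\cA_0$ with the invariants $(\cA^{(1)})^{G_1}$ of the stabilizer $G_1\subseteq G$ of that factor, which is again diagonalizable. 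Lemma~\ref{invarquotlem} then shows this is a normal domain, so $\cO_{Y,y}$ is a normal domain for every $y$, making $Y$ normal, while finiteness of $\pi_0(Y)$ follows from surjectivity of $p$.

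The main obstacle, as the text flags, is part (ii) and the noetherian/finite-type assertions: everything there rests on the nontrivial Proposition~\ref{noetherprop}, whose combinatorial proof over an arbitrary finitely generated $L$ is the real content. Once that is in hand, the remaining difficulty is the bookkeeping in the normal case above, where the possibility that $G$ mixes connected components has to be handled by descending to a single $G$-orbit of factors; the other properties reduce to routine topological and descent arguments.
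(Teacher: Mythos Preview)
Your proposal is correct and, for items (d)--(h) and for (ii), follows exactly the paper's route: reduce to affine $Y$ (and affine $S$ for (d)) and invoke Lemma~\ref{fgquotlem} and Proposition~\ref{noetherprop}, with (e) and (h) assembled from the local statements plus quasi-compactness, and (f) from surjectivity of $p$.

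The difference lies in (a), (b), (c). The paper dispatches all three in one line by citing \cite[Chapter~0,~\S2]{GIT}: once $Y$ is known to be a categorical quotient with $\cO_Y=(p_*\cO_X)^G$ (Theorem~\ref{relafftheor}), reducedness, integrality, and normality descend to $Y$ by the general argument there. You instead work by hand: (a) via Lemma~\ref{invarquotlem}, (b) by combining reducedness with irreducibility of the image, and (c) by an equivariant-localization argument that reduces to a single $G$-orbit of integral normal factors and then applies Lemma~\ref{invarquotlem} to one factor under the stabilizer. Your approach is more self-contained and makes explicit what the GIT citation hides; the trade-off is that your treatment of (c) is longer and the orbit--stabilizer step (``projection to one factor identifies $\cA_0$ with $(\cA^{(1)})^{G_1}$'') is phrased in group-action language that, over a general base, should really be justified through the coaction/grading rather than literal group elements. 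This can be made rigorous (the annihilator of a degree-zero element is a graded ideal, forcing the idempotent it cuts out to be degree zero and hence $0$ or $1$ in the local ring $\cA_0$), but it is worth tightening. The paper's citation sidesteps this entirely.
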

\begin{proof}
Note that $X\to S$ factors through $Y=X\sslash G$ because the latter is the categorical quotient by Theorem~\ref{relafftheor}. Claim (f) is obvious since $X\to Y$ is onto. Furthermore, a morphism is of finite type if and only if it is quasi-compact and locally of finite type, hence (e) follows from (d) and (f). It remains to prove all assertions except (e) and (f). As shown in \cite[Section 2 of Chapter 0]{GIT}, claims (a), (b) and (c) hold for any categorical quotient, hence they are implied by Theorem~\ref{relafftheor}(i). Note that the assertion of (d) is local on $S$ hence we can assume that $S$ is affine. Furthermore, all assertions of the theorem we are dealing with are local on $Y$, so we can assume that $Y$ is affine. In this case, (d) was proven in Lemma~\ref{fgquotlem}, and (g), (h) and (ii) were proven in Proposition~\ref{noetherprop}.
\end{proof}

\subsubsection{Quotients of morphisms}\label{Sec:desreg}
We start with the following corollary of the above theorem.

\begin{corollary}\label{fgcor}
Assume that locally noetherian schemes $X$ and $X'$ are provided with relatively affine actions of a diagonalizable group $G$, and $f\:X'\to X$ is a $G$-equivariant morphism. If $f$ is of finite type then the quotient morphism $f\sslash G$ is of finite type.
\end{corollary}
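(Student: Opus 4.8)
\textbf{The plan} is to reduce the finite-type statement for $f \sslash G$ to the finite-type statement for schemes already established in Theorem~\ref{quottheorem}(i)(e), by working locally on the target quotient and exploiting universality of the quotient functor. Since being of finite type is local on the base, I would first cover $X \sslash G$ by affine opens $V_i = \Spec(C_i)$. Pulling back along the quotient morphism $q\:X \to X\sslash G$ gives strongly equivariant affine opens $X_i = q^{-1}(V_i) = \Spec(A_i)$ with $A_i$ an $L$-graded $C_i$-algebra and $(A_i)_0 = C_i$.

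The heart of the argument is to exhibit the base $Y' = X' \sslash G$ as a scheme of finite type over $Y = X \sslash G$ after this reduction, which in turn requires relating $f \sslash G$ to the quotient of a suitable affine piece of $X'$. Here I would invoke that $f$ is relatively affine over $X$ as well: restricting over each $X_i$, the preimage $X'_i = f^{-1}(X_i)$ carries a relatively affine $G$-action, and since $f$ is assumed of finite type, $X'_i$ is of finite type over $X_i = \Spec(A_i)$. Passing to an affine equivariant cover of $X'_i$, I would write a typical piece as $\Spec(B)$ with $B$ an $L$-graded $A_i$-algebra that is finitely generated over $A_i$, hence finitely generated over $C_i$. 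Now the key move is to apply Lemma~\ref{fgquotlem}: since $B$ is finitely generated over the subring $C_i \subseteq B_0$, the invariant ring $B_0$ is finitely generated over $C_i$. But $B_0$ is exactly (a piece of) the coordinate ring of $X'\sslash G$ over $V_i$, so $f\sslash G$ is of finite type over $V_i$, and gluing over the cover gives the result globally.

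The main obstacle I anticipate is bookkeeping around the compatibility of quotients with the affine covers, specifically making sure the local quotients $\Spec(B_0)$ glue correctly to $Y'$ and that the morphism they assemble to is genuinely $f\sslash G$. This is handled by the universality of the quotient (Theorem~\ref{relafftheor}(i)) together with Lemma~\ref{Lem:univers}, which guarantees that forming the quotient commutes with the affine base changes $V_i \to Y$ defining the cover; consequently $X'_i \sslash G = Y' \times_Y V_i$ and the restriction of $f\sslash G$ over $V_i$ is the quotient of $f|_{X'_i}$. The only genuinely substantive input is the finite generation statement of Lemma~\ref{fgquotlem}, and everything else is the standard reduction ``finite type is local on the base, so check on an affine cover.'' I would therefore present the proof tersely, as a direct deduction from Theorem~\ref{quottheorem} and Lemma~\ref{fgquotlem}, noting that the finite type hypothesis on $f$ furnishes, locally, an $L$-graded $A_i$-algebra finitely generated over $A_i$ to which Lemma~\ref{fgquotlem} applies.
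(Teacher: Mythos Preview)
Your approach is correct, but the paper's proof is considerably more direct. Rather than localizing and re-applying Lemma~\ref{fgquotlem}, the paper simply composes: by Theorem~\ref{quottheorem}(ii) the quotient morphism $X\to X\sslash G$ is of finite type, so the composition $X'\to X\to X\sslash G$ is of finite type; then Theorem~\ref{quottheorem}(i)(d) (applied with $S=X\sslash G$, on which $G$ acts trivially) immediately yields that $X'\sslash G\to X\sslash G$ is of finite type. Your argument unwinds this same content affine-locally --- you implicitly use that $A_i$ is finitely generated over $C_i$, which is exactly Theorem~\ref{quottheorem}(ii), and then your invocation of Lemma~\ref{fgquotlem} re-derives the case of Theorem~\ref{quottheorem}(i)(d) that the paper simply cites. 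The bookkeeping you anticipate (showing $X'_i$ is strongly equivariant and that the local quotients glue to $Y'\times_YV_i$) is genuine and works as you describe, but it is all avoided by the paper's two-line deduction.
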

\begin{proof}
The morphism $X\to X\sslash G$ is of finite type by Theorem~\ref{quottheorem}(ii). Hence the composition $X'\to X\sslash G$ is of finite type, and then $X'\sslash G\to X\sslash G$ is also of finite type by Theorem~\ref{quottheorem}(i)(d).
\end{proof}

\begin{proposition}\label{quotmorprop}
Let $G$ be a diagonalizable group and $f\:X'\to X$ a $G$-equivariant morphism such that the actions on $X$ and $X'$ are relatively affine. Then,

(i) If $f$ satisfies one of the following properties: (a) affine, (b) integral, (c) a closed embedding, then $f\sslash G$ satisfies the same property.

(ii) If $X$ is locally noetherian and $f$ is finite then $f\sslash G$ is finite.
\end{proposition}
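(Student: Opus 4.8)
The plan is to reduce everything to the affine base case and then read off each property from the degree-zero part of the relevant graded ring homomorphism. First I would observe that all four properties --- affine, integral, closed embedding, and finite --- are local on the target $Y=X\sslash G$, so it suffices to verify them after restricting over an affine open $Y_i\subseteq Y$. Writing $X_i=X\times_YY_i$ for the corresponding strongly equivariant (affine) open of $X$, the preimage $f^{-1}(X_i)$ is again strongly equivariant in $X'$: since $Y$ is the categorical quotient (Theorem~\ref{relafftheor}), the composite $X'\to X\to Y$ factors through $Y'=X'\sslash G$, so $f^{-1}(X_i)$ is the preimage in $X'$ of the open $(f\sslash G)^{-1}(Y_i)\subseteq Y'$, whence $f^{-1}(X_i)\sslash G=(f\sslash G)^{-1}(Y_i)$. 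Because the quotient morphism $X\to Y$ is affine, $Y_i$ affine forces $X_i=\Spec(A)$ affine with $A$ an $L$-graded ring and $A_0=\Gamma(Y_i,\cO_Y)$. This reduces us to the situation $Y=\Spec(A_0)$, $X=\Spec(A)$.

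In this affine situation I would argue (i) directly on rings. If $f$ is affine then $X'=\Spec(B)$ for an $L$-graded $A$-algebra $B$, so $f\sslash G$ is the morphism $\Spec(B_0)\to\Spec(A_0)$, which is affine; this gives (a). For (b), any $b\in B_0$ satisfies an integral equation $b^n+a_{n-1}b^{n-1}+\dots+a_0=0$ over $A$; taking its degree-zero component only replaces each coefficient $a_i$ by its degree-zero part $(a_i)_0$, since the powers of $b$ already lie in $B_0$, and yields an integral equation over $A_0$. Hence $A_0\to B_0$ is integral. For (c), a homogeneous surjection $A\to B$ is surjective in each degree, in particular $A_0\to B_0$ is surjective, so $f\sslash G$ is a closed embedding.

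For (ii) I would combine the integral case with the finite-type result already established. If $X$ is locally noetherian and $f$ is finite, then $f$ is of finite type and $X'$ is locally noetherian, so Corollary~\ref{fgcor} gives that $f\sslash G$ is of finite type; moreover $f\sslash G$ is integral by part (i)(b), since a finite morphism is integral. A morphism that is both \emph{integral and of finite type} is finite, so $f\sslash G$ is finite.

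The conceptual core --- and the only place demanding care --- is the reduction step: verifying that the preimage of a strongly equivariant open is strongly equivariant, which rests on the universal (categorical) nature of the quotient so that $X'\to X\to Y$ factors through $Y'$. Once this is in place, the three cases in (i) are immediate graded-algebra computations, and (ii) is a formal consequence of (i)(b) and Corollary~\ref{fgcor}.
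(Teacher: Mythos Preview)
Your proof is correct and follows essentially the same route as the paper: reduce to the case where $Y=\Spec(A_0)$ and $X=\Spec(A)$ are affine, note that $f$ is affine in all cases so $X'=\Spec(B)$, and then read off each property from the degree-zero part of the homogeneous homomorphism $A\to B$, with (ii) obtained by combining (i)(b) and Corollary~\ref{fgcor}. You spell out the reduction step (why the preimage of a strongly equivariant open of $X$ is strongly equivariant in $X'$) more carefully than the paper does; the paper simply asserts that the claim is local on $Y$.
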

\begin{proof}
The claim is local on $Y=X\sslash G$, hence we can assume that $X=\Spec(A)$ and $Y=\Spec(A_0)$ are affine. Since $f$ is affine in each case, $X'=\Spec(A')$ is affine, and so $Y'=\Spec(A'_0)$ is affine. This proves (a). If $A\to A'$ is onto then $A_0\to A'_0$ is onto and we obtain (c).

If $A\to A'$ is integral then any $x\in A'$ satisfies an integral equation $x^n+\sum_{i=0}^{n-1}a_ix^i$ with coefficients in $A$. If $x\in A'_0$ then replacing each $a_i$ with its component of degree zero we obtain an integral equation for $x$ with coefficients in $A_0$. Thus, $A'_0$ is integral over $A_0$ and we obtain (b). Finally, (ii) follows from (b) and Corollary~\ref{fgcor}.
\end{proof}

\subsubsection{Some bad examples}
Basic properties of $G$-morphisms, including, smoothness and separatedness, are not preserved by the quotient functor. Here are some classical bad examples.

\begin{example}\label{badexam}
(i) Let $k$ be a field with ${\rm char}\ k\neq 2$ and let $X=\Spec(k[x,y])$ be an affine plane with $\mu_2$ acting by changing the sign of $x$ and $y$. Then $Y=X\sslash\mu_2$ is the quotient singularity $Y=\Spec(k[x^2,xy,y^2])$ and the quotient morphism $X\to Y$ is not even flat. On the other hand, set $X'=X\times\mu_2$ with $\mu_2$ acting diagonally. The projection $X'\to X$ is a split \'etale covering which is not inert above the origin $O\in X$ because $O$ is fixed by $\mu_2$ and the action of $\mu_2$ on $X'$ is free. Moreover, $Y'=X'\sslash\mu_2\toisom X$, so the quotient map $f\sslash\mu_2$ is the projection $X\to Y$. Although $f$ is split \'etale, its quotient is not even flat.

(i)' Similarly, it is easy to construct a $\GGm$-action on $X=\bbA^3_k$ such that the quotient $Y=X\sslash\GGm$ is not smooth (e.g., if $X=\Spec(A)$ for $A=k[x,y,z]$ with $x,y\in A_1$ and $z\in A_{-2}$, then $Y=\Spec(k[x^2z,xyz,y^2z])$ is an orbifold). Then automatically the morphism $X\to Y$ is not flat, and setting $X'=X\times\GGm$ we obtain another bad example, where $X'\to X$ is smooth but the morphism between $X'\sslash\GGm=X$ and $X\sslash\GGm$ is not flat.

(ii) Let $k$ be a field and $G=\GG_m=\mathbf{D}_\ZZ$. Consider the $\ZZ$-graded $k$-algebra $A=k[x,y,z]$ with $x\in A_1$ and $y,z\in A_{-1}$, and set $X=\Spec(A)=\AA^3_k$. Then $Y=X\sslash G$ equals to $\Spec(k[xz,xy])=\AA^2_k$ and the orbits over the origin are: the origin, the punctured $(x)$-axis, all punctured lines through the origin in the $(yz)$-plane. Consider the equivariant subspace $X'=\Spec(k[x,y,z^{\pm 1}])$. The open embedding $X'\into X$ preserves stabilizers but takes some closed orbits to non-closed orbits. For example, the punctured $(z)$-axis is closed in $X'$. The quotient $Y'=X'\sslash G=\Spec(k[xz,\frac{y}{z}])$ is not flat over $Y$. In fact, it is an affine chart of the blowing up of $Y$ at the origin, and the quotient map $X'\to Y'$ separates all orbits of $X$ sitting over the origin of $Y$ and contained in $X'$.

(ii)' Analogous examples related to cobordisms will play a crucial role in the proof of the factorization theorem: $(B_a)_+\into B_a$ is an open embedding preserving the stabilizers (see \cite[\S3.4]{AT2}), but $(B_a)_+\sslash G\to B_a\sslash G$ is usually a non-trivial modification.

(iii) Let $G=\GGm$ act on $T=\Spec(k[x,y])$ as $x\mapsto tx$ and $y\mapsto t^{-1}y$, and let $X'$ be obtained from $T$ by removing the origin. Then $X'\sslash G$ is an affine line with doubled origin. In particular, the morphism $f\:X'\to\Spec(k)$ is separated but its quotient is not.
\end{example}

\subsection{Strongly equivariant morphisms}
The situation improves drastically if one considers quotients of a more restrictive class of strongly equivariant morphisms.

\subsubsection{The definition}\label{strongsec}
We say that a $G$-morphism $f\:X'\to X$ is {\em strongly equivariant}\index{strongly equivariant morphism} if the actions are relatively affine and $f$ is the base change of its quotient $f\sslash G$, i.e. the morphism $\phi\:X'\to X\times_YY'$ is an isomorphism, where $Y=X\sslash G$ and $Y'=X'\sslash G$. Furthermore, we say that $f$ is {\em strongly equivariant over} a point $y'\in Y'$ if $\phi$ is an isomorphism over $y'$ in the sense that $$\phi_{y'}:X'\times_{Y'}\Spec(\cO_{Y',y'})\to X\times_{Y}\Spec(\cO_{Y',y'})$$ is an isomorphism. Note that $f$ is strongly equivariant if and only if it is strongly equivariant over all points of $Y'$, and if $Y'$ is quasi-compact then it suffices to consider only closed points.

\begin{lemma}\label{stronglem}
Let $G=\bfD_L$ be a diagonalizable group.

(i) The composition of strongly $G$-equivariant morphisms is strongly $G$-equivariant.

(ii) If $Y\to X$ is a strongly $G$-equivariant morphism and $g\:Z\to Y$ is a $G$-equivariant morphism such that the composition is strongly $G$-equivariant, then $g$ is strongly $G$-equivariant.

(iii) If $Y\to X$ is strongly $G$-equivariant and $Z\to X$ is $G$-equivariant then the base change $Y\times_XZ\to Z$ is strongly $G$-equivariant.

(iv) If $f\:Y\to X$ is strongly equivariant then the diagonal $\Delta_f\:Y\to Y\times_XY$ is strongly equivariant and $\Delta_f\sslash G$ is the diagonal of $f\sslash G$.
\end{lemma}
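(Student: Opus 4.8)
The plan is to reformulate strong equivariance as a cartesian-square condition and then reduce each part to a formal manipulation of fibre products, using only functoriality and universality of the quotient. Since the quotient is a universal categorical quotient (Theorem~\ref{relafftheor}), the assignment $T\mapsto T\sslash G$ is functorial on relatively affine $G$-schemes: an equivariant $h\:T'\to T$ induces $\bar h=h\sslash G\:T'\sslash G\to T\sslash G$ together with a commutative square to the quotient morphisms $q_{T'},q_T$, and by definition $h$ is \emph{strongly equivariant} precisely when the actions are relatively affine and this square is cartesian. The one geometric input beyond functoriality is the following consequence of universality: if $T$ is relatively affine with quotient $\bar T=T\sslash G$ and $S\to\bar T$ is an arbitrary morphism of trivially acted schemes, then $T\times_{\bar T}S\to S$ is affine, hence relatively affine, and $(T\times_{\bar T}S)\sslash G=S$ with the projection as quotient morphism. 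This says exactly that $\sslash G$ commutes with base change along trivially acted schemes over $\bar T$, and it requires no affineness hypothesis on $S\to\bar T$ because $\bar T$ is a \emph{universal} categorical quotient.

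For (i) and (ii) no further input is needed. Given strongly equivariant $f\:X''\to X'$ and $g\:X'\to X$, functoriality yields a commutative ladder whose two squares (for $f$ and for $g$) are both cartesian, so the pasting lemma for cartesian squares shows the outer rectangle, which is the square attached to $g\circ f$, is cartesian, giving (i). For (ii), with $p\:W\to X$ strongly equivariant and $g\:Z\to W$ equivariant such that $p\circ g$ is strongly equivariant, the same ladder has its right square (for $p$) and its outer rectangle (for $p\circ g$) cartesian, and the pasting lemma in the form ``outer plus right cartesian implies left cartesian'' shows the left square, attached to $g$, is cartesian. In both cases relative affineness of all schemes is part of the hypotheses, so the cartesian-square conclusion is precisely strong equivariance.

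For (iii), write $\bar X=X\sslash G$, $\bar W=W\sslash G$, $\bar Z=Z\sslash G$, and use that $p\:W\to X$ strongly equivariant means $W\toisom X\times_{\bar X}\bar W$. I would then compute, by transitivity of fibre products, $W\times_X Z\toisom(X\times_{\bar X}\bar W)\times_X Z\toisom\bar W\times_{\bar X}Z$, and, rewriting $Z\toisom\bar Z\times_{\bar Z}Z$, obtain $W\times_X Z\toisom(\bar W\times_{\bar X}\bar Z)\times_{\bar Z}Z$. Thus $W\times_X Z$ is the base change of $Z$ along the trivially acted scheme $\bar W\times_{\bar X}\bar Z\to\bar Z$, so by the universality input its quotient is $\bar W\times_{\bar X}\bar Z$ and the square attached to $W\times_X Z\to Z$ is cartesian; relative affineness is preserved since the projection to $Z$ is affine. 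This is strong equivariance of the base change.

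For (iv), with $f\:W\to X$ strongly equivariant, I would again use $W\toisom X\times_{\bar X}\bar W$, whence $W\times_X W\toisom X\times_{\bar X}(\bar W\times_{\bar X}\bar W)$ and, by the universality input, $(W\times_X W)\sslash G=\bar W\times_{\bar X}\bar W$. Under these identifications $\Delta_f$ is the base change along $X\to\bar X$ of the diagonal $\Delta_{\bar f}\:\bar W\to\bar W\times_{\bar X}\bar W$ of $\bar f=f\sslash G$: pulling $W\times_X W$ back along $\Delta_{\bar f}$ gives $X\times_{\bar X}\big((\bar W\times_{\bar X}\bar W)\times_{\bar W\times_{\bar X}\bar W}\bar W\big)\toisom X\times_{\bar X}\bar W\toisom W$, so the square attached to $\Delta_f$ is cartesian and $\Delta_f\sslash G=\Delta_{\bar f}$, the diagonal of $f\sslash G$. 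I expect the main obstacle to be bookkeeping rather than conceptual: one must check that the isomorphisms produced by fibre-product transitivity coincide with the \emph{canonical} comparison maps $\phi$ from the definition of strong equivariance (and that the bottom edge in (iv) really is $\Delta_{\bar f}$), and one must apply the universality input in the possibly non-affine relatively affine setting, which is done by passing to an affine open cover of the relevant quotient and invoking Lemma~\ref{Lem:univers} together with universality of the categorical quotient.
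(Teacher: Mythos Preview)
Your proposal is correct. For (i), (ii), and (iv) you do exactly what the paper does: it states that these are ``a simple diagram chase of the relevant cartesian squares'' and omits the details; your pasting-lemma formulation makes that chase explicit.

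For (iii) you take a genuinely different route. The paper localizes on all three quotients to reduce to the affine case $X=\Spec(A)$, $Y=\Spec(B)$, $Z=\Spec(A')$ with $B=A\otimes_{A_0}B_0$, and then verifies directly in graded rings that $B'=A'\otimes_A B$ satisfies $B'_n=A'_n\otimes_{A'_0}B'_0$ for every $n\in L$ (in particular obtaining $B'_0=A'_0\otimes_{A_0}B_0$ by hand). You instead stay at the scheme level: from $W\toisom X\times_{\bar X}\bar W$ and fibre-product transitivity you exhibit $W\times_X Z$ as $Z\times_{\bar Z}(\bar W\times_{\bar X}\bar Z)$, and then invoke the universality of the quotient to identify $(W\times_X Z)\sslash G$ with $\bar W\times_{\bar X}\bar Z$. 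Both arguments are valid. The paper's computation is a two-line check once the definitions are unpacked and makes no appeal to the universal-categorical-quotient statement; your argument is coordinate-free and isolates exactly which property of $\sslash G$ is being used, at the price of the bookkeeping you correctly flag (matching your isomorphisms with the canonical $\phi$, and justifying the ``universality input'' beyond the affine-over-the-quotient hypothesis of Lemma~\ref{Lem:univers}).
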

\begin{proof}
The proof of (i), (ii) and (iv) is a simple diagram chase of  the relevant cartesian squares, so we omit it. Let us prove (iii). The claim is local on $X\sslash G$, $Y\sslash G$ and $Z\sslash G$ hence we can assume that $X=\Spec(A)$, $Y=\Spec(B)$ and $Z=\Spec(A')$ for $L$-graded rings $A,B,A'$. We are given that $B=A\otimes_{A_0}B_0$ and we should prove that $B'=A'\otimes_AB$ satisfies $B'=A'\otimes_{A'_0}B'_0$. Clearly, $B'=A'\otimes_{A_0}B_0$ and so $B'_n=A'_n\otimes_{A_0}B_0$ for any $n\in L$. Therefore, $B'_n=A'_n\otimes_{A'_0}A'_0\otimes_{A_0}B_0=A'_n\otimes_{A'_0}B'_0,$ as required.
\end{proof}

\subsubsection{Strongly satisfied properties}
Let $P$ be a property of morphisms preserved by base changes. As in Section \ref{strhomsec}, we say that an equivariant morphism $f\:X'\to X$ {\em strongly} satisfies $P$ if it is strongly equivariant and the quotient $f\sslash G$ satisfies $P$. In particular, $f$ itself satisfies $P$.

\begin{remark}
(i) In the case of strongly \'etale\index{strongly etale morphism@strongly \'etale morphism} morphisms we recover the definition from \cite[Appendix 1.D]{GIT}. Such morphisms played important role in \cite{AKMW}, and we will use strongly regular\index{strongly regular morphism} morphisms in \cite{AT2} for analogous purposes.

(ii) Let $U$ be an open subscheme of $X$. The morphism $U\into X$ is a strongly open immersion if and only if $U$ is a strongly equivariant open subscheme.

(iii) For various properties $P$ it is true that a morphism strongly satisfies $P$ if and only if it satisfies $P$ and is strongly equivariant. In fact, one should only check that if $f$ satisfies $P$ and is strongly equivariant then $f\sslash G$ satisfies $P$. However, such descent results may be difficult to prove because the morphisms $X\to X\sslash G$ are not always flat; in particular, flat descent is unapplicable. For the strong \'etale property such descent claim is a part of Luna's fundamental lemma.
\end{remark}

\subsubsection{Some descent results}
Here is a (rather incomplete) list of properties for which the descent is easy.

\begin{proposition}\label{sepandprop}
Let $G$ be a diagonalizable group and $f\:X'\to X$ a $G$-equivariant morphism of schemes with relatively affine $G$-actions. Then,

(i) Let $P$ be any of the following properties: (a) has finite fibers, (b) a monomorphism, (c) separated, (d) universally closed. Then $f$ strongly satisfies $P$ if and only if it satisfies $P$ and is strongly equivariant.

(iii) Assume that $X$ is locally noetherian and let $P$ be one of the following properties: (e) quasi-finite, (f) proper. Then $f$ strongly satisfies $P$ if and only if it satisfies $P$ and is strongly equivariant.
\end{proposition}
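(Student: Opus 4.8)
The plan is to dispatch the easy direction uniformly and then reduce all the content to a descent statement for the quotient $g:=f\sslash G$. If $f$ strongly satisfies $P$ then, by definition, $f$ is strongly equivariant and $f$ is the base change of $g$; since each listed $P$ is stable under base change, $f$ satisfies $P$. This gives the forward implication for every $P$ at once. For the converse I would assume $f$ strongly equivariant and satisfying $P$, record the cartesian square with vertical quotient maps $q\:X\to Y=X\sslash G$ and $q'\:X'\to Y'=X'\sslash G$, and aim to show $g$ satisfies $P$. Throughout I would use that $q,q'$ are surjective and submersive (Theorem~\ref{relafftheor}), that $q$ is affine, and that $q$ is in fact \emph{universally} submersive: for any $T\to Y$ the action on $X_T=X\times_YT$ is again relatively affine with quotient $X_T\sslash G=T$ (universality, Lemma~\ref{Lem:univers} and Theorem~\ref{relafftheor}(i)), so the projection $q_T\:X_T\to T$ is submersive by Theorem~\ref{relafftheor}(i).

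The two properties controlled by the diagonal are then immediate. By Lemma~\ref{stronglem}(iv) the diagonal $\Delta_f$ is strongly equivariant with $\Delta_f\sslash G=\Delta_g$. Since $f$ is a monomorphism iff $\Delta_f$ is an isomorphism and the quotient functor carries isomorphisms to isomorphisms, case (b) follows; since $f$ is separated iff $\Delta_f$ is a closed immersion, and closed immersions descend along the quotient functor by Proposition~\ref{quotmorprop}(i)(c), case (c) follows. For case (a) I would pick $y\in Y$ and, using surjectivity of $q$, a point $x\in X$ over $y$; strong equivariance identifies the fibre $f^{-1}(x)=X'\times_X\Spec k(x)$ with $Y'_y\times_{k(y)}k(x)$, whose projection to $Y'_y=Y'\times_Y\Spec k(y)$ is surjective, so finiteness of $f^{-1}(x)$ forces $Y'_y$ finite. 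Case (e) is then (a) together with finite type, which descends by Corollary~\ref{fgcor}, and case (f) is (c) and (d) together with finite type.

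The heart of the matter, and the step I expect to be the main obstacle, is case (d). Fix $T\to Y$ and a closed $Z\subseteq Y'_T:=Y'\times_YT$; I must show $g_T(Z)$ is closed in $T$. The key is the set-theoretic identity
$$q_T^{-1}\bigl(g_T(Z)\bigr)=f_T\bigl((q'_T)^{-1}(Z)\bigr),$$
where $f_T\:X'_T\to X_T$ and $q'_T\:X'_T\to Y'_T$ are the two projections of the cartesian square $X'_T=X_T\times_TY'_T$ (the base change of the original square). The inclusion $\subseteq$ uses $g_T\circ q'_T=q_T\circ f_T$; the inclusion $\supseteq$ uses that a point $x\in X_T$ and a point $z\in Z$ with common image $t\in T$ are hit by a single point of the fibre product, because $k(x)\otimes_{k(t)}k(z)\neq 0$. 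Granting the identity, $(q'_T)^{-1}(Z)$ is closed, $f_T$ is closed since $f$ is universally closed, hence $q_T^{-1}(g_T(Z))$ is closed; the submersiveness of $q_T$ noted above then yields that $g_T(Z)$ is closed. As $T$ was arbitrary, $g$ is universally closed.

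The two subtle inputs I would watch are exactly those feeding case (d): justifying universal submersiveness of $q$ from universality of the quotient and Theorem~\ref{relafftheor}(i) applied to $X_T$, and the nonemptiness $k(x)\otimes_{k(t)}k(z)\neq 0$ guaranteeing the surjectivity used in the identity. Everything else is formal manipulation of the cartesian square together with the already-established descent of closed immersions (Proposition~\ref{quotmorprop}) and of finite type (Corollary~\ref{fgcor}).
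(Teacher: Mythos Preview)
Your proof is correct and follows essentially the same route as the paper: cases (b) and (c) via Lemma~\ref{stronglem}(iv) combined with Proposition~\ref{quotmorprop}(i)(c), case (a) via surjectivity of $q$, and cases (e), (f) by combining with Corollary~\ref{fgcor}. For (d) the paper is slightly more structural---it first shows that \emph{closed} descends (using the same set-theoretic identity you spell out, though stated tersely as ``the preimage of $g(T)$ in $X$ is closed''), and then obtains \emph{universally} closed by invoking Lemma~\ref{stronglem}(iii) to see that every base change of $f$ along a strongly equivariant morphism is again strongly equivariant---whereas you unroll this directly by working with an arbitrary $T\to Y$ from the start and invoking universal submersiveness of $q$; the underlying content is identical.
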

\begin{proof}
In all claims we assume that $f$ is strongly equivariant and satisfies $P$ and we should prove that $g=f\sslash G$ satisfies $P$. The assertion of (a) follows from the surjectivity of $X\to X\sslash G$. Note that a morphism is a monomorphism if and only if its diagonal is an isomorphism. Hence (b) follows from Lemma~\ref{stronglem}(iv). Similarly, (c) follows from Lemma~\ref{stronglem}(iv) and Proposition~\ref{quotmorprop}(i)(c).

(d) First, assume only that $f$ is closed. Then for any closed set $T\subset X'\sslash G$ the preimage of $g(T)$ in $X$ is closed. Since the quotient morphism $X\to X\sslash G$ is submersive, $g(T)$ is closed. Thus $g$ is closed. The assertion of (ii) follows from this and the fact that the base change of $f$ by a strongly equivariant morphism is strongly equivariant by Lemma~\ref{stronglem}(iii).

Finally, (e) follows from (a) and Corollary~\ref{fgcor}, and (f) follows from (d) and Corollary~\ref{fgcor}.
\end{proof}

\subsubsection{The case of $\GGm$}
Assume that $X$ and $Y$ are provided with a relatively affine action of $G=\GGm$. We refer to Section~\ref{xplusminus} for the definitions of $X_\pm$ and $Y_\pm$.

\begin{lemma}\label{xpluslem}
Keeping the above notation, if $f\:X\to Y$ is strongly equivariant then $X_\pm=Y_\pm\times_YX$.
\end{lemma}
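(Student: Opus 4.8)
The plan is to reduce to the affine local case and there identify the two ideals that cut out the complements of $X_+$ and of $Y_+\times_Y X$. First I would exploit that the formation of $X_\pm$ is local on $X\sslash G$ and that of $Y_\pm$ is local on $Y\sslash G$, as recalled in Section~\ref{xplusminus}. Covering $Y\sslash G$ by affine opens $Z_i=\Spec(B_0^{(i)})$, their preimages $Y_i\subset Y$ are affine, say $Y_i=\Spec(B^{(i)})$; the preimages of $Z_i$ in $X\sslash G$ are affine opens $W_i=\Spec(A_0^{(i)})$, and $X_i:=X\times_{X\sslash G}W_i=\Spec(A^{(i)})$ is affine. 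Since $X_i=X\times_YY_i$, Lemma~\ref{stronglem}(iii) shows that $f|_{X_i}\:X_i\to Y_i$ is again strongly equivariant, so $A^{(i)}=B^{(i)}\otimes_{B_0^{(i)}}A_0^{(i)}$. Because $X_i$ is the preimage of $W_i$ and $X_i\to Y$ factors through $Y_i$, one has $X_\pm\cap X_i=(X_i)_\pm$ and $(Y_\pm\times_YX)\cap X_i=(Y_i)_\pm\times_{Y_i}X_i$; hence it suffices to treat the affine case.

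Dropping indices, I would then work with $Y=\Spec(B)$ and $X=\Spec(A)$, where strong equivariance gives $A=B\otimes_{B_0}A_0$ with grading $A_n=B_n\otimes_{B_0}A_0$. By definition $X_+=X\setminus V(A_-)$ with $A_-=\oplus_{n<0}A_n$, while $Y_+\times_YX=X\setminus V(B_-A)$, where $B_-A$ denotes the ideal of $A$ generated by the image of $B_-=\oplus_{n<0}B_n$ under $b\mapsto b\otimes 1$. The key step is the ideal identity $A_-A=B_-A$. The inclusion $B_-A\subseteq A_-A$ is immediate since $B_n\otimes 1\subseteq A_n$ for $n<0$. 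For the reverse inclusion I would write an arbitrary element of $A_n=B_n\otimes_{B_0}A_0$ (for $n<0$) as a finite sum of products $(b\otimes 1)(1\otimes a_0)$ with $b\in B_n$ and $a_0\in A_0$; as $b\otimes 1\in B_-A$ and $1\otimes a_0\in A$, each such product lies in $B_-A$, so $A_-\subseteq B_-A$. Thus $A_-A=B_-A$, giving $V(A_-)=V(B_-A)$ and $X_+=Y_+\times_YX$. The case of $X_-$ is entirely symmetric, replacing $n<0$ by $n>0$.

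No genuine obstacle is expected here: the essential content is the elementary identity $A_-A=B_-A$ forced by the equality $A_n=B_n\otimes_{B_0}A_0$. The only points demanding care are bookkeeping ones in the reduction—confirming that restriction to $X_i$ commutes with the formation of both $X_\pm$ and the base change $Y_\pm\times_YX$, which rests on $X_i$ being simultaneously the preimage of $W_i\subseteq X\sslash G$ and of $Z_i\subseteq Y\sslash G$—together with recording that strong equivariance yields the explicit graded presentation of $A$ used throughout.
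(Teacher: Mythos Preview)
Your approach is essentially the paper's: localize on the quotients, then use strong equivariance to get $A_n=B_n\otimes_{B_0}A_0$ and deduce the ideal identity $B_-A=A_-A$ (the paper writes this simply as $B_-A=A_-$). The core computation is identical.

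One small point in your reduction: you assert that the preimage $W_i$ of $Z_i\subseteq Y\sslash G$ in $X\sslash G$ is affine, but nothing guarantees that $f\sslash G$ is an affine morphism. The fix is immediate and is what the paper does implicitly when it says the claim is ``local on $Y\sslash G$ and $X\sslash G$'': after restricting to $Z_i$, further cover the preimage in $X\sslash G$ by affine opens $W_{ij}$, and run your argument on $X_{ij}=X\times_{X\sslash G}W_{ij}$ and $Y_i$. Everything else goes through unchanged.
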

\begin{proof}
The claim is local on $Y\sslash G$ and $X\sslash G$, so we can assume that $Y=\Spec B$, $Y\sslash G=\Spec B_0$, $X=\Spec A$ and $X\sslash G=\Spec A_0$. By strong equivariance, $A=B\otimes_{B_0}A_0$. Hence $A_n=B_n\otimes_{B_0}A_0$ for any $n$ and we obtain equalities of ideals $B_-A=A_-$ and $B_+A=A_+$.
\end{proof}
\begin{remark} For a general $G=\bfD_L$, we similarly have that $$X^{ss}(m)=(Y^{ss}(m))\times_YX.$$
\end{remark}
\subsection{Free actions}
In \S\ref{Sec:local} we studied the quotients when the stabilizers are maximal. This section is devoted to the other extreme case when the stabilizers are trivial. We will show in Corollary~\ref{Cor:freeact} that any quotient of a relatively affine action of $G = \bfD_L$ can be described in terms of these two cases.

\subsubsection{Definitions}
We say that an action of $G$ on $X$ is {\em regular} or {\em split free}\index{split free action} if there is an equivariant isomorphism $X\toisom G\times Y$, where $Y=X\sslash G$. If such an isomorphism only exists locally on $Y$ for a topology $\tau$ (e.g. flat, \'etale, or Zariski) then we say that the action is $\tau$-split free. Finally, if there is a $\tau$-open morphism $g\:Y'\to Y$ and an equivariant isomorphism $X\times_YY'\toisom G\times Y'$ then for any point $y\in g(Y')$ we say that the action is {\em $\tau$-split free over} $y$.

\begin{remark}
Recall that an action of $G$ on $X$ is {\em semi-regular} or {\em free} if the morphism $G\times X\to X\times X$ is a closed embedding, and this condition is equivalent to the condition that $\psi\:G\times X\to X\times_YX$ is an isomorphism; in other words, $X$ is a pseudo $G$-torsor over $Y$, see \cite[{\tt tag/0498}]{stacks}. This pseudo-torsor is a $\tau$-torsor of $G$ if and only if the action is $\tau$-split free. In fact, any semi-regular action has free orbits hence it follows from Lemma~\ref{Lem:freeact} below that it is flat-split free over the quotient $Y$. In particular, we will not distinguish free and flat-split free actions.
\end{remark}

\subsubsection{A criterion for splitting}
Flat-split freeness of an action can be tested very easily: the stabilizers of the fibers should be trivial.

\begin{lemma}\label{Lem:freeact}
Given a relatively affine action of a diagonalizable group $G=\mathbf{D}_L$ on a scheme $X$, let $f\:X\to Y=X\sslash G$ be the quotient morphism and $y\in Y$  a point. Then the following conditions are equivalent:
\begin{enumerate}
\item[(i)]
 the action is flat-split free over $y$,
\item[(ii)]
 the scheme-theoretic fiber $X_y=y\times_YX$ coincides with a single free orbit $O=\Spec(k[L])$,
\item[(iii)]
 the set-theoretic fiber $f^{-1}(y)$ is a single free orbit,
\item[(iv)]
 all points of the fiber $f^{-1}(y)$ have trivial stabilizer.
\end{enumerate}
Moreover, if the degree of the torsion of $L$ is invertible on $Y$ (respectively, $L$ is torsion free) then one may replace the flat topology in (i) with the \'etale (respectively, Zariski) topology.
\end{lemma}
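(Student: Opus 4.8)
The plan is to reduce to the \emph{local} situation and then route everything through the closed orbit of the fiber, which by Lemma~\ref{Lem:fib} is the carrier of all the stabilizer information. Since the four conditions and the splitting topology in (i) are all local on $Y$ near $y$, I would first replace $Y$ by $\Spec(\cO_{Y,y})$ and $X$ by its equivariant localization along the special orbit over $y$ (\S\ref{Sec:localspecial}), so that $X=\Spec A$ is $L$-local with $A_0$ local, maximal ideal $m_0$ and residue field $k=k(y)$, and the fiber is $X_y=\Spec\bar A$ with $\bar A=A/m_0A$, $\bar A_0=k$. The implications (ii)$\Rightarrow$(iii)$\Rightarrow$(iv) are then immediate: a scheme-theoretic free orbit is in particular a set-theoretic one, and if $f^{-1}(y)$ is a single free orbit then every point lies in that orbit and stabilizers are constant along an orbit, so each is trivial.

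For (i)$\Rightarrow$(ii) I would base change along a flat cover $g\:Y'\to Y$ with $y\in g(Y')$ witnessing flat-split freeness, so that $X\times_YY'\cong G\times Y'$. Picking $y'\in Y'$ over $y$, the fiber of $G\times Y'$ is the free orbit $\Spec(k(y')[L])$, while it also equals $X_y\otimes_{k(y)}k(y')$. Being a single free orbit, i.e. a $\bfD_L$-torsor over the base, is an fppf-local property, and $k(y)\to k(y')$ is faithfully flat; hence $X_y$ is already a single free orbit, becoming $\Spec(k[L])$ after that base change.

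The substantive direction is (iv)$\Rightarrow$(i). By Lemma~\ref{Lem:fib} the fiber $\bar A$ has a unique closed orbit $O=\Spec(k[L'])$ with stabilizer $\bfD_{L/L'}$, and $O\subseteq f^{-1}(y)$, so (iv) forces $\bfD_{L/L'}$ to be trivial, i.e. $L'=L$; thus every $\bar A_n$ contains a homogeneous unit. I would lift a unit $\bar u_n\in\bar A_n$ to $u_n\in A_n$; since the inverse of $\bar u_n$ is homogeneous of degree $-n$, lifting it to some $v\in A_{-n}$ gives $u_nv\in A_0$ with $u_nv\equiv 1\pmod{m_0}$, whence $u_nv\in A_0^\times$ because $A_0$ is local. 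Therefore each $u_n$ is a unit of $A$ and $A_n=A_0u_n$, so $A=\bigoplus_{n\in L}A_0u_n$ is free over $A_0$ and $X\to Y$ is a $\bfD_L$-torsor.

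It then remains to trivialize this torsor with the correct topology. Writing $L=\ZZ^r\oplus\bigoplus_i\ZZ/d_i$, the free part is Zariski-trivial (take the $u_{e_i}$ as generators, after inverting finitely many elements of $A_0$), while each torsion factor contributes a piece $\Spec(A_0[s]/(s^{d_i}-c_i))$ with $c_i\in A_0^\times$, split by adjoining a $d_i$-th root of $c_i$. The resulting cover $Y'\to Y$ is finite flat, giving (i); it is \'etale exactly when every $d_i$ is invertible on $Y$, and no cover is needed when $L$ is torsion-free, which yields the ``moreover'' clause. I expect this final step to be the main obstacle: passing from the pointwise triviality of stabilizers to an honest local trivialization, where the torsion of $L$ dictates precisely which topology is available, since extracting a $d$-th root of a unit is \'etale only when $d$ is invertible.
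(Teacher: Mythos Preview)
Your proof is correct and follows essentially the same route as the paper: both use Lemma~\ref{Lem:fib} to see that trivial stabilizers force every graded piece of the fiber to contain a unit, lift those units to the ring upstairs, and then trivialize the resulting torsor by adjoining $d_i$-th roots of the degree-zero elements $u_{m_i}^{d_i}$. The only organizational difference is that the paper breaks the backward direction into the chain (iv)$\Rightarrow$(iii)$\Rightarrow$(ii)$\Rightarrow$(i), working first in the fiber $\bar A$ to establish (ii) and only then lifting to $A$ for (ii)$\Rightarrow$(i), whereas you pass directly from (iv) to (i) by lifting units immediately to the $L$-local ring $A$; your parenthetical ``after inverting finitely many elements of $A_0$'' is therefore unnecessary, since you have already localized $A_0$ at $m_0$.
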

\begin{proof}
The implications (i)$\implies$(ii)$\implies$(iii)$\implies$(iv) are obvious. In the opposite direction, if (iv) holds then all orbits in $f^{-1}(y)$ are free and closed, hence the fiber is a single orbit by Theorem~\ref{relafftheor}(ii), and we obtain (iii).

If (iii) holds then $X_y=\Spec(A)$ such that $A_0=k$ and $k[L]$ is the reduction of $A$. This implies that each $A_n$ contains a unit, and hence is an invertible $k$-module. Therefore, the reduction $A\to k[L]$ is an isomorphism, and we obtain (ii).

Finally, assume that $X_y=\Spec(k[L])$. Let $L=\oplus_{i=1}^nL_i$ be a decomposition of $L$ into cyclic groups, and choose a generator $m_i$ of $L_i$. Shrinking $Y$ around $y$ we can assume that $X=\Spec(A)$, $Y=\Spec(A_0)$, and each $m_i\in k[L]$ lifts to a unit $u_i\in A_{m_i}$ with respect to the homomorphism $A\to k[L]$. For each $m_i$ of a finite order $d_i$ we have that $u_i^{d_i}=a_i\in A_0$. Obviously, $X$ is a principal $G$-torsor over $Y$ which is trivialized by adjoining the roots $a_i^{1/d_i}$ to $A_0$. The latter defines a flat covering of $Y$, which is \'etale (respectively, an isomorphism) if $d_i$ are invertible on $Y$ (respectively, $L$ is torsion free).
\end{proof}

The following corollary shows that locally one can construct quotients in two steps: first by dividing by the stabilizer and then by dividing by a locally free action.

\begin{corollary}\label{Cor:freeact}
Assume that a scheme $X$ is provided with a relatively affine action of a diagonalizable group $G=\mathbf{D}_L$, $y$ is a point of $Y=X\sslash G$, $G'$ is the stabilizer of the closed orbit of the fiber $X_y$, $Z=X\sslash G'$, and $G'' = G/G'$. Then $Z\sslash G''=Y$ and the $G''$-action on $Z$ is flat-split free over $y$. Moreover, if the torsion of $L$ is invertible on $Y$ (resp. $L$ is torsion free) then the action is \'etale-split (resp. Zariski-split) free over $y$.
\end{corollary}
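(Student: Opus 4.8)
The plan is to reduce everything to the affine description of the iterated quotient and then read off the fiber over $y$, where Lemma~\ref{Lem:freeact} applies directly. All assertions are local on $Y$, so I would cover $Y$ by affines and take $Y=\Spec(A_0)$, $X=\Spec(A)$ with $A=\oplus_{n\in L}A_n$. The first task is to pin down, via the duality between subgroups of $\bfD_L$ and quotients of $L$, the subgroup $L'$ describing $G'$. Base-changing along $y\to Y$ produces the $L$-graded $k(y)$-algebra $\bar A=A\otimes_{A_0}k(y)=\oplus_{n\in L}\bar A_n$ with $\bar A_0=k(y)$, and $X_y=\Spec(\bar A)$. By Lemma~\ref{Lem:fib} the subgroup $L'=\{n\in L:\bar A_n\text{ contains a unit}\}$ governs the unique closed orbit $O=\Spec(k(y)[L'])$, whose stabilizer is $\bfD_{L/L'}$. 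Hence $G'=\bfD_{L/L'}$ and $G''=G/G'=\bfD_{L'}$, and restricting the $G$-action to $G'$ pushes the grading forward along $L\to L/L'$.

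With $L'$ in hand, the ring of $G'$-invariants is $B:=A^{G'}=\oplus_{n\in L'}A_n$, so $Z=\Spec(B)$, with residual $G''=\bfD_{L'}$-action given by the $L'$-grading $B=\oplus_{n\in L'}A_n$. Since $B_0=A_0$, these affine pieces glue to give $Z\sslash G''=\Spec(A_0)=Y$, and $Z\to Y$ is affine so the $G''$-action is relatively affine; this is the first assertion. For the freeness statement I would compute the fiber directly: $Z_y=\Spec(B\otimes_{A_0}k(y))=\Spec\big(\oplus_{n\in L'}\bar A_n\big)$, whose degree-zero part is the field $k(y)$ and in which every graded piece $\bar A_n$, $n\in L'$, contains a unit. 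Applying Lemma~\ref{Lem:fib} once more, now to $Z_y$ as a $\bfD_{L'}$-scheme, every graded piece carries a unit, so its maximal homogeneous ideal is zero; hence $Z_y$ is a single orbit with trivial stabilizer, i.e.\ a single free orbit of $G''$. By Lemma~\ref{Lem:freeact} the $G''$-action on $Z$ is flat-split free over $y$.

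For the refinement, the torsion subgroup of $L'$ is contained in that of $L$, so if the degree of the torsion of $L$ is invertible on $Y$ (resp.\ $L$ is torsion free) the same holds for $L'$; the final sentence of Lemma~\ref{Lem:freeact} then upgrades flat-split freeness over $y$ to \'etale-split (resp.\ Zariski-split) freeness over $y$. I expect the only delicate point to be the bookkeeping of the duality---pinning down that the stabilizer $G'$ of the closed orbit is $\bfD_{L/L'}$, so that the complementary group $G''$ is dual to the very subgroup $L'\subseteq L$ that reappears as the character group acting freely on $Z_y$; once this is set up correctly, each individual verification is a direct application of Lemmas~\ref{Lem:fib} and~\ref{Lem:freeact}.
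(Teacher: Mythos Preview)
Your proof is correct and follows essentially the same route as the paper's. The paper invokes universality of quotients to pass immediately to the fiber $X_y$ and then computes $X_y\sslash G'=\Spec(k(y)[L'])$ (in your notation) directly; you instead work with the global affine ring $A$, form $B=\oplus_{n\in L'}A_n$, and tensor down to $k(y)$, arriving at the same graded $k(y)$-algebra. Both arguments then finish by appealing to Lemma~\ref{Lem:freeact}. Your explicit treatment of the ``Moreover'' clause via the inclusion of torsion subgroups is a detail the paper leaves implicit.
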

\begin{proof}
By universality, quotients are compatible with taking fibers. So, in view of Lemma~\ref{Lem:freeact} it suffices to show that $X_y\sslash G'$ is a single free $G''$-orbit. Note that $X_y=\Spec(A)$, where $(A,m)$ is an $L$-local ring with $A_0=k$. Let $0\to L''\to L\to L'\to 0$ be the exact sequence corresponding to $1\to G'\to G\to G''\to 1$. Since $A/m=k[L'']$, each $A_n$ with $n\in L''$ contains a unit and we obtain that $A^{L'}=\oplus_{n\in L''}k=k[L'']$. Thus,
$X_y\sslash G'=\Spec(k[L''])$, and we are done.
\end{proof}

\subsubsection{Descent of regularity}
We show  below that dividing by a free action is a nice functor that preserves various properties of equivariant morphisms. In fact, the following lemma reduces this to the usual flat descent.

\begin{lemma}\label{Lem:freequotflat}
Assume that a diagonalizable group $G$ acts on a scheme $X$ and the action is flat-split free over a point $y\in Y=X\sslash G$. Then the quotient morphism $f\:X\to Y$ is flat along the fiber $f^{-1}(y)$. In particular, if $X$ is regular at a point $x\in f^{-1}(y)$ then $Y$ is regular at $y$.
\end{lemma}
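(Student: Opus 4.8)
The preceding text already signals the intended method: reduce to ordinary faithfully flat descent. The plan is to first unwind the hypothesis using the criterion of Lemma~\ref{Lem:freeact}. Flat-split freeness over $y$ furnishes a flat morphism $g\:Y'\to Y$ whose image contains an open neighborhood $V$ of $y$, together with a $G$-equivariant isomorphism $X\times_YY'\toisom G\times Y'$ of schemes over $Y'$. Since the assertion that $f$ is flat along $f^{-1}(y)$ is local on $Y$ near $y$, I would replace $Y$ by $V$ and $Y'$ by $g^{-1}(V)$, so that $g$ becomes faithfully flat onto its base.

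Next I would identify the base change $f'\:X\times_YY'\to Y'$ of $f$ with the second projection $\pi\:G\times Y'\to Y'$ through the given equivariant isomorphism; under that isomorphism the structure morphism to $Y'$ is precisely $\pi$. Now $\pi$ is flat: since $G=\bfD_L=\Spec(\ZZ[L])$ and $\ZZ[L]$ is free over $\ZZ$, the group $G$ is flat over $\ZZ$, so $G\times Y'=Y'\times_{\Spec\ZZ}G\to Y'$ is flat as a base change of $G\to\Spec\ZZ$. Hence $f'$ is flat. Faithfully flat descent of flatness along $g$ (\cite[$\rm IV_2$, 2.2.11]{ega}, see also \cite{stacks}) then shows that $f$ is flat over $V$, in particular along the fiber $f^{-1}(y)$, which gives the first conclusion.

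For the final clause I would pass to local rings. Flatness of $f$ at a point $x\in f^{-1}(y)$ means that the local homomorphism $\cO_{Y,y}\to\cO_{X,x}$ is flat, and a flat local homomorphism of local rings is automatically faithfully flat. Regularity descends along such homomorphisms: if $\cO_{X,x}$ is regular then so is $\cO_{Y,y}$ (\cite[Theorem~23.7]{Matsumura-ringtheory}), i.e.\ $Y$ is regular at $y$. The argument is essentially formal once the hypothesis is unwound; the only points demanding care are the reduction to a genuinely faithfully flat $g$ (shrinking $Y$ to the open image of the flat, hence open, covering morphism) and the correct identification of the base change $f'$ with the projection $\pi$. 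After these reductions both descent statements are standard, so I anticipate no serious obstacle beyond this bookkeeping.
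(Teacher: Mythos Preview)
Your argument is correct and is essentially the same as the paper's: unwind the definition of flat-split free to obtain a flat $Y'\to Y$ hitting $y$ with $X\times_YY'\simeq G\times Y'$, observe that the base change of $f$ is the flat projection $G\times Y'\to Y'$, and descend flatness; then descend regularity along the flat local homomorphism $\cO_{Y,y}\to\cO_{X,x}$. You are simply more explicit than the paper about why $G\times Y'\to Y'$ is flat and about the descent step. One minor remark: the existence of the flat cover and the trivialization comes directly from the definition of ``flat-split free over $y$'' rather than from Lemma~\ref{Lem:freeact}, which is a criterion for that property; and instead of asserting that the image of $g$ is open (which needs finite presentation), it is cleaner to localize $Y$ at $y$ and use that a flat morphism hitting the closed point is then surjective.
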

\begin{proof}
By definition, there exists a flat morphism $Y'\to Y$, whose image contains $y$, and an isomorphism $X\times_YY'\toisom G\times Y'$. Then the flat base change $X\times_YY'\to Y'$ of $f$ is flat, and hence $f$ is flat over $y$. If $X$ is regular at a point $x$ then $Y$ is regular at $y$ by \cite[{\tt tag/00OF}]{stacks}.
\end{proof}

\begin{corollary}\label{Cor:freequotcor}
Assume that schemes $X$ and $X'$ are provided with relatively affine actions of a diagonalizable group $G=\mathbf{D}_L$, and $f\:X'\to X$ is a $G$-equivariant morphism with quotient $g=f\sslash G\:Y'\to Y$. Suppose $y'\in Y'$ is a point such that the actions on $X'$ and $X$ are flat-split free over $y'$ and $g(y')$, respectively. Then $f$ is strongly equivariant over $y'$. In particular, if $f$ is regular or lci at a point $x'\in X'_{y'}$ then $g$ is regular or lci at $y'$, respectively.
\end{corollary}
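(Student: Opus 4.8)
The plan is to reduce the strong-equivariance assertion to an explicit statement about the graded pieces of two local rings, prove it by tracking where homogeneous generators go, and then deduce the regularity/lci statement by faithfully flat descent along the quotient map.

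First I would reduce to the affine local case. Since strong equivariance over $y'$ and the pointwise properties of $f$ and $g$ are statements about localizations, set $y=g(y')$ and replace $Y$ by $\Spec\cO_{Y,y}$ and $Y'$ by $\Spec\cO_{Y',y'}$. By the equivariant localization of \S\ref{Sec:localspecial} together with Lemma~\ref{Lem:local-action}, this makes $X=\Spec A$ and $X'=\Spec B$ affine with $A$ and $B$ being $L$-local, with $A_0=\cO_{Y,y}$ and $B_0=\cO_{Y',y'}$ local, and $f$ corresponding to a homogeneous local homomorphism $A\to B$ lying over $A_0\to B_0=g$. In these terms $f$ is strongly equivariant over $y'$ exactly when $A\to B$ is strongly homogeneous (\S\ref{strhomsec}), i.e. when each comparison map $A_n\otimes_{A_0}B_0\to B_n$ is an isomorphism. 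Next, using flat-split freeness of $X$ over $y$: after localizing, the trivializing flat morphism becomes faithfully flat onto $\Spec A_0$ (its image is stable under generization and contains the closed point, hence is everything), and over it $A$ becomes a ring of the form $(\,\cdot\,)[L]$, so each $A_n$ becomes free of rank one. By faithfully flat descent each $A_n$ is an invertible $A_0$-module, hence free of rank one over the local ring $A_0$; fix a generator $a_n$. The same applied to $B$ over $y'$ gives generators $b_n$ of $B_n=B_0 b_n$, and by Lemma~\ref{Lem:freeact}(ii) the fibers are $A\otimes_{A_0}k(y)\cong k(y)[L]$ and $B\otimes_{B_0}k(y')\cong k(y')[L]$.

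With freeness in hand, homogeneity of $A\to B$ gives $a_n\mapsto c_n b_n$ for a unique $c_n\in B_0$, and the comparison map $A_n\otimes_{A_0}B_0\to B_n$ is multiplication by $c_n$ between free rank-one $B_0$-modules; it is an isomorphism if and only if $c_n\in B_0^\times$. To check this I would reduce modulo $m_{y'}$: since $A_0\to B_0$ is local we have $A\otimes_{A_0}k(y')\cong k(y')[L]$, and the induced homomorphism $A\otimes_{A_0}k(y')\to B\otimes_{B_0}k(y')$ is a graded $k(y')$-algebra map between graded fields. The image $\bar a_n$ of $a_n$ is a nonzero homogeneous element, hence a unit, so its image $\bar c_n\bar b_n$ is a unit; in particular $\bar c_n\neq 0$, so $c_n\notin m_{y'}$ is a unit. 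This holds for all $n\in L$, so $A\to B$ is strongly homogeneous and $f$ is strongly equivariant over $y'$.

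Finally, strong equivariance identifies $f$ (over the localizations) with the base change of $g$ along the quotient map $q\colon X\to Y$, which by Lemma~\ref{Lem:freequotflat} is flat along $f^{-1}(y)$ and, being surjective, yields a faithfully flat local homomorphism $\cO_{Y,y}\to\cO_{X,x}$ for $x=f(x')$. I would then invoke faithfully flat descent of regular (resp. lci) homomorphisms along this map to transfer the property from $f$ at $x'$ down to $g$ at $y'$. The reduction and generator-tracking steps are routine; I expect the main obstacle to be this last descent, where one must account for the fact that $\cO_{X',x'}$ is only a localization of $\cO_{X,x}\otimes_{\cO_{Y,y}}\cO_{Y',y'}$ and verify that the descent statements for regular and for lci local homomorphisms apply at the level of local homomorphisms. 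I note that the strictly-local machinery of Theorem~\ref{localcaseth} is not available here, since a free closed orbit is not a point, so $A$ is $L$-local but not strictly $L$-local; the argument must genuinely go through descent along $q$.
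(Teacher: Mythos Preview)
Your proof is correct, but the route you take to strong equivariance is different from the paper's. The paper never unpacks the graded pieces: after localizing $Y$ and $Y'$ it checks directly that the comparison morphism $\phi\colon X'\to X\times_Y Y'$ is an isomorphism by flat descent. Namely, it first replaces $Y$ by a flat cover over which $X\simeq G\times Y$, then replaces $Y'$ by a flat cover $Y''$ over which $X'\times_{Y'}Y''\simeq G\times Y''$; the base-changed $\phi$ is then a $G$-equivariant $Y''$-morphism between trivial $G$-torsors, hence an isomorphism, and descent finishes. Your argument instead uses descent only to see that each $A_n$ and $B_n$ is free of rank one, and then shows by hand that the comparison maps $A_n\otimes_{A_0}B_0\to B_n$ are isomorphisms by reducing modulo $m_{y'}$ and observing that a graded $k(y')$-algebra endomorphism of $k(y')[L]$ sends units to units. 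This is slightly longer but more explicit, and has the virtue of making the structure of $A$ and $B$ completely transparent; the paper's torsor argument is shorter but relies on the (easy) fact that any equivariant morphism of $G$-torsors is an isomorphism.

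For the descent of regularity and lci, both approaches coincide: faithfully flat descent along the quotient map $q\colon X\to Y$. The localization issue you flag---that $\cO_{X',x'}$ is only a localization of $\cO_{X,x}\otimes_{\cO_{Y,y}}\cO_{Y',y'}$---is real but minor, and the paper simply glosses over it by phrasing the descent at the level of the morphisms $f$ and $g$ rather than at individual local rings. Your observation that Theorem~\ref{localcaseth} is unavailable here because the closed orbit is not a point is exactly right, and is indeed why this corollary is proved separately.
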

\begin{proof}
The statement is local at $y'$ and $y$. So, we can assume that $Y$ and $Y'$ are local, and the claim reduces to showing that $\phi\:X'\to X\times_YY'$ is an isomorphism. By flat descent, this can be checked flat-locally on $Y$, so by Lemma~\ref{Lem:freequotflat} we can assume that $X\toisom G\times Y$. By the same lemma, there exists a flat covering $Y''\to Y'$ such that $X'\times_{Y'}Y''\toisom G\times Y''$. The base change of $\phi$ with respect to the morphism $Y''\to Y'$ is the isomorphism $X'\times_{Y'}Y''\toisom G\times Y''\toisom X\times_YY''$. Thus, $\phi$ is an isomorphism by flat descent.

Suppose $f$ is regular. To check that $g: Y' \to Y$ is regular or lci it suffices to check that its pullback by a flat surjective morphism is regular or lci, respectively; but  $X \to Y$ is flat and surjective by Lemma~\ref{Lem:freequotflat}, and we have shown that the pullback is $f:X'\to X$.
\end{proof}

\subsection{Inert morphisms}\label{Sec:inert}
The main disadvantage in the definition of strong equivariance (Section \ref{strongsec}) is that it involves the quotient morphism. It is desirable to have an explicit criterion of strong equivariance in terms of the morphism itself. This leads to the notion of inert morphisms.

\subsubsection{Inertia preserving morphisms}\label{Sec:inertia-preserving}
Recall that a $G$-equivariant morphism $f\:X'\to X$ is called {\em fixed point reflecting} if for any $x'\in X'$ and $x=f(x')$ the inclusion of the stabilizers $G_{x'}\into G_x$ is an equality (e.g. \cite[IV.1.8]{Knutson}). We propose the following scheme-theoretic strengthening of this notion: a $G$-equivariant morphism $f\:X'\to X$ is {\em inertia preserving} if the closed immersion $I_{X'}\into I_X\times_XX'$ is an equality.

\begin{lemma}\label{inertialem}
Assume that locally noetherian schemes $X$ and $X'$ are provided with relatively affine actions of a diagonalizable group $G=\mathbf{D}_L$, and $f\:X'\to X$ is a $G$-equivariant morphism. Consider the following conditions:

(i) $f$ is inertia preserving.

(ii) For each subgroup $H\subseteq G$ the closed immersion $X'^H\into X^H\times_XX'$ is an equality.

(iii) For each point $x\in X$ the stabilizer $G_x$ acts trivially on the fiber $X'\times_X x$.

(iv) $f$ is fixed point reflecting and for any point $x'\in X'$ with $x=f(x')$ the action of $G_x$ on $\Omega_{X'/X}\otimes k(x')$ is trivial.

(v) $f$  is fixed point reflecting.

Then Conditions (ii), (iii), (iv) are equivalent, each condition is implied by (i), and all conditions imply (v). In addition, if $f$ has reduced fibers then (v) implies the conditions (ii)--(iv).
\end{lemma}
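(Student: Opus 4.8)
The plan is to translate all five conditions into the language of $L$-graded rings and then dispatch them by a mix of base-change computations on the inertia scheme, the graded Nakayama lemma, and the fibre analysis of Lemma~\ref{fiblem}. Since every assertion is local on $X$ and $X'$, I would first reduce to the affine case $X=\Spec A$, $X'=\Spec A'$ with $A\to A'$ a homogeneous map of $L$-graded noetherian rings. For a point $x$ the stabiliser is $G_x=\bfD_{L/L_x}$ with $L_x=\langle n : A_n\not\subseteq\mathfrak p_x\rangle$, and for a subgroup $H=\bfD_{L/N}$ one has $X^H=\Spec(A/I_N)$ with $I_N=(A_n : n\notin N)$. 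In these terms condition (iii) reads $(A'\otimes_A k(x))_n=0$ for all $n\notin L_x$, while (ii) reads $I'_N=I_NA'$ for every subgroup $N\subseteq L$, where $I'_N=(A'_n : n\notin N)$; note that $I_NA'\subseteq I'_N$ always, so (ii) is the assertion that this inclusion is an equality.

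Next I would clear away the easy implications. For (i)$\Rightarrow$(iii): restricting $I_{X'}=I_X\times_X X'$ over the fibre $X'_x$ gives $I_{X'}\times_{X'}X'_x=G_x\times_{k(x)}X'_x$, and since any element fixing a point of $X'_x$ already lies in $G_x$, this exhibits the inertia of the $G_x$-action on $X'_x$ as the full group scheme, i.e.\ the action is trivial. For (iii)$\Rightarrow$(v): $G_{x'}\subseteq G_x$ by equivariance, while (iii) forces $G_x$ to fix $x'$, so $G_{x'}=G_x$. For the reduced-fibre converse (v)$\Rightarrow$(iii): fixed-point reflection gives $G_x\subseteq G_{x'}$ for every $x'\in X'_x$, hence every point of $X'_x$ lies in $(X'_x)^{G_x}$; as $X'_x$ is reduced, the closed subscheme $(X'_x)^{G_x}$ with full support equals $X'_x$, so $G_x$ acts trivially. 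Finally (ii)$\Rightarrow$(iii) is immediate: taking $N=L_x$ we have $x\in X^{G_x}$ and $I_{L_x}\subseteq\mathfrak p_x$ (because $n\notin L_x$ forces $A_n\subseteq\mathfrak p_x$), so $A'_n\subseteq I_{L_x}A'\subseteq\mathfrak p_xA'$ for $n\notin L_x$, giving $(A'\otimes_A k(x))_n=0$.

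For the equivalence (iii)$\Leftrightarrow$(iv) I would pass to the fibre $\Lambda=A'\otimes_A k(x)$, an $L/L_x$-graded algebra over the trivially graded field $k(x)$. The locus where the $L/L_x$-grading is nontrivial is $G$-equivariant and closed, so by Proposition~\ref{nakayamaprop} it suffices to test triviality at the closed orbit of each fibre, which puts $\Lambda$ into the graded-local situation of Lemma~\ref{fiblem}. There the equivalence \emph{$\Lambda$ trivially graded $\Leftrightarrow$ $l$ and $\Omega_{\Lambda/k(x)}\otimes_\Lambda l$ trivially graded} applies, and fixed-point reflection (part of (iv), and a consequence of (iii)) guarantees that the residue field $l=k(x')$ is trivially graded; since $\Omega_{\Lambda/k(x)}\otimes_\Lambda l=\Omega_{X'/X}\otimes k(x')$, this is exactly the $\Omega$-condition of (iv). The direction (iii)$\Rightarrow$(iv) is then trivial, and (iv)$\Rightarrow$(iii) follows from Lemma~\ref{fiblem}.

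The hard part will be (iii)$\Rightarrow$(ii). My plan is to show that the finitely generated $A'$-module $Q_N:=I'_N/I_NA'$ vanishes by testing it at special orbits: its support is $G$-equivariant and closed, so if it were nonzero it would meet a fibre of $X'\to X'\sslash G$ in the closed orbit, and by the graded Nakayama lemma it suffices to prove $Q_N$ vanishes after equivariant localisation along each special orbit $O'$. Localising, $A'$ becomes $L$-local with graded residue field $\kappa[L'_{O'}]$; either $O'\not\subseteq f^{-1}(X^H)$, in which case the relevant fibre of $\Spec(A'/I_NA')$ is empty and there is nothing to prove, or $O'\subseteq f^{-1}(X^H)$, so its image $x=f(z)$ (for $z\in O'$) lies in $X^H$ and $L_x\subseteq N$. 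The decisive step is to use (iii) at $x$ to pin down the stabiliser of the special orbit: since $z\in X'_x$ and $G_x$ acts trivially on $X'_x$, the point $z$ is $G_x$-fixed, whence $G_x\subseteq G_z$ and therefore $L'_{O'}=L_x\subseteq N$; this makes the graded residue field supported in degrees lying in $N$. The genuine obstacle is exactly the passage from this fibrewise/residual triviality to the scheme-theoretic equality of ideals $I'_N=I_NA'$: set-theoretic fixed-point information alone does not suffice (as the behaviour of $X_\pm$ in \S\ref{xplusminus} shows), and one must run a careful graded Nakayama argument using that each homogeneous component is finitely generated by Goto--Yamagishi (Proposition~\ref{noetherprop}). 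I expect the bookkeeping in this last reduction — controlling the generators of $I'_N$ modulo $I_NA'$ after localising along $O'$ — to be where the real work of the lemma concentrates.
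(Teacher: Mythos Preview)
Your treatment of the easy implications, of (iii)$\Leftrightarrow$(iv) via Lemma~\ref{fiblem}, and of the reduced-fibre converse (v)$\Rightarrow$(iii) is correct and matches the paper's argument essentially verbatim.

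The divergence is in (iii)$\Rightarrow$(ii), and here you are making the problem harder than it is. Localising $X'$ at a special orbit $O'$ is awkward precisely because $f$ is not assumed to take special orbits to special orbits, so the image $x=f(z)$ has no privileged status in $X$; applying (iii) only at that single $x$ leaves you short, and you correctly sense that the remaining ``bookkeeping'' is nontrivial. The paper bypasses all of this by working on the \emph{target} side: argue by contrapositive, fix $H=\bfD_{L'}$ for which (ii) fails, and restrict to $X^H$. Over an affine open $\Spec(A)\subset X^H$ the base is \emph{trivially} $L'$-graded, so if $\Spec(B)=\Spec(A)\times_XX'$ then $(B\otimes_A k(x))_l=B_l\otimes_A k(x)$ for every $l\in L'$, and the failure of (ii) means some $B_l\neq 0$ with $l\neq 0$. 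Now Goto--Yamagishi gives $B_l$ finitely generated over $B_0$; pick $q\in\Spec(B_0)$ with $B_l\otimes_{B_0}\kappa(q)\neq 0$ and let $x\in\Spec(A)$ be its image, so that $B_l\otimes_A k(x)$ surjects onto $B_l\otimes_{B_0}\kappa(q)\neq 0$. Then $H$ acts nontrivially on the fibre $X'_x$, and since $x\in X^H$ forces $H\subseteq G_x$, this contradicts (iii). No equivariant localisation, no special-orbit analysis, no case split --- the whole step is two lines once you pass to $X^H$. The moral: your instinct to invoke Goto--Yamagishi and a Nakayama-type support argument is right, but apply it to the graded components of $B=\cO_{X^H\times_XX'}$ over $X^H$, not to $Q_N$ over the $L$-local localisation of $X'$.
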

\begin{proof}
The implications (i)$\implies$(ii)$\implies$(iii) and (iv)$\implies$(v) are obvious, and the equivalence (iii)$\Longleftrightarrow$(iv) follows from Lemma~\ref{fiblem} since $\Omega_{X'/X}\otimes k(x')=\Omega_{Z_x/x}\otimes k(x')$ for the fiber $Z_x=X'\times_X x$.

We show (iii)$\Longrightarrow$(ii): Assume that (ii) fails, i.e. there exists $H=\bfD_{L'}\subseteq G$ which acts non-trivially on $X^H\times_XX'$. It suffices to prove that in this case $H$ acts non-trivially already on the fiber $Z_x$ over a point $x\in X^H$. Choose an open affine subscheme $\Spec(A)\into X^H$ such that the action of $H$ on $\Spec(B)=\Spec(A)\times_XX'$ is non-trivial. The morphism $\Spec(B)\to\Spec(A)$ corresponds to an $L'$-homogeneous homomorphism $A\to B$ such that $A=A_0$ and $B_l\neq 0$ for some $l\neq 0$. Note that $B_l$ is finitely generated over $B_0$ since $B$ is noetherian, hence there exists a point $x\in\Spec(A)$ such that $B_l\otimes_A k(x)\neq 0$. Then the action on the fiber over $x$ is non-trivial.

Finally, assume that the fibers of $f$ are reduced, and let us show that in this case (v) implies (iii). Both conditions are fiberwise, so replacing $f$ and $G$ with the fiber over a point $x$ and the stabilizer $G_s$, we can assume that $X=\Spec(k)$ is $G$-invariant and for any point $y\in Y=\Spec(A)$ one has that $G_y=G$. Fix $0\neq n\in L$, then the latter condition implies that $A_n$ is contained in all $L$-prime ideals of $A$. By Lemma~\ref{homradlem} $A_n$ lies in the nilpotent radical, and since $A$ is reduced by the assumption, $A=A_0$, as claimed.
\end{proof}

\subsubsection{Inert morphisms}\label{Sec:inert-def}
Consider $G$-equivariant morphism $f\:X'\to X$ and {\em assume that it takes special orbits (\ref{Def:special-orbit}) to special orbits.} We say that
\begin{enumerate}
\item the morphism $f$ is {\em pointwise inert}\index{pointwise inert morphism}  if condition (v)  of Lemma~\ref{inertialem} holds, namely it is fixed-point reflecting;
\item the morphism is {\em fiberwise inert}\index{fiberwise inert} if either condition (ii), (iii) or (iv) of Lemma~\ref{inertialem} holds, namely, for each point $x\in X$ the stabilizer $G_x$ acts trivially on the fiber $X'\times_X x$;
\item the morphism is {\em inert}\index{inert morphism}  if condition (i)  of Lemma~\ref{inertialem} holds, namely it is inertia preserving.
\end{enumerate}

\begin{remark}\label{inertrem2}
(i) One can easily construct a morphism (with non-reduced fibers) which is pointwise inert but not fiberwise inert. We have not studied the difference (if any!) between inert morphisms and morphisms which are fiberwise inert.

(i) It follows directly from the definition (\ref{strongsec}) that any strongly equivariant morphism is inert and, conversely, pointwise inertness will play a role in our criteria of strong equivariance.

(ii) If either of the conditions of inertness is violated then the quotient functor might behave rather badly. For example, the morphism is not inertia preserving in Example~\ref{badexam}(i) and (i)', and it takes special orbits to non-special orbits in Example~\ref{badexam}(ii) and (ii)'.
\end{remark}

\subsubsection{Comparison with the literature}
In the case of a reductive group $G$ and a $G$-equivariant \'etale morphism of varieties, the fixed point reflecting condition was used by Luna. Although not formulated separately, it is contained in \cite[Lemme fondamental]{Luna}. For finite groups, fixed point reflecting morphisms were called inert in \cite[Section~VIII.5.3.6]{Illusie-Temkin}. In both cases, the condition on special orbits was not required since it was automatically satisfied. An analogue of our version of inert morphism was called ``schematically inert" in \cite[Remark~VIII.5.6.2]{Illusie-Temkin}.

An analogue of inertness and pointwise inertness is used by Alper in a much more general context of good and adequate moduli spaces, see for example \cite{Alper}. It is needed to guarantee that the moduli space functor (which generalizes the quotient functor) behaves nicely.

\subsection{Main results about the quotient functor}
In this section we will prove that pointwise inert regular morphisms are strongly regular. In the case of diagonalizable groups, this extends the classical Luna's fundamental lemma, which deals with inert \'etale morphisms of varieties, to regular morphisms of schemes. Moreover, we will obtain general criteria for equivariant morphisms to be strongly equivariant and establish descent for syntomic and lci morphisms.

\subsubsection{Local case}
We start with the following extension of Theorem \ref{localcaseth} to the case of local actions that do not have to be strictly local.

\begin{lemma}\label{localLuna}
Assume that locally noetherian schemes $X$ and $X'$ are provided with relatively affine actions of a diagonalizable group $G=\mathbf{D}_L$, and $f\:X'\to X$ is a $G$-equivariant morphism with quotient $g=f\sslash G\:Y'\to Y$. Let $x'\in X'$ be a point with images $x=f(x')\in X$, $y'\in Y'$, and $y\in Y$. Assume that
\begin{itemize}
\item[(a)] $x$ lies in a special orbit,
\item[(b)] the inclusion of the stabilizers $G_{x'}\subseteq G_x$ is an equality,
\item[(c)] let $G_x=\mathbf{D}_{L_x}$, then the $L_x$-grading on the vector spaces $\Omega_{X'/X}\otimes k(x')$ and $H_1(\LL_{X'/X}\otimes^{\rm L} k(x'))$ are trivial.
\end{itemize}
Then
\begin{itemize}
\item[(i)] $f$ is strongly equivariant over $y'$,
\item[(ii)] $f$ is formally smooth at $x'$ if and only if $g$ is formally smooth at $y'$,
\item[(iii)] If $f$ is lci at $x'$ then $g$ is lci at $y'$. Conversely, if $g$ is lci at $y'$ and the homomorphisms $\cO_{Y,y}\to\cO_{Y',y'}$ and $\cO_{Y,y}\to\cO_{X,x}$ are $\Tor$-independent then $f$ is lci at $x'$. In particular, $f$ is syntomic at $x'$ if and only if $g$ is syntomic at $y'$.
\end{itemize}
\end{lemma}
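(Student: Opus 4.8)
The plan is to reduce the statement to the strictly local Luna lemma (Theorem~\ref{localcaseth}) by peeling off the stabilizer and the free part of the action separately, using the two-step quotient of Corollary~\ref{Cor:freeact}. All three claims are local on $Y$ and $Y'$, so I would first localize and assume, by Lemma~\ref{Lem:local-action}, that $Y=\Spec\cO_{Y,y}$ and $Y'=\Spec\cO_{Y',y'}$ are local, that $X=\Spec A$ and $X'=\Spec A'$ are affine with $A,A'$ noetherian $L$-local, and that $f$ corresponds to a homogeneous local homomorphism $A\to A'$ with $A_0=\cO_{Y,y}$ and $A'_0=\cO_{Y',y'}$. Condition (a) puts $x$ on the closed orbit $O$ of $X$, and I would next check that the orbit $O'$ of $x'$ is also special: $f$ carries $O'$ onto $O$, and were $O'$ not closed its closure would contain the closed orbit of $X'_{y'}$, whose stabilizer is strictly larger than $G_{x'}=G_x$; but equivariance forces every stabilizer in $\overline{O'}$ to lie in the stabilizer $G_x$ of its image, a contradiction. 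Writing $G'=G_x=G_{x'}=\bfD_{L_x}$ for the common stabilizer and $G''=G/G'=\bfD_{L_\circ}$ for the free quotient (so $O=\Spec k[L_\circ]$ and $L_x=L/L_\circ$), Corollary~\ref{Cor:freeact} factors the quotients as $X\to Z:=X\sslash G'\to Z\sslash G''=Y$ and $X'\to Z':=X'\sslash G'\to Y'$, with the residual $G''$-actions flat-split free over $y$ and $y'$.

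The free part I would handle directly: applying Corollary~\ref{Cor:freequotcor} to $\bar f:=f\sslash G'\colon Z'\to Z$ for the group $G''$ shows that $\bar f$ is strongly $G''$-equivariant over $y'$, so $Z'=Z\times_Y Y'$, and that regularity and lci at the image $\bar x'$ of $x'$ descend to $g=\bar f\sslash G''$ at $y'$. For the stabilizer part I would pass to the strictly local situation: since $G'$ fixes $x$ and $x'$, localizing $Z$ and $Z'$ at the images $z,z'$ makes $\tilde A:=A\otimes_{A^{G'}}\cO_{Z,z}$ and $\tilde A':=A'\otimes_{(A')^{G'}}\cO_{Z',z'}$ strictly $L_x$-local, the $L_x$-grading being exactly the $G_x$-action. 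Because $\Omega$ and $\LL$ commute with localization and $l:=k(x')$ is the residue field, condition (c) says precisely that $\Omega_{\tilde A'/\tilde A}\otimes_{\tilde A'}l$ and $H_1(\LL_{\tilde A'/\tilde A}\otimes^{\rm L}l)$ are trivially $L_x$-graded; Theorem~\ref{localcaseth}(i) then gives that $\tilde A\to\tilde A'$ is strongly homogeneous, i.e. $f$ is strongly $G'$-equivariant over the point $z'$, while parts (ii) and (iii) of the same theorem give the formally smooth and lci statements for $\bar f$ at $z'$.

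To upgrade strong $G'$-equivariance from the single point $z'$ to all of $Z'$ I would exploit the free $G''$-action. The strongly $G'$-equivariant locus in $Z'$ is open, since the kernel and cokernel of each $A_{[\bar n]}\otimes_{A^{G'}}(A')^{G'}\to A'_{[\bar n]}$ are finitely generated by Proposition~\ref{noetherprop}, and it is $G''$-stable; as $G''$ acts transitively on the closed fibre $Z'_{y'}$, it contains all of $Z'_{y'}$. A nonempty $G''$-stable closed complement would then miss $Z'_{y'}$, hence map to a nonempty closed subset of the local scheme $Y'$ avoiding $y'$ (using submersiveness of $Z'\to Y'$), which is impossible. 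Thus $f$ is strongly $G'$-equivariant over $y'$, i.e. $X'=X\times_Z Z'$; combined with $Z'=Z\times_Y Y'$ this yields $X'=X\times_Y Y'$, proving (i). For (ii) and (iii) I would chain the two descents: in one direction $f$ is a base change of $g$ along $X\to Y$, so formal smoothness, flatness and (under the Tor-independence hypothesis on $\cO_{Y,y}\to\cO_{Y',y'}$ and $\cO_{Y,y}\to\cO_{X,x}$) lci ascend from $g$ to $f$; in the other direction Theorem~\ref{localcaseth}(ii),(iii) transport formal smoothness and lci from $f$ to $\bar f$ at $z'$ (now that $f$ is strongly homogeneous), and Corollary~\ref{Cor:freequotcor} transports them from $\bar f$ to $g$. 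The syntomic equivalence then follows because flatness makes all the relevant base changes Tor-independent and descends along the surjection $X\to Y$.

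The main obstacle is exactly this passage from the pointwise hypothesis (c) at $x'$ to strong equivariance over the entire local quotient. Theorem~\ref{localcaseth} applies only after localizing at the $G'$-fixed point, so it produces the cartesian property at $z'$ alone; the crux is the openness-plus-$G''$-transitivity argument that spreads it over the whole special orbit and its generizations, together with the bookkeeping ensuring that the $L_x$-grading supplied by $G_x$ on the cotangent cohomology is the one appearing in Theorem~\ref{localcaseth}.
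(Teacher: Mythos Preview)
Your proof is correct and follows the same two-step strategy as the paper: reduce to the stabilizer $G_x$ via Theorem~\ref{localcaseth} and to the free quotient $G/G_x$ via Corollary~\ref{Cor:freequotcor}, exactly as in the paper's Cases~1 and~2. The only structural difference is that you treat the free step first and the strictly local step second, and you make explicit the passage from strong $G_x$-equivariance over the single point $z'$ to strong $G_x$-equivariance over all of $Z'$; the paper compresses this into the sentence ``The lemma follows.'' Your openness-plus-$G''$-transitivity argument is the right way to justify that step, though note that one should apply it to each graded piece $A_{[\bar n]}\otimes_{A^{G'}}(A')^{G'}\to A'_{[\bar n]}$ separately (each support is then already empty), rather than asserting that the full strongly equivariant locus is open a priori, since $L_x$ may be infinite. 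Your verification that $x'$ lies on the closed orbit is likewise a point the paper leaves implicit; the phrasing ``strictly larger'' is slightly stronger than what you actually use, but the argument goes through because equality of stabilizers with the closed orbit already forces $x'$ to lie on it.
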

\begin{proof}
The claim is local at $y'$ and $y$, so we can assume that $Y'=\Spec\cO_{Y',y'}$ and $Y=\Spec\cO_{Y,y}$. Then the actions on $X'$ and $X$ are local (\ref{Sec:local-action}), and $x$ lies in the closed orbit by assumption (a). We need to prove that $f$ is strongly equivariant, and if $f$ is formally smooth then $g$ is formally smooth. Let us recall two cases of the lemma that were already established earlier.

Case 1. {\em The lemma holds true when $G_x=G$.} Indeed, in this case the closed orbits are $x$ and $x'$, so $X=\Spec(A)$ and $X'=\Spec(A')$, where $A$ and $A'$ are strictly $L$-local rings by assumption (b). So Case 1 is covered by Theorem~\ref{localcaseth}.

Case 2. {\em The lemma holds true when $G_x=\{1\}$.} Indeed, in this case the actions are free by Lemma~\ref{Lem:freeact}, hence Case 2 is covered by Corollary~\ref{Cor:freequotcor}.

Assume now that $G_x$ is arbitrary. Recall that the actions of $G':=G/G_x$ on $Z=X\sslash G_x$ and $Z'=X'\sslash G_x$ are free by Corollary~\ref{Cor:freeact}. Consider the $G'$-equivariant morphism $h=f\sslash G_x$. By Case 1 above, $f$ is strongly equivariant over $z'$ and if $f$ is formally smooth at $x'$ then $h$ is formally smooth at its image $z'\in Z'$. By the local freeness of the action, $Z_y$ consists of a single orbit, so  $G'$, $h$, and $z'$ satisfy all assumptions of the lemma. Therefore, Case 2 applies and we obtain that $h$ is strongly equivariant over $y'$ and if $f$ is formally smooth at $x'$ then $g=h\sslash G'$ is formally smooth at $y'$. The lemma follows.
\end{proof}

\subsubsection{Luna's fundamental lemma}

\begin{theorem}\label{Th:Luna}
Let $X$ and $X'$ be locally noetherian schemes provided with relatively affine actions of a diagonalizable group $G$, and let $f\:X'\to X$ be a $G$-equivariant morphism. Then,

(i) $f$ is strongly equivariant if and only if $f$ is fiberwise inert and any special orbit in $X'$ contains a point $x'$ such that the action of $G_{x'}$ on $H_1(\LL_{X'/X}\otimes^{\rm L} k(x'))$ is trivial.

(ii) Let $P$ be one of the following properties: (a) regular, (b) smooth, (c) \'etale, (d) an open embedding. Then the following conditions are equivalent: (1) $f$ is pointwise inert and satisfies $P$,  (2) $f$ is inert and satisfies $P$, (3) $f$ is strongly equivariant and satisfies $P$, (4) $f$ strongly satisfies $P$.

(iii) $f$ is strongly syntomic if and only if it is syntomic and strongly equivariant. Moreover, if $f$ is strongly equivariant then the following claims hold: (a) if $f$ is lci then $f\sslash G$ is lci, (b) if $f\sslash G$ is lci and $\Tor$-independent with the morphism $X \to X\sslash G$ then $f$ is lci.
\end{theorem}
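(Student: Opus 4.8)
The plan is to reduce the global assertions to the strictly/locally affine case already settled in Lemma~\ref{localLuna}, by working over one point of $Y'=X'\sslash G$ at a time. The organizing principle is that strong equivariance may be tested over each $y'\in Y'$ separately (Section~\ref{strongsec}), and that by Theorem~\ref{relafftheor}(ii) the fiber of $X'\to Y'$ over $y'$ carries a unique closed orbit, i.e.\ a unique special orbit $O'$; this $O'$ supplies the point $x'$ at which Lemma~\ref{localLuna} will be applied. Throughout I write $g=f\sslash G\colon Y'\to Y$, and for a point $x'$ I write $x=f(x')$, together with its images $y'\in Y'$ and $y=g(y')$; I also use freely that the quotient of a locally noetherian scheme is locally noetherian (Theorem~\ref{quottheorem}).

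For part (i) I would treat the substantive ``if'' direction first. Fixing $y'$ and the special orbit $O'$ in its fiber, the hypothesis furnishes a point $x'\in O'$ with $G_{x'}$ acting trivially on $H_1(\LL_{X'/X}\otimes^{\rm L}k(x'))$, and I would check the three hypotheses of Lemma~\ref{localLuna} at this $x'$: assumption (a) holds because fiberwise inertness carries $O'$ to a special orbit, so $x$ lies in one; assumption (b), namely $G_{x'}=G_x$, holds because fiberwise inertness implies the fixed-point-reflecting condition (v) of Lemma~\ref{inertialem}; and assumption (c) splits into triviality of the grading on $\Omega_{X'/X}\otimes k(x')$, which is condition (iv) of Lemma~\ref{inertialem}, and triviality on $H_1$, which is the hypothesis. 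Lemma~\ref{localLuna}(i) then yields strong equivariance over $y'$, and letting $y'$ vary gives strong equivariance. For the ``only if'' direction, strong equivariance implies inertness (Remark~\ref{inertrem2}), hence fiberwise inertness; the triviality of the grading on $H_1$ along a special orbit is obtained by localizing and reducing, via the two-step quotient of Corollary~\ref{Cor:freeact}, to the strictly local case, where Theorem~\ref{localcaseth}(i) identifies strong homogeneity with triviality of the gradings on $H_0$ and $H_1$.

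For part (ii) I would run the cycle $(1)\Rightarrow(3)\Rightarrow(4)\Rightarrow(2)\Rightarrow(1)$. The implications $(4)\Rightarrow(2)$ and $(2)\Rightarrow(1)$ are formal: a morphism that strongly satisfies $P$ is strongly equivariant, hence inert, hence pointwise inert, and satisfies $P$ as a base change, while inertness trivially implies pointwise inertness. The implication $(1)\Rightarrow(3)$ invokes part~(i): since every $P$ in the list is in particular regular, $f$ is flat with geometrically regular — in particular reduced — fibers, so pointwise inertness upgrades to fiberwise inertness by the last assertion of Lemma~\ref{inertialem}; moreover regularity is formal smoothness at all points (Corollary~\ref{Cor:regfsmooth}), which forces $\LL_{X'/X}$ to be concentrated in degree zero and hence $H_1(\LL_{X'/X}\otimes^{\rm L}k(x'))=0$, so part~(i) gives strong equivariance. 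The remaining descent $(3)\Rightarrow(4)$ is where the work lies: for $P$ regular it follows from Lemma~\ref{localLuna}(ii) at each $y'$ together with Corollary~\ref{Cor:regfsmooth}; for smooth and \'etale one adjoins finite-type descent (Corollary~\ref{fgcor}) and, for \'etale, the vanishing of $\Omega_{Y'/Y}$, obtained because its pullback along the surjective quotient $X'\to Y'$ is $\Omega_{X'/X}=0$; for an open embedding one uses that a strongly equivariant open immersion identifies $X'$ with a strongly equivariant open $q^{-1}(V)$, whence $Y'=V$ is open in $Y$.

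Part (iii) is again local over $Y'$. In the nontrivial direction, the equivalence \emph{strongly syntomic $\Leftrightarrow$ syntomic and strongly equivariant} reduces to showing $g$ syntomic, which I obtain by applying the syntomic clause of Lemma~\ref{localLuna}(iii) at a special-orbit point over each $y'$; claim (a) is the lci clause of the same lemma, applied the same way. In both cases the hypotheses (a)--(c) of that lemma are supplied exactly as in the ``only if'' direction of part~(i), using that strong equivariance gives inertness and the triviality of the gradings. Claim (b) is cleanest without the local lemma: since $f$ is the base change of $g$ along $q\colon X\to X\sslash G$, and lci morphisms are preserved under $\Tor$-independent base change, the $\Tor$-independence of $g$ with $q$ together with lci-ness of $g$ forces $f$ to be lci. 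The main obstacle throughout is that $q$ is typically not flat, so properties of $f$ cannot be transported to $g$ by flat descent; this is precisely what forces the pointwise reduction to Theorem~\ref{localcaseth}, and the one genuinely delicate input is the control of the grading on $H_1(\LL_{X'/X}\otimes^{\rm L}k(x'))$ in the ``only if'' direction of part~(i) and its reuse in part~(iii).
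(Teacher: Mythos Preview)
Your approach is essentially the paper's: localize at $y'\in Y'$, pick $x'$ in the special orbit, verify hypotheses (a)--(c) of Lemma~\ref{localLuna}, and conclude. The paper organizes part~(ii) as the single chain $(4)\Rightarrow(3)\Rightarrow(2)\Rightarrow(1)\Rightarrow(4)$ rather than your cycle, but this is cosmetic.

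Two points deserve tightening. First, your claim that regularity ``forces $\LL_{X'/X}$ to be concentrated in degree zero'' is too strong for morphisms not of finite type; what you actually need, and what holds, is $H_1(\LL_{X'/X}\otimes^{\rm L}k(x'))=0$ at each $x'$, which follows from formal smoothness of the completed local map (take $D=\hatB$ in a formally smooth factorization, so $I=0$ in the exact sequence of Lemma~\ref{idealgrading}). Second, in your \'etale step the surjectivity of $q'\:X'\to Y'$ is not by itself enough to conclude $\Omega_{Y'/Y}=0$ from $q'^*\Omega_{Y'/Y}=0$; you should invoke that $\cO_{Y'}$ is a direct summand of $q'_*\cO_{X'}$ (the degree-zero piece), which makes pullback along $q'$ faithful on coherent sheaves.

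For (c) and (d) the paper takes a slightly different and more uniform route: having established that $g$ is smooth, it uses that \'etale $=$ smooth with finite fibers and open immersion $=$ smooth monomorphism, and then invokes Proposition~\ref{sepandprop}(i)(a),(b) to descend ``finite fibers'' and ``monomorphism'' along the strongly equivariant $f$. Your $\Omega$-argument for \'etale is a legitimate alternative once patched as above, but your one-line open-embedding argument (``identifies $X'$ with $q^{-1}(V)$'') hides exactly the step that needs proof, namely that $g$ is a monomorphism; the paper's route via Proposition~\ref{sepandprop} handles this cleanly.
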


Part (iii) refers to Avramov's definition of lci morphisms and the resulting notion of syntomic morphisms described in Section \ref{Sec:lci-syntomic}.

\begin{proof}
(i) The direct implication is clear, so let us prove the inverse implication.  Let $y'\in X'\sslash G$ be a point and choose a point $x'$ in the special orbit over $y'$ such that the action of $G_{x'}$ on $H_1(\LL_{X'/X}\otimes^{\rm L} k(x'))$ is trivial. We assume $f$ is inert, so (a) $x=f(x')$ lies in a special orbit, (b) $G_{x'} = G_{x}$ and (c)  by Lemma~\ref{inertialem} the action of $G_{x'}$ on $H_0(\LL_{X'/X}\otimes^{\rm L} k(x'))=\Omega_{X'/X} \otimes k(x')$ is trivial as well. Thus we may apply Lemma \ref{localLuna} at $x'$, hence $f$ is strongly equivariant over $y'$, thus proving (i). Moreover, this argument shows that if $f$ is regular then $g=f\sslash G$ is formally smooth at all point of $X\sslash G$ and hence is itself a regular morphism.

Part (iii) follows similarly from Lemma \ref{localLuna}.

(ii) The implications (4)$\implies$(3)$\implies$(2)$\implies$(1) are obvious, so let us prove that (1)$\implies$(4). Regardless to the choice of $P$, the morphism $f$ is regular. Since $f$ has reduced fibers, it is fiberwise inert by Lemma~\ref{inertialem}, and as we have proved above it is then strongly regular. This covers (a). A morphism is smooth if and only if it is regular and of finite type. Hence (b) follows from (a) and Corollary~\ref{fgcor}. A smooth morphism is \'etale or an open embedding if and only if it has finite fibers or is a monomorphism, respectively. Hence (c) and (d) follow from (a) and assertions (a) and (b) of Proposition~\ref{sepandprop}.
\end{proof}

\subsection{Groups of multiplicative type}\label{nonsplitsec}
Assume that $S$ is a scheme and $G_S$ is an $S$-group. We say that $G_S$ is of {\em multiplicative type} if it is of finite type and $G_{S'}=G_S\times_SS'$ is isomorphic to a diagonalizable $S'$-group $\bfD_L\times S'$ for a surjective flat finitely presented morphism $h\:S'\to S$. For example, this includes the case when $S=\Spec(\ZZ[\frac{1}{N}])$ and $G_S=\ZZ/N\ZZ$. Our definition is more restrictive than in \cite[IX.1.1]{SGA3-2} because we use the fppf topology instead of the fpqc topology and consider only groups of finite type. Recall that by \cite[X.4.5]{SGA3-2} any such group $G_S$ is quasi-isotrivial, i.e. it can be split by an \'etale surjective morphism $h\:S'\to S$.

We claim that all results of Section~\ref{Lunasection} extend to the case when an $S$-scheme $X$ is provided with a relatively affine action of an $S$-group $G_S$ of multiplicative type. The only exception is part (ii) of Lemma~\ref{Lem:local-action}, which makes no sense for non-diagonalizable group. The proofs we provided of equivalence of (i) and (iii) in Lemma~\ref{Lem:local-action} and parts (a), (b) and (c) of Theorem~\ref{quottheorem}(i) apply to groups of multiplicative type without change. All remaining results, including Luna's fundamental lemma, can be extended to $G_S$ of multiplicative type with $L$ denoting the abelian group Cartier dual to $G_{S'}$ by use of \'etale descent with respect to $h$. Namely, $X_{S'}\sslash G_{S'}=(X\sslash G_S)\times_SS'$ so the result for $X_{S'}\sslash G_{S'}$ extends to $X\sslash G$ by descent. In fact, already flat descent suffices in almost all cases, with the main exception being Proposition~\ref{Prop:reg}.

\bibliographystyle{amsalpha}
\bibliography{factor-qe}
\printindex
\end{document}